\newtheorem{theorem}{Theorem}\numberwithin{theorem}{section}
\newtheorem{lemma}[theorem]{Lemma}
\newtheorem{theoremm}{Theorem}\numberwithin{theoremm}{subsection}
\newtheorem{lemmma}[theoremm]{Lemma}
\newtheorem{propposition}[theoremm]{Proposition}
\newtheorem{quesstion}[theoremm]{Question}
\numberwithin{theoremmm}{subsubsection}
\theoremstyle{definition}
\newtheorem{deffinition}[theoremm]{Definition}
\newtheorem{nottation}[theoremm]{Notation}
\newtheorem{remmark}[theoremm]{Remark}
\newcommand{\Rad}{\operatorname{Rad}}
\newcommand{\Aut}{\operatorname{Aut}}
\newcommand{\Alt}{\operatorname{Alt}}
\newcommand{\PSL}{\operatorname{PSL}}
\newcommand{\cl}{\operatorname{cl}}
\newcommand{\lcm}{\operatorname{lcm}}
\newcommand{\ord}{\operatorname{ord}}
\newcommand{\Sym}{\operatorname{Sym}}
\renewcommand{\l}{\mathfrak{l}}
\newcommand{\C}{\mathcal{C}}
\newcommand{\Soc}{\operatorname{Soc}}
\newcommand{\id}{\operatorname{id}}
\newcommand{\Out}{\operatorname{Out}}
\newcommand{\e}{\mathrm{e}}
\newcommand{\T}{\operatorname{T}}
\newcommand{\M}{\operatorname{M}}
\newcommand{\J}{\operatorname{J}}
\newcommand{\GL}{\operatorname{GL}}
\newcommand{\PSU}{\operatorname{PSU}}
\newcommand{\PSp}{\operatorname{PSp}}
\newcommand{\im}{\operatorname{im}}
\newcommand{\G}{\mathcal{G}}
\newcommand{\D}{\operatorname{D}}
\newcommand{\Comp}{\operatorname{Comp}}
\newcommand{\Mod}[1]{\ (\textup{mod}\ #1)}
\renewcommand{\k}{\operatorname{k}}
\renewcommand{\P}{\mathfrak{P}}
\newcommand{\F}{\operatorname{F}}
\renewcommand{\F}{\mathcal{F}}
\newcommand{\IN}{\mathbb{N}}
\newcommand{\Hcal}{\mathcal{H}}
\newcommand{\Sp}{\operatorname{Sp}}
\newcommand{\IF}{\mathbb{F}}
\newcommand{\GU}{\operatorname{GU}}
\newcommand{\IZ}{\mathbb{Z}}
\renewcommand{\o}{\operatorname{o}}
\newcommand{\bcpc}{\operatorname{bcpc}}
\newcommand{\IR}{\mathbb{R}}
\newcommand{\MCS}{\operatorname{MCS}}
\newcommand{\Inndiag}{\operatorname{Inndiag}}
\newcommand{\IP}{\mathbb{P}}
\newcommand{\PGU}{\operatorname{PGU}}
\newcommand{\POmega}{\operatorname{P}\Omega}
\newcommand{\GO}{\operatorname{GO}}
\newcommand{\Outdiag}{\operatorname{Outdiag}}
\newcommand{\CT}{\operatorname{CT}}
\newcommand{\supp}{\operatorname{supp}}
\newcommand{\omicron}{\operatorname{o}}
\newcommand{\q}{\mathfrak{q}}
\newcommand{\Ord}{\operatorname{Ord}}
\newcommand{\SL}{\operatorname{SL}}
\newcommand{\U}{\mathrm{U}}
\newcommand{\Mcal}{\mathcal{M}}
\newcommand{\B}{\operatorname{B}}
\newcommand{\m}{\mathfrak{m}}
\newcommand{\dfrak}{\mathfrak{d}}
\newcommand{\Exp}{\operatorname{Exp}}
\newcommand{\Div}{\operatorname{Div}}
\newcommand{\Adm}{\operatorname{Adm}}
\newcommand{\Omicron}{\operatorname{O}}
\newcommand{\bcp}{\operatorname{bcp}}
\newcommand{\Cent}{\operatorname{C}}
\newcommand{\ct}{\operatorname{ct}}
\newcommand{\coBD}{\operatorname{coBD}}
\newcommand{\good}{\mathrm{good}}
\newcommand{\bad}{\mathrm{bad}}
\newcommand{\HS}{\operatorname{HS}}
\newcommand{\Co}{\operatorname{Co}}
\newcommand{\McL}{\operatorname{McL}}
\newcommand{\Suz}{\operatorname{Suz}}
\newcommand{\He}{\operatorname{He}}
\newcommand{\HN}{\operatorname{HN}}
\newcommand{\Th}{\operatorname{Th}}
\newcommand{\Fi}{\operatorname{Fi}}
\newcommand{\ON}{\operatorname{O'N}}
\newcommand{\Ru}{\operatorname{Ru}}
\newcommand{\Ly}{\operatorname{Ly}}
\newcommand{\g}{\mathfrak{g}}
\newcommand{\alphabv}{\vec{\bm{\alpha}}}
\newcommand{\psiv}{\vec{\psi}}
\newcommand{\tor}{\mathrm{tor}}
\newcommand{\SU}{\operatorname{SU}}
\newcommand{\PGammaO}{\operatorname{P}\Gamma\operatorname{O}}
\newcommand{\PCO}{\operatorname{PCO}}
\newcommand{\Scal}{\mathcal{S}}
\renewcommand{\O}{\operatorname{O}}
\newcommand{\ssrm}{\mathrm{ss}}
\begin{document}

\title{Automorphism orbits and element orders in finite groups: almost-solubility and the Monster}

\author{Alexander Bors, Michael Giudici and Cheryl E. Praeger\thanks{First author's address: Johann Radon Institute for Computational and Applied Mathematics (RICAM), Altenbergerstra{\ss}e 69, 4040 Linz, Austria. E-mail: \href{mailto:alexander.bors@ricam.oeaw.ac.at}{alexander.bors@ricam.oeaw.ac.at} \newline Second and third author's address: The University of Western Australia, Centre for the Mathematics of Symmetry and Computation, 35 Stirling Highway, Crawley 6009, WA, Australia. E-mail: \href{mailto:michael.giudici@uwa.edu.au}{michael.giudici@uwa.edu.au} and \href{mailto:cheryl.praeger@uwa.edu.au}{cheryl.praeger@uwa.edu.au} \newline The first author is supported by the Austrian Science Fund (FWF), project J4072-N32 \enquote{Affine maps on finite groups}. The second and third authors were supported by the Australian Research Council Discovery Project DP160102323. \newline 2010 \emph{Mathematics Subject Classification}: Primary: 20D60. Secondary: 20D05, 20D45. \newline \emph{Key words and phrases:} Finite groups, Automorphism orbits, Element orders, Simple groups, Monster simple group}}

\date{\today}

\maketitle

\abstract{For a finite group $G$, we denote by $\omega(G)$ the number of $\Aut(G)$-orbits on $G$, and by $\omicron(G)$ the number of distinct element orders in $G$. In this paper, we are primarily concerned with the two quantities $\dfrak(G):=\omega(G)-\omicron(G)$ and $\q(G):=\omega(G)/\omicron(G)$, each of which may be viewed as a measure for how far $G$ is from being an AT-group in the sense of Zhang (that is, a group with $\omega(G)=\omicron(G)$). We show that the index $|G:\Rad(G)|$ of the soluble radical $\Rad(G)$ of $G$ can be bounded from above both by a function in $\dfrak(G)$ and by a function in $\q(G)$ and $\omicron(\Rad(G))$. We also obtain a curious quantitative characterisation of the Fischer-Griess Monster group $\M$.}

\numberwithin{equation}{subsection}
\section{Introduction}\label{sec1}

The underlying theme of this paper is the study of finite groups that are \enquote{highly homogeneous}. Homogeneity conditions on structures in the general, model-theoretic sense (i.e., sets endowed with operations and relations) have been studied in various contexts for a long time. Fra\"{\i}ss{\'e} \cite{Fra53a} called a structure \emph{homogeneous} if and only if any isomorphism between finitely generated substructures extends to an automorphism of the whole structure. This notion of homogeneity has received much attention for certain classes of structures, such as graphs \cite{Gar76a}, groups \cite{CF00a} and linear spaces (in the design-theoretic sense) \cite{DD98a}.

Of course, Fra\"{\i}ss{\'e}'s notion of a \enquote{homogeneous structure} is rather strong, and weaker conditions have been studied as well. For example, rather than involving all (finitely generated) substructures of a given structure $X$ in the condition, one can restrict one's attention to the simplest subsets of $X$, such as vertices (equivalently, singleton subsets) or edges when $X$ is a graph, blocks or flags when $X$ is a design, or elements when $X$ is a group. The homogeneity conditions would then consist of transitivity assumptions on the natural action of the automorphism group $\Aut(X)$ on these simple subsets, leading to well-studied notions such as vertex-transitive graphs \cite[Definition 4.2.2, p.~85]{BW79a}, block-transitive designs \cite{CP93a,CP93b}, flag-transitive designs \cite{Hub09a}, or flag-transitive finite projective planes \cite{Tha03a}.

It should be noted that even some of these weaker (compared to Fra\"{\i}ss{\'e}'s approach) homogeneity conditions on structures $X$ are so strong that only a few \enquote{standard} examples for $X$ satisfy them. For example, the only finite $8$-arc-transitive graphs are cycles \cite{Wei81a}, and the only group $G$ such that $\Aut(G)$ acts transitively on $G$ is the trivial group. In these cases, it may be fruitful to study even weaker conditions: $s$-arc-transitive graphs with $2\leqslant s<8$ have received a lot of attention (see e.g.~the paper \cite{LSS15a} and references therein), and for finite groups $G$, the following weakenings of the condition \enquote{$\Aut(G)$ acts transitively on $G$.} have been studied:

\begin{itemize}
\item \enquote{$\Aut(G)$ admits exactly $c$ orbits on $G$.} for some given, small constant $c$. For $c=2$, it is not difficult to show that this is equivalent to $G$ being nontrivial and elementary abelian (i.e., the underlying additive group of a finite vector space). For general results pertaining to $c\in\{3,4,5,6,7\}$, see the papers \cite{BD16a,DGB17a,LM86a,MS97a,Stro02a} by various authors, and for a classification of the finite simple groups with $c\leq17$, see Kohl's result \cite[Table 3]{Koh04a}.
\item \enquote{$\Aut(G)$ admits at least one orbit of length at least $\rho|G|$ on $G$.} for some given constant $\rho\in\left(0,1\right]$. For example, it is known that if $\rho>\frac{18}{19}$, then $G$ is necessarily soluble \cite[Theorem 1.1.2(1)]{Bor19a}.
\item \enquote{For each element order $o$ in $G$, $\Aut(G)$ acts transitively on elements of order $o$ in $G$.}. In other words, $\Aut(G)$ is \enquote{as transitive as possible} given that automorphisms must preserve the orders of elements. Such finite groups $G$ are called \emph{AT-groups} and are studied extensively by Zhang in \cite{Zha92a}.
\end{itemize}

We note that there is a connection between a slightly weaker version of AT-groups, studied in \cite{LP97a}, and so-called CI-groups (groups $G$ such that any two isomorphic Cayley graphs over $G$ are \enquote{naturally isomorphic} via an automorphism of $G$), which have been studied by various authors, see the survey \cite{Li02a}.

The aim of this paper is to study notions of finite groups that are \enquote{close to being AT-groups}. That is, we will view AT-groups as extremal structures, lying at one end of a quantitative spectrum of homogeneity conditions. We will do so by comparing, for a given finite group $G$, the numbers of $\Aut(G)$-orbits on $G$ and of distinct element orders in $G$ respectively, observing that $G$ is an AT-group if and only if these two numbers are equal.

One of our main results, Theorem \ref{mainTheo1}, provides upper bounds on the smallest index of a soluble normal subgroup of a finite group $G$ that is \enquote{almost an AT-group}, with the caveat that for one of the two notions of \enquote{almost-AT-groups} with which Theorem \ref{mainTheo1} is concerned, one must also assume that the maximum number of element orders in a soluble normal subgroup of $G$ is bounded. So, roughly speaking, Theorem \ref{mainTheo1} states that \enquote{Almost-AT-groups are almost soluble.}. Before being able to prove such a result on finite groups in general, we will study the important special case of nonabelian finite simple groups, with which Theorem \ref{mainTheo2} is concerned. This involves a curious quantitative characterisation of the Fischer-Griess Monster group $\M$, see Theorem \ref{mainTheo2}(5).

\subsection{Statement of our main results}\label{subsec1P1}

Let us first introduce some notation in order to be able to state our main results in a concise way:

\begin{deffinition}\label{mainDef1}
Let $G$ be a finite group.

\begin{enumerate}
\item We denote by $\omega(G)$ the number of $\Aut(G)$-orbits on $G$.
\item We denote by $\Ord(G)$ the set of element orders in $G$, and we set $\omicron(G):=|\Ord(G)|$, the number of element orders in $G$.
\item For $o\in\Ord(G)$, we denote by $\omega_o(G)$ the number of $\Aut(G)$-orbits on the set of order $o$ elements in $G$.
\item We introduce the following parameters:
\begin{enumerate}
\item $\dfrak(G):=\omega(G)-\omicron(G)$;
\item $\q(G):=\omega(G)/\omicron(G)$;
\item $\m(G):=\max_{o\in\Ord(G)}{\omega_o(G)}$.
\end{enumerate}
\item Finally, we denote by $\Rad(G)$ the \emph{soluble radical of $G$} (the largest soluble normal subgroup of $G$).
\end{enumerate}
\end{deffinition}

We note that $\dfrak(G)\geqslant0$ and $\m(G)\geqslant\q(G)\geqslant1$ for all finite groups $G$, and that $\dfrak(G)=0$ if and only if $\m(G)=1$ if and only if $\q(G)=1$ if and only if $G$ is an AT-group. Throughout this paper, $\exp$ denotes the natural exponential function (with base the Euler constant $\e$, and not to be confused with the notation $\Exp(G)$ used for the exponent of the finite group $G$), and $\log$ denotes the natural logarithm (with base $\e$). Our first main result is the following:

\begin{theoremm}\label{mainTheo1}
There are monotonically increasing (in each component) functions $f_i:\left[0,\infty\right)^i\rightarrow\left[1,\infty\right)$ for $i=1,2$ such that for all finite groups $G$, the following hold:
\begin{enumerate}
\item $|G:\Rad(G)|\leqslant f_1(\dfrak(G))$.
\item $|G:\Rad(G)|\leqslant f_2(\q(G),\omicron(\Rad(G)))$.
\end{enumerate}
Moreover, denoting by $\M$ the Fischer-Griess Monster group and setting
\[
c:=\frac{\log\log{|\M|}}{\log\log{(413/73)}}\approx 8.76843,
\]
$f_1$ may be chosen as follows:
\begin{align*}
&f_1(x)=\\
&\exp((2^x+x)\log^c{(2^x+x+3)}\exp(\log^c{(2^x+x+3)}))\cdot \\
&(\log^{-1}{2}\cdot(2^x+x+3))^{(2^x+x)\exp(\log^c{(2^x+x+3)})}\cdot \\
&((2^x+x)!)^{\exp(\log^c{(2^x+x+3)})}.
\end{align*}
\end{theoremm}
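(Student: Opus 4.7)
The plan is to reduce both parts to a bound on the semisimple quotient $H:=G/\Rad(G)$, which has trivial soluble radical, and then invoke Theorem \ref{mainTheo2} (to be proved first, and which accounts for the Monster-calibrated exponent $c$) together with the structure theorem for groups with trivial soluble radical. The reduction $G\rightsquigarrow H$ should be soft: since $\Rad(G)$ is characteristic, the map $\Aut(G)\to\Aut(H)$ sends each $\Aut(G)$-orbit on $G$ into a single $\Aut(H)$-orbit on $H$, and since the projection $G\to H$ is surjective, this yields $\omega(H)\leqslant\omega(G)$. Moreover, any element order $m$ in $H$ is realised in $G$: lifting an order-$m$ element $\bar{x}\in H$ to some $x\in G$ of order $md$ (with $d$ the order of $x^m\in\Rad(G)$), the element $x^d\in G$ has order $m$, so $\Ord(H)\subseteq\Ord(G)$ and $\omicron(H)\leqslant\omicron(G)$. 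This yields $\dfrak(H)\leqslant\dfrak(G)$ directly, and a bound of the form $\q(H)\leqslant\q(G)\cdot\omicron(\Rad(G))$ after accounting for element orders contributed by $\Rad(G)$; this is why $\omicron(\Rad(G))$ appears in part (2) but not in (1).

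The next step is to exploit the structure of $H$. Since $\Rad(H)=1$, the socle $\Soc(H)=\prod_iS_i^{n_i}$ is a product of isotypic characteristic components of powers of nonabelian finite simple groups, and $H\hookrightarrow\prod_i(\Aut(S_i)\wr\Sym(n_i))$ with $\Soc(H)\leqslant H$. I would establish a combinatorial lemma bounding, for each nonabelian simple $S$ and $n\geqslant 1$, the number of $(\Aut(S)\wr\Sym(n))$-orbits on $S^n$ from below by an explicit function of $n$ and the orbit/order invariants of $S$; this shows that $\sum_in_i$, and likewise the number of isotypic components, are bounded by a function of order $2^{\dfrak(H)}+\dfrak(H)$ (the exponential coming from distinct support patterns of tuples in a direct product, which produce distinct $\Aut(H)$-orbits). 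Theorem \ref{mainTheo2}(5) will then bound each $|S_i|$ in terms of its orbit/order invariant (in turn at most $2^x+x$ for $x=\dfrak(G)$), identifying the Monster as the extremal case: the constant $c$ is chosen precisely so that $\M$ saturates $\log\log|S|\leqslant c\cdot\log\log(\text{invariant}+3)$, equivalently $|S|\leqslant\exp(\log^c(\text{invariant}+3))$.

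Assembling, the per-factor bound $|S_i|\leqslant\exp(\log^c(2^x+x+3))$ raised to the power $\sum_in_i\leqslant 2^x+x$ bounds $|\Soc(H)|$ and gives the outer factor $\exp\bigl((2^x+x)\log^c(2^x+x+3)\exp(\log^c(2^x+x+3))\bigr)$ in $f_1$. The classification yields $|\Out(S_i)|\leqslant\log^{-1}2\cdot|S_i|$ in the shape needed, and exponentiating this per factor produces the second factor $(\log^{-1}2\cdot(2^x+x+3))^{(2^x+x)\exp(\log^c(2^x+x+3))}$. The wreathing $\Sym(n_i)$-contribution is controlled via $n_i\leqslant 2^x+x$, yielding $((2^x+x)!)^{\exp(\log^c(2^x+x+3))}$. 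For part (2) the same architecture applies with $\q(G)\cdot\omicron(\Rad(G))$ playing the role of $\dfrak(G)$; $f_2$ is not made explicit because $\omicron(\Rad(G))$ enters multiplicatively without altering the shape.

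The main obstacle, as I see it, is Theorem \ref{mainTheo2}, and specifically its extremal clause identifying the Monster: showing that $413/73$ is the maximal value of the relevant ratio across nonabelian finite simple groups requires a case analysis through the classification, with character-table computations for the sporadic groups (and in particular a careful bookkeeping for $\M$) and uniform but delicate orbit/order estimates for the groups of Lie type and for the alternating groups. Once that is in hand, the reduction $G\rightsquigarrow H$ and the socle bookkeeping are standard, and the assembly of $f_1$ is a direct, if notation-heavy, calculation.
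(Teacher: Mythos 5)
Your high-level plan for part (1)---reduce to the semisimple quotient $H:=G/\Rad(G)$, decompose the socle, bound each simple factor via Theorem~\ref{mainTheo2}(5) (which is indeed the source of the Monster constant $c$), and assemble the factorial and logarithmic factors from the wreath product and outer automorphism bounds---is broadly the paper's architecture, and the observation that the constant $c$ is calibrated so that $|S|\leqslant\exp(\log^c(\q(S)+3))$ is exactly right. However, there are genuine gaps in the execution.

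First, the claim that $\dfrak(H)\leqslant\dfrak(G)$ ``directly'' follows from $\omega(H)\leqslant\omega(G)$ and $\omicron(H)\leqslant\omicron(G)$ is a non-sequitur: $a-b\leqslant c-d$ does not follow from $a\leqslant c$ and $b\leqslant d$. The paper does not assert this inequality for the quotient. What it proves instead is the weaker (but actually attainable) bound $\m(G/\Rad(G))\leqslant 2^{\dfrak(G)}+\dfrak(G)$, which is Lemma~\ref{charQuotLem}(2), a nontrivial generalisation of Zhang's Lemma 1.1. Its proof requires a careful lifting argument tracking, for each prime power dividing an element order $\overline{o}$ of $G/N$, the possible $p$-adic valuations among lifts of order-$\overline{o}$ elements, together with an AM--GM step which is exactly where the exponential $2^{\dfrak(G)}$ originates. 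Your proposal does not contain this lemma, and the attribution of the exponential to ``distinct support patterns of tuples in a direct product'' is mistaken: support patterns only give the linear lower bound $\m(S^n)\geqslant n\cdot\m(S)$ (the paper's Lemma~\ref{socLem}), not anything exponential. In fact, notice the internal inconsistency: if $\dfrak(H)\leqslant\dfrak(G)$ were true, then $\m(H)\leqslant\dfrak(H)+1\leqslant\dfrak(G)+1$, and you would never need the $2^x$ that appears in your assembly of $f_1$ at all.

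Second, for part (2) you assert that ``the same architecture applies with $\q(G)\cdot\omicron(\Rad(G))$ playing the role of $\dfrak(G)$.'' This underestimates the difficulty dramatically. The reduction $\q(H)\leqslant\q(G)\cdot\omicron(\Rad(G))$ (Remark~\ref{semisimpleRem}) is correct, but after that one is left needing to bound $|H|$ purely in terms of $\q(H)$ for semisimple $H$, with no control on $\m(H)$ available. Knowing $\q(H)$ is bounded tells you only that orbits-per-order is small \emph{on average}, and the part~(1) argument breaks down at the step where one passes from $\m(\Soc(H))$ to bounding the socle. This is precisely the content of the paper's entire Section~\ref{sec4}: partitions into socle cosets, the parameters $\tilde{\q}(S)$ and the cosetwise lower bounds of Lemma~\ref{qTildeLem}, the $\hat{h}$-small/$\hat{h}$-large dichotomy, $A$-good and $A$-bad cycle types, and Lemmas~\ref{constantLem} through~\ref{coupLem}, none of which is a cosmetic variant of the part~(1) machinery. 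Your proposal as written would not yield part~(2).
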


We note that obtaining an explicit example of a function $f_2$ as in Theorem \ref{mainTheo1} appears to be a nontrivial open problem -- let us discuss why. Using known explicit bounds such as \cite[Corollary 3.1]{Mar03a} (a lower bound on the integer partition counting function) or \cite[Theorem 3]{Dus99a} (a lower bound on the $k$-th prime number), it is in principle possible to make most of our asymptotic argument for Theorem \ref{mainTheo1}(2) in Section \ref{sec4} explicit. However, one of the crucial auxiliary observations for the proof of Theorem \ref{mainTheo1}(2), stated in asymptotic form as Lemma \ref{qTildeLem2}, is the following: For nonabelian finite simple groups $S$, as certain parameters associated with $S$ tend to $\infty$ (such as the degree $m$ of $S$ when $S=\Alt(m)$ is alternating), the number of $\Aut(S)$-conjugacy classes intersecting a given coset of $S$ in $\Aut(S)$ grows faster than any power of the number of distinct orders of elements in that coset. For alternating groups, proving this boils down to showing that $\omicron(\Sym(m))$, the number of element orders in the symmetric group $\Sym(m)$, is of the form $\exp(\o(1)\sqrt{m})$ as $m\to\infty$. A stronger asymptotic statement was proved by Erd\H{o}s and Tur{\'a}n, see \cite[Theorem I]{ET68a}, but a corresponding explicit upper bound on $\omicron(\Sym(m))$ does not seem to be directly available in the literature nor easily derivable from known results. We note that the second part of Lemma \ref{qTildeLem2}, which is concerned with simple Lie type groups, is not affected, as it only relies on the asymptotics of the number of divisors function, for which explicit (upper) bounds are available, see e.g.~\cite[Th{\'e}or{\`e}me 1]{NR83a}. For another potential approach of constructing $f_2$ explicitly, see Question \ref{openQues2} and the discussion thereafter.

As the quotient $G/\Rad(G)$ is a so-called \emph{semisimple group} (a group without nontrivial soluble normal subgroups, following \cite[p.~89]{Rob96a}), and the class of finite semisimple groups is closely connected to the class of nonabelian finite simple groups (see \cite[3.3.18, p.~89]{Rob96a}), it is not surprising that an important stepping stone in proving Theorem \ref{mainTheo1} is the investigation of $\q$-values of nonabelian finite simple groups $S$. For these, we introduce the numerical parameters
\[
\epsilon_{\omega}(S):=\frac{\log\log{\omega(S)}}{\log\log{|S|}}
\]
and
\begin{equation}\label{qsPlus3Eq}
\epsilon_{\q}(S):=\frac{\log\log{(\q(S)+3)}}{\log\log{|S|}}.
\end{equation}
The addition of a positive quantity in the numerator of $\epsilon_{\q}(S)$ is necessary because there are examples where $\q(S)=1$ (e.g., $S=\Alt(5)$), and $\log\log{1}$ is not defined. On the other hand, $\log\log{(\q(S)+3)}$ is always defined and positive, and $3$ is also the largest positive constant $c$ such that $\q(S)+c\leqslant\omega(S)$ for all $S$ (this is because by Burnside's $p^aq^b$-theorem, one always has $\omega(S)\geqslant\omicron(S)\geqslant4$, and $\omega(\Alt(5))=\omicron(\Alt(5))=4$). Hence by definition, one has $0<\epsilon_{\q}(S)\leqslant\epsilon_{\omega}(S)<1$, and
\[
\exp(\log^{\epsilon_{\omega}(S)}{|S|})=\omega(S)
\]
as well as
\[
\exp(\log^{\epsilon_{\q}(S)}{|S|})=\q(S)+3.
\]
As an important stepping stone toward proving Theorem \ref{mainTheo1}, we will prove the following collection of statements on nonabelian finite simple groups, which is our second main result:

\begin{theoremm}\label{mainTheo2}
Let $S$ be a nonabelian finite simple group. Then the following hold:

\begin{enumerate}
\item $\liminf_{|S|\to\infty}{\epsilon_{\omega}(S)}=\frac{1}{2}$.
\item $\epsilon_{\omega}(S)\geqslant\frac{\log\log{4}}{\log\log{60}}\approx 0.231720$, with equality if and only if $S\cong\Alt(5)$.
\item $\frac{\log{\omicron(S)}}{\log{\omega(S)}}\to0$ as $|S|\to\infty$.
\item $\liminf_{|S|\to\infty}{\epsilon_{\q}(S)}=\frac{1}{2}$. In particular, $\q(S)\to\infty$ as $|S|\to\infty$.
\item Denoting by $\M$ the Fischer-Griess Monster group, we have that $\epsilon_{\q}(S)\geqslant\epsilon_{\q}(\M)=\frac{\log\log{(413/73)}}{\log\log{|\M|}}\approx 0.114045$, with equality if and only if $S\cong\M$.
\end{enumerate}
\end{theoremm}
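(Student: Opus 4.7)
My plan is to proceed by Classification of Finite Simple Groups case analysis, treating alternating, Lie type and sporadic groups separately, with the common strategy of bounding $\omega(S)$ from below and $\omicron(S)$ from above in terms of $|S|$. The five parts are then combined as follows: (1), (3) and the asymptotic half of (4) flow from matching upper and lower estimates on the liminf of $\epsilon_\omega$ and $\epsilon_\q$ across all CFSG families; (2) and (5) are extremal statements reduced to a finite explicit check by these asymptotics.

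For $S=\Alt(m)$ (with $m\geqslant 5$, $m\neq 6$), $\omega(S)$ equals the number of $\Sym(m)$-conjugacy classes meeting $\Alt(m)$, which is the number of partitions of $m$ of appropriate parity and is asymptotic to $p(m)/2\sim\exp(\pi\sqrt{2m/3})/(8m\sqrt{3})$. Therefore $\log\log\omega(\Alt(m))\sim\tfrac{1}{2}\log m$, whereas $\log\log|\Alt(m)|\sim\log m$ from $\log|\Alt(m)|\sim m\log m$, giving $\epsilon_\omega(\Alt(m))\to\tfrac{1}{2}$. For the corresponding upper bound on element orders I would invoke the Erd\H{o}s--Tur\'an theorem \cite{ET68a} to get $\log\omicron(\Sym(m))=O(\sqrt{m/\log m})$, whence $\log\omicron/\log\omega\to 0$ and $\q(S)\to\infty$ along this family. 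These computations already establish the upper bound $\liminf\epsilon_\omega\leqslant\tfrac{1}{2}$ in (1), the alternating contribution to (3), and the companion upper bound in (4).

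For $S$ of Lie type over $\IF_q$ of rank $r$, I would bound $\omega(S)\geqslant k(S)/|\Out(S)|$ using the standard estimate that $k(S)$ is a polynomial in $q$ of degree $r$; the element orders are constrained to divide $\lcm\{q^i\pm 1:i\leqslant r\}\cdot p$, so $\omicron(S)\leqslant |S|^{\o(1)}$ by the classical divisor bound. In bounded-rank families this forces $\epsilon_\omega\to 1$, while in unbounded-rank families with bounded $q$ (e.g.\ $\PSL(n,q)$, $q$ fixed, $n\to\infty$) a parallel computation gives $\log\omega\sim n\log q$ against $\log|S|\sim n^2\log q$, again producing $\epsilon_\omega\to\tfrac{1}{2}$; in every case $\q(S)\to\infty$. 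The 26 sporadic groups are dispatched individually from ATLAS data. Combined with the alternating analysis this yields $\liminf\epsilon_\omega=\liminf\epsilon_\q=\tfrac{1}{2}$ and completes parts (1), (3) and (4).

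For (2), Burnside's $p^aq^b$-theorem forces $\omicron(S)\geqslant 4$, hence $\omega(S)\geqslant 4$; the inequality $\epsilon_\omega(S)\geqslant\log\log 4/\log\log 60$ can therefore saturate only when $|S|=60$ and $\omega(S)=4$, i.e.\ when $S\cong\Alt(5)$, and the general asymptotic bound reduces the remaining verification to a finite explicit list of small simple groups to be checked directly. For (5), I would compute $\omega(\M)=194$ (since $\Aut(\M)=\M$) and $\omicron(\M)=73$ from the ATLAS, giving $\q(\M)+3=413/73$; the claim $\epsilon_\q(S)>\epsilon_\q(\M)$ for $S\not\cong\M$ is then obtained by combining part (4) with an effective asymptotic bound to reduce to a finite case list, and verifying the remaining 25 sporadics together with the surviving small classical, exceptional and alternating simple groups by direct computation. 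The main obstacle is making the asymptotics sufficiently effective to isolate a manageable finite list, in particular establishing an explicit upper bound on $\omicron(\Sym(m))$ in the spirit of Erd\H{o}s--Tur\'an; this is precisely the gap flagged after Theorem \ref{mainTheo1} that obstructs the explicit construction of $f_2$.
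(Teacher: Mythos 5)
Your overall strategy — CFSG case analysis with lower bounds on $\omega$ and upper bounds on $\omicron$, then extremal verification by reducing to a finite list — is the same as the paper's. However, your bound on $\omicron(S)$ for Lie type groups of unbounded rank has a genuine gap.

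You claim that element orders divide $\lcm\{q^i\pm 1:i\leqslant r\}\cdot p$ and deduce $\omicron(S)\leqslant |S|^{\o(1)}$ from the divisor bound. That deduction is correct, but the resulting bound is too weak when the rank is unbounded. To see why, take $q$ fixed and $r\to\infty$: one has $|S|\asymp q^{\Theta(r^2)}$ and $\omega(S)\asymp q^{\Theta(r)}$, so $\log\omega(S)\asymp r\log q$, while the lcm $N:=\lcm\{q^i\pm 1:i\leqslant r\}$ satisfies $\log N\asymp r^2\log q$ and $\log\log N\asymp\log r$. The classical divisor bound then gives only
\[
\log\omicron(S)\leqslant\log\tau(N)\lesssim\frac{r^2\log q\cdot\log 2}{\log r},
\]
so $\log\omicron(S)/\log\omega(S)\gtrsim r/\log r\to\infty$, which is the opposite of what part (3) asserts; in particular $\q(S)\to\infty$ and the lower bound $\liminf\epsilon_{\q}(S)\geqslant\frac{1}{2}$ in part (4), and likewise part (5), do not follow from this argument. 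The problem is that bounding $\omicron(S)$ by $\tau$ of a single large lcm wastes the structure: while the lcm of all torus exponents is of size $\approx q^{r^2}$, each individual torus exponent is only $\leqslant(q+1)^r$. The paper exploits this by writing $\omicron_{\mathrm{ss}}(S)\leqslant\k_{\tor}(\Inndiag(S))\cdot\max_T\tau(\Exp(T))$: the number of conjugacy classes of maximal tori grows only subexponentially in $r$ (via Gager's parametrisation and Hardy--Ramanujan), and each $\Exp(T)\leqslant(q+1)^r$, so the divisor bound yields $\omicron(S)\leqslant q^{\o(1)r}$ rather than $q^{\o(1)r^2}$ — precisely what is needed to dominate $\omega(S)\geqslant q^{(1-\varepsilon)r}$. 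You would need to replace your lcm estimate with this torus-by-torus count (plus Testerman's bound on the $p$-part of $\Exp(S)$) to make the Lie type case of parts (3), (4) and (5) go through.

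A smaller issue in part (2): the fact that equality forces $|S|=60$ and $\omega(S)=4$ is not quite automatic from $\omega(S)\geqslant4$ alone, since a priori $\log\log\omega(S)/\log\log|S|$ could equal $\log\log4/\log\log60$ for some other pair $(\omega(S),|S|)$; the paper rules this out by the explicit verification that $\epsilon_\omega(S)>\epsilon_\omega(\Alt(5))$ strictly for all $S$ on the residual finite list, which your plan also needs to do strictly rather than up to saturation.
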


The appearance of the monster group $\M$ in Theorem \ref{mainTheo2}(5) seems to suggest itself when considering just how small $\q(\M)$ is compared to $\q(S)$ for other nonabelian finite simple groups $S$ of roughly the same order as $\M$. Indeed, one has $|\M|\approx 8\cdot10^{53}$ and $\q(\M)=\frac{194}{73}\approx2.65753$, see Table \ref{sporadictable} below, whereas, for example, $|\Alt(43)|\approx 3\cdot 10^{52}$ and $\q(\Alt(43))=\frac{31659}{559}\approx 56.63506$, and $|\PSL_7(13)|\approx 3\cdot 10^{53}$ and $\q(\PSL_7(13))\geqslant\frac{2614423}{423}\approx 6180.66903$. In the case of $S=\Alt(43)$, we computed the exact values of $\omega(S)$ and $\omicron(S)$ using GAP \cite{GAP4}, whereas for $S=\PSL_7(13)$, the specified numerator and denominator are the lower bound $\lceil\k(S)/|\Out(S)|\rceil$ on $\omega(S)$ (with $\k(S)$ denoting the number of conjugacy classes of $S$, computed with GAP) and an upper bound on $\omicron(S)$, computed by a GAP implementation of an algorithm described below in Case (5) of our proof of Theorem \ref{mainTheo2}(5) in Subsection \ref{subsec2P3}, between Tables \ref{conjClassTable} and \ref{onlyOmicronTable}. Note that the nonabelian finite simple groups $S$ that are AT-groups (which are precisely the groups $\PSL_n(q)$ with $(n,q)\in\{(2,5),(2,7),(2,8),(2,9),(3,4)\}$, see \cite[Theorem 3.1]{Zha92a}) cannot achieve such a small $\epsilon_{\q}$-value because their orders are too small (the addition of $3$ in the numerator of $\epsilon_{\q}(S)$ causes the total value of the fraction to become too large).

We would like to mention that the proofs in Section \ref{sec2} involve various computational checks, which were all carried out using GAP \cite{GAP4} and are mentioned whenever they occur. The associated GAP source code, together with a documentation for it, is available from the first author's website under \url{https://alexanderbors.wordpress.com/sourcecode/orbord/}, and the documentation is also available as a preprint on arXiv under \url{https://arxiv.org/abs/1910.12570}.

\subsection{Overview of the proofs of Theorems \ref{mainTheo1} and \ref{mainTheo2}}\label{subsec1P2}

We first discuss the proof of Theorem \ref{mainTheo2}, as it will be done first (since Theorem \ref{mainTheo2} is needed in the proofs of both statements in Theorem \ref{mainTheo1}). Theorem \ref{mainTheo2} is proved in Section \ref{sec2}, and the proof is split into the three cases \enquote{$S$ is sporadic}, \enquote{$S$ is alternating} and \enquote{$S$ is of Lie type}.
\begin{itemize}
\item The sporadic finite simple groups $S$, dealt with in Subsection \ref{subsec2P1}, are irrelevant for the asymptotic statements (1), (3) and (4) of Theorem \ref{mainTheo2}, so one only needs to verify the universal bounds in statements (2) and (5) for them, which is straightforward using information from the ATLAS of Finite Group Representations \cite{ATLAS}.
\item For the alternating groups $\Alt(m)$, which are discussed in Subsection \ref{subsec2P2}, the two key ideas are, firstly, that $\omega(\Alt(m))$ and $\omicron(\Alt(m))$ are \enquote{almost equal to} the corresponding parameters of the symmetric group $\Sym(m)$, and, secondly, that both $\omega(\Sym(m))$ and $\omicron(\Sym(m))$ can be expressed in terms of certain integer partition counting functions. One can therefore apply number-theoretic results, dating back to Hardy and Ramanujan's 1918 paper \cite{HR18a} but also involving comparatively recent results such as Mar{\'o}ti's \cite[Corollary 3.1]{Mar03a}, which together yield information on the asymptotic growth rate of and explicit bounds on those partition counting functions.
\item Finally, Subsection \ref{subsec2P3} is concerned with the finite simple groups $S$ of Lie type. A lower bound on $\omega(S)$ can be produced, using \cite[Corollary 1, p.~506]{Ern61a}, from a well-known (see e.g.~\cite[Theorem 1.1(1)]{FG12a}) lower bound on the number of conjugacy classes of $S$, using that $|\Aut(S):S|=|\Out(S)|$ is \enquote{small} (see e.g.~\cite{Koh03a}). On the other hand, an upper bound on $\omicron(S)$ can be obtained as follows: Firstly, one notes that $\omicron(S)$ is \enquote{almost} (up to a small factor) the same as the number of \emph{semisimple} element orders (i.e., element orders not divisible by the defining characteristic of $S$). Secondly, every semisimple element of $S$ is contained in a maximal torus of $S$, so the number of semisimple element orders in $S$ can be bounded from above nicely using classical results on the conjugacy classes of maximal tori of $S$ and their orders, see \cite[Section 3]{Car81a}, \cite[Lemma 3.3]{Har92a} and \cite[Theorem 1.2(b), p.~1.8]{Gag73a} (cf.~also \cite[Proposition 25.1, p.~219]{MT11a}). The asserted asymptotic results follow swiftly from this, and for the universal bounds, one needs to determine which \enquote{small} cases are not clear by the asymptotic arguments and repeat essentially the same arguments in a more careful manner.
\end{itemize}

Next, we talk about the proof of Theorem \ref{mainTheo1}(1), which is the subject of Section~\ref{sec3}. Assume that $G$ is a finite group; our goal is to bound $|G:\Rad(G)|$ in terms of $\dfrak(G)$. We first generalise a result of Zhang \cite[Lemma 1.1]{Zha92a} to show that if $N$ is a characteristic subgroup of a finite group $G$, then $\m(G/N)\leqslant 2^{\dfrak(G)}+\dfrak(G)$, see Lemma \ref{charQuotLem}(2). Applied with $N:=\Rad(G)$, we find that $\m(G/\Rad(G))$ is bounded in terms of $\dfrak(G)$, which allows us to restrict our attention to finite semisimple groups $H$, and show that $|H|$ can be bounded from above in terms of $\m(H)$. But $\m(\Soc(H))\leqslant\m(H)$, see Lemma \ref{charSubLem}, and since $H$ embeds into $\Aut(\Soc(H))$ via its conjugation action on $H$, it suffices to show that $|\Soc(H)|$ can be bounded from above in terms of $\m(\Soc(H))$. Since $\Soc(H)$ is a direct product of nonabelian finite simple groups, that last statement easily reduces to Theorem \ref{mainTheo2}, see Lemma \ref{socLem}.

Finally, we give an overview of the proof of Theorem \ref{mainTheo1}(2), with which Section \ref{sec4} is concerned.
\begin{itemize}
\item In Subsection \ref{subsec4P1}, a crucial starting observation is made, namely that it suffices to show that for finite semisimple groups $H$, the order of $H$ can be bounded from above in terms of $\q(H)$ (compare this with bounding $|H|$ in terms of $\m(H)$, which is needed in the proof of Theorem \ref{mainTheo1} and is much easier). The remainder of Section \ref{sec4} is concerned with proving this result for finite semisimple groups $H$.
\item In order to bound $\q(H)$ suitably from below, we partition $H$ into certain unions of cosets of $\Soc(H)$ and study the \enquote{$\q$-values of these subsets}; note that at the moment, $\q(G)$ is only defined when $G$ is a finite group, not a subset $M$ thereof, but the definition will be extended accordingly in Notation \ref{subsetNot}(3), writing $\q_G(M)$. Subsection \ref{subsec4P2} provides simple, but important abstract tools, in the form of Lemmas \ref{partitionLem} and \ref{mixedPartitionLem}, to make this idea of working with partitions of $H$ feasible.
\item In the first instance, the \enquote{partition approach} described in the previous bullet point allows one to show that $\q(H)$ is large when $\Soc(H)$ contains a (nonabelian) composition factor $S$ for which a certain other parameter, $\tilde{\q}(S)$ (see Notation \ref{tildeNot}(1)), is large; in other words, it gives a partial reduction, carried out in Subsection \ref{subsec4P4}, to nonabelian finite simple groups, and corresponding auxiliary results on nonabelian finite simple groups $S$ are proved for later use in Subsection \ref{subsec4P3}.
\item In the brief Subsection \ref{subsec4P5}, we change our perspective: Our goal can be equivalently restated as showing that for each constant $c\geqslant1$, the class $\Hcal^{(c)}$ (see Notation \ref{classNot}(1)), of finite semisimple groups $H$ with $\q(H)\leqslant c$, is finite. The remainder of the proof is concerned with giving more and more restrictions on the members of an arbitrary, but fixed class $\Hcal^{(c)}$ until it becomes clear that only finitely many finite semisimple groups can satisfy all those restrictions. A first result in this direction is Lemma \ref{hcalLem}, which shows, as an application of the theory developed so far, that $\Hcal^{(c)}$ is contained in a certain other class of finite semisimple groups, $\Hcal_{\hat{m},\hat{d},\hat{p}}$ (see Notation \ref{classNot}(2,3) for the precise definition), whose members satisfy numerical restrictions with regard to the composition factors of their socles.
\item Subsection \ref{subsec4P6} contains a few elementary number-theoretic results, which serve as auxiliary results in the subsequent subsection.
\item Subsection \ref{subsec4P7} consists of some technical results holding for all finite semisimple groups $H$ belonging to a fixed class $\Hcal_{\hat{m},\hat{d},\hat{p}}$ as introduced in Subsection \ref{subsec4P5}. First, it is observed that only a very specific kind of socle coset in a finite semisimple group $H$, called an \emph{$\hat{h}$-small socle coset} (see Notation \ref{hNot} for the details) is \enquote{problematic} as far as the partition idea from Subsection \ref{subsec4P2} is concerned, see Lemma \ref{hHatLargeLem}. Next, Lemma \ref{constantLem} is proved, which basically states that $\hat{h}$-small socle cosets (in finite semisimple groups lying in a class $\Hcal_{\hat{m},\hat{d},\hat{p}}$) contain only few distinct element orders (which is useful in view of Lemma \ref{mixedPartitionLem}). Lemma \ref{qGammaLem} narrows the set of \enquote{problematic} socle cosets $C$ further, based on the common permutation action of the members of $C$ on the coordinates of $\Soc(H)$. Finally, Lemma \ref{aLem}, an application of Lemmas \ref{constantLem} and \ref{qGammaLem} as well as the results of Subsection \ref{subsec4P2}, exhibits a partition of a \enquote{large part of $H$} in which every partition member has large $\q_H$-value.
\item The last few remaining tools for proving Theorem \ref{mainTheo1}(2) are provided in Subsection \ref{subsec4P8}. Firstly, a third kind of class of finite semisimple groups, $\Hcal_{\hat{m},\hat{d},\hat{p},\hat{r},f}$ (contained in $\Hcal_{\hat{m},\hat{d},\hat{p}}$), is introduced in Notation \ref{classNot2}, and it is shown that each class $\Hcal^{(c)}$ is contained in such a class, see Lemma \ref{hcalLem2}. But also, each intersection $\Hcal^{(c)}\cap\Hcal_{\hat{m},\hat{d},\hat{p},\hat{r},f}$ is finite, see Lemma \ref{coupLem}. Combining these two facts, one gets that indeed, $\Hcal^{(c)}$ is always finite, as required.
\item To round Section \ref{sec4} off, Subsection \ref{subsec4P9} gives the actual proof of Theorem \ref{mainTheo1}(2) in a concise form, referring to results from the other subsections as needed.
\end{itemize}

\subsection{Some related open questions}\label{subsec1P3}

In this subsection, we discuss three open questions related to the results and proofs of this paper. The following is natural to ask when comparing statements (1) and (2) in Theorem \ref{mainTheo1}:

\begin{quesstion}\label{openQues1}
Does there exist a monotonically increasing function $f:\left[1,\infty\right)\rightarrow\left[1,\infty\right)$ such that $|G:\Rad(G)|\leqslant f(\q(G))$ for all finite groups $G$?
\end{quesstion}

As will become clear later from Remark \ref{semisimpleRem}, Theorem \ref{mainTheo1}(2) is essentially just a statement about finite semisimple groups, which is a very helpful observation, since the structure of finite semisimple groups is well understood. However, in order to answer Question \ref{openQues1} in the affirmative, one would need to improve on the (probably very pessimistic) bounds
\[
\omega(G)\geqslant\omega(G/\Rad(G))
\]
and
\[
\omicron(G)\leqslant\omicron(\Rad(G))\cdot\omicron(G/\Rad(G))
\]
from the discussion in Remark \ref{semisimpleRem}, and it seems inevitable that in order to do so, one needs to gain a better understanding of the \enquote{interplay} between $\Rad(G)$ and $G/\Rad(G)$, i.e., of the theory of extensions of finite semisimple groups by finite soluble groups. In the authors' opinion, this is a probably very challenging, but also interesting research problem, and even partial results putting structural restrictions on $\Rad(G)$ or $G/\Rad(G)$ (for example, assuming that $\Rad(G)$ is cyclic) would be of interest.

The second open question concerns the following numerical parameter associated with each finite group:

\begin{nottation}\label{lNot}
Let $G$ be a nontrivial finite group. We set
\[
\l(G):=\frac{\log{\omega(G)}}{\log{\omicron(G)}}.
\]
Moreover, we define the $\l$-value of the trivial group to be $1$.
\end{nottation}

Note that by definition, the parameters $\dfrak(G)$, $\q(G)$ and $\l(G)$ satisfy the following equations for each finite group $G$, which could be used as implicit definitions for them (except for $\l(G)$ when $G$ is trivial):
\[
\omega(G)=\omicron(G)+\dfrak(G),
\]
\begin{equation}\label{qImplicitEq}
\omega(G)=\omicron(G)\cdot\q(G)
\end{equation}
and
\[
\omega(G)=\omicron(G)^{\l(G)}.
\]
Note also that Theorem \ref{mainTheo2}(3) just says that for nonabelian finite simple groups $S$, $\l(S)\to\infty$ as $|S|\to\infty$. However, in contrast to $\dfrak(G)$, we have the following:

\begin{propposition}\label{lProp}
There is no monotonically increasing function $f:\left[1,\infty\right)\rightarrow\left[1,\infty\right)$ such that $|G:\Rad(G)|\leqslant f(\l(G))$ for all finite groups $G$.
\end{propposition}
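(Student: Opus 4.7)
The plan is to construct, for some fixed constant $C\geqslant1$, a sequence of finite groups $(G_k)_{k\geqslant1}$ with $|G_k:\Rad(G_k)|\to\infty$ as $k\to\infty$ while $\l(G_k)\leqslant C$ for all $k$. Such a sequence rules out any function $f$ as in the statement, since otherwise we would have $|G_k:\Rad(G_k)|\leqslant f(\l(G_k))\leqslant f(C)<\infty$ uniformly in $k$, contradicting $|G_k:\Rad(G_k)|\to\infty$.

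The construction is as follows. Fix a sequence of nonabelian finite simple groups $S_k$ with $|S_k|\to\infty$ (for instance, $S_k:=\Alt(k)$ for $k\geqslant5$). For each $k$, choose a cyclic group $A_k$ of order coprime to $|S_k|$ and satisfying $\tau(|A_k|)\geqslant\q(S_k)$, where $\tau$ denotes the number-of-divisors function. This is feasible by setting $A_k:=\IZ/(p_1p_2\cdots p_{n_k})$ for sufficiently many distinct primes $p_i$ not dividing $|S_k|$, since then $\tau(|A_k|)=2^{n_k}$ is unbounded in $n_k$. Let $G_k:=S_k\times A_k$.

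Next, I would verify the two key properties. Since $S_k$ is nonabelian simple and $A_k$ is soluble, $\Rad(G_k)=A_k$, so $|G_k:\Rad(G_k)|=|S_k|\to\infty$. Because $\gcd(|S_k|,|A_k|)=1$, both $S_k$ and $A_k$ are the unique Hall subgroups of their respective orders in $G_k$ and therefore characteristic; consequently, $\Aut(G_k)=\Aut(S_k)\times\Aut(A_k)$, the $\Aut(G_k)$-orbits on $G_k$ are products of $\Aut(S_k)$-orbits on $S_k$ with $\Aut(A_k)$-orbits on $A_k$, so $\omega(G_k)=\omega(S_k)\omega(A_k)$. The same coprimality gives that the orders of elements of $G_k$ factor uniquely as $o_1o_2$ with $o_1\in\Ord(S_k)$ and $o_2\in\Ord(A_k)$, yielding $\omicron(G_k)=\omicron(S_k)\omicron(A_k)$. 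Finally, since $A_k$ is cyclic, $\omega(A_k)=\omicron(A_k)=\tau(|A_k|)=:T_k$.

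Writing $\omega(S_k)=\omicron(S_k)\cdot\q(S_k)$ as in equation \eqref{qImplicitEq}, a short computation gives
\[
\l(G_k)=\frac{\log\omega(S_k)+\log T_k}{\log\omicron(S_k)+\log T_k}=1+\frac{\log\q(S_k)}{\log(\omicron(S_k)T_k)}\leqslant1+\frac{\log\q(S_k)}{\log T_k}\leqslant2,
\]
the final inequality coming from $T_k\geqslant\q(S_k)$. So $C:=2$ works, and the construction is complete. The verification is elementary throughout; the only slightly delicate point is the decomposition $\Aut(G_k)=\Aut(S_k)\times\Aut(A_k)$, which follows from standard properties of direct products of groups of coprime orders. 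The conceptual insight driving the construction is that the cyclic factor $A_k$ is itself an AT-group contributing an arbitrarily large number of distinct element orders, which suffices to drag $\l(G_k)$ close to $1$ regardless of how badly $\l(S_k)$ blows up. Notably, Theorem~\ref{mainTheo2} is not invoked.
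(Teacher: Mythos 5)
Your proof is correct and takes essentially the same approach as the paper: both construct $G = S \times A$ with $S$ a nonabelian finite simple group and $A$ a squarefree cyclic group of coprime order, yielding $\omega(G) = \omega(S)\cdot 2^n$ and $\omicron(G) = \omicron(S)\cdot 2^n$ (where $n$ is the number of primes), whence $\l(G) < 2$. The only cosmetic differences are that you present a sequence with uniformly bounded $\l$-value rather than fixing $f$ and exhibiting one counterexample, and you calibrate $2^{n_k}\geqslant\q(S_k)$ directly while the paper uses the cruder estimates $\omega(S)\leqslant|S|\leqslant 2^{|S|/2}$ with $n:=|S|$.
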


\begin{proof}
By contradiction: Fix such a function $f$. Let $S$ be a nonabelian finite simple group with $|S|>f(2)$. Set $k:=|S|$, and fix $k$ pairwise distinct primes $p_1,\ldots,p_k$ none of which divides $|S|$. Set $G:=S\times(\IZ/(p_1\cdots p_k)\IZ)$. Then the second factor, $\IZ/(p_1\cdots p_k)\IZ$, is the soluble radical of $G$, and the first factor, $S$, is the derived subgroup of $G$. So both factors are characteristic in $G$, and thus
\[
\Aut(G)=\Aut(S)\times\Aut(\IZ/(p_1\cdots p_k)\IZ)
\]
and
\begin{equation}\label{lOmegaEq}
\omega(G)=\omega(S)\cdot\omega(\IZ/(p_1\cdots p_k)\IZ)=\omega(S)\cdot 2^k.
\end{equation}
Moreover, we have a surjection
\[
\Ord(S)\times\Ord(\IZ/(p_1\cdots p_k)\IZ)\rightarrow\Ord(G), (o_1,o_2)\mapsto\lcm(o_1,o_2),
\]
and by the choice of $p_1,\ldots,p_k$, this surjection is a bijection. Hence
\begin{equation}\label{lOmicronEq}
\omicron(G)=\omicron(S)\cdot\omicron(\IZ/(p_1\cdots p_k)\IZ)=\omicron(S)\cdot 2^k.
\end{equation}
Recall that by definition, $k=|S|\geqslant60$, which entails that
\[
\omega(S)\leqslant k\leqslant 2^{k/2},
\]
and thus
\begin{equation}\label{lLastEq}
\frac{\log{\omega(S)}}{k\log{2}}\leqslant\frac{1}{2}.
\end{equation}
Combining Formulas (\ref{lOmegaEq}), (\ref{lOmicronEq}) and (\ref{lLastEq}), we conclude that
\[
\l(G)=\frac{\log{\omega(G)}}{\log{\omicron(G)}}=\frac{\log{\omega(S)}+k\log{2}}{\log{\omicron(S)}+k\log{2}}=\frac{\frac{\log{\omega(S)}}{k\log{2}}+1}{\frac{\log{\omicron(S)}}{k\log{2}}+1}\leqslant\frac{\log{\omega(S)}}{k\log{2}}+1\leqslant\frac{1}{2}+1<2,
\]
and thus
\[
f(2)\geqslant f(\l(G))\geqslant |G:\Rad(G)|=|S|>f(2),
\]
a contradiction.
\end{proof}

Observing that for the groups $G$ used as counter-examples in the proof of Proposition \ref{lProp}, $\omicron(\Rad(G))$ depends on $|G:\Rad(G)|$, it still seems reasonable to ask the following:

\begin{quesstion}\label{openQues2}
Is there a function $f:\left[1,\infty\right)^2\rightarrow\left[1,\infty\right)$ that is monotonically increasing in both components and such that $|G:\Rad(G)|\leqslant f(\l(G),\omicron(\Rad(G)))$ for all nontrivial finite groups $G$?
\end{quesstion}

Note that an affirmative answer to Question \ref{openQues2} implies Theorem \ref{mainTheo1}(2). Indeed, for each nontrivial finite group $G$, by taking logarithms on both sides of Formula (\ref{qImplicitEq}) and then dividing both sides by $\log{\omicron(G)}$, we find that
\[
\l(G)=\frac{\log{\omega(G)}}{\log{\omicron(G)}}=1+\frac{\log{\q(G)}}{\log{\omicron(G)}}\leqslant 1+\frac{\log{\q(G)}}{\log{2}}.
\]
Hence for every finite group $G$ (the trivial group is just checked separately),
\[
\l(G)\leqslant 1+\frac{\log{\q(G)}}{\log{2}},
\]
and so if $f$ is as in Question \ref{openQues2}, then
\[
f_2(x,y):=\begin{cases}1, & \text{if }\min\{x,y\}<1, \\ f(1+\frac{\log{x}}{\log{2}},y), & \text{if }\min\{x,y\}\geqslant1\end{cases}
\]
is a suitable choice for the function $f_2$ in Theorem \ref{mainTheo1}(2).

In Subsection \ref{subsec2P2}, we show that for all $n<25000$, $\omicron(\Sym(n))$ is at most $\exp(\sqrt{n})$. This leads to the following question, an affirmative answer to which would give a simple universally valid upper bound on $\omicron(\Sym(n))$:

\begin{quesstion}\label{openQues3}
Is it true that $\omicron(\Sym(n))\leqslant\exp(\sqrt{n})$ for all positive integers $n$?
\end{quesstion}

We note that Erd\H{o}s and Tur{\'a}n's result \cite[Theorem I]{ET68a} on the asymptotics of $\omicron(\Sym(n))$, see also Formula (\ref{orderAsymptoticsEq}), implies that the inequality in Question \ref{openQues3} does hold for all large enough $n$. In view of this, a possible approach to answering Question \ref{openQues3} would be to
\begin{enumerate}
\item work through Erd\H{o}s and Tur{\'a}n's proof and check whether each of the asymptotic number-theoretic results which they use has a counterpart with explicit bounds, so that $\omicron(\Sym(n))\leqslant\exp(\sqrt{n})$ could at least be proved for all $n\geqslant N_0$ for an \emph{explicit} positive integer $N_0$, and
\item check with a computer whether $\omicron(\Sym(n))\leqslant\exp(\sqrt{n})$ for $n<N_0$.
\end{enumerate}

\section{Notation}\label{secNot}

In this section, we fix some basic notation that will be used throughout this paper. The symbol $\IN$ will always denote the set of natural numbers including $0$, and $\IN^+$ denotes the set of positive integers. For a finite set $\Omega$, we denote by $\Sym(\Omega)$ the symmetric group on $\Omega$, and for $n\in\IN^+$, $\Sym(n)$ and $\Alt(n)$ denote the symmetric and alternating group on $\{1,\ldots,n\}$ respectively. For a prime power $q$, the finite field with $q$ elements will be denoted by $\IF_q$, and the algebraic closure of a field $K$ is denoted by $\overline{K}$. For a finite group $G$, we denote by $\k(G)$ the number of conjugacy classes of $G$ and by $\Exp(G)$ the exponent of $G$ (i.e., the least common multiple of the element orders in $G$).

If $n$ is a positive integer and $p$ is a prime, we denote by $\nu_p(n)$ the \emph{$p$-adic valuation of $n$}, i.e., the largest nonnegative integer $k$ such that $p^k\mid n$. We will also write $\Div(n)$ for the set of (positive) divisors of $n$, and $\tau(n)$ for the number of (positive) divisors of $n$ (this needs to be distinguished from the variable $\tau$ used to denote so-called \enquote{$S$-types} in Section \ref{sec4}, see Definition \ref{sTypeDef}(1)). For a positive integer $n$ and a power $\pi$ of some prime $\ell$, we write $\pi\mid\mid n$, read \enquote{$\pi$ sharply divides $n$}, when $\pi$ divides $n$, but $\pi\cdot\ell$ does not divide $n$.

For functions $f,g$ mapping from some unbounded set $M\subseteq\left[0,\infty\right)$ to $\left[0,\infty\right)$, we will use the Landau notation $f=\O(g)$, meaning that there is a constant $c>0$ such that $f(x)\leqslant c\cdot g(x)$ for all $x\in M$.

In what follows, we set up some notation regarding the finite simple groups of Lie type. For a prime $p$ and a Lie symbol $X_d\in\{A_d,B_d,C_d,D_d\mid d\geqslant1\}\cup\{E_6,E_7,E_8,F_4,G_2\}$, we denote by $X_d(\overline{\IF_p})$ the associated simple Chevalley group (i.e., simple linear algebraic group of adjoint type) over $\overline{\IF_p}$. If $\sigma$ is a Lang-Steinberg endomorphism (\enquote{Frobenius map} in the terminology of \cite[p.~104]{Har92a}) on $X_d(\overline{\IF_p})$, then $X_d(\overline{\IF_p})_{\sigma}$ denotes the (finite) fixed point subgroup of $\sigma$ in $X_d(\overline{\IF_p})$. For a finite group $G$ and a prime $p$, $O^{p'}(G)$ is the subgroup of $G$ generated by the $p$-elements (elements of order a power of $p$) of $G$. The notation we use for finite simple groups of Lie type follows the approach taken in \cite[Section 3, pp.~104f.]{Har92a}, so that $\leftidx{^t}{X_d(p^{f\cdot t})}$, where the pre-superscripted $t$ is usually omitted if it is $1$, denotes $O^{p'}(X_d(\overline{\IF_p})_{\sigma})$, where $\sigma$ is a Lang-Steinberg endomorphism of $X_d(\overline{\IF_p})$ satisfying the following conditions involving the parameters $t=t(\sigma)$ and $f=f(\sigma)$: Let $B$ be any $\sigma$-invariant Borel subgroup of $X_d(\overline{\IF_p})$, and let $T$ be any $\sigma$-invariant maximal torus of $X_d(\overline{\IF_p})$ contained in $B$. Then $t$ is the unique smallest positive integer (independent of the choice of $B$ and $T$) such that the $t$-th power of the map $\sigma^{\ast}$ on the character group $X(T)$ induced by $\sigma$ is a positive integral multiple of $\id_{X(T)}$, and $f\in\IN^+/2=\{\frac{1}{2},1,\frac{3}{2},\ldots\}$ is such that $\sigma^{\ast}=p^f\sigma_0$ with $\sigma_0^t=\id_{X(T)}$; $f$ also does not depend on the choice of $B$ and $T$. So $p^f=q(\sigma)$ in the notation of \cite{Har92a}, which is also a notation we will be using, and $f\in\IN^+$ unless $\leftidx{^t}{X_d(p^{f\cdot t})}$ is one of the Suzuki or Ree groups, in which case $f$ is half of an odd positive integer. For us, a \emph{finite simple group of Lie type} is by definition any group of the form $\leftidx{^t}X_d(p^{ft})$, even if it is not a simple group (such as $A_1(2)$). We say that $\leftidx{^t}X_d(p^{ft})$ is of \emph{untwisted Lie rank} $d$; with a few small exceptions (such as $A_1(7)\cong A_2(2)$), each finite simple group of Lie type has precisely one untwisted Lie rank. In the context of finite simple groups of Lie type, the terms \enquote{graph automorphism}, \enquote{field automorphism} and \enquote{graph-field automorphism} (the last meaning \enquote{product of a field and a graph automorphism}) and the associated notations $\Phi_S$ and $\Gamma_S$ are used as explained in \cite[p.~105]{Har92a}. Moreover, as in \cite{GLS98a}, $\Inndiag(S)$ denotes the inner diagonal automorphism group of $S$ (so $\Inndiag(\leftidx{^t}X_d(p^{f\cdot t}))\cong X_d(\overline{\IF_p})_{\sigma}$ in the above notation), and $\Outdiag(S)$, the \emph{outer diagonal automorphism group of $S$}, is the image of $\Inndiag(S)$ under the canonical projection $\Aut(S)\rightarrow\Out(S)$. As in \cite[Theorem 2.5.12(b), p.~58]{GLS98a}, we also view $\Phi_S$ and $\Gamma_S$ as subsets of $\Out(S)$, depending on the context. When $\alpha\in\Aut(S)$ (resp.~$\alpha\in\Out(S)$), then as stated in \cite[p.~105]{Har92a}, $\alpha$ admits a unique factorisation into an element of $\Inndiag(S)$ (resp.~$\Outdiag(S)$), an element of $\Phi_S$ and an element of $\Gamma_S$, and we call these the \emph{inner diagonal component} (resp.~\emph{outer diagonal component}), \emph{field component} and \emph{graph component of $\alpha$}, respectively. The product of the field and graph component of $\alpha$ is also called the \emph{graph-field component of $\alpha$}.

\section{Proof of Theorem \ref{mainTheo2}}\label{sec2}

\subsection{Sporadic groups}\label{subsec2P1}

In Table \ref{sporadictable}, we give an overview of the values of $\omega(S)$ and $\omicron(S)$ as well as of $\epsilon_{\omega}(S)$ and $\epsilon_{\q}(S)$ for the sporadic nonabelian finite simple groups $S$, thus proving Theorem \ref{mainTheo2} for them. This was mostly just read off from the ATLAS of Finite Group Representations \cite{ATLAS}; only for $S=\T$, the Tits group, $\omega(S)$ could not be determined directly from the ATLAS, but was computed by comparing centraliser orders, in $\T$ and $\Aut(\T)$ respectively, of conjugacy class representatives of $\T$.

\begin{center}
\begin{longtable}[H]{|c|c|c|c|c|}
\caption{Overview of the sporadic groups and the Tits group}\label{sporadictable}\\
\hline
$S$ & $\omicron(S)$ & $\omega(S)$ & $\epsilon_{\omega}(S)\approx$ & $\epsilon_{\q}(S)\approx$ \\ \hline
$\M_{11}$ & $8$ & $10$ & $0.380024$ & $0.168333$ \\ \hline
$\M_{12}$ & $9$ & $12$ & $0.373194$ & $0.156934$ \\ \hline
$\M_{22}$ & $9$ & $11$ & $0.340952$ & $0.142251$ \\ \hline
$\M_{23}$ & $12$ & $17$ & $0.374450$ & $0.142269$ \\ \hline
$\M_{24}$ & $15$ & $26$ & $0.398909$ & $0.149020$ \\ \hline
$\HS$ & $13$ & $21$ & $0.388150$ & $0.148125$ \\ \hline
$\J_2$ & $11$ & $16$ & $0.393933$ & $0.155060$ \\ \hline
$\Co_1$ & $32$ & $101$ & $0.406933$ & $0.158971$ \\ \hline
$\Co_2$ & $21$ & $60$ & $0.409051$ & $0.165308$ \\ \hline
$\Co_3$ & $21$ & $42$ & $0.400357$ & $0.144505$ \\ \hline
$\McL$ & $15$ & $19$ & $0.356873$ & $0.122978$ \\ \hline
$\Suz$ & $19$ & $37$ & $0.390324$ & $0.142663$ \\ \hline
$\He$ & $15$ & $26$ & $0.381463$ & $0.142502$ \\ \hline
$\HN$ & $22$ & $44$ & $0.379828$ & $0.135821$ \\ \hline
$\Th$ & $25$ & $48$ & $0.369346$ & $0.127106$ \\ \hline
$\Fi_{22}$ & $22$ & $59$ & $0.406280$ & $0.159655$ \\ \hline
$\Fi_{23}$ & $32$ & $98$ & $0.405230$ & $0.156730$ \\ \hline
$\Fi_{24}'$ & $35$ & $97$ & $0.378603$ & $0.139755$ \\ \hline
$\B$ & $49$ & $184$ & $0.379739$ & $0.148826$ \\ \hline
$\M$ & $73$ & $194$ & $0.344642$ & $0.114045$ \\ \hline
$\J_1$ & $10$ & $15$ & $0.399899$ & $0.163849$ \\ \hline
$\ON$ & $18$ & $25$ & $0.355275$ & $0.118954$ \\ \hline
$\J_3$ & $13$ & $17$ & $0.362182$ & $0.131708$ \\ \hline
$\Ru$ & $18$ & $36$ & $0.393116$ & $0.146573$ \\ \hline
$\J_4$ & $31$ & $62$ & $0.370447$ & $0.124360$ \\ \hline
$\Ly$ & $28$ & $53$ & $0.377735$ & $0.126657$ \\ \hline
$\T$ & $11$ & $17$ & $0.369863$ & $0.147333$ \\ \hline
\end{longtable}
\end{center}

\subsection{Alternating groups}\label{subsec2P2}

We will prove the following five statements:
\begin{enumerate}[label=(\Roman*)]
\item $\frac{\log{\omicron(\Alt(n))}}{\log{\omega(\Alt(n))}}\to0$ as $n\to\infty$.
\item For all $n\geqslant5$, $\epsilon_{\omega}(\Alt(n))\geqslant\frac{\log\log{4}}{\log\log{60}}\approx 0.231720$, with equality if and only if $n=5$.
\item $\epsilon_{\omega}(\Alt(n))\to\frac{1}{2}$ as $n\to\infty$.
\item For all $n\geqslant5$, $\epsilon_{\q}(\Alt(n))\geqslant\frac{\log\log{(77/16)}}{\log\log{19958400}}\approx 0.160121$, with equality if and only if $n=11$.
\item $\epsilon_{\q}(\Alt(n))\to\frac{1}{2}$ as $n\to\infty$.
\end{enumerate}

\begin{proof}[Proof of statement (I)]
This can be obtained by combining the following facts (recall that $\k(G)$ denotes the number of conjugacy classes of the finite group $G$):
\begin{enumerate}
\item For $n\geqslant7$, one has $\omega(\Alt(n))\geqslant\frac{1}{2}\k(\Alt(n))\geqslant\frac{1}{4}\k(\Sym(n))=\frac{1}{4}p(n)$, where the last inequality uses \cite[Formula (1.6), p.~90]{DET69a} and $p(n)$ denotes the number of (unordered) integer partitions of $n$.
\item The partition number $p(n)$ has the following asymptotics (see \cite{HR18a}):
\begin{equation}\label{partitionAsymptoticsEq}
p(n)\sim\frac{1}{4\sqrt{3}n}\exp(\frac{2\pi}{\sqrt{6}}\sqrt{n})
\end{equation}
\item Clearly, $\omicron(\Alt(n))\leqslant\omicron(\Sym(n))$, and by \cite[Theorem I]{ET68a}, the number of element orders in $\Sym(n)$ has the following asymptotics:
\begin{equation}\label{orderAsymptoticsEq}
\omicron(\Sym(n))=\exp(\frac{2\pi}{\sqrt{6}}\sqrt{\frac{n}{\log{n}}}+\O(\frac{\sqrt{n}\log\log{n}}{\log{n}})).
\end{equation}
\end{enumerate}
\end{proof}

\begin{proof}[Proof of statement (II)]
This can be checked directly for $n=5,6,7,8$. For $n\geqslant9$, we use that
\begin{equation}\label{omega56Eq}
\omega(\Alt(n))\geqslant\frac{1}{4}p(n)>\frac{1}{56}\exp(2\sqrt{n}),
\end{equation}
where the first inequality was already discussed in the proof of statement (I), and the second is a result of Mar{\'o}ti, see \cite[Corollary 3.1]{Mar03a}. Hence it suffices to check the inequality
\[
\frac{\log\log{(\exp(2\sqrt{n})/56)}}{\log\log{(n!/2)}}>\frac{\log\log{4}}{\log\log{60}}
\]
for all $n\geqslant9$. For $n=9,\ldots,54$, one just verifies this with a computer, and for $n\geqslant55$, where one has
\begin{itemize}
\item $\e\sqrt{n}\leqslant\frac{n}{\e}$,
\item $\sqrt{n}\geqslant\log{56}$, and
\item $\log{(n+1)}+\log\log{n}\leqslant 2\log{n}$,
\end{itemize}
one proceeds as follows:
\begin{itemize}
\item Firstly, $\log\log{(\exp(2\sqrt{n})/56)}=\log{(2\sqrt{n}-\log{56})}\geqslant\log{\sqrt{n}}=\frac{1}{2}\log{n}$.
\item Secondly, using the simple upper bound $n!\leqslant\e\sqrt{n}(\frac{n}{\e})^n$, which follows from Robbins' sharper bound \cite{Rob55a}, we find that
\begin{align*}
&\log\log{(n!/2)}\leqslant\log\log{n!}\leqslant\log\log{(\e\sqrt{n}(\frac{n}{\e})^n)}\leqslant\log\log{(\frac{n}{\e})^{n+1}}\leqslant \\
&\log{((n+1)(\log{n}-1))}\leqslant\log{((n+1)\log{n})}=\log{(n+1)}+\log\log{n}\leqslant2\log{n}.
\end{align*}
\end{itemize}
Combining these inequalities, one gets that
\[
\frac{\log\log{(\exp(2\sqrt{n})/56)}}{\log\log{(n!/2)}}\geqslant\frac{\frac{1}{2}\log{n}}{2\log{n}}=\frac{1}{4}>\frac{\log\log{4}}{\log\log{60}}.\qedhere
\]
\end{proof}

\begin{proof}[Proof of statements (III) and (V)]
Observe that for each constant $c>0$, using Stirling's approximation, we have that as $n\to\infty$,
\[
\frac{\log\log{\exp(c\sqrt{n})}}{\log\log{(\frac{1}{2}n!)}}\to\frac{1}{2}.
\]
It is thus sufficient to show that there are positive constants $c<c'$ such that for all large enough $n$,
\begin{equation}\label{constantsEq}
\exp(c\sqrt{n})\leqslant\q(\Alt(n))\leqslant\omega(\Alt(n))\leqslant\exp(c'\sqrt{n}).
\end{equation}
Also, by Formula (\ref{partitionAsymptoticsEq}), for large enough $n$,
\[
\omega(\Alt(n))\leqslant\omega(\Sym(n))\leqslant\k(\Sym(n))=p(n)\leqslant\exp(\frac{2\pi}{\sqrt{6}}\sqrt{n}),
\]
and so $c':=\frac{2\pi}{\sqrt{6}}$ is a possible choice in Formula (\ref{constantsEq}). Moreover, the bounds discussed in the proof of statement (I) yield that any choice of $c<\frac{2\pi}{\sqrt{6}}$ works in Formula (\ref{constantsEq}).
\end{proof}

\begin{proof}[Proof of statement (IV)]
For $n=5,\ldots,37$, one verifies this directly with the aid of GAP \cite{GAP4}. For $n=38,\ldots,24999$, we give an argument that relies partially on computer calculations. Indeed, one can compute $\omicron(\Sym(n))$ exactly for each such $n$ using a certain recursion which we will now describe. Observe that
\[
\omicron(\Sym(n))=\sum_{k=0}^n{r(k)}
\]
where $r(k)$ denotes the number of integer partitions of $k$ into pairwise coprime prime powers each greater than $1$ (note that in view of the empty partition, $r(0)=1$). Moreover, recalling the notation $\IP$ for the set of primes, we have
\[
r(k)=\sum_{p\leqslant k,p\in\IP}{r_p(k)}
\]
where $r_p(k)$ denotes the number of integer partitions of $k$ into pairwise coprime prime powers greater than $1$ such that the smallest prime base which occurs is $p$. The numbers $r_p(k)$ satisfy the recursion
\[
r_p(k)=\sum_{e=1}^{\lfloor\log_p(k)\rfloor}{\begin{cases}\sum_{p<\ell\leqslant k,\ell\in\IP}{r_{\ell}(k-p^e)}, & \text{if }k>p^e, \\ 1, & \text{if }k=p^e.\end{cases}}
\]
This allows one to check that $\omicron(\Sym(n))\leqslant\exp(\sqrt{n})$ for all $n<25000$. Using this and Formula (\ref{omega56Eq}), it follows that for all $n\in\{38,\ldots,24999\}$,
\[
\q(\Alt(n))=\frac{\omega(\Alt(n))}{\omicron(\Alt(n))}\geqslant\frac{1}{4}\frac{p(n)}{\omicron(\Sym(n))}\geqslant\frac{1}{56}\frac{\exp(2\sqrt{n})}{\exp(\sqrt{n})}=\frac{1}{56}\exp(\sqrt{n}),
\]
and thus
\begin{align*}
&\epsilon_{\q}(\Alt(n))\geqslant\frac{\log\log{(\exp(\sqrt{n})/56)}}{\log\log{(n!/2)}}\geqslant\frac{\log{(\sqrt{n}-\log{56})}}{\log\log{(\frac{\e}{2}\sqrt{n}(\frac{n}{\e})^n)}}= \\
&\frac{\log{(\sqrt{n}-\log{56})}}{\log{(1-\log{2}+\frac{1}{2}\log{n}+n(\log{n}-1))}},
\end{align*}
and one can verify with a computer that for $n=38,\ldots,24999$, the last expression is always at least $0.164>\epsilon_{\q}(\Alt(11))\approx 0.160121$.

It remains to deal with the case $n\geqslant25000$. For this, we will use a different, worse upper bound on $\omicron(\Sym(n))$ than $\exp(\sqrt{n})$, obtained as follows: Note that $\omicron(\Sym(n))\leqslant\sum_{k=0}^n{s(k)}$, where $s(k)$ denotes the number of integer partitions of $k$ into pairwise distinct parts. By \cite[Subsection 5.2]{Wil08a}, we have that $s(k)=\sum_r{p(\frac{k-r(r+1)/2}{2})}$, where $r$ ranges over the nonnegative integers such that $k-\frac{r(r+1)}{2}$ is an even nonnegative integer. There are at most $\sqrt{2k}$ such $r$, and each corresponding summand is of the form $p(j)$ where $0\leqslant j\leqslant \frac{k}{2}$. Using this and Erd\H{o}s's explicit upper bound $p(j)\leqslant\e^{\frac{2\pi}{\sqrt{6}}\sqrt{j}}$, see \cite[pp.~437f.]{Erd42a}, it follows that $s(k)\leqslant\sqrt{2k}\,\e^{\frac{\pi}{\sqrt{3}}\sqrt{k}}$, and thus $\omicron(\Sym(n))\leqslant(n+1)\sqrt{2n}\,\e^{\frac{\pi}{\sqrt{3}}\sqrt{n}}$.

Hence (and in view of Formula (\ref{omega56Eq})) we get that
\begin{align*}
&\q(\Alt(n))\geqslant\frac{1}{4}\frac{p(n)}{\omicron(\Sym(n))}\geqslant\frac{1}{56}\frac{\exp(2\sqrt{n})}{(n+1)\sqrt{2n}\cdot\exp((\pi/\sqrt{3})\sqrt{n})}= \\
&(56\cdot(n+1)\sqrt{2n})^{-1}\cdot\exp((2-\frac{\pi}{\sqrt{3}})\sqrt{n}).
\end{align*}
Note that $2-\frac{\pi}{\sqrt{3}}>0.1862$, and that for $n\geqslant25000$, one has
\[
\exp(0.1362\sqrt{n})\geqslant56(n+1)\sqrt{2n},
\]
so that for all such $n$,
\[
\q(\Alt(n))\geqslant\exp(\frac{1}{20}\sqrt{n}).
\]
Therefore, still for $n\geqslant25000$, and using again the upper bound $n!\leqslant\e\sqrt{n}(\frac{n}{\e})^n$,
\begin{align*}
&\epsilon_{\q}(\Alt(n))\geqslant\frac{\log\log{\exp(\frac{1}{20}\sqrt{n})}}{\log\log{(n!/2)}}\geqslant\frac{\frac{1}{2}\log{n}-\log{20}}{\log{(1-\log{2}+\frac{1}{2}\log{n}+n(\log{n}-1))}}\geqslant \\
&\frac{\frac{1}{2}\log{n}-\log{20}}{\log{(\frac{3}{2}n\log{n})}}=\frac{\frac{1}{2}\log{n}-\log{20}}{\log{n}+\log{\frac{3}{2}}+\log\log{n}}=\frac{\frac{1}{2}-\frac{\log{20}}{\log{n}}}{1+\frac{\log{(3/2)}}{\log{n}}+\frac{\log\log{n}}{\log{n}}}\geqslant \\
&\frac{\frac{1}{2}-\frac{\log{20}}{\log{25000}}}{1+\frac{\log{(3/2)}}{\log{25000}}+\frac{\log\log{25000}}{\log{25000}}}>\epsilon_{\q}(\Alt(11)).\qedhere
\end{align*}
\end{proof}

\subsection{Groups of Lie type}\label{subsec2P3}

We first verify the asymptotic statements (1), (3) and (4) of Theorem \ref{mainTheo2} for the finite simple groups of Lie type. Before giving the actual proofs, we make some preparatory observations. As in the previous subsection, for a finite group $G$, denote by $\k(G)$ the number of conjugacy classes of $G$. As explained in Section \ref{secNot}, $S=\leftidx{^t}X_d(p^{ft})$, and we set $q:=p^f$. Then $\k(\Inndiag(S))\geqslant q^d$ (see, for instance, \cite[Theorem 1.1(1)]{FG12a}), and so, using \cite[Corollary 1, p.~506]{Ern61a}, we have
\begin{equation}\label{conjClEq}
\k(S)\geqslant\frac{q^d}{|\Inndiag(S):S|}\geqslant\frac{q^d}{\min\{d+1,q+1\}}
\end{equation}
Moreover, $|S|\leqslant q^{4d^2}$, as a simple case-by-case inspection shows. Therefore, using Kohl's bound \cite{Koh03a},
\[
|\Out(S)|\leqslant\log_2{|S|}\leqslant\log_2{(q^{4d^2})}=4d^2\log_2{q},
\]
it follows that
\begin{equation}\label{justBeforeN1Eq}
\omega(S)\geqslant\frac{\k(S)}{|\Out(S)|}\geqslant\frac{q^d}{4d^2\log_2{q}\min\{d+1,q+1\}}.
\end{equation}
In particular, for any $\epsilon>0$, there is an $N_1=N_1(\epsilon)\in\IN^+$ such that if $\max\{d,q\}\geqslant N_1$, then $\omega(S)\geqslant q^{(1-\epsilon)d}$.

In what follows, we explain how to suitably bound $\omicron(S)$ from above. Actually, our argument even provides an upper bound on $\omicron(\Inndiag(S))$. We will use the Landau notation $f=\Theta(g)$, which is defined for all functions $f$ and $g$ mapping from a common, unbounded set of positive real numbers to $\left[0,\infty\right)$, and it just means that $f=\O(g)$ and $g=\O(f)$. We will also write $\tau(n)$ for the number of divisors of a positive integer $n$.
\begin{enumerate}
\item We first consider unipotent element orders in $\Inndiag(S)$. By \cite[Corollary 0.5]{Tes95a}, the $p$-adic valuation of $\Exp(\Inndiag(S))$ is just $\lceil\log_p(H(X_d)+1)\rceil$ where $H(X_d)$ is the height of the highest root of the root system $X_d$. Denote by $h(X_d)$ the \emph{Coxeter number of $X_d$}, i.e., the order of any \emph{Coxeter element} of the Weyl group $W(X_d)$ (by definition, Coxeter elements are just those elements of $W(X_d)$ that can be obtained by multiplying together, in any order, the elements of any fixed set of simple roots). Then by \cite[Theorem, p.~84]{Hum90a}, $H(X_d)+1=h(X_d)$, so we can also write the $p$-adic valuation of $\Exp(\Inndiag(S))$ as $\lceil\log_p(h(X_d))\rceil$ and conclude that there are exactly $1+\lceil\log_p(h(X_d))\rceil$ elements in $\Ord(S)$ that are powers of $p$. The Coxeter numbers of the various indecomposable root systems can be found in tabulated form in \cite[Table 2, p.~80]{Hum90a}; for our asymptotic observations, the key property is that $h(X_d)=\Theta(d)$.
\item Secondly, we will consider the number of semisimple element orders in $\Inndiag(S)$. We can bound this number from above by the product of
\begin{itemize}
\item $\k_{\tor}(\Inndiag(S))$, the number of conjugacy classes of maximal tori of $\Inndiag(S)$, with
\item the maximum number of element orders in a maximal torus of $\Inndiag(S)$.
\end{itemize}
Concerning these two quantities:
\begin{itemize}
\item $\k_{\tor}(\Inndiag(S))$ is equal to the number of $\phi$-conjugacy classes in the corresponding Weyl group $W=W(X_d)$ (i.e., orbits of the action of $W$ on itself via $w^v=v^{-1}wv^{\phi}$), for a suitable $\phi=\phi(\leftidx{^t}X_d)\in\Aut(W)$, see \cite[Theorem 1.2(b), p.~1.8]{Gag73a}. Since there are only finitely many Lie symbols (and thus finitely many Weyl groups) for Lie type groups of a given rank $d$, one has $\k_{\tor}(\Inndiag(S))\leqslant g(d)$ for some unary function $g$, which is of subexponential growth (see \cite[Section 3]{Car81a} and use the asymptotics of the partition number $p(n)$ from Formula (\ref{partitionAsymptoticsEq})).
\item Recall that $\tau(n)$ denotes the number of (positive) divisors of the positive integer $n$. Since the order of a maximal torus in $\Inndiag(S)$ is at most $(q+1)^d$ (see \cite[Lemma 3.3]{Har92a}, for instance), we find (by Lagrange's theorem) that the maximum number of element orders in a maximal torus of $\Inndiag(S)$ is at most $h(d,q):=\max\{\tau(1),\tau(2),\ldots,\tau((q+1)^d)\}\leqslant 2(q+1)^{d/2}$.
\end{itemize}
\item Combining the above bounds, we get that
\begin{equation}\label{omicronLieEq}
\omicron(S)\leqslant\omicron(\Inndiag(S))\leqslant(1+\lceil\log_2{\Theta(d)}\rceil)\cdot g(d)\cdot h(d,q),
\end{equation}
and for each $\epsilon>0$, this is at most $q^{(\frac{1}{2}+\epsilon)d}$ if $\max\{d,q\}\geqslant N_2=N_2(\epsilon)$.
\end{enumerate}

\begin{proof}[Proof of Theorem \ref{mainTheo2}(1)]
Note that by the results on alternating groups from the previous subsection, it suffices to show that for finite simple groups of Lie type $S=\leftidx{^t}X_d(q^t)$, we have $\liminf_{|S|\to\infty}{\epsilon_{\omega}(S)}\geqslant\frac{1}{2}$. That is, we need to show that for each $\delta>0$, there is an $N=N(\delta)$ such that if $\max\{d,q\}\geqslant N$, then $\epsilon_{\omega}(S)\geqslant\frac{1}{2}-\delta$. Assume w.l.o.g.~that $\max\{d,q\}\geqslant N_1(\frac{1}{2})$, with $N_1(\epsilon)$ as defined above just after Formula (\ref{justBeforeN1Eq}). Hence $\omega(S)\geq q^{(1-1/2)d}=q^{d/2}$, and thus
\[
\epsilon_{\omega}(S)=\frac{\log\log{\omega(S)}}{\log\log{|S|}}\geqslant\frac{\log\log{q^{d/2}}}{\log\log{q^{4d^2}}}=\frac{\log{d}-\log{2}+\log\log{q}}{\log{4}+2\log{d}+\log\log{q}},
\]
which is bounded from below by $\frac{1}{2}-\delta$ if and only if
\[
\log{d}-\log{2}+\log\log{q}\geqslant(\frac{1}{2}-\delta)\log{4}+(1-2\delta)\log{d}+(\frac{1}{2}-\delta)\log\log{q},
\]
or equivalently,
\[
(\frac{1}{2}+\delta)\log\log{q}+2\delta\log{d}\geqslant2(1-\delta)\log{2},
\]
and this is indeed true if $\max\{d,q\}$ is large enough (relative to $\delta$).
\end{proof}

\begin{proof}[Proof of Theorem \ref{mainTheo2}(4)]
Note that by the definitions of $N_1(\epsilon)$ and $N_2(\epsilon)$, if
\[
\max\{d,q\}\geqslant\max\{N_1(\frac{1}{8}),N_2(\frac{1}{8})\},
\]
then $\omega(S)\geqslant q^{7d/8}$ and $\omicron(S)\leqslant q^{5d/8}$, whence
\[
\q(S)=\frac{\omega(S)}{\omicron(S)}\geqslant q^{d/4}.
\]
The second assertion of the statement is immediate from this, and the first also follows, using this lower bound on $\q(S)$ with the same argument used for proving statement (1).
\end{proof}

\begin{proof}[Proof of Theorem \ref{mainTheo2}(3)]
Note that
\[
\omega(S)\leqslant\omega(\Inndiag(S))\leqslant\k(\Inndiag(S))=q^{\O(1)d},
\]
where the implied upper bound on $\k(\Inndiag(S))$ is again by \cite[Theorem 1.1(1)]{FG12a}. But as explained after Formula (\ref{justBeforeN1Eq}), we also have $d=\O(\log_q{\omega(S)})$; combining these two facts, we get
\[
\omega(S)=\omega(\leftidx{^t}X_d(p^{ft}))=p^{\Theta(1)df}\text{ as }\max\{p,d,f\}\to\infty.
\]
On the other hand, $\omicron(S)\leqslant(1+\lceil\log_2{\Theta(d)}\rceil)\cdot g(d)\cdot h(d)$, and so, by the asymptotics of the number of divisors function $\tau$ (see for example \cite[Th{\'e}or{\`e}me 1]{NR83a}),
\begin{equation}\label{lieOmicronEq}
\omicron(S)=\omicron(\leftidx{^t}X_d(p^{ft}))\leqslant p^{\omicron(1)df}\text{ as }\max\{p,d,f\}\to\infty.
\end{equation}
\end{proof}

This concludes the verification of the three asymptotic statements in Theorem \ref{mainTheo2}. It remains to prove the universal lower bounds on $\epsilon_{\omega}(S)$ and $\epsilon_{\q}(S)$ from statements (2) and (5) respectively for finite simple groups of Lie type.

\begin{proof}[Proof of Theorem \ref{mainTheo2}(2)]
The proof idea is simply to carefully study lower bounds on $\omega(S)$ similar to the one in Formula (\ref{justBeforeN1Eq}) in order to obtain a theoretical argument which proves that $\epsilon_{\omega}(S)>\epsilon_{\omega}(\Alt(5))$ for all nonabelian finite simple groups $S$ nonisomorphic to $\Alt(5)$ except possibly those from an explicit finite list. These finitely many remaining exceptions are then dealt with using GAP \cite{GAP4}. By the results of Subsections \ref{subsec2P1} and \ref{subsec2P2}, we may assume that $S=\leftidx{^t}X_d(p^{ft})$ is of Lie type. Throughout, we set $q:=p^f$. Moreover, we will use the following conventions: We denote by $\log$ the function $\IR\rightarrow\IR\cup\{-\infty\}$ mapping
\[
x\mapsto\begin{cases}\log{x}, & \text{if }x>0, \\ -\infty, & \text{else}.\end{cases}
\]
Furthermore, by convention,
\begin{itemize}
\item $-\infty<x$ for all real numbers $x$,
\item $-\infty+x=-\infty$ for all real numbers $x$, and
\item $\frac{-\infty}{c}=-\infty$ for all $c>0$.
\end{itemize}
These conventions imply the following, which will be used various times without further mentioning: If $x_1,x_2,y_1,y_2$ are positive real numbers with $x_1\geqslant x_2$ and $y_1\leqslant y_2$, then
\[
\frac{\log\log{x_1}}{y_1}\geqslant\frac{\log\log{x_2}}{y_2}.
\]

Our arguments for bounding $\epsilon_{\omega}(S)$, with $S=\leftidx{^t}X_d(p^{ft})$, are split into the two cases \enquote{$d\leqslant2$} and \enquote{$d\geqslant3$}.
\begin{enumerate}
\item Case: $d\leqslant2$. There are seven families of Lie type groups $S$ of untwisted Lie rank $d$ at most $2$, as listed in Tables \ref{dAtMost2Table1} and \ref{dAtMost2Table2} below. We take the following unified approach to show that $\epsilon_{\omega}(S)>\epsilon_{\omega}(\Alt(5))$ for each of them apart from $A_1(4)\cong A_1(5)\cong \Alt(5)$: For each of the seven families, there is a reference in the literature for a precise formula for the number of conjugacy classes $\k(S)$, as displayed in Table \ref{dAtMost2Table1}. We note, however, that the given reference \cite{SM03a} for $\k(B_2(q))=\k(C_2(q))=\k(\PSp_4(q))$ when $q$ is odd appears to contain an error, because, according to \cite[Table 2]{SM03a}, $\k(\PSp_4(q))=\frac{1}{2}q^2+\frac{13}{4}q+\frac{23}{4}$ when $q\equiv3\Mod{4}$, which implies that $\k(\PSp_4(7))=53$, although actually (as one can check with GAP \cite{GAP4}) $\k(\PSp_4(7))=52$. The formula for $\k(\PSp_4(q))$ given in Table \ref{dAtMost2Table1} is based on the fact that in each of the three cases \enquote{$q$ is even}, \enquote{$q\equiv1\Mod{4}$} and \enquote{$q\equiv3\Mod{4}$}, the conjugacy class number $\k(\PSp_4(q))$ is a quadratic polynomial in $q$ (the authors would like to thank Frank L{\"u}beck for bringing this to their attention), and so in each of the three cases, the precise formula for $\k(\PSp_4(q))$ can be obtained by computing the conjugacy class number for three different values of $q$ from the respective congruence class, which can be done with GAP \cite{GAP4}. Column 2 of Table \ref{dAtMost2Table2} contains the well-known formula for $|\Out(S)|$, and column 3 contains an upper bound $\overline{|S|}$ on $|S|$, which is easily obtained from the well-known formula for the exact value of $|S|$. Recall from Formula (\ref{justBeforeN1Eq}) that
\begin{equation}\label{omegaOutEq}
\omega(S)\geqslant\k(S)/|\Out(S)|.
\end{equation}
Column 4 of Table \ref{dAtMost2Table2} lists a lower bound $\underline{\omega}(S)$ on $\omega(S)$ that can easily be derived from Formula (\ref{omegaOutEq}) and the information in Table \ref{dAtMost2Table1} (note that $f=\log_p{q}\leqslant\log_2{q}$). In order for $\epsilon_{\omega}(S)>\epsilon_{\omega}(\Alt(5))$ to hold, it is sufficient to have
\begin{equation}\label{fails1Eq}
\frac{\log\log{\underline{\omega}(S)}}{\log\log{\overline{|S|}}}>\epsilon_{\omega}(\Alt(5)),
\end{equation}
and with elementary calculus, one can check that Formula (\ref{fails1Eq}) holds in all but finitely many cases, which are listed in column 5 of Table \ref{dAtMost2Table2} as \enquote{fails 1}. Moreover, it is routine to check that among the finitely many groups $S$ corresponding to column 5 of Table \ref{dAtMost2Table2}, all but those listed in column 6 as \enquote{fails 2} satisfy the inequality
\[
\frac{\log\log{\lceil\frac{\k(S)}{|\Out(S)|}\rceil}}{\log\log{|S|}}>\epsilon_{\omega}(\Alt(5)),
\]
which is also sufficient for $\epsilon_{\omega}(S)>\epsilon_{\omega}(\Alt(5))$. Finally, for each of the remaining groups $S$ listed in column 6, one can compute the exact value of $\omega(S)$ with a simple GAP algorithm \cite{GAP4} written by the authors, and use this to check that $\epsilon_{\omega}(S)>\epsilon_{\omega}(\Alt(5))$ in those cases as well (for those exceptions $S$ listed in column 6 that are of type $A_1$ or ${^2}B_2$, one could alternatively use Kohl's formulas from \cite[Theorems 2.5 and 3.4]{Koh02a}). Our algorithm proceeds by first computing the conjugacy classes of $S$ using GAP's built-in command {\tt ConjugacyClasses}, and then computes the orbits of the action of $\Out(S)$ on the set of conjugacy classes of $S$ using the built-in commands
\begin{itemize}
\item {\tt AutomorphismGroup},
\item {\tt InnerAutomorphismsAutomorphismGroup},
\item {\tt RightTransversal}, and
\item {\tt IsConjugate}.
\end{itemize}

\begin{center}
\begin{longtable}[H]{|c|c|c|}
\caption{Formulas for $\k(S)$ in case $d\leqslant2$.}
\label{dAtMost2Table1}\\
\hline
$S$ & formula for $\k(S)$ & reference for $\k(S)$ \\ \hline
$A_1(q)$ & $\k(S)=\begin{cases}q+1, & \text{if }2\mid q, \\ \frac{q+5}{2}, & \text{if }q\nmid q\end{cases}$ & \cite[Formula (5.2), p.~43]{Mac81a} \\ \hline
$A_2(q)$ & $\k(S)=\begin{cases}q^2+q, & \text{if }q\equiv0,2\Mod{3}, \\ \frac{q^2+q+10}{3}, & \text{if }q\equiv1\Mod{3}\end{cases}$ & \cite[Formula (5.2), p.~43]{Mac81a} \\ \hline
$\leftidx{^2}A_2(q^2)$ & $\k(S)=\begin{cases}q^2+q+2, & \text{if }q\equiv0,1\Mod{3}, \\ \frac{q^2+q+12}{3}, & \text{if }q\equiv2\Mod{3}\end{cases}$ & \cite[Formula (6.13), p.~47]{Mac81a} \\ \hline
$B_2(q)\cong C_2(q)$ & $\k(S)=\begin{cases}q^2+2q+3, & \text{if }2\mid q, \\ \frac{q^2+6q+13}{2}, & \text{if }2\nmid q\end{cases}$ & \thead{\cite[Theorem 3.7.3]{Wal63a} for $q$ even; \\ \cite[Tables 1 and 2]{SM03a} for $q$ odd, but see \\ the paragraph before Formula (\ref{omegaOutEq}) above} \\ \hline
$G_2(q)$ & $\k(S)=\begin{cases}q^2+2q+9, & \text{if }q\equiv1,5\Mod{6}, \\ q^2+2q+8, & \text{if }q\equiv2,3,4\Mod{6}\end{cases}$ & \cite{Lue} \\ \hline
$\leftidx{^2}B_2(2^{2k+1})$ & $\k(S)=2^{2k+1}+3$ & \cite{Lue} \\ \hline
$\leftidx{^2}G_2(3^{2k+1})$ & $\k(S)=3^{2k+1}+8$ & \cite{Lue} \\ \hline
\end{longtable}
\end{center}

\begin{center}
\begin{longtable}[H]{|c|c|c|c|c|c|}
\caption{Remaining information for the case $d\leqslant2$.}
\label{dAtMost2Table2}\\
\hline
$S$ & $|\Out(S)|$ & $\overline{|S|}$ & $\underline{\omega}(S)$ & fails 1 & fails 2 \\ \hline
$A_1(q)$, $q\geqslant7$ & $\gcd(2,q-1)\cdot f$ & $q^3$ & $\frac{q}{4\log_2{q}}$ & $q\leqslant199$ & $q=7,8,9,11,13,16,25,27$ \\ \hline
$A_2(q)$, $q\geqslant3$ & $\gcd(3,q-1)\cdot 2f$ & $q^8$ & $\frac{q^2}{18\log_2{q}}$ & $q\leqslant25$ & $q=4,7,16$ \\ \hline
$\leftidx{^2}A_2(q^2)$, $q\geqslant3$ & $\gcd(3,q+1)\cdot 2f$ & $q^8$ & $\frac{q^2}{18\log_2{q}}$ & $q\leqslant25$ & $q=5,8$ \\ \hline
$B_2(q)\cong C_2(q)$, $q\geqslant3$ & $2f$ & $q^{10}$ & $\frac{q^2}{4\log_2{q}}$ & $q\leqslant9$ & none \\ \hline
$G_2(q)$, $q\geqslant3$ & $\gcd(2,q-1)f$ & $q^{14}$ & $\frac{q^2}{2\log_2{q}}$ & $q\leqslant5$ & none \\ \hline
$\leftidx{^2}B_2(2^{2k+1})$, $k\geqslant1$ & $2k+1$ & $2^{10k+6}$ & $2^{k+1}$ & $k\leqslant2$ & $k=1$ \\ \hline
$\leftidx{^2}G_2(3^{2k+1})$, $k\geqslant1$ & $2k+1$ & $3^{14k+8}$ & $3^{k+1}$ & none & none \\ \hline
\end{longtable}
\end{center}

\item Case: $d\geqslant3$. Let $S=\leftidx{^t}X_d(p^{ft})=\leftidx{^t}X_d(q^t)$. We will use the bound $|S|\leqslant q^{4d^2}$ from the beginning of this subsection. Note also that
\[
|\Out(S)|\leqslant\min\{d+1,q+1\}\cdot 6f,
\]
as a simple case-by-case analysis shows that the number of outer diagonal components of automorphisms of $S$ is always at most $\min\{d+1,q+1\}$, while the number of graph-field components is at most $6f$. Using Formula (\ref{conjClEq}), this implies that
\begin{align}\label{omegaLieEq}
\notag \omega(S) &\geqslant\frac{\k(S)}{|\Out(S)|}\geqslant\frac{q^d}{\min\{d+1,q+1\}\cdot|\Out(S)|}\geqslant\frac{q^d}{\min\{d+1,q+1\}^2\cdot 6f} \\
&=q^{d-\log_q(\min\{d+1,q+1\}^26f)},
\end{align}
and so, using also that the function $x\mapsto\frac{\log{x}}{x}$, assumes its global maximum on $\left[1,\infty\right)$ at $x=\e$,
\begin{align}\label{smileyEq}
\notag \epsilon_{\omega}(S) &=\frac{\log\log{\omega(S)}}{\log\log{|S|}}\geqslant\frac{\log\log{q^{d-\log_q(\min\{d+1,q+1\}^26f)}}}{\log\log{q^{4d^2}}} \\
\notag &=\frac{\log(d-\log_q(\min\{d+1,q+1\}^26f))+\log\log{q}}{\log(4d^2)+\log\log{q}} \\
\notag &\geqslant\frac{\log(d-\frac{2\log(d+1)}{\log{q}}-\frac{\log{6}}{\log{q}}-\frac{\log{f}}{f\log{p}})+\log{f}+\log\log{p}}{\log(4d^2)+\log{f}+\log\log{p}} \\
&\geqslant \frac{\log(d-\frac{2\log(d+1)}{\log{q}}-\frac{\log{6}}{\log{q}}-\frac{1}{\e\log{p}})+\log{f}+\log\log{p}}{\log(4d^2)+\log{f}+\log\log{p}}.
\end{align}
Note that the smallest value of $q$ that we need to consider is $2$ (we can ignore the group $\leftidx{^2}F_4(2)$, and the Tits group $\leftidx{^2}F_4(2)'$ is included among the sporadic groups in Subsection \ref{subsec2P1}). Let us make a subcase distinction:
\begin{enumerate}
\item Subcase: $q=2$. Then by Formula (\ref{smileyEq}),
\[
\epsilon_{\omega}(S)\geqslant\frac{\log(d-\frac{2\log(d+1)}{\log{2}}-\frac{\log{6}}{\log{2}})+\log\log{2}}{\log(4d^2)+\log\log{2}},
\]
which is strictly larger than $\epsilon_{\omega}(\Alt(5))$ for $d\geqslant19$.
\item Subcase: $q>2$. Then either $p\geqslant3$, or $p=2$ and $f\geqslant 3/2$, and so then
\[
\log{f}+\log\log{p}\geqslant\min\{\log\log{3},\log(3/2)+\log\log{2}\}>0.
\]
Hence in view of Formula (\ref{smileyEq}), if
\begin{equation}\label{IEq}
\frac{\log(d-\frac{2\log(d+1)}{\log{q}}-\frac{\log{6}}{\log{q}}-\frac{1}{\e\log{2}})}{\log(4d^2)}>\epsilon_{\omega}(\Alt(5)),
\end{equation}
then we also have $\epsilon_{\omega}(S)>\epsilon_{\omega}(\Alt(5))$. Conveniently, the left-hand side in Formula (\ref{IEq}) is monotonically increasing in $q$. Since we are currently assuming that $q>2$, we actually have $q\geqslant 2^{3/2}$, and so $\epsilon_{\omega}(S)>\epsilon_{\omega}(\Alt(5))$ as long as
\[
\frac{\log(d-\frac{2\log(d+1)}{1.5\log{2}}-\frac{\log{6}}{1.5\log{2}}-\frac{1}{\e\log{2}})}{\log(4d^2)}>\epsilon_{\omega}(\Alt(5)),
\]
which holds for $d\geqslant12$.
\end{enumerate}
So summarising what we know so far in the case $d\geqslant3$, we have $\epsilon_{\omega}(S)>\epsilon_{\omega}(\Alt(5))$ for all finite simple groups $S=\leftidx{^t}X_d(q^t)$ of Lie type of rank $d\geqslant19$, and for $d=11,\ldots,18$, only the case $q=2$ is open (to include $d=11$, substitute $q=3$ instead of $q=\sqrt{8}$ in Formula \ref{IEq} above, noting that non-integer values of $q$ are only relevant for $d=4$ (and $d=2$, which is not part of this case)). Similarly, using Formula (\ref{IEq}), we can show for $d=4,\ldots,10$ that if $q\geqslant q_0(d)$ with $q_0(d)$ as listed in Table \ref{table2} below, then $\epsilon_{\omega}(S)>\epsilon_{\omega}(\Alt(5))$.

For $d=3$, we argue as follows that one may choose $q_0(3)=10^5$: The number of graph-field automorphisms of any rank $3$ finite simple group of Lie type $S=\leftidx{^t}X_3(q^t)$ is at most $2f$, not just at most $6f$ as in the general case. This allows us to improve our upper bound on $|\Out(S)|$ from the beginning of the considerations for this case to $\min\{d+1,q+1\}\cdot 2f$, and repeating the chain of inequalities in Formula (\ref{smileyEq}) but with this improved bound, we find that
\[
\epsilon_{\omega}(S)\geqslant\frac{\log(3-\frac{2\log{4}}{\log{q}}-\frac{\log{2}}{\log{q}}-\frac{\log{f}}{\log{q}})}{\log{36}}.
\]
Now, assuming that $q\geqslant10^5$, we have $f\geqslant10$ or $p\geqslant\sqrt{10}$, the latter of which implies $p\geqslant5$. Hence
\[
\frac{\log{f}}{\log{q}}=\frac{\log{f}}{f\log{p}}\leqslant\max(\frac{\log{10}}{10\log{2}},\frac{1}{\e\log{5}})=\frac{\log{10}}{10\log{2}},
\]
and so
\[
\epsilon_{\omega}(S)\geqslant\frac{\log(3-\frac{2\log{4}}{\log{10^5}}-\frac{\log{2}}{\log{10^5}}-\frac{\log{10}}{10\log{2}})}{\log{36}}>\epsilon_{\omega}(\Alt(5)).
\]
This concludes the argument that $q_0(3)$ may be chosen as $10^5$.
\begin{center}
\begin{longtable}[H]{|c|c|}
\caption{Lower bounds $q_0(d)$ as described above}
\label{table2}\\
\hline
$d$ & $q_0(d)$ \\ \hline
${}\geqslant19$ & $2$ \\ \hline
$11,\ldots,18$ & $3$ \\ \hline
$9,10$ & $4$ \\ \hline
$8$ & $5$ \\ \hline
$7$ & $7$ \\ \hline
$6$ & $13$ \\ \hline
$5$ & $32$ \\ \hline
$4$ & $373$ \\ \hline
$3$ & $10^5$ \\ \hline
\end{longtable}
\end{center}
For dealing with these finitely many remaining groups, the authors proceeded as follows: They wrote a GAP function which computes for each finite simple group of Lie type $S=\leftidx{^t}X_d(p^{ft})$ a number $\underline{\k}(S)$ which is a lower bound on $\k(S)$ (in some cases, $\underline{\k}(S)=\k(S)$). More precisely:
\begin{itemize}
\item If $S$ is classical and at least one of the following holds
\begin{itemize}
\item $\leftidx{^t}X\in\{A,\leftidx{^2}A,B\}$;
\item $\leftidx{^t}X\in\{D,\leftidx{^2}D\}$ and $S$ is its own Schur cover;
\item $p=2$;
\end{itemize}
then set $\underline{\k}(S):=\k(S)$, to be computed according to \cite[Formulas (5.2) and (6.13)]{Mac81a} for $\leftidx{^t}X\in\{A,\leftidx{^2}A\}$, or \cite[Theorems 3.19(1), 3.13(1), 3.16(1,2) and 3.22(1,2)]{FG12a} for $\leftidx{^t}X\in\{B,C,D,\leftidx{^2}D\}$.
\item If $\leftidx{^t}X=C$ and $p>2$, set $\underline{\k}(S):=\lceil\frac{\k(\Sp_{2d}(q))}{2}\rceil$, to be computed according to \cite[Subsection 2.6, Case (B), statement (iii), p.~36]{Wal63a}.
\item If $\leftidx{^t}X\in\{D,\leftidx{^2}D\}$, $p>2$ and the Schur cover $\tilde{S}=\Omega^{\pm}_{2d}(q)$ of $S$ has nontrivial centre, set $\underline{\k}(S):=\lceil\frac{\k(\tilde{S})}{2}\rceil$, to be computed according to \cite[Theorem 3.18(1)]{FG12a}.
\item If $S$ is exceptional, then L{\"u}beck's database \cite{Lue} provides an exact formula for $\k(S)$, and we set $\underline{\k}(S):=\k(S)$.
\end{itemize}
The authors then wrote another GAP algorithm, which simply computes
\[
\frac{\log\log(\lceil\underline{\k}(\leftidx{^t}X_d(q^t))\rceil/|\Out(\leftidx{^t}X_d(q^t))|)}{\log\log{|\leftidx{^t}X_d(q^t)|}},
\]
a lower bound on $\epsilon_{\omega}(\leftidx{^t}X_d(q^t))$, checks whether this numerical value is strictly larger than $\epsilon_{\omega}(\Alt(5))$, and, if not, adds the group $S$ to a list of exceptions which is output by the algorithm at the end. It turns out that there are only four such exceptions, namely $A_3(5),\leftidx{^2}A_3(3^2),\leftidx{^2}A_4(4^2)$ and $D_4(3)$. For each of these four groups $S$, the value of $\omega(S)$ can be computed exactly with GAP \cite{GAP4}, as explained in the argument for \enquote{$d\leqslant2$} above, and one can thus check that $\epsilon_{\omega}(S)>\epsilon_{\omega}(\Alt(5))$ for these four groups $S$ as well, which concludes the proof.\qedhere
\end{enumerate}
\end{proof}

\begin{proof}[Proof of Theorem \ref{mainTheo2}(5)]
We use the same conventions with respect to $\log$ and $-\infty$ as described at the beginning of the proof of Theorem \ref{mainTheo2}(2). By the results of Subsections \ref{subsec2P1} and \ref{subsec2P2}, we may assume that $S=\leftidx{^t}X_d(q^t)$ is of Lie type with $q=p^f$. Let us first show that $\epsilon_{\q}(S)>\epsilon_{\q}(\M)$ when $S$ is \emph{exceptional}, i.e., when $\leftidx{^t}X_d \in \{\leftidx{^2}B_2,G_2,\leftidx{^2}G_2,F_4,\leftidx{^2}F_4,\leftidx{^3}D_4,E_6,\leftidx{^2}E_6,E_7,E_8\}$.

Recall that $\k_{\tor}(\Inndiag(S))$ denotes the number of conjugacy classes of maximal tori of $\Inndiag(S)$. As noted at the beginning of this subsection, $\k_{\tor}(\Inndiag(S))$ does not depend on $q$, but only on the symbol $\leftidx{^t}X_d$, and its values, which we give in the second column of Table \ref{exceptionalTable1} below, can be found in the two references \cite{DF91a} and \cite{Gag73a}, see the third column of Table \ref{exceptionalTable1} for more details. The Coxeter numbers $h(X_d)$ are given in the fourth column of Table \ref{exceptionalTable1}; see \cite[Table 2, p.~90]{Hum90a} for a reference. We argue as follows:

By the observations from the beginning of this subsection,
\begin{align*}
\omicron(S)&=\omicron(\leftidx{^t}X_d(q^t))\leqslant\omicron(\Inndiag(S))\leqslant\k_{\tor}(\Inndiag(S))\cdot 2(q+1)^{d/2}\cdot(1+\lceil\log_p(h(X_d))\rceil) \\
&\leqslant 2\k_{\tor}(\Inndiag(S))(1+\lceil\log_2(h(X_d))\rceil)\cdot(q+1)^{d/2}=:\overline{\o}(S).
\end{align*}
On the other hand, the information in L{\"u}beck's database \cite{Lue} allows one to derive a lower bound $\underline{\k}(S)$ on the number of conjugacy classes of $S$, found in the fifth column of Table \ref{exceptionalTable1}. Together with the upper bound $\overline{\Out}(S)=c(S)f$ on $|\Out(S)|$ from the sixth column in Table \ref{exceptionalTable1}, one obtains
\[
\omega(S)\geqslant\k(S)/|\Out(S)|\geqslant\underline{\k}(S)/\overline{\Out}(S).
\]
Finally, the well-known formulas for $|S|$ allow one to conclude that $|S|\leqslant q^{e(S)}$ with $e(S)$ as in the seventh column of Table \ref{exceptionalTable1}. One thus has
\[
\epsilon_{\q}(S)\geqslant\frac{\log\log{(\underline{\k}(S)/(\overline{\Out}(S)\cdot\overline{\omicron}(S)))}}{\log\log{q^{e(S)}}}\geqslant\frac{\log\log{(\underline{\k}(S)/(c(S)\overline{\omicron}(S)\cdot\log_2(q)))}}{\log\log{q^{e(S)}}},
\]
and one can check with elementary calculus that this lower bound on $\epsilon_{\q}(S)$ is strictly larger than $\epsilon_{\q}(\M)$ unless $q$ is from an explicit finite set specified in the eighth column of Table \ref{exceptionalTable1}. Below Table \ref{exceptionalTable1}, we explain how to deal with those finitely many remaining cases.

\begin{center}
\begin{longtable}[H]{|c|c|c|c|c|c|c|c|}
\caption{Rough treatment of exceptional groups.}
\label{exceptionalTable1}\\
\hline
$\leftidx{^t}X_d$ & $\k_{\tor}$ & $\k_{\tor}$ ref. & $h(X_d)$ & $\underline{\k}(S)$ & $\overline{\Out}$ & $e(S)$ & remaining $q$ \\ \hline
$\leftidx{^2}B_2$ & $3$ & \cite[Prop.~7.3]{Gag73a} & $4$ & $q^2=2^{2m+1}$ & $2f=2m+1$ & $10$ & $2^{3/2},\ldots,2^{21/2}$ \\ \hline
$G_2$ & $6$ & \cite[\S 5.2]{Gag73a} & $6$ & $q^2$ & $2f$ & $14$ & all $q<6947$ \\ \hline
$\leftidx{^2}G_2$ & $4$ & \cite[Prop.~7.4]{Gag73a} & $6$ & $q^2=3^{2m+1}$ & $2f=2m+1$ & $14$ & $3^{3/2},\ldots,3^{17/2}$ \\ \hline
$F_4$ & $25$ & \cite[\S 5.3]{Gag73a} & $12$ & $q^4$ & $2f$ & $52$ & all $q<157$ \\ \hline
$\leftidx{^2}F_4$ & $11$ & \cite[Prop.~7.5]{Gag73a} & $12$ & $q^4$ & $2f=2m+1$ & $52$ & $2^{3/2},\ldots,2^{13/2}$ \\ \hline
$\leftidx{^3}D_4$ & $7$ & \cite[Prop.~7.41]{Gag73a} & $6$ & $q^4$ & $3f$ & $29$ & all $q<79$ \\ \hline
$E_6$ & $25$ & \cite{DF91a} & $12$ & $q^6/3$ & $6f$ & $78$ & all $q<59$ \\ \hline
$\leftidx{^2}E_6$ & $25$ & \cite{DF91a} & $12$ & $q^6/3$ & $6f$ & $78$ & all $q<59$ \\ \hline
$E_7$ & $60$ & \cite{DF91a} & $18$ & $q^7/2$ & $2f$ & $133$ & all $q<29$ \\ \hline
$E_8$ & $112$ & \cite{DF91a} & $30$ & $q^8$ & $f$ & $248$ & all $q<16$ \\ \hline
\end{longtable}
\end{center}

Let us now discuss how to handle the finitely many remaining exceptional Lie type groups. Table \ref{exceptionalTable2} gives an overview of references with information that allows one to compute the exact number $\omicron_{\ssrm}(\Inndiag(S))$ of semisimple element orders in $\Inndiag(S)$, as well as $\omicron(S)$ (except for $\omicron(E_8(q))$).

More precisely, the second column of Table \ref{exceptionalTable2} gives a reference for the complete list of cyclic structures of maximal tori of $\Inndiag(S)$, from which the exponents, and thus the sets of element orders, of the maximal tori of $\Inndiag(S)$ can be computed. Since every semisimple element of $\Inndiag(S)$ lies in some maximal torus, this is enough to compute $\omicron_{\ssrm}(\Inndiag(S))$.

Moreover, for all exceptional finite simple Lie type groups $S$ except for $E_8(q)$, there is a result in the literature specifying a subset $\nu(S)$ of $\Ord(S)$ such that $\Ord(S)$ is the closure of $\nu(S)$ under taking divisors. These references are given in the third column of Table \ref{exceptionalTable2}.

Now, in those cases where $\omicron(S)$ can be computed exactly (i.e., for all exceptional $S$ apart from the groups $E_8(q)$), go through the finitely many remaining values of $q$ from the last column in Table \ref{exceptionalTable1}, set $\k_0(S):=\k(\Inndiag(S))/|\Outdiag(S)|$ (the precise values of $\k(\Inndiag(S))$ in the various cases can be read off from L{\"u}beck's database \cite{Lue}) and $\underline{\omega}(S):=\lceil\k_0(S)/|\Out(S)|\rceil$, and check whether the following lower bound on $\epsilon_{\q}(S)$ is greater than $\epsilon_{\q}(\M)$:
\[
\frac{\log\log{(\underline{\omega}(S)/\omicron(S)+3)}}{\log\log{|S|}}.
\]
For $S=E_8(q)$, define $\underline{\omega}(S)$ as for the other exceptional groups, but additionally, set $\overline{\omicron}(S):=\omicron_{\ssrm}(\Inndiag(S))\cdot(1+\lceil\log_p(30)\rceil)$, and check whether the following lower bound on $\epsilon_{\q}(S)$ is greater than $\epsilon_{\q}(\M)$:
\[
\frac{\log\log{(\underline{\omega}(S)/\overline{\omicron}(S)+3)}}{\log\log{|S|}}.
\]
Only very few cases resist even these refined checks, and they are listed in the last column of Table \ref{exceptionalTable2} and will be discussed further below.

\begin{center}
\begin{longtable}[H]{|c|c|c|c|}
\caption{Refined treatment of exceptional groups.}
\label{exceptionalTable2}\\
\hline
$\leftidx{^t}X_d$ & ref.~for cyclic structure & ref.~for $\nu(S)$ & remaining $q$ \\ \hline
$\leftidx{^2}B_2$ & \cite[Prop.~7.3]{Gag73a} & \cite[Theorem 2]{Shi92a} & $2^{3/2}$ \\ \hline
$G_2$ & \cite[\S 5.2, Table 5.1]{Gag73a} & \cite[Lemma 1.4]{VS13a} & none \\ \hline
$\leftidx{^2}G_2$ & \cite[Prop.~7.4]{Gag73a} & \cite[Lemma 4]{BS93a} & $3^{3/2}$ \\ \hline
$F_4$ & \cite[\S 5.3, Table 5.2]{Gag73a} & \cite[Theorem 3.1]{GZ16a} & none \\ \hline
$\leftidx{^2}F_4$ & \cite[\S 7.4, Table 7.3]{Gag73a} & \cite[Lemma 3]{DS99a} & $2^{3/2}$ \\ \hline
$\leftidx{^3}D_4$ & \cite[\S 7.5, Table 7.5]{Gag73a} & \cite[Theorem 3.2]{GZ16a} & $2$ \\ \hline
$E_6$ & \cite{DF91a} & \cite[Theorem 1]{But13a} & none \\ \hline
$\leftidx{^2}E_6$ & \cite{DF91a} & \cite[Theorem 1]{But13a} & $2$ \\ \hline
$E_7$ & \cite{DF91a} & \cite[Theorem 2]{But16a} & none \\ \hline
$E_8$ & \cite{DF91a} & no ref. & none \\ \hline
\end{longtable}

\end{center}

We now discuss the remaining five exceptional Lie type groups $S$ specified by the last column in Table \ref{exceptionalTable2}.
\begin{itemize}
\item Note that by the definition of $\epsilon_{\q}$, for every nonabelian finite simple group $S$, $\epsilon_{\q}(S)\geqslant\frac{\log\log{4}}{\log\log{|S|}}$. For $S=\leftidx{^2}B_2(8)$, this trivial lower bound is actually larger than $\epsilon_{\q}(\M)$.
\item For $S=\leftidx{^2}G_2(27)$: By \cite[Lemma 4]{BS93a}, we have $\omicron(S)=11$, and in order to conclude that $\epsilon_{\q}(S)>\epsilon_{\q}(\M)$, it is enough to know that $\omega(S)\geqslant13$, which we will show now. Let $\C$ be the set of conjugacy classes of $S$, and let $\Mcal$ be the multiset of positive integers obtained by replacing each class $C\in\C$ by the common order (in $S$) of the elements of $C$. Then the elements of $\Mcal$ are just the element orders in $S$, and the multiplicity in $\Mcal$ of each element order $o$ in $S$ is just $\k_o(S)$, the number of $S$-conjugacy classes of order $o$ elements in $S$. From the page on $\leftidx{^2}G_2(27)$ in the ATLAS of Finite Group Representations \cite{ATLAS}, one can read off that $\Mcal$ is the following multiset (where the notation $x_n$ is shorthand for $n$ copies of $x$):
\[
\Mcal=\{1_1,2_1,3_3,6_2,7_1,9_3,13_6,14_3,19_3,26_6,37_6\}.
\]
Since $|\Out(S)|=3$, each orbit of the natural action of $\Aut(S)$ on $\C$ is of length $1$ or $3$, and so, writing $\k_o(S)=3\cdot q_o+r_o$ with $r_o\in\{0,1,2\}$, we find that $\omega_o(S)\geqslant q_o+r_o$ and
\[
\omega(S)=\sum_{o\in\Ord(S)}{\omega_o(S)}\geqslant\sum_{o\in\Ord(S)}{(q_o+r_o)}=15>13,
\]
as required.
\item For $S=\leftidx{^2}F_4(8)$: By \cite[Lemma 3]{DS99a}, we have $\omicron(S)=28$. In order to conclude that $\epsilon_{\q}(S)>\epsilon_{\q}(\M)$, it is sufficient to show that $\omega(S)\geqslant52$, which we will do now, based on the extended character table of $S$ available in GAP \cite{GAP4}. There are 19 unipotent conjugacy classes in $S$. The following is a drawing of the finite digraph whose vertices are these $19$ unipotent conjugacy classes and which has an edge $c_1\rightarrow c_2$ if and only if the elements in $c_1$ square to elements in $c_2$:

\begin{center}
\begin{tikzpicture}[->]
\node (v1a) at (0,0) {1a};
\node (v2a) at (-1,1) {2a};
\node (v2b) at (1,1) {2b};
\node (v4a) at (-4,2) {4a};
\node (v4b) at (-3,2) {4b};
\node (v4d) at (-2,2) {4d};
\node (v4c) at (-1,2) {4c};
\node (v4e) at (1,2) {4e};
\node (v4f) at (2,2) {4f};
\node (v4g) at (3,2) {4g};
\node (v8a) at (-2,3) {8a};
\node (v8b) at (-1,3) {8b};
\node (v8c) at (0,3) {8c};
\node (v8e) at (1,3) {8e};
\node (v8d) at (2,3) {8d};
\node (v16a) at (-2,4) {16a};
\node (v16d) at (-1,4) {16d};
\node (v16b) at (0,4) {16b};
\node (v16c) at (1,4) {16c};
\draw (v2a) edge (v1a);
\draw (v2b) edge (v1a);
\draw (v4a) edge (v2a);
\draw (v4b) edge (v2a);
\draw (v4d) edge (v2a);
\draw (v4c) edge (v2a);
\draw (v4e) edge (v2b);
\draw (v4f) edge (v2b);
\draw (v4g) edge (v2b);
\draw (v8a) edge (v4c);
\draw (v8b) edge (v4c);
\draw (v8c) edge (v4c);
\draw (v8e) edge (v4e);
\draw (v8d) edge (v4f);
\draw (v16a) edge (v8b);
\draw (v16d) edge (v8b);
\draw (v16b) edge (v8c);
\draw (v16c) edge (v8c);
\end{tikzpicture}
\end{center}

Noting that $|\Out(S)|=3$, one can use this graph to argue that distinct conjugacy classes of elements of $S$ whose order lies in $\{1,2,8,16\}$ span distinct $\Aut(S)$-orbits, and that no two of the conjugacy classes 4a, 4c, 4e, 4f, 4g span the same $\Aut(S)$-orbit. It follows that the number of $\Aut(S)$-orbits consisting of unipotent elements is at least $17$. Now, for each non-unipotent element order $o\in\Ord(S)$, write the number of conjugacy classes of order $o$ elements in $S$ as $3\cdot q_o+r_o$ with $q_o\in\IN$ and $r_o\in\{0,1,2\}$. Then, as for $S=\leftidx{^2}G_2(27)$ above, since $|\Out(S)|=3$, $\omega_o(S)\geqslant q_o+r_o$. Denoting by $\Ord'(S)$ the set of non-unipotent element orders in $S$, it follows that
\[
\omega(S)\geqslant 17+\sum_{o\in\Ord'(S)}{(q_o+r_o)}=53>52,
\]
as asserted.
\item For $S=\leftidx{^3}D_4(8)$: By \cite[Theorem 3.2]{GZ16a}, $\omicron(S)=14$, and in order to deduce that $\epsilon_{\q}(S)>\epsilon_{\q}(\M)$, it is enough to know that $\omega(S)\geqslant15$, i.e., that $S$ is not an AT-group. But this is clear by Zhang's characterisation of AT-groups \cite[Theorem 3.1]{Zha92a}. Alternatively, one can use the extended character table of $S$ available in GAP \cite{GAP4} to see that $S$ has $20$ distinct conjugacy class lengths. In particular, $\omega(S)\geqslant 20>15$.
\item For $S=\leftidx{^2}E_6(2^2)$: By \cite[Theorem 1]{But13a}, $\omicron(S)=27$, and in order to get that $\epsilon_{\q}(S)>\epsilon_{\q}(\M)$, it is enough to know that $\omega(S)\geqslant48$. Using the extended character table of $S$ available in GAP \cite{GAP4}, we see that $S$ has $77$ distinct conjugacy class lengths, whence $\omega(S)\geqslant 77>48$.
\end{itemize}

This concludes our discussion of exceptional $S$, and we may assume from now on that $S=\leftidx{^t}X_d(q^t)$ is a classical finite simple group of Lie type. Note that by \cite[Table 2, p.~90]{Hum90a}, the Coxeter number $h(X_d)$ is at most $2d$. Moreover, recall the notation $\k_{\tor}(\Inndiag(S))$ for the number of conjugacy classes of maximal tori in the inner diagonal automorphism group of $S$. We proceed in several steps, showing successively stronger statements:
\begin{enumerate}
\item \emph{$\epsilon_{\q}(S)>\epsilon_{\q}(\M)$ if $d\geqslant1012$, or $q\geqslant3$ and $d\geqslant180$.} By \cite[Section 3]{Car81a}, \cite[pp.~437f.]{Erd42a} and the fact that the function $x\mapsto \sqrt{x}+\sqrt{d-x}$, has maximum value $\sqrt{2d}$ on the domain $\left[0,d\right]$, we have the following, where $p(k)$ denotes the number of ordered integer partitions of $k\in\IN$:
\begin{itemize}
\item If $\leftidx{^t}X \in \{A,\leftidx{^2}A\}$, then $\k_{\tor}(\Inndiag(S))=p(d+1)\leqslant\exp(\frac{2\pi}{\sqrt{6}}\sqrt{d+1})$.
\item If $\leftidx{^t}X \in \{B,C,D,\leftidx{^2}D\}$, then
\begin{align*}
\k_{\tor}(\Inndiag(S)) &=\sum_{i=0}^d{p(i)p(d-i)}\leqslant\sum_{i=0}^d{\exp(\frac{2\pi}{\sqrt{6}}(\sqrt{i}+\sqrt{d-i}))} \\
&\leqslant(d+1)\exp(\frac{2\pi}{\sqrt{3}}\sqrt{d}).
\end{align*}
\end{itemize}
Set
\[
g_1(d):=(d+1)\exp(\frac{2\pi}{\sqrt{3}}\sqrt{d})
\]
and $h_1(d,q):=2(q+1)^{d/2}$. Then by the observations from the beginning of this subsection (see Formula (\ref{omicronLieEq})) and letting $T$ range over the maximal tori in $\Inndiag(S)$,
\begin{align*}
\omicron(S)  &\leqslant\k_{\tor}(\Inndiag(S))\cdot\max_T{\omicron(T)}\cdot(1+\lceil\log_p(h(X_d))\rceil) \\
&\leqslant g_1(d)\cdot h_1(d,q)\cdot(1+\lceil\log_p(2d)\rceil).
\end{align*}
On the other hand, analogously to Formula (\ref{omegaLieEq}) from the proof of Theorem \ref{mainTheo2}(2), we have
\[
\omega(S)\geqslant q^{d-\log_q{(\min\{d+1,q+1\}^2\cdot 2f)}};
\]
since we can use $2f$ instead of $6f$ in the exponent because the number of graph-field automorphisms of $S$ is at most $2f$ (as $6f$ only occurs when $d=4$). Combining these bounds on $\omicron(S)$ and $\omega(S)$, we get that
\begin{align}\label{qOfSBoundEq}
\notag \q(S) &\geqslant q^{d-\log_q(\min\{d+1,q+1\}^2\cdot 2f\cdot(1+\lceil\log_p(2d)\rceil)\cdot(d+1)\exp(2\pi\sqrt{d/3})\cdot 2(q+1)^{d/2})} \\
&\geqslant q^{(1-\frac{\log{(q+1)}}{2\log{q}})d-\frac{\frac{2\pi}{\sqrt{3}}\sqrt{d}+3\log{(d+1)}+\log{(2+\frac{\log{(2d)}}{\log{2}})}+\log{4}}{\log{q}}-\frac{\log{f}}{\log{q}}}.
\end{align}
In view of $|S|\leqslant q^{4d^2}$, this implies that
\begin{equation}\label{epsilonQRoughEq}
\epsilon_{\q}(S)\geqslant\frac{\log{((1-\frac{\log{(q+1)}}{2\log{q}})d-\frac{\frac{2\pi}{\sqrt{3}}\sqrt{d}+3\log{(d+1)}+\log{(2+\frac{\log{(2d)}}{\log{2}})}+\log{4}}{\log{q}}-\frac{\log{f}}{\log{q}})}+\log\log{q}}{\log{(4d^2)}+\log\log{q}}.
\end{equation}
For $q=2$, the lower bound in Formula (\ref{epsilonQRoughEq}) becomes
\[
\frac{\log{((1-\frac{\log{3}}{\log{4}})d-\frac{\frac{2\pi}{\sqrt{3}}\sqrt{d}+3\log{(d+1)}+\log{(2+\frac{\log{2d}}{\log{2}})}+\log{4}}{\log{2}})}+\log\log{2}}{\log{(4d^2)}+\log\log{2}},
\]
which is indeed larger than $\epsilon_{\q}(\M)$ if $d\geqslant1012$. So we may henceforth assume that $q\geqslant3$; in particular, $\log\log{q}>0$. Moreover,
\[
\frac{\log{f}}{\log{q}}=\frac{\log{f}}{f\log{p}}\leqslant\frac{1}{\e\log{2}}
\]
since the function $x\mapsto\frac{\log{x}}{x}$, assumes its maximum on $\left(0,\infty\right)$ at $x=\e$. Combining this with Formula (\ref{epsilonQRoughEq}), we conclude that
\begin{align*}
\epsilon_{\q}(S) &\geqslant\frac{\log{((1-\frac{\log{(q+1)}}{2\log{q}})d-\frac{\frac{2\pi}{\sqrt{3}}\sqrt{d}+3\log{(d+1)}+\log{(2+\frac{\log{(2d)}}{\log{2}})}+\log{4}}{\log{q}}-\frac{\log{f}}{\log{q}})}}{\log{(4d^2)}} \\
&\geqslant\frac{\log{((1-\frac{\log{4}}{\log{9}})d-\frac{\frac{2\pi}{\sqrt{3}}\sqrt{d}+3\log{(d+1)}+\log{(2+\frac{\log{(2d)}}{\log{2}})}+\log{4}}{\log{3}}-\frac{1}{\e\log{2}})}}{\log{(4d^2)}},
\end{align*}
which is larger than $\epsilon_{\q}(\M)$ if $d\geqslant180$, as required.
\item \emph{$\epsilon_{\q}(S)>\epsilon_{\q}(\M)$ if $d\geqslant91$, or $q\geqslant3$ and $d\geqslant54$.} We will use the main results of \cite{BG07a}, which provide information on the cyclic structure of maximal tori of $S$ (\emph{not} just $\Inndiag(S)$). We need two new terminologies:
\begin{itemize}
\item Call a set $M\subseteq\{1,\ldots,(q+1)^d\}$ \emph{sufficient} if and only if for every maximal torus $T$ of $\Inndiag(S)$, there is an $o\in M$ such that the group exponent $\Exp(T\cap S)$ divides $o$. For every sufficient set $M$ of positive integers, the set of semisimple element orders in $S$ is just the union of the sets of divisors of the $o\in M$.
\item If $\lambda$ is an ordered integer partition, say $\lambda\vdash n$, then we denote by $\overline{\lambda}$ the unique ordered integer partition such that
\begin{enumerate}
\item $\overline{\lambda}\vdash n$;
\item for each positive integer $k>1$, we have that $k$ is a part of $\overline{\lambda}$ if and only if it is a part of $\lambda$; and
\item each positive integer $k>1$ occurs with multiplicity at most $1$ in $\overline{\lambda}$.
\end{enumerate}
For example, $\overline{(4,3,3,2,2,2,1)}=(4,3,2,1,1,1,1,1,1,1,1)$. An ordered integer partition $\lambda$ will be called \emph{reduced} if and only if $\lambda=\overline{\lambda}$.
\end{itemize}
We noted earlier that the number of semisimple element orders of $S$ can be bounded from above by the product of
\begin{itemize}
\item the number $\k_{\tor}(\Inndiag(S))$ of conjugacy classes of maximal tori of $\Inndiag(S)$ with
\item the maximum number of divisors of a given positive integer between $1$ and $(q+1)^d$.
\end{itemize}
However, in such a count, many conjugacy classes $C$ of maximal tori are usually redundant in the sense that there are lots of other conjugacy classes $C'$ such that the exponent of an element of $C'$ is a multiple of the exponent of an element of $C$. It is therefore more efficient to replace $\k_{\tor}(\Inndiag(S))$ by a sufficiently good upper bound on the size of a sufficient set of positive integers for $S$. Hence our goal is to find a \enquote{small} sufficient set $M(S)$ of positive integers (and an upper bound on $|M(S)|$). The notion of a reduced partition as well as the results of \cite{BG07a} will help us achieve this goal. More precisely:
\begin{itemize}
\item Assume first that $S=\leftidx{^t}A_d(q^t)$ for some $t\in\{1,2\}$. In the notation of \cite{BG07a}, $S=\PSL^{\epsilon}_{d+1}(q)$ for some $\epsilon\in\{+,-\}$. Then the conjugacy classes of maximal tori of $S$ are in bijection with ordered integer partitions $\lambda\vdash d+1$. Denote by $T_{\lambda}$ any fixed maximal torus of $\Inndiag(S)$ whose conjugacy class corresponds to $\lambda$. As we will now explain, the results \cite[Theorems 2.1 and 2.2]{BG07a} allow us to determine $\Exp(T_{\lambda}\cap S)$ in terms of $\lambda=(n_1,\ldots,n_t)$ (where $n_1\geqslant n_2\geqslant\cdots\geqslant n_t$). One can check that the cyclic decompositions $a_1\times\cdots\times a_t$ of $T_{\lambda}\cap S$ given in \cite[Theorems 2.1 and 2.2]{BG07a} are canonical in the sense that $a_t\mid a_{t-1}\mid\cdots\mid a_2\mid a_1$ (note, however, that some of the later $a_i$ may be $1$). It follows that the exponent of $T_{\lambda}\cap S$ is always the order of the first (i.e., left-most) cyclic direct factor in the decomposition from \cite[Theorems 2.1 and 2.2]{BG07a}. Hence, setting
\[
d_1(\lambda):=d_1^{\epsilon}(\lambda,q):=\lcm(q^{n_1}-(\epsilon1)^{n_1},\ldots,q^{n_t}-(\epsilon1)^{n_t}),
\]
we have the following:
\begin{itemize}
\item If $t>2$, then $\Exp(T_{\lambda}\cap S)=d_1(\lambda)$.
\item If $t=2$, then $\Exp(T_{\lambda}\cap S)=\frac{d_1(\lambda)}{\gcd((d+1)/\gcd(n_1,n_2),q-\epsilon1)}$.
\item If $t=1$, then $\Exp(T_{\lambda}\cap S)=\frac{d_1(\lambda)}{\gcd(d+1,q-\epsilon1)(q-\epsilon1)}=\frac{q^{d+1}-(\epsilon1)^{d+1}}{\gcd(d+1,q-\epsilon1)(q-\epsilon1)}$.
\end{itemize}
From this, it is immediate that $\Exp(T_{\lambda}\cap S)$ divides $\Exp(T_{\overline{\lambda}}\cap S)$. Therefore, the set of all positive integers of the form $d_1^{\epsilon}(\lambda,q)$ where $\lambda$ is a \emph{reduced} partition of $d+1$ is a sufficient set of positive integers for $S=\PSL^{\epsilon}_{d+1}(q)$; we define $M(S)$ to be this set. The number of reduced partitions of $d+1$, and thus the cardinality of $M(S)$, is at most $\sum_{i=0}^{d+1}{s(i)}$ where, as in Subsection \ref{subsec2P2}, $s(i)$ denotes the number of partitions of $i$ into pairwise distinct parts.
\item Now assume that $S$ is orthogonal or symplectic. Then the conjugacy classes of maximal tori of $\Inndiag(S)$ are in bijection with (certain, depending on the case) conjugacy classes of signed permutations of $\{\pm1,\ldots,\pm d\}$, and hence they are in bijection with (certain) ordered pairs $\bm{\lambda}=(\lambda_+,\lambda_-)$ of ordered integer partitions (corresponding to the multisets of lengths of positive, respectively negative, cycles) such that if $\lambda_+\vdash d_+$ and $\lambda_-\vdash d_-$, then $d_++d_-=d$. For each such pair $\bm{\lambda}=(\lambda_+,\lambda_-)$, write $\lambda_{\epsilon}=(n^{\epsilon}_1,\ldots,n^{\epsilon}_{t_{\epsilon}})$ for $\epsilon\in\{+,-\}$, and set
\begin{align*}
&d_1(\bm{\lambda}):= \\
&\begin{cases}\lcm(\{q^{n^+_{i_+}}-1,q^{n^-_{i_-}}+1\mid i_{\epsilon}=1,\ldots,t_{\epsilon}\}), & \text{if }t_++t_->1\text{ or }p=2, \\ \frac{1}{2}\lcm(\{q^{n^+_{i_+}}-1,q^{n^-_{i_-}}+1\mid i_{\epsilon}=1,\ldots,t_{\epsilon}\}), & \text{if }t_++t_-=1\text{ and }p>2.\end{cases}
\end{align*}
Moreover, set $\overline{\bm{\lambda}}:=(\overline{\lambda_+},\overline{\lambda_-})$. Then $d_1(\bm{\lambda})\mid d_1(\overline{\bm{\lambda}})$. Moreover, every pair $\bm{\lambda}$ corresponds to a conjugacy class $C_{\bm{\lambda}}$ of maximal tori of $\Inndiag(C_d(q))$, and by \cite[Theorem 3]{BG07a}, $d_1(\bm{\lambda})$ is the exponent of any torus in $C_{\bm{\lambda}}$, whence $d_1(\bm{\lambda})\leqslant(q+1)^d$. Finally, by \cite[Theorems 3--7]{BG07a}, for each $S=\leftidx{^t}X_d(q^t)$ with $X\in\{B,C,D\}$, if $\bm{\lambda}$ corresponds to a conjugacy class $C_{\bm{\lambda}}$ of maximal tori of $\Inndiag(S)$, then the exponent of any representative of $C_{\bm{\lambda}}$ divides $d_1(\bm{\lambda})$. Combining these facts, it follows that for any symplectic or orthogonal group $S=\leftidx{^t}X_d(q^t)$, the set $M(S)$ of all numbers of the form $d_1(\bm{\lambda})$ such that both entries of $\bm{\lambda}$ are reduced partitions is sufficient, and
\[
|M(S)|\leqslant\sum_{d_++d_-=d}\left(\sum_{i_+=0}^{d_+}{\sum_{i_-=0}^{d_-}{s(i_+)s(i_-)}}\right),
\]
where, again, $s(i)$ denotes the number of ordered integer partitions of $i$ with pairwise distinct parts.
\end{itemize}
It is not difficult to check that for each $d\geqslant1$,
\[
\sum_{i=0}^{d+1}{s(i)}\leqslant\sum_{d_++d_-=d}\left(\sum_{i_+=0}^{d_+}{\sum_{i_-=0}^{d_-}{s(i_+)s(i_-)}}\right)=:g_2(d),
\]
and hence every classical finite simple group of Lie type $S$ of untwisted rank $d$ admits a sufficient set $M(S)$ of positive integers of size at most $g_2(d)$.

It will also be necessary to use sharper upper bounds on
\[
\max\{\tau(1),\ldots,\tau((q+1)^d)\},
\]
where $\tau(n)$ denotes the number of (positive) divisors of $n$, exploiting that by assumption, $(q+1)^d$ is \enquote{large}. Assume first that $d\geqslant91$ (and $q=2$). Then
\[
(q+1)^{0.311\cdot d}=3^{0.311\cdot d}\geqslant 3^{0.311\cdot 91}\geqslant \exp(\exp(0.311^{-1}\cdot 1.538\cdot\log{2})),
\]
which (by taking logarithms twice) is equivalent to
\[
\frac{1.538\log{2}}{\log\log{(q+1)^{0.311\cdot d}}}\leqslant0.311.
\]
Hence by \cite[Th{\'e}or{\`e}me 1]{NR83a}, for every positive integer $k\geqslant(q+1)^{0.311\cdot d}$, we have $\tau(k)\leqslant k^{0.311}$. We claim that this implies that
\[
\max\{\tau(1),\ldots,\tau((q+1)^d)\}\leqslant(q+1)^{0.311d}.
\]
Indeed, let $k$ be a positive integer with $1\leqslant k\leqslant(q+1)^d$. If $k\geqslant(q+1)^{0.311\cdot d}$, then by the above, $\tau(k)\leqslant k^{0.311}\leqslant(q+1)^{0.311\cdot d}$. And if $k<(q+1)^{0.311\cdot d}$, then $\tau(k)\leqslant k<(q+1)^{0.311\cdot d}$.

Now, following the argument from the case (1) and replacing $\min\{d+1,q+1\}$ in Formula (\ref{qOfSBoundEq}) by $q+1=3$ instead of $d+1$, we obtain that
\[
\epsilon_{\q}(S)\geqslant\frac{\log{((1-0.311\cdot\frac{\log{3}}{\log{2}})d-\frac{\log{g_2(d)}+2\log{3}+\log{(2+\frac{\log{(2d)}}{\log{2}})}+\log{2}}{\log{2}})}+\log\log{2}}{\log{(4d^2)}+\log\log{2}},
\]
and one can check this lower bound to be greater than $\epsilon_{\q}(\M)$ for $d=91,\ldots,1011$ (we use the table of values of $s(k)$ from \cite{OEIS} to compute $g_2(d)$). This concludes the discussion of this case for $q=2$, so we may henceforth assume that $q\geqslant3$ and $d\in\{54,\ldots,179\}$. Then we repeat the argument for $q=2$ with $\frac{1}{3}$ instead of $0.311$, obtaining that
\[
\epsilon_{\q}(S)\geqslant\frac{\log{((1-\frac{\log{4}}{\log{27}})d-\frac{\log{g_2(d)}+2\log{(d+1)}+\log{(2+\frac{\log{(2d)}}{\log{2}})}+\log{2}}{\log{3}}-\frac{1}{\e\log{2}})}}{\log{(4d^2)}},
\]
which can also be checked to be larger than $\epsilon_{\q}(\M)$ for each $d\in\{54,\ldots,179\}$.

\item \emph{$\epsilon_{\q}(S)>\epsilon_{\q}(\M)$ if $d\geqslant54$.} Compared to the previous case, the only groups $S$ additionally included here are those where $q=2$ and $d\in\{54,\ldots,90\}$. Observe that the Schur cover of $\leftidx{^t}X_d(2^t)$ embeds into $\leftidx{^t}X_{d+1}(2^t)$; for example, $\SL_{d+1}(2)$ embeds into $\PSL_{d+2}(2)$ via
\[
M\mapsto\overline{\begin{pmatrix}M & 0 \\ 0 & 1\end{pmatrix}},
\]
and similar arguments work for the other types of classical groups. In particular,
\begin{equation}\label{omicronssEq}
\omicron_{\ssrm}(\leftidx{^t}X_d(2^t))\leqslant\omicron_{\ssrm}(\leftidx{^t}X_{d+1}(2^t)).
\end{equation}
Now let $S=\leftidx{^t}X_d(2^t)$ with $d\in\{54,\ldots,90\}$. Then by Formula (\ref{omicronssEq}),
\begin{equation}\label{omicronssEq2}
\omicron(S)\leqslant\omicron_{\ssrm}(S)\cdot(1+\lceil\log_2(h(X_d))\rceil)\leqslant\omicron_{\ssrm}(\leftidx{^t}X_{90}(2^t))\cdot(1+\lceil\log_2(h(X_d))\rceil).
\end{equation}
Define $\omicron_0(S)$ to be the upper bound on $\omicron(S)$ from Formula (\ref{omicronssEq2}). In order to make use of it, one needs upper bounds on $\omicron_{\ssrm}(\leftidx{^t}X_{90}(2^t))$ for each of the six Lie classes of finite simple classical groups. The authors computed such bounds using GAP \cite{GAP4}; let us briefly explain how this was done.

Consider the following case-dependent notion of an admissible partition or partition pair:
\begin{itemize}
\item For $\leftidx{^t}X\in\{A,\leftidx{^2}A\}$, a partition $\lambda$ is called \emph{$\leftidx{^t}X$-admissible} if and only if $\lambda$ is reduced.
\item For $\leftidx{^t}X\in\{B,C\}$, a partition pair $\bm{\lambda}=(\lambda_+,\lambda_-)$ is called \emph{$\leftidx{^t}X$-admissible} if and only if both entries of $\bm{\lambda}$ are reduced.
\item For $t\in\{1,2\}$, a partition pair $\bm{\lambda}=(\lambda_+,\lambda_-)$ is called \emph{$\leftidx{^t}D$-admissible} if and only if $\lambda_+$ is reduced, the number of parts of $\lambda_-$ is congruent to $t+1$ modulo $2$, and $\lambda_-$ is \enquote{almost reduced} in the sense that if some positive integer $k>1$ occurs $m>1$ times as a part of $\lambda_-$, then $(k,m)=(2,2)$.
\end{itemize}
Then every admissible partition or partition pair with the \enquote{right} sum of parts corresponds to a conjugacy class of maximal tori of $\Inndiag(S)$, and for every maximal torus $T_1$ of $\Inndiag(S)$, there is an admissible partition or partition pair with corresponding maximal torus $T_2$ such that $\Exp(T_1\cap S)$ divides $\Exp(T_2\cap S)$. Therefore, $\omicron_{\ssrm}(S)$ is bounded from above by the sum of the numbers of divisors of the exponents of the groups $T\cap S$ where $T$ ranges over a set of representatives for the conjugacy classes of maximal tori of $\Inndiag(S)$ corresponding to admissible partitions or partition pairs. It is this sum which we computed for each of the five (taking $B_{90}(2)\cong C_{90}(2)$ into account) rank $90$ classical groups using GAP, and we list the computed values in the following table:

\begin{center}
\begin{longtable}[H]{|c|c|}
\caption{Upper bounds for rank $90$ groups.}
\label{ssBoundsTable}\\
\hline
$\leftidx{^t}X$ & upper bound on $\omicron_{\ssrm}(\leftidx{^t}X_{90}(2^t))$ \\ \hline
$A$ & 4235078858 \\ \hline
$\leftidx{^2}A$ & 3178257722 \\ \hline
$B,C$ & 22293229392 \\ \hline
$D$ & 15931588348 \\ \hline
$\leftidx{^2}D$ & 12297818620 \\ \hline
\end{longtable}

\end{center}

Observe that by \cite[Theorem 1.1(1)]{FG12a}, we have
\begin{equation}\label{ksEq}
\k(S)\geqslant\frac{\k(\Inndiag(S))}{|\Inndiag(S):S|}\geqslant\frac{2^d}{|\Inndiag(S):S|}=\begin{cases}2^d, & \text{if }\leftidx{^t}X\not=\leftidx{^2}A, \\ \frac{2^d}{\gcd(3,d+1)}, & \text{if }\leftidx{^t}X=\leftidx{^2}A.\end{cases}
\end{equation}
Set $\omega_0(S):=\lceil\k_0(S)/|\Out(S)|\rceil$ where $\k_0(S)$ is the lower bound on $\k(S)$ from Formula (\ref{ksEq}). Then one can check that
\[
\epsilon_{\q}(S)\geqslant\frac{\log\log{(\omega_0(S)/\omicron_0(S)+3)}}{\log\log{|S|}}>\epsilon_{\q}(\M),
\]
as required.

\item \emph{$\epsilon_{\q}(S)>\epsilon_{\q}(\M)$ for all classical finite simple groups of Lie type $S$ except possibly those from an explicit, finite list (given below).} Note that by case (3), we only need to consider classical groups $S$ of untwisted Lie rank at most $53$. The goal is to explicitly determine, for each $d=1,\ldots,53$, a prime power $q_0(d)$ such that for all prime powers $q\geqslant q_0(d)$, we have $\epsilon_{\q}(\leftidx{^t}X_d(q^t))>\epsilon_{\q}(\M)$, and then use a computer to sort out most of the remaining finitely many groups in order to arrive at a short, explicit list of potential exceptions that will need to be checked with more careful methods.

Let us start with $d=1$, which is discussed separately because the method used for $d\geqslant2$ would produce too large a value for $q_0(1)$. So we need to consider $S=A_1(q)=\PSL_2(q)$. By \cite[Table 4, p.~43]{Mac81a},
\[
\k(\PSL_2(q))=\begin{cases}q+1, & \text{if }2\mid q, \\ \frac{q+5}{2}, & \text{if }2\nmid q,\end{cases}
\]
and so in any case, $\k(\PSL_2(q))\geqslant\frac{q+1}{2}$. Moreover,
\[
|\Out(\PSL_2(q))|=\gcd(2,q-1)\cdot f,
\]
so that
\[
\omega(\PSL_2(q))\geqslant\frac{q+1}{4f}\geqslant\frac{q+1}{4\log_2(q)}.
\]
Finally, by \cite[Theorem 2.1]{BG07a}, the maximal tori of $\PSL_2(q)$ are cyclic of orders $\frac{q\pm1}{\gcd(2,q-1)}$, and since
\[
\left\{\overline{\begin{pmatrix}1 & a \\ 0 & 1\end{pmatrix}} \mid a\in\IF_q\right\}
\]
is a Sylow $p$-subgroup of $\PSL_2(q)$ which is of exponent $p$ and equal to the centraliser in $\PSL_2(q)$ of any of its nontrivial elements, the only non-semisimple element order in $\PSL_2(q)$ is $p$. It follows that $\Ord(\PSL_2(q))$ is just the set of positive integers dividing at least one of the numbers $\frac{q+1}{2}$, $\frac{q-1}{2}$ or $p$, so that
\[
\omicron(\PSL_2(q))\leqslant 2+2\sqrt{\frac{q+1}{2}}-1+2\sqrt{\frac{q-1}{2}}-1=2(\sqrt{\frac{q+1}{2}}+\sqrt{\frac{q-1}{2}}).
\]
It follows that
\[
\epsilon_{\q}(\PSL_2(q))\geqslant\frac{\log\log{(\frac{q+1}{8\log_2(q)\cdot(\sqrt{(q+1)/2}+\sqrt{(q-1)/2})})}}{\log\log{(q(q^2-1))}},
\]
which can be checked to be larger than $\epsilon_{\q}(\M)$ for $q\geqslant1100543=:q_0(1)$, as recorded in Table \ref{q0Table}.

Let us now discuss how to handle $d=2,\ldots,53$. Set
\[
c:=c(d):=\begin{cases}2, & \text{if }d\not=4, \\ 6, & \text{if }d=4.\end{cases}
\]
Then
\[
\omega(S)\geqslant\frac{q^d}{\min\{d+1,q+1\}^2\cdot cf}\geqslant\frac{q^d}{c\log_2(q)\cdot\min\{d+1,q+1\}^2}
\]
and
\[
\omicron(S)\leqslant g_2(d)\cdot 2(q+1)^{d/2}\cdot(1+\lceil\log_2(2d)\rceil).
\]
It follows that
\[
\epsilon_{\q}(S)\geqslant\frac{\log\log{\frac{q^d}{2c\cdot\log_2(q)\cdot\min\{d+1,q+1\}^2\cdot g_2(d)(1+\lceil\log_2(2d)\rceil)\cdot(q+1)^{d/2}}}}{\log\log{q^{4d^2}}},
\]
which can be checked to be greater than $\epsilon_{\q}(\M)$ for all $q\geqslant q_0(d)$ with $q_0(d)$ as in the following table:

\begin{center}
\begin{longtable}[H]{|c|c||c|c|}
\caption{Values of $q_0(d)$.}
\label{q0Table}\\
\hline
$d$ & $q_0(d)$ & $d$ & $q_0(d)$ \\ \hline
$1$ & $1100543$ & $13$ & $25$ \\ \hline
$2$ & $62753$ & $14$ & $23$ \\ \hline
$3$ & $4903$ & $15$ & $19$ \\ \hline
$4$ & $1801$ & $16,17$ & $16$ \\ \hline
$5$ & $401$ & $18$ & $13$ \\ \hline
$6$ & $197$ & $19,20,21$ & $11$ \\ \hline
$7$ & $121$ & $22$ & $9$ \\ \hline
$8$ & $79$ & $23,24,25$ & $8$ \\ \hline
$9$ & $59$ & $26,\ldots,34$ & $7$ \\ \hline
$10$ & $43$ & $35,\ldots,45$ & $5$ \\ \hline
$11$ & $37$ & $46,\ldots,53$ & $4$ \\ \hline
$12$ & $29$ & & \\ \hline
\end{longtable}

\end{center}

So, at this point, we are down to an explicit, finite list of potential exceptions $S$ to $\epsilon_{\q}(S)>\epsilon_{\q}(\M)$. We can still reduce this list further by checking, for each $S=\leftidx{^t}X_d(q^t)$ with $d\in\{1,\ldots,53\}$ and $q<q_0(d)$, whether certain sharper lower bounds on $\epsilon_{\q}(S)$ are larger than $\epsilon_{\q}(\M)$. More precisely,
\begin{itemize}
\item with $T$ ranging over a complete set of representatives of the conjugacy classes of maximal tori of $\Inndiag(S)$ corresponding to an admissible partition or partition pair as defined in the argument for the previous case, (3),
\begin{itemize}
\item observe that $\omicron_{\ssrm}(S)$ is just the cardinality of the set of positive integers dividing one of the numbers $\Exp(T\cap S)$, and
\item let $\overline{\omicron_{\ssrm}}(S)$ denote the sum of the numbers $\tau(\Exp(T\cap S))$, where $\tau(k)$ denotes the number of divisors of $k$.
\end{itemize}
Then $\omicron_{\ssrm}(S)\leqslant\overline{\omicron_{\ssrm}}(S)$. Moreover, set
\[
\overline{\omicron}(S):=\omicron_{\ssrm}(S)\cdot(1+\lceil\log_p(h(X_d))\rceil)
\]
and
\[
\overline{\overline{\omicron}}(S):=\overline{\omicron_{\ssrm}}(S)\cdot(1+\lceil\log_p(h(X_d))\rceil).
\]
We have $\omicron(S)\leqslant\overline{\omicron}(S)\leqslant\overline{\overline{\omicron}}(S)$.
\item consider the following lower bounds $\underline{\k}(S)$ and $\underline{\underline{\k}}(S)$ on $\k(S)$, which satisfy $\k(S)\geqslant\max\{\underline{\k}(S),\underline{\underline{\k}}(S)\}$:
\begin{itemize}
\item Assume that at least one of the following holds:
\begin{itemize}
\item $\leftidx{^t}X\in\{A,\leftidx{^2}A,B\}$;
\item $\leftidx{^t}X\in\{D,\leftidx{^2}D\}$ and $S$ is its own Schur cover;
\item $p=2$.
\end{itemize}
Then set $\underline{\k}(S):=\k(S)$, to be computed according to \cite[Formulas (5.2) and (6.13)]{Mac81a} for $\leftidx{^t}X\in\{A,\leftidx{^2}A\}$, or \cite[Theorems 3.19(1), 3.13(1), 3.16(1,2) and 3.22(1,2)]{FG12a} for $\leftidx{^t}X\in\{B,C,D,\leftidx{^2}D\}$.
\item If $\leftidx{^t}X=C$ and $p>2$, set $\underline{\k}(S):=\lceil\frac{\k(\Sp_{2d}(q))}{2}\rceil$, to be computed according to \cite[Subsection 2.6, Case (B), statement (iii), p.~36]{Wal63a}.
\item If $\leftidx{^t}X\in\{D,\leftidx{^2}D\}$, $p>2$ and the Schur cover $\tilde{S}=\Omega^{\pm}_{2d}(q)$ of $S$ has nontrivial centre, set $\underline{\k}(S):=\lceil\frac{\k(\tilde{S})}{2}\rceil$, to be computed according to \cite[Theorem 3.18(1)]{FG12a}.
\item Set
\[
\underline{\underline{\k}}(S):=
\begin{cases}
\underline{\k}(S)=\k(S), & \text{if }\leftidx{^t}X\in\{A,\leftidx{^2}A\}, \\
\lceil\frac{q^d}{\gcd(2,q-1)}\rceil, & \text{if }\leftidx{^t}X\in\{B,C\}, \\
\lceil\frac{q^d}{\gcd(2,q-1)^2}\rceil, & \text{if }\leftidx{^t}X\in\{D,\leftidx{^2}D\}.
\end{cases}
\]
\end{itemize}
The fact that $\k(S)\geqslant\max\{\underline{\k}(S),\underline{\underline{\k}}(S)\}$ follows from the above definitions and the bound $\k(\Inndiag(S))\geqslant q^d$, see \cite[Theorem 1.1(1)]{FG12a}. Using these lower bounds on $\k(S)$, set
\[
\underline{\omega}(S):=\lceil\underline{\k}(S)/|\Out(S)|\rceil
\]
and
\[
\underline{\underline{\omega}}(S):=\lceil\underline{\underline{\k}}(S)/|\Out(S)|\rceil,
\]
so that $\omega(S)\geqslant\max\{\underline{\omega}(S),\underline{\underline{\omega}}(S)\}$.
\end{itemize}

Note that, while $\overline{\omicron}(S)$ is a better upper bound on $\omicron(S)$ than $\overline{\overline{\omicron}}(S)$, it takes longer to compute, and similarly, $\underline{\omega}(S)$ (which seems to be larger than $\underline{\underline{\omega}}(S)$ by empirical evidence) takes longer to compute than $\underline{\underline{\omega}}(S)$. In order to minimise the computation time for our checks, we proceed as follows:

For each pair $(d,q)$ with $d\in\{1,\ldots,53\}$ and $q$ being a prime power less than $q_0(d)$, we go through the finite simple classical Lie type groups $S=\leftidx{^t}X_d(q^t)$, and, for each of them, we check first whether
\[
\frac{\log\log{(\underline{\underline{\omega}}(S)/\overline{\overline{\omicron}}(S)+3)}}{\log\log{|S|}}>\epsilon_{\q}(\M).
\]
If this fails, we check if
\[
\frac{\log\log{(\underline{\omega}(S)/\overline{\overline{\omicron}}(S)+3)}}{\log\log{|S|}}>\epsilon_{\q}(\M),
\]
and if this also fails, we check whether
\[
\frac{\log\log{(\underline{\omega}(S)/\overline{\omicron}(S)+3)}}{\log\log{|S|}}>\epsilon_{\q}(\M).
\]
If this third check also fails and $d>1$, we take note of $S$, progressively building a (hopefully short) list of exceptions that require further inspection. When $d=1$ (and so $S=A_1(q)=\PSL_2(q)$), we know that $\omicron(S)=\omicron_{\ssrm}(S)+1$ and perform one more check, namely whether
\[
\frac{\log\log{(\underline{\omega}(S)/(\omicron_{\ssrm}(S)+1)+3)}}{\log\log{|S|}}>\epsilon_{\q}(\M),
\]
and only if $S$ fails this check as well do we add it to the list. We note that the GAP algorithm written by the authors to perform these checks prints a warning several times, stating that the calculations are carried out under the assumption that $37644053098601$ is a prime. This, however is not a problem, as one can actually check with GAP's built-in primality testing algorithm that this number is indeed a prime (GAP's primality testing algorithm is deterministic for inputs less than $10^{18}$, see \cite[Subsection 14.4-2: {\tt IsPrimeInt}]{GAPM}).

It turns out that only the following $68$ groups fail each of these refined checks:
\begin{itemize}
\item $A_d(q)$ with $(d,q)$ from the set
\begin{align*}
&\{(2,2),(2,4),(2,7),(2,8),(2,9),(2,13),(2,16),(2,19),(2,25),(2,49),(2,64), \\
&(3,2),(3,3),(3,4),(3,5),(3,7),(3,9),(3,13),(4,2),(4,3),(4,4),(4,16),(5,2), \\
&(5,3),(5,4),(6,2)\}.
\end{align*}
\item $\leftidx{^2}A_d(q^2)$ with $(d,q)$ from the set
\begin{align*}
&\{(2,2),(2,4),(2,5),(2,8),(2,11),(2,17),(2,23),(2,29),(2,32),(3,3),(3,4), \\
&(3,5),(3,7),(3,9),(3,11),(4,4),(4,9),(5,2),(5,3),(5,5),(7,3),(8,2)\}.
\end{align*}
\item $B_d(q)$ with $(d,q)$ from the set
\[
\{(2,4),(2,8),(2,9),(3,3),(3,4),(3,5)\}.
\]
\item None of the groups $C_d(q)$ with $d\geqslant3$ except for $C_3(4)\cong B_3(4)$.
\item $D_d(q)$ with $(d,q)$ from the set
\[
\{(4,2),(4,3),(4,4),(4,5),(4,7),(4,9),(5,2),(5,3),(5,5),(6,3),(7,3)\}.
\]
\item $\leftidx{^2}D_d(q^2)$ with $(d,q)$ from the set
\[
\{(4,2),(4,3),(5,3)\}.
\]
\end{itemize}

\item \emph{$\epsilon_{\q}(S)>\epsilon_{\q}(\M)$ for all finite simple classical groups of Lie type $S$.} We need to deal with the $67$ remaining groups listed above. The authors implemented simple algorithms in GAP \cite{GAP4} for computing $\omicron(S)$ and $\omega(S)$ (and thus $\epsilon_{\q}(S)$) exactly. These algorithms require one to first compute the set of conjugacy classes of $S$, for which GAP has the built-in command {\tt ConjugacyClasses}. This allows one to deal with $46$ out of the $68$ groups, as listed in the following table:
\begin{center}
\begin{longtable}[H]{|c|c|c|c||c|c|c|c|}
\caption{Exact computation of $\epsilon_{\q}(S)$.}
\label{exactComputTable}\\
\hline
$S$ & $\omega(S)$ & $\omicron(S)$ & $\epsilon_{\q}(S)\approx$ & $S$ & $\omega(S)$ & $\omicron(S)$ & $\epsilon_{\q}(S)\approx$ \\ \hline
$A_2(2)$ & $5$ & $5$ & $0.199907$ & $\leftidx{^2}A_2(2^2)$ & $4$ & $4$ & $0.224773$ \\ \hline
$A_2(4)$ & $6$ & $6$ & $0.142406$ & $\leftidx{^2}A_2(4^2)$ & $9$ & $8$ & $0.145146$ \\ \hline
$A_2(7)$ & $15$ & $10$ & $0.152856$ & $\leftidx{^2}A_2(5^2)$ & $10$ & $9$ & $0.140543$ \\ \hline
$A_2(8)$ & $17$ & $10$ & $0.155376$ & $\leftidx{^2}A_2(8^2)$ & $10$ & $9$ & $0.126245$ \\ \hline
$A_2(9)$ & $32$ & $17$ & $0.160853$ & $\leftidx{^2}A_2(11^2)$ & $30$ & $15$ & $0.164402$ \\ \hline
$A_2(13)$ & $39$ & $15$ & $0.183387$ & $\leftidx{^2}A_2(17^2)$ & $62$ & $21$ & $0.188453$ \\ \hline
$A_2(16)$ & $20$ & $12$ & $0.141745$ & $\leftidx{^2}A_3(3^2)$ & $14$ & $10$ & $0.145173$ \\ \hline
$A_2(19)$ & $75$ & $23$ & $0.19498$ & $\leftidx{^2}A_3(4^2)$ & $35$ & $14$ & $0.175921$ \\ \hline
$A_2(25)$ & $72$ & $21$ & $0.193765$ & $\leftidx{^2}A_3(5^2)$ & $64$ & $21$ & $0.186343$ \\ \hline
$A_2(49)$ & $237$ & $31$ & $0.253006$ & $\leftidx{^2}A_3(7^2)$ & $76$ & $23$ & $0.18362$ \\ \hline
$A_3(2)$ & $12$ & $8$ & $0.177958$ & $\leftidx{^2}A_4(4^2)$ & $34$ & $19$ & $0.129955$ \\ \hline
$A_3(3)$ & $21$ & $12$ & $0.161363$ & $\leftidx{^2}A_5(2^2)$ & $44$ & $18$ & $0.167802$ \\ \hline
$A_3(4)$ & $36$ & $16$ & $0.16688$ & $B_2(2)$ & $12$ & $9$ & $0.145857$ \\ \hline
$A_3(5)$ & $34$ & $16$ & $0.157277$ & $B_2(8)$ & $21$ & $14$ & $0.134539$ \\ \hline
$A_3(7)$ & $137$ & $30$ & $0.210503$ & $B_2(9)$ & $41$ & $16$ & $0.176644$ \\ \hline
$A_3(9)$ & $85$ & $26$ & $0.175965$ & $B_3(3)$ & $52$ & $16$ & $0.195259$ \\ \hline
$A_3(13)$ & $358$ & $36$ & $0.260237$ & $B_3(4)$ & $75$ & $22$ & $0.183849$ \\ \hline
$A_4(2)$ & $20$ & $13$ & $0.14886$ & $B_3(5)$ & $136$ & $27$ & $0.209901$ \\ \hline
$A_4(3)$ & $72$ & $24$ & $0.178591$ & $D_4(2)$ & $27$ & $12$ & $0.181326$ \\ \hline
$A_4(4)$ & $110$ & $32$ & $0.177528$ & $D_4(3)$ & $38$ & $16$ & $0.161802$ \\ \hline
$A_5(2)$ & $44$ & $18$ & $0.166564$ & $D_5(2)$ & $84$ & $24$ & $0.189461$ \\ \hline
$A_5(4)$ & $169$ & $40$ & $0.176772$ & $\leftidx{^2}D_4(2^2)$ & $39$ & $15$ & $0.1844$ \\ \hline
$A_6(2)$ & $77$ & $27$ & $0.163159$ & $\leftidx{^2}D_4(3^2)$ & $100$ & $25$ & $0.195833$ \\ \hline
\end{longtable}

\end{center}

For $S=D_4(4)$, one can compute the set of conjugacy classes of $S$ with GAP and use this to determine $\omicron(S)$ exactly as well as to provide a certain lower bound on $\omega(S)$. This lower bound, the derivation of which is explained in detail after Table \ref{conjClassTable}, is larger than $\underline{\omega}(S)$ and (unlike $\underline{\omega}(S)$) is sufficient to conclude that $\epsilon_{\q}(S)>\epsilon_{\q}(\M)$:

\begin{center}
\begin{longtable}[H]{|c|c|c|c|}
\caption{A conjugacy class argument.}
\label{conjClassTable}\\
\hline
$S$ & $\omega(S)\geqslant$ & $\omicron(S)$ & $\epsilon_{\q}(S)\geqslant$ \\ \hline
$\D_4(4)$ & $51$ & $23$ & $0.143317$ \\ \hline
\end{longtable}

\end{center}

Indeed, consider the set $\C$ of conjugacy classes of $S$. View $\C$ as a multiset, and consider the multiset obtained from it by replacing each element $C\in\C$ by the order of a representative of $C$. The following is the said multiset (we write $x_n$ shorthand for $n$ copies of $x$):
\begin{align*}
&\{1_1,2_5,3_5,4_6,5_{19},6_{14},7_1,8_2,9_3,10_{43},12_7,13_2,15_{43},17_{24},20_{18},21_8,30_{42},34_{12}, \\
&51_{12},63_{18},65_{48},85_{24},255_{48}\}.
\end{align*}
In particular, $\omicron(S)=23$, and since $|\Out(S)|=12$, for each element order $o$ in $S$, $\omega_o(S)$ is bounded from below by the shortest length of an integer partition of $o$ into divisors of $12$. Hence
\begin{itemize}
\item $\omega_o(S)\geqslant2$ for $o\in\{2,3,6,12,17,20,21,63,85\}$,
\item $\omega_5(S)\geqslant3$,
\item $\omega_o(S)\geqslant4$ for $o\in\{30,65,255\}$, and
\item $\omega_o(S)\geqslant5$ for $o\in\{10,15\}$.
\end{itemize}
It follows that $\omega(S)\geqslant\omicron(S)+28=51$, as asserted.

In order to deal with the remaining $20$ possibilities for $S$, which are all of Lie type $A$, $\leftidx{^2}A$, $D$ or $\leftidx{^2}D$ (and those of types $D$ or $\leftidx{^2}D$ all have odd defining characteristic), it will be useful to have an algorithm for computing an upper bound on $\omicron(S)$, and it is our next goal to describe such an algorithm.

Let $S=\leftidx{^t}X_d(p^{ft})$. Recall from the beginning of this subsection that by \cite[Corollary 0.5]{Tes95a}, the largest power of $p$ dividing $\Exp(S)$ is $p^{\lceil\log_p(h(X_d))\rceil}$ where $h(X_d)$ is the Coxeter number of the root system $X_d$. Recall also from earlier in this proof that we already described and used an algorithm for computing $\omicron_{\ssrm}(S)$, the number of semisimple element orders in $S$. This latter algorithm essentially consists of looping over certain conjugacy classes of maximal tori $T$ of $\Inndiag(S)$ and joining the divisor sets $\Div(\Exp(T\cap S))$, based on the formulas in \cite{BG07a}. Now, for $e=0,\ldots,\lceil\log_p(h(X_d))\rceil$, denote by $\omicron_{p,e}(S)$ the number of element orders $o$ in $S$ such that the largest power of $p$ dividing $o$ is $p^e$. Hence, $\omicron_{p,0}(S)$ is just $\omicron_{\ssrm}(S)$, $\omicron_{p,1}(S)$ is the number of element orders in $S$ that are sharply divisible by $p$, and so on. Our algorithm for computing an upper bound on $\omicron(S)$ proceeds by computing a certain upper bound $\overline{\omicron_{p,e}}(S)$ on $\omicron_{p,e}(S)$ for each $e=0,\ldots,\lceil\log_p(h(X_d))\rceil$ and then adding those upper bounds.

We set $\overline{\omicron_{p,0}}(S):=\omicron_{\ssrm}(S)$, to be computed as described above. Hence we assume that $e\geqslant1$. The following theoretical results are the basis of our argument:

\begin{enumerate}
\item Let $S=A_d(q)=\PSL_{d+1}(q)$, and let $o\in\Ord(S)$ with $p^e\mid\mid o$, for some given $e\in\IN^+$ (in particular, $p^{e-1}+1\leqslant h(A_d)=d+1$). Then:
\begin{enumerate}
\item There is an element $g\in\GL_{d+1}(q)$ such that $o\mid\ord(g)$ and the following hold:
\begin{itemize}
\item the rational canonical form of $g$ has exactly one non-semisimple block, which is of the form $\Comp((X-a)^{p^{e-1}+1})$ for some $a\in\IF_q^{\ast}$; and
\item the semisimple blocks of the rational canonical form of $g$ form the rational canonical form of a (semisimple) element of $\GL_{d-p^{e-1}}(q)$.
\end{itemize}
In particular, $\frac{o}{p^e}$ divides a number of the form $\lcm(q-1,o')$ for some $o'\in\Ord_{\ssrm}(\GL_{d-p^{e-1}}(q))$.
\item If $p^{e-1}+1=d+1=h(A_d)$, then $o=p^e$.
\item If $p^{e-1}+1=d=h(A_d)-1$, then $\frac{o}{p^e}$ divides $\frac{q-1}{\gcd(d+1,q-1)}$.
\end{enumerate}
\item Let $S=\leftidx{^2}A_d(q^2)=\PSU_{d+1}(q)$, and let $o\in\Ord(S)$ with $p^e\mid\mid o$, for some given $e\in\IN^+$ (in particular, $p^{e-1}+1\leqslant h(A_d)=d+1$). Then:
\begin{enumerate}
\item There is an element $g\in\GU_{d+1}(q)\leqslant\GL_{d+1}(q^2)$ such that $o\mid\ord(g)$ and the following hold:
\begin{itemize}
\item the rational canonical form of $g$ has exactly one non-semisimple block, which is of the form $\Comp((X-a)^{p^{e-1}+1})$ for some $a\in\IF_{q^2}^{\ast}$ with $\ord(a)\mid q+1$; and
\item the semisimple blocks of the rational canonical form of $g$ form the rational canonical form of a (semisimple) element of $\GU_{d-p^{e-1}}(q)$.
\end{itemize}
In particular, $\frac{o}{p^e}$ divides a number of the form $\lcm(q+1,o')$ for some $o'\in\Ord_{\ssrm}(\GU_{d-p^{e-1}}(q))$.
\item If $p^{e-1}+1=d+1=h(A_d)$, then $o=p^e$.
\item If $p^{e-1}+1=d=h(A_d)-1$, then $\frac{o}{p^e}$ divides $\frac{q+1}{\gcd(d+1,q+1)}$.
\end{enumerate}
\item Let $S=\POmega_{2d}^{\epsilon}(q)$ with $\epsilon\in\{+,-\}$ and $q$ odd, and let $o\in\Ord(S)$ with $p^e\mid\mid o$, for some given $e\in\IN^+$ (in particular, $p^{e-1}+1\leqslant h(D_d)=2d-2$). Then:
\begin{enumerate}
\item There is an element $g\in\GO_{2d}^{\epsilon}(q)$ such that $o\mid\ord(g)$ and the following hold:
\begin{itemize}
\item the rational canonical form of $g$ has exactly one non-semisimple block, which is of one of the two forms $\Comp((X+1)^{p^{e-1}+2})$ or $\Comp(P(X)^{p^{e-1}+1})$ for some monic quadratic irreducible polynomial $P(X)\in\IF_q[X]$ such that $\ord(P(X))\mid q+1$;
\item if the unique non-semisimple block of $g$ is of the form $\Comp((X+1)^{p^{e-1}+2})$, then the semisimple blocks of the rational canonical form of $g$ form the rational canonical form of a (semisimple) element of $\GO_{2d-p^{e-1}-2}(q)$; and
\item if the unique non-semisimple block of $g$ is of the form $\Comp(P(X)^{p^{e-1}+1})$, then the semisimple blocks of the rational canonical form of $g$ form the rational canonical form of a (semisimple) element of $\GO_{2d-2(p^{e-1}+1)}^{\epsilon}(q)$.
\end{itemize}
In particular, $\frac{o}{p^e}$ divides a number of one of the two forms $\lcm(2,o')$ for some $o'\in\Ord_{\ssrm}(\GO_{2d-p^{e-1}-2}(q))$, or $\lcm(q+1,o'')$ for some $o''\in\Ord_{\ssrm}(\GO_{2d-2(p^{e-1}+1)}^{\epsilon}(q))$.
\end{enumerate}
\item If $p^{e-1}+1=2d-2=h(D_d)$, then $o=p^e$.
\end{enumerate}

These results can be deduced from the classification of rational canonical forms of a given finite classical isometry group due to Wall \cite[Case (A), p.~34; Case (C), pp.~38f.]{Wal63a}. We only exemplarily prove the result for $S=\leftidx{^2}A_d(q^2)=\PSU_{d+1}(q)$ and leave the remaining two cases as exercises to the inclined reader.

Fix an element $s\in S$ with $\ord(s)=o$, and let $\tilde{s}\in\SU_{d+1}(q)$ be a lift of $s$. By \cite[Case (A), p.~34]{Wal63a}, the rational canonical form of $\tilde{s}$ has the property that its multiset of (Frobenius) blocks is closed under the involutory operation $\Comp(P(X)^k)\mapsto\Comp(\tilde{P}(X)^k)$, where $\tilde{P}(X)$ is the minimal polynomial over $\IF_{q^2}$ of $\xi^{-q}$, for any root $\xi\in\overline{\IF_{q^2}}$ of $P(X)$. Since $p$ does not divide the centre order $|\zeta\SU_{d+1}(q)|$, we also have $p^e\mid\mid\ord(\tilde{s})$, and so all Frobenius blocks $\Comp(P(X)^k)$ of $\tilde{s}$ have the property that $k\leqslant p^e$, and there is at least one block with $k\geqslant p^{e-1}+1$. Choose one copy of a Frobenius block $\Comp(P(X)^k)$ of $\tilde{s}$ for some $P(X)$ with $k\geqslant p^{e-1}+1$, mark it, and if $P(X)\not=\tilde{P}(X)$, additionally mark one copy of $\Comp(\tilde{P}(X)^k)$. Now apply the following transformations, in the given order, to the rational canonical form of $\tilde{s}$:
\begin{itemize}
\item Replace each unmarked block $\Comp(Q(X)^{\ell})$ by one copy of $\Comp(Q(X))$ and $\deg(Q(X))\cdot(\ell-1)$ copies of the trivial block $I_1$.
\item Replace each of the (at most two) marked blocks $\Comp(R(X)^k)$ by one marked copy of $\Comp(R(X)^{p^{e-1}+1})$ and $\deg{R(X)}\cdot (k-p^{e-1}-1)$ unmarked copies of the trivial block $I_1$.
\item If there are now two distinct marked blocks $\Comp(P(X)^{p^{e-1}+1})$ and $\Comp(\tilde{P}(X)^{p^{e-1}+1})$, then replace them by one unmarked copy of each of $\Comp(P(X))$ and $\Comp(\tilde{P}(X))$, as well as one unmarked copy of $\Comp((X-1)^{p^{e-1}+1})$ and $2\deg(P(X))\cdot(p^{e-1}+1)-2\deg(P(X))-(p^{e-1}+1)$ unmarked copies of the trivial block $I_1$. If, on the other hand, there is exactly one marked block, namely $\Comp(P(X)^{p^{e-1}+1})$, then proceed as follows: If $P(X)$ is linear, just remove the mark and leave the block itself unchanged. If $P(X)$ is not linear, replace the block by one unmarked copy of $\Comp(P(X))$, one unmarked copy of $\Comp((X-1)^{p^{e-1}+1})$ and $\deg(P(X))\cdot(p^{e-1}+1)-\deg(P(X))-(p^{e-1}+1)$ unmarked copies of the trivial block $I_1$.
\end{itemize}
These transformations result in a rational canonical form in $\GL_{d+1}(q^2)$ whose multiset of Frobenius blocks is still closed under the operation $\Comp(Q(X)^{\ell})\mapsto\Comp(\tilde{Q}(X)^{\ell})$, whence by \cite[Case (A), p.~34]{Wal63a}, the constructed rational canonical form is attained by some element $g\in\GU_{d+1}(q)$. Moreover, it is clear by construction that the rational canonical form of $g$ has exactly one non-semisimple block, which is of the form $\Comp(L(X)^{p^{e-1}+1})$ for some linear polynomial $L(X)\in\IF_{q^2}[X]$ with $L(X)=\tilde{L}(X)$, which forces $L(X)$ to be of the form $X-a$ with $\ord(a)$ divides $q+1$. The multiset of semisimple blocks of the rational canonical form of $g$ is closed under $\Comp(Q(X)^{\ell})\mapsto\Comp(\tilde{Q}(X)^{\ell})$ and thus by \cite[Case (A), p.~34]{Wal63a} forms the rational canonical form of an element of $\GU_{(d+1)-(p^{e-1}+1)}(q)=\GU_{d-p^{e-1}}(q)$. Finally, none of the three transformations described above which lead from the rational canonical form of $\tilde{s}$ to the one of $g$ change the order of the form, whence $o=\ord(s)$ divides $\ord(\tilde{s})=\ord(g)$, as required. This concludes the proof of statement (i) for $S=\leftidx{^2}A_d(q^2)$.

As for statement (ii), note that any lift $\tilde{s}\in\SU_{d+1}(q)$ of $s$ must have a Frobenius block of the form $\Comp(P(X)^k)$ with $k\geqslant p^{e-1}+1=d+1$. It follows that the rational canonical form of $\tilde{s}$ consists of exactly one Frobenius block, of the form $\Comp((X-a)^{d+1})$ with $a\in\IF_{q^2}^{\ast}$ of order dividing $q+1$. Hence $(\tilde{s}-aI_{d+1})^{d+1}=0$, and consequently (raising both sides to the power $p^e$) $(\tilde{s}^{p^e}-a^{p^e}I_{d+1})^{d+1}=0$. However, $\tilde{s}^{p^e}$ is semisimple, and so the minimal polynomial of $\tilde{s}^{p^e}$ divides $(X-a^{p^e})^{d+1}$. It follows that the minimal polynomial of $\tilde{s}^{p^e}$ is $X-a^{p^e}$, whence $\tilde{s}^{p^e}$ is the scalar matrix $a^{p^e}I_{d+1}$. Denoting by $\pi$ the canonical projection $\SU_{d+1}(q)\rightarrow\PSU_{d+1}(q)=S$, it follows that
\[
s^{p^e}=\pi(\tilde{s})^{p^e}=\pi(\tilde{s}^{p^e})=\pi(a^{p^e}I_{d+1})=1_S,
\]
whence $o=p^e$, as asserted.

Finally, as for statement (iii), let, again, $\tilde{s}\in\SU_{d+1}(q)$ be a lift of $s$. Then $\tilde{s}$ has a Frobenius block of the form $\Comp(P(X)^k)$ with $k\geqslant p^{e-1}+1=d$, so either $\tilde{s}$ is as in the arugment for statement (ii), which implies that $\ord(s)=p^e$, or $\tilde{s}$ has two Frobenius blocks, one of the form $\Comp((X-a)^{p^{e-1}+1})=\Comp((X-a)^d)$ for some $a\in\IF_{q^2}^{\ast}$ with $\ord(a)$ divides $q+1$, and the other block is $\Comp(X-a^{-d})$ (taking into account that $\det(\tilde{s})=1$). Similarly to the argument for statement (ii), we find that $\tilde{s}^{p^e}$ is similar to the diagonal matrix which has the eigenvalues $a^{p^e}$, with multiplicity $d$, and $a^{-dp^e}$, with multiplicity $1$. Modulo the scalars $\zeta\GU_{d+1}(q)$, the said diagonal matrix is congruent to the one which has eigenvalues $1$, with multiplicity $d$, and $a^{-(d+1)p^e}$. It follows that
\[
\frac{o}{p^e}=\ord(s^{p^e})\textrm{ divides }\ord(a^{-(d+1)p^e})\textrm{ divides }\frac{q+1}{\gcd(q+1,d+1)},
\]
as required. This concludes our proof of result (b), exemplary for the entire proof of results (a)--(c).

Now let $e\in\{1,\ldots,\lceil\log_p(h(X_d))\rceil\}$. By the above results (a)--(c), each of the following numbers $\overline{\omicron_{p,e}}(S)$ is an upper bound on $\omicron_{p,e}(S)$:
\begin{enumerate}
\item If $S=A_d(q)$, set
\[
\overline{\omicron_{p,e}}(S):=\begin{cases}1, & \text{if }p^{e-1}+1=d+1, \\ \tau(\frac{q-1}{\gcd(d+1,q-1)}), & \text{if }p^{e-1}+1=d, \\ |\bigcup_{o\in\Ord_{\ssrm}(\GL_{d-p^{e-1}}(q))}{\Div(\lcm(q-1,o))}|, & \text{otherwise.}\end{cases}
\]
\item If $S=\leftidx{^2}A_d(q^2)$, set
\[
\overline{\omicron_{p,e}}(S):=\begin{cases}1, & \text{if }p^{e-1}+1=d+1, \\ \tau(\frac{q+1}{\gcd(d+1,q+1)}), & \text{if }p^{e-1}+1=d, \\ |\bigcup_{o\in\Ord_{\ssrm}(\GU_{d-p^{e-1}}(q))}{\Div(\lcm(q+1,o))}|, & \text{otherwise.}\end{cases}
\]
\item If $S=\POmega^{\epsilon}_{2d}(q)$, set
\[
U_1:=\bigcup_{o\in\Ord_{\ssrm}(\GO_{2d-p^{e-1}-2}(q))}{\Div(\lcm(2,o))}
\]
and
\[
U_2:=\bigcup_{o\in\Ord_{\ssrm}(\GO_{2d-2(p^{e-1}+1)}^{\epsilon}(q))}{\Div(\lcm(q+1,o))}.
\]
Then set
\[
\overline{\omicron_{p,e}}(S):=\begin{cases}1, & \text{if }p^{e-1}+1=2d-2, \\ |U_1 \cup U_2|, & \text{otherwise.}\end{cases}
\]
\end{enumerate}

In order to compute these upper bounds $\overline{\omicron_{p,e}}(S)$, we need to be able to compute the set of semisimple element orders in the groups $\GL_n(q)$, $\GU_n(q)$, $\GO_n(q)$ for $n$ odd, and $\GO_n^{\epsilon}(q)$ for $n$ even. This is similar to (and actually easier than) the computation of $\omicron_{\ssrm}(S)$ following \cite{BG07a}, and it uses the following well-known facts:
\begin{enumerate}
\item The conjugacy classes of maximal tori of $\GL_n(q)$ are in a natural bijection with the ordered integer partitions $\lambda$ of $n$. Moreover, for any maximal torus $T\leqslant\GL_n(q)$ in the class corresponding to $\lambda=(\lambda_1,\ldots,\lambda_s)$, we have $\Exp(T)=\lcm(q^{\lambda_1}-1,\ldots,q^{\lambda_s}-1)$.
\item The conjugacy classes of maximal tori of $\GU_n(q)$ are in a natural bijection with the ordered integer partitions $\lambda$ of $n$. Moreover, for any maximal torus $T\geqslant\GU_n(q)$ in the class corresponding to $\lambda=(\lambda_1,\ldots,\lambda_s)$, we have $\Exp(T)=\lcm(q^{\lambda_1}-(-1)^{\lambda_1},\ldots,q^{\lambda_s}-(-1)^{\lambda_s})$.
\item Let $n\in\IN^+$ be odd. The conjugacy classes of maximal tori of $\GO_n(q)$ are in a natural bijection with the ordered pairs $(\lambda_+,\lambda_-)$ of ordered integer partitions such that the total sum of the parts of $\lambda_+$ and $\lambda_-$ is $\frac{n-1}{2}$. Moreover, for any maximal torus $T\leqslant\GO_n(q)$ in the class corresponding to $(\lambda_+,\lambda_-)$ with $\lambda_+=(\lambda^{(1)}_+,\ldots,\lambda^{(s_+)}_+)$ and $\lambda_-=(\lambda^{(1)}_-,\ldots,\lambda^{(s_-)}_-)$, we have $\Exp(T)=\lcm(q^{\lambda^{(1)}_+}-1,\ldots,q^{\lambda^{(s_+)}_+}-1,q^{\lambda^{(1)}_-}+1,\ldots,q^{\lambda^{(s_-)}_-}+1)$.
\item Let $n\in\IN^+$ be even. The conjugacy classes of maximal tori of $\GO_n^{\epsilon}(q)$, with $\epsilon\in\{+,-\}$, are in a natural bijection with the ordered pairs $(\lambda_+,\lambda_-)$ of ordered integer partitions such that the total sum of the parts of $\lambda_+$ and $\lambda_-$ is $\frac{n}{2}$ and the number of parts of $\lambda_-$ is even when $\epsilon=+$ and odd when $\epsilon=-$. Moreover, for any maximal torus $T\leqslant\GO_n^{\epsilon}(q)$ in the class corresponding to $(\lambda_+,\lambda_-)$ with $\lambda_+=(\lambda^{(1)}_+,\ldots,\lambda^{(s_+)}_+)$ and $\lambda_-=(\lambda^{(1)}_-,\ldots,\lambda^{(s_-)}_-)$, we have $\Exp(T)=\lcm(q^{\lambda^{(1)}_+}-1,\ldots,q^{\lambda^{(s_+)}_+}-1,q^{\lambda^{(1)}_-}+1,\ldots,q^{\lambda^{(s_-)}_-}+1)$.
\end{enumerate}
Recall the notion of a $\leftidx{^t}X$-admissible partition (pair) from case (3) earlier in this proof. The above results on exponents of maximal tori of $\GL_n(q)$ and $\GU_n(q)$ respectively imply that for $G\in\{\GL_n(q),\GU_n(q)\}$ and for each partition $\lambda$ of $n$, there is a reduced (i.e., admissible) partition $\mu$ of $n$ (namely $\mu=\overline{\lambda})$ such that the exponent of any maximal torus of $G$ in the class corresponding to $\lambda$ is equal to the exponent of any maximal torus of $G$ in the class corresponding to $\mu$. Hence $\Ord_{\ssrm}(G)$ can be computed by looping only over conjugacy classes of maximal tori corresponding to admissible (i.e., reduced) partitions of $n$ and joining the sets of divisors of the exponents of such maximal tori. Similarly, $\Ord_{\ssrm}(G)$  for $G$ an orthogonal group, can be computed by looping only over conjugacy classes of maximal tori corresponding to admissible partition pairs and joining the sets of divisors of the exponents of such maximal tori. This concludes our description of the algorithm for computing $\Ord_{\ssrm}(G)$.

We now start to apply this algorithm, which the authors have implemented in GAP \cite{GAP4}. For the $10$ groups listed in Table \ref{onlyOmicronTable}, the direct computation of $\omega(S)$ (and for some also of $\omicron(S)$) via a computation of the conjugacy classes of $S$ was either not possible at all or just too costly. However, combining either
\begin{itemize}
\item the exact value of $\omicron(S)$ (where it can be computed) or
\item the upper bound $\overline{\omicron}(S)$ on it as computed by the above described algorithm
\end{itemize}
with the lower bound $\underline{\omega}(S)$ on $\omega(S)$ from the end of case (4) above is enough for those $9$ groups $S$ to show that $\epsilon_{\q}(S)>\epsilon_{\q}(\M)$.
\begin{center}
\begin{longtable}[H]{|c|c|c|c|}
\caption{Computation of $\underline{\omega}(S)$ and of $\omicron(S)$ or $\overline{\omicron(S)}$ is sufficient.}
\label{onlyOmicronTable}\\
\hline
$S$ & $\underline{\omega}(S)$ & $\omicron(S)$ & $\epsilon_{\q}(S)\geqslant$ \\ \hline
$A_2(64)$ & $39$ & $=26$ & $0.11759$ \\ \hline
$A_4(16)$ & $350$ & $\leqslant86$ & $0.1607$ \\ \hline
$A_5(3)$ & $51$ & $=33$ & $0.114388$ \\ \hline
$\leftidx{^2}A_2(23^2)$ & $32$ & $\leqslant20$ & $0.13303$ \\ \hline
$\leftidx{^2}A_2(29^2)$ & $49$ & $\leqslant25$ & $0.14480$ \\ \hline
$\leftidx{^2}A_3(9^2)$ & $56$ & $\leqslant27$ & $0.13961$ \\ \hline
$\leftidx{^2}A_4(9^2)$ & $76$ & $\leqslant41$ & $0.11623$ \\ \hline
$\leftidx{^2}A_5(3^2)$ & $66$ & $\leqslant35$ & $0.12715$ \\ \hline
$D_5(5)$ & $166$ & $\leqslant81$ & $0.11635$ \\ \hline
$D_7(3)$ & $557$ & $\leqslant113$ & $0.16116$ \\ \hline
\end{longtable}
\end{center}

The group $S=\leftidx{^2}A_2(32^2)=\PSU_3(32)$ can be dealt with similarly; the lower bound $\underline{\omega}(S)=12$ fails us by $1$, but this can be easily fixed:

\begin{center}
\begin{longtable}[H]{|c|c|c|c|}
\caption{A group with similar treatment.}
\label{singleGroupTable}\\
\hline
$S$ & $\omega(S)\geqslant$ & $\omicron(S)$ & $\epsilon_{\q}(S)\geqslant$ \\ \hline
$\leftidx{^2}A_2(32^2)$ & $13$ & $\leqslant10$ & $0.11502$ \\ \hline
\end{longtable}
\end{center}

Indeed, in order to get $\omega(S)\geqslant13$, we note that $\k(S)=356$. This leads to the  lower bound
\[
\underline{\omega}(S)=\lceil\frac{356}{|\Out(S)|}\rceil=\lceil\frac{356}{30}\rceil=12,
\]
and we can improve this by $1$ by noting that the trivial conjugacy class of $S$ is certainly fixed under the action of $\Out(S)$, so that
\[
\omega(S)\geqslant 1+\lceil\frac{355}{30}\rceil=1+12=13,
\]
as asserted.

Only the $10$ groups $S$ listed in Table \ref{remainingTable} below remain to be dealt with. For each of them, we proceed as follows: First, we compute the upper bound $\overline{\omicron}(S)$ on $\omicron(S)$ listed in Table \ref{remainingTable} with our algorithm. Once this is done, we consider certain element orders $o\in\Ord(S)$, for which we provide nontrivial (i.e., greater than $1$) lower bounds on $\omega_o(S)$; essentially, we do so by specifying various distinct rational canonical forms of elements in the Schur cover of $S$ which project to order $o$ elements in $S$, but in light of graph-field automorphisms, the correspondence between rational canonical forms and $\Aut(S)$-orbits of order $o$ elements is not always injective, so in general, we need to divide the number of normal forms by a certain number to get a lower bound on $\omega_o(S)$. This is particularly cumbersome when $S=D_4(q)$, due to the large number of graph-field automorphisms, though we will be able to use \cite[Proposition 3.55(i)]{Bur07a} to at least get better bounds when $o=p$. In any case, this approach allows us to show that $\omega(S)\geqslant\omicron(S)+W(S)$ for a certain number $W(S)$, also listed in Table \ref{remainingTable}. We then conclude that $\q(S)=\frac{\omega(S)}{\omicron(S)}\geqslant\frac{\overline{\omicron}(S)+W(S)}{\overline{\omicron}(S)}$, and this will be sufficient to get that $\epsilon_{\q}(S)>\epsilon_{\q}(\M)$.

\begin{center}
\begin{longtable}[H]{|c|c|c|c|}
\caption{The remaining groups.}
\label{remainingTable}\\
\hline
$S$ & $W(S)$ & $\overline{\omicron}(S)$ & $\epsilon_{\q}(S)\geqslant$ \\ \hline
$\leftidx{^2}A_3(11^2)$ & $29$ & $38$ & $0.12567$ \\ \hline
$\leftidx{^2}A_5(5^2)$ & $65$ & $70$ & $0.11677$ \\ \hline
$\leftidx{^2}A_7(3^2)$ & $84$ & $76$ & $0.11593$ \\ \hline
$\leftidx{^2}A_8(2^2)$ & $48$ & $48$ & $0.11922$ \\ \hline
$D_4(5)$ & $17$ & $31$ & $0.11483$ \\ \hline
$D_4(7)$ & $35$ & $47$ & $0.11611$ \\ \hline
$D_4(9)$ & $38$ & $47$ & $0.11459$ \\ \hline
$D_5(3)$ & $30$ & $44$ & $0.1153$ \\ \hline
$D_6(3)$ & $69$ & $61$ & $0.11808$ \\ \hline
$\leftidx{^2}D_5(3^2)$ & $23$ & $29$ & $0.1161$ \\ \hline
\end{longtable}
\end{center}

As for the details of computing $W(S)$, we start by discussing the four remaining groups $S$ of type $\leftidx{^2}A$. Let $S=\leftidx{^2}A_d(q^2)=\PSU_{d+1}(q)$. The Schur cover of $S$ is $\SU_{d+1}(q)$, sitting inside $\GU_{d+1}(q)\leqslant\GL_{d+1}(q^2)$. The rational canonical forms in $\GL_{d+1}(q^2)$ that are attained by elements of $\GU_{d+1}(q)$ are characterised by Wall in \cite[Subsection 2.6, Case (a), p.~34]{Wal63a} to be those where the multiset of (Frobenius) blocks is closed under the involutory operation $\Comp(P(X)^k)\mapsto\Comp(\tilde{P}(X)^k)$, where $\tilde{P}(X)$ is the minimal polynomial over $\IF_{q^2}$ of $\xi^{-q}$, for any root $\xi\in\overline{\IF_{q^2}}$ of $P(X)$. Henceforth, we will refer to rational canonical forms in $\GL_{d+1}(q^2)$ satisfying Wall's criterion as \emph{admissible}.

It is easy to see that a monic irreducible polynomial $P(X)\in\IF_{q^2}[X]$ satisfies $P(X)=\tilde{P}(X)$ if and only if $\deg{P(X)}$ is odd and $\ord(P(X))$ divides $q^{\deg{P(X)}}+1$, in which case we call either of
\begin{itemize}
\item the positive $p'$-integer $\ord(P(X))$,
\item the polynomial $P(X)$, or
\item the Frobenius block $\Comp(P(X)^k)$
\end{itemize}
\emph{$\U$-economic}, or just \emph{economic} for short (the $\U$ stresses the fact that this notion of \enquote{economy} is specific for the treatment of the unitary case; for orthogonal groups, we will use a different notion of economic objects, see below). Positive $p'$-integers, monic irreducible polynomials in $\IF_{q^2}[X]$ or Frobenius blocks over $\IF_{q^2}$ which are not economic will be called \emph{uneconomic}. Note that these notions of (un)economic objects depend on $q$, which we view as fixed.

It is not difficult to show that a positive $p'$-integer $o$ is economic if and only if it satisfies either of the following, equivalent conditions:
\begin{itemize}
\item $o$ divides some number of the form $q^{2k+1}+1$ with $k\in\IN$.
\item The \emph{$q^2$-degree of $o$}, denoted by $\deg_{q^2}(o)$, which is defined as the smallest positive integer $t$ such that $o\mid q^{2t}-1$, is odd, and $o\mid q^{\deg_{q^2}(o)}+1$.
\end{itemize}

Now, given an element order $o$ in $S=\PSU_{d+1}(q)$, our goal is to specify a set $\F_o$ of order $o$ admissible rational canonical forms in $\GL_{d+1}(q^2)$ such that
\begin{enumerate}
\item all forms in $\F_o$ have determinant $1$ (so that they actually represent elements in $\SU_{d+1}(q)$, not just $\GU_{d+1}(q)$);
\item all forms in $\F_o$ have order $o$ \enquote{modulo the scalars $\zeta\GU_{d+1}(q)$} (i.e., for each form in $\F_o$, $o$ is the smallest positive integer $m$ such that the $m$-th power of the form is in $\zeta\GU_{d+1}(q)$); and
\item no two matrix similarity classes represented by distinct forms in $\F_o$ are fused under multiplication by scalars in $\zeta\GU_{d+1}(q)$.
\end{enumerate}

It is easy to check that each of these three properties, say (x), is implied by a certain other property, (x$'$), which we will now formulate:
\renewcommand{\labelenumii}{(\alph{enumii}$'$)}
\begin{enumerate}
\item $o$ is coprime to $\gcd(d+1,q+1)=|\zeta\GU_{d+1}(q)|$ (equivalently, every Frobenius block of every form in $\F_o$ has order coprime to $\gcd(d+1,q+1)$.
\item Either $o$ is coprime to $\gcd(d+1,q+1)$, or $p\mid o$ and each form in $\F_o$ has at least one unipotent block $\Comp((X-1)^k)$.
\item There is a divisor $o'$ of $o$ which is coprime to $\gcd(d+1,q+1)$ and such that each form in $\F_o$ has at least one block of order $o'$, but no form in $\F_o$ has a block of order a proper multiple of $o'$.
\end{enumerate}
\renewcommand{\labelenumii}{(\alph{enumii})}

Assume now that we have specified such a set $\F_o$ of rational canonical forms. Then different elements in $\F_o$ correspond to different conjugacy classes in $S$, and by property (c) above, the only fusion under the action of $\Aut(S)=\PGU_{d+1}(q)\rtimes\Phi_S$ of conjugacy classes in $S$ corresponding to different forms in $\F_o$ that can occur is under $\Phi_S$, the group of field automorphisms of $S$. In each of the four examples $S$ that we need to consider, $q=p$ is a prime, and thus $|\Phi_S|=2$. It follows that $\omega_o(S)\geqslant\lceil\frac{|\F_o|}{2}\rceil$, and we even get $\omega_o(S)\geqslant|\F_o|$ if we can argue that no fusion under field automorphisms can occur either (which is possible in some cases).

Each of the element orders $o$ in $S$ which we consider for the definition of $\F_o$ falls into exactly one of the following three categories:
\begin{itemize}
\item Category I: $o$ is semisimple and coprime to $\gcd(d+1,q+1)$. Then
\begin{itemize}
\item if $o$ is economic, we define $\F_o$ to consist of those (admissible) rational canonical forms with exactly one nontrivial block, which is of the form $\Comp(P(X))$ for some monic irreducible polynomial $P(X)\in\IF_{q^2}[X]$ of order $o$. Then $|\F_o|\geqslant\frac{\phi(o)}{\deg_{q^2}(o)}$, and $\omega_o(S)\geqslant\lceil\frac{|\F_o|}{2}\rceil$;
\item if $o$ is uneconomic, we define $\F_o$ to consist of those (admissible) rational canonical forms with exactly two nontrivial blocks, of the forms $\Comp(P(X))$ and $\Comp(\tilde{P}(X))$ for some monic irreducible polynomial $P(X)\in\IF_{q^2}[X]$ of order $o$. Then $|\F_o|\geqslant\lceil\frac{\phi(o)}{2\deg_{q^2}(o)}\rceil$ and $\omega_o(S)\geqslant\lceil\frac{|\F_o|}{2}\rceil$.
\end{itemize}
\item Category II: $o=p^e$ is unipotent. Then we let $\F_o$ consist of all unipotent rational canonical forms in $\GL_{d+1}(q^2)$ whose order is $p^e$ (note that unipotent rational canonical forms are always admissible). Then $|\F_o|$ is just $\pi(d+1,p,e)$, which is defined as the number of ordered integer partitions of $d+1$ all of whose parts are at most $p^e$ and which have at least one part strictly larger than $p^{e-1}$. Moreover, since all unipotent forms have coefficients in the prime field $\IF_p$, no fusion can occur under $\Phi_S$, whence $\omega_o(S)\geqslant|\F_o|$.
\item Category III: $o$ is neither semisimple nor unipotent. This only occurs in the following two cases:
\begin{itemize}
\item $S=\leftidx{^2}A_7(2^2)=\PSU_8(2)$ and $o=6=2^e\cdot 3$ with $e=1$;
\item $S=\leftidx{^2}A_8(2^2)=\PSU_9(2)$ and $o=2^e\cdot 3$ with $e\in\{1,2\}$.
\end{itemize}
In both cases, let $\xi\in\IF_{2^2}$ be a generator of $\IF_{2^2}^{\ast}$. We let $\F_o$ consist of those rational canonical forms in $\GL_{d+1}(2^2)$ whose nontrivial semisimple blocks are $\Comp(X-\xi)$ and $\Comp(X-\xi^{-1})$, occurring with the same multiplicity $m\in\{1,2,3\}$ (this is to ensure that the determinant is $1$) and whose other nontrivial blocks are all unipotent of maximal order $2^e$. It is not difficult to see that $|\F_o|=\sum_{m=1}^3{\pi(d+1-2m,2,e)}$ and $\omega_o(S)\geqslant|\F_o|$.
\end{itemize}

We are now ready to give the arguments for the lower bounds $W(S)$ on $\dfrak(S)=\omega(S)-\omicron(S)$ when $S$ is one of the four  $\leftidx{^2}A$ examples in compact, tabular form. Each table corresponds to one group $S$, and each row to an element order $o$ in $S$. One can then check that the value of $W(S)$ given in Table \ref{remainingTable} coincides with $\sum_o{(\underline{\omega_o}(S)-1)}$, where $o$ ranges over the element orders in $S$ listed in the respective table and $\underline{\omega_o}(S)$ is the lower bound on $\omega_o(S)$ given in the last column of the table.

\begin{center}
\begin{longtable}[H]{|c|c|c|c|c|}
\caption{$S={^2}A_3(11^2)=\PSU_4(11)$, $\gcd(d+1,q+1)=4$.}
\label{remainingTable1}\\
\hline
$o$ & Category of $o$ & Relevant info & $|\F_o|$ & $\omega_o(S)\geqslant$ \\ \hline
$305=5\cdot61$ & I & $o$ is uneco., $\deg_{11^2}(o)=2$ & $\geqslant\lceil\frac{\phi(305)}{4}\rceil=60$ & $\lceil\frac{60}{2}\rceil=30$ \\ \hline
\end{longtable}

\end{center}

\begin{center}
\begin{longtable}[H]{|c|c|c|c|c|}
\caption{$S={^2}A_5(5^2)=\PSU_6(5)$, $\gcd(d+1,q+1)=6$.}
\label{remainingTable2}\\
\hline
$o$ & Category of $o$ & Relevant info & $|\F_o|$ & $\omega_o(S)\geqslant$ \\ \hline
$217=7\cdot31$ & I & $o$ is uneco., $\deg_{5^2}(o)=3$ & $\geqslant\lceil\frac{\phi(217)}{2\cdot 3}\rceil=30$ & $\lceil\frac{30}{2}\rceil=15$ \\ \hline
$521$ & I & $o$ is eco., $\deg_{5^2}(o)=5$ & $=\frac{\phi(521)}{5}=104$ & $\lceil\frac{104}{2}\rceil=52$ \\ \hline
\end{longtable}

\end{center}

\begin{center}
\begin{longtable}[H]{|c|c|c|c|c|}
\caption{$S={^2}A_7(3^2)=\PSU_8(3)$, $\gcd(d+1,q+1)=4$.}
\label{remainingTable3}\\
\hline
$o$ & Category of $o$ & Relevant info & $|\F_o|$ & $\omega_o(S)\geqslant$ \\ \hline
$3$ & II & $|\F_o|=\pi(8,3,1)$ & $=9$ & $9$ \\ \hline
$6=2\cdot 3$ & III & $|\F_o|=\sum_{m=1}^3{\pi(8-2m,3,1)}$ & $=9$ & $9$ \\ \hline
$9=3^2$ & II & $|\F_o|=\pi(8,3,2)$ & $=12$ & $12$ \\ \hline
$61$ & I & $o$ is eco., $\deg_{3^2}(o)=5$ & $=\frac{\phi(61)}{5}=12$ & $\lceil\frac{12}{2}\rceil=6$ \\ \hline
$91=7\cdot13$ & I & $o$ is uneco., $\deg_{3^2}(o)=3$ & $\geqslant\lceil\frac{\phi(91)}{2\cdot3}\rceil=12$ & $\lceil\frac{12}{2}\rceil=6$ \\ \hline
$205=5\cdot41$ & I & $o$ is uneco., $\deg_{3^2}(o)=4$ & $\geqslant\lceil\frac{\phi(205)}{4}\rceil=20$ & $\lceil\frac{20}{2}\rceil=10$ \\ \hline
$547$ & I & $o$ is eco., $\deg_{3^2}(o)=7$ & $=\frac{\phi(547)}{7}=78$ & $\lceil\frac{78}{2}\rceil=39$ \\ \hline
\end{longtable}

\end{center}

\begin{center}
\begin{longtable}[H]{|c|c|c|c|c|}
\caption{$S={^2}A_8(2^2)=\PSU_9(2)$, $\gcd(d+1,q+1)=3$.}
\label{remainingTable4}\\
\hline
$o$ & Category of $o$ & Relevant info & $|\F_o|$ & $\omega_o(S)\geqslant$ \\ \hline
$2$ & II & $|\F_o|=\pi(9,2,1)$ & $=4$ & $4$ \\ \hline
$4=2^2$ & II & $|\F_o|=\pi(9,2,2)$ & $=13$ & $13$ \\ \hline
$6=2\cdot3$ & III & $|\F_o|=\sum_{m=1}^3{\pi(9-2m,2,1)}$ & $=6$ & $6$ \\ \hline
$8=2^3$ & II & $|\F_o|=\pi(9,2,3)$ & $=11$ & $11$ \\ \hline
$12=2^2\cdot3$ & III & $|\F_o|=\sum_{m=1}^3{\pi(9-2m,2,2)}$ & $=14$ & $14$ \\ \hline
$43$ & I & $o$ is eco., $\deg_{2^2}(o)=7$ & $=\frac{\phi(43)}{7}=6$ & $\lceil\frac{6}{2}\rceil=3$ \\ \hline
$85=5\cdot17$ & I & $o$ is uneco., $\deg_{2^2}(o)=4$ & $\geqslant\lceil\frac{\phi(85)}{8}\rceil=8$ & $\lceil\frac{8}{2}\rceil=4$ \\ \hline
\end{longtable}

\end{center}

We now turn to the remaining six groups $S$, all of which are of type $D$ or $\leftidx{^2}D$, i.e., of the form $\POmega_{2d}^{\epsilon}(q)$ with $\epsilon\in\{+,-\}$, and they have odd defining characteristic. The Schur cover of $S$ is $\Omega_{2d}^{\epsilon}(q)$, a subgroup of $\GO_{2d}^{\epsilon}(q)\leqslant\GL_{2d}(q)$. In \cite[Subsection 2.6, Case (C), pp.~38f.]{Wal63a}, Wall characterised those rational canonical forms in $\GL_{2d}(q)$ which are attained by an element of $\GO_{2d}^{\epsilon}(q)$. More precisely, these are just those rational canonical forms where
\begin{itemize}
\item the multiset of Frobenius blocks is closed under the involutory operation $\Comp(P(X)^k)\mapsto\Comp(P^{\ast}(X)^k)$ where $P^{\ast}(X)$ is the minimal polynomial over $\IF_q$ of $\xi^{-1}$ for any root $\xi\in\overline{\IF_q}$ of $P(X)$;
\item the multiplicity of each block of the form $\Comp((X\pm1)^{2k})$ is even; and
\item denoting by $\mu(P(X)^k)$ the multiplicity of the block $\Comp(P(X)^k)$ in the form: if there are no blocks of the form $\Comp((X\pm1)^{2k+1})$, then
\[
\sum_{P(X),k}{k\mu(P(X)^k)}\equiv\begin{cases}0\Mod{2}, & \text{if }\epsilon=+, \\ 1\Mod{2}, & \text{if }\epsilon=-,\end{cases}
\]
where $P(X)$ ranges over all monic irreducible polynomials in $\IF_q[X]$ and $k$ ranges over all positive integers.
\end{itemize}
It is easy to see that a monic irreducible polynomial $P(X)\in\IF_q[X]$ satisfies $P(X)=P^{\ast}(X)$ if and only if $\deg{P(X)}$ is even and $\ord(P(X))\mid q^{\deg(P(X))/2}+1$, in which case we call either of
\begin{itemize}
\item the positive $p'$-integer $\ord(P(X))$,
\item the polynomial $P(X)\in\IF_q[X]$, or
\item the Frobenius block $\Comp(P(X)^k)$
\end{itemize}
\emph{$\O$-economic} or just \emph{economic} for short (note that this is distinct from the notion of \emph{$\U$-economic} objects used in the unitary case above). Positive $p'$-integers, monic irreducible polynomials in $\IF_q[X]$, and Frobenius blocks over $\IF_q$ which are not economic will be called \emph{uneconomic}. Note that, as in the unitary case, these notions of (un)economic objects depend on $q$, which we view as fixed. Also, observe that a positive $p'$-integer $o$ is economic if and only if it satisfies either of the following, equivalent conditions:
\begin{itemize}
\item $o$ divides some number of the form $q^k+1$ with $k\in\IN^+$.
\item Either $o\leq2$, or the \emph{$q$-degree of $o$}, denoted by $\deg_q(o)$ and defined as the smallest positive integer $t$ such that $o$ divides $q^t-1$, is even, and $o\mid q^{\deg_q(o)/2}+1$.
\end{itemize}

Our basic strategy is to specify, for a given element order $o$ in $S=\POmega_{2d}^{\epsilon}(q)$, a (preferably large) set $\F_o$ of order $o$ rational canonical forms in $\GL_{2d}(q)$ such that the following hold:
\begin{enumerate}
\item all forms in $\F_o$ are attained by an element of $\POmega_{2d}^{\epsilon}(q)$ (not just $\GO_{2d}^{\epsilon}(q)$);
\item all forms in $\F_o$ have order $o$ \enquote{modulo the scalars $\zeta\Omega_{2d}^{\epsilon}$} (i.e., for each form in $\F_o$, $o$ is the smallest positive integer $m$ such that the $m$-th power of the form is a scalar matrix in $\zeta\Omega_{2d}^{\epsilon}$);
\item no two matrix similarity classes represented by distinct forms in $\F_o$ are fused under multiplication by scalars in $\zeta\Omega_{2d}^{\epsilon}(q)$.
\end{enumerate}
Each of these properties, say (x), is implied by a certain other property, (x$'$), which we will now list:
\renewcommand{\labelenumii}{(\alph{enumii}$'$)}
\begin{enumerate}
\item all forms in $\F_o$ are similar to the square of the rational canonical form of some element in $\GO_{2d}^{\epsilon}(q)$, as characterised by Wall;
\item all forms in $\F_o$ have at least one Frobenius block of odd order;
\item there is an odd divisor $o'$ of $o$ such that every form in $\F_o$ has at least one block of order $o'$, but no form in $\F_o$ has a block of order $2o'$.
\end{enumerate}
\renewcommand{\labelenumii}{(\alph{enumii})}
To see that (a$'$) implies (a), use that $\GO_{2d}^{\epsilon}(q)/\Omega_{2d}^{\epsilon}(q)$ is of exponent $2$, and to see that (b$'$) implies (b) and (c$'$) implies (c), use that $|\zeta\Omega_{2d}^{\epsilon}(q)|\leqslant2$. Since $D_5(3)=\Omega_{10}^+(3)$ is centreless, we do not need to worry about properties (b) and (c) at all when $S=D_5(3)$. Throughout the concrete discussion of the six remaining examples $S$ below, properties (a$'$) and (b$'$) (and thus (a) and (b)) will be satisfied for each of the element orders $o$ of $S$ that we will consider. Property (c) will also always be satisfied, but sometimes, property (c$'$) is not (for example, when $S=D_4(7)$ and $o=4$); in those cases, one needs to verify directly that no two of the given forms are fused under scalar multiplication (by $-I_{2d}$).

After specifying $\F_o$ and computing its cardinality, we need to consider potential fusion of forms in $\F_o$ under $\Aut(S)$. Note that $\Aut(S)$ contains the subgroup $\PCO_{2d}^{\epsilon}(q)$, which does not fuse distinct matrix similarity classes. If $S=\leftidx{^2}D_5(3^2)$, which is the only group of type $\leftidx{^2}D$ that we need to consider, then $\Aut(S)=\PCO_{2d}^-(3)$, and we may use $|\F_o|$ itself as a lower bound on $\omega_o(S)$. So assume for the rest of this paragraph that $S=D_d(q)=\POmega_{2d}^+(q)$. Then the fusion of forms in $\F_o$ under $\Aut(S)$ is controlled by the index $|\Aut(S):\PCO_{2d}^+(q)|$. If $d>4$, then $\Aut(S)=\PGammaO_{2d}^+(q)=\PCO_{2d}^+(q)\Phi_S$, and so we only need to worry about fusion under field automorphisms of $S$, of which there are $|\Phi_S|=f$, and we conclude that $\omega_o(S)\geqslant\frac{|\F_o|}{f}$. If, on the other hand, $d=4$, then $\PGammaO_{2d}^+(q)$ is an index $3$ subgroup of $\Aut(S)$, and $\Aut(S)=\PGammaO_{2d}^+(q)\langle\gamma\rangle$ where $\gamma$ is an order $3$ graph automorphism of $S$; it follows that $\omega_o(S)\geqslant\frac{|\F_o|}{3f}$. The only case where we obtain better lower bounds on $\omega_o(S)$ is when $d=4$ and $o=p$ (the defining characteristic), using \cite[Proposition 3.55(i)]{Bur07a}.

We now provide some more information relevant for reading Tables \ref{remainingTable5} to \ref{remainingTable9} below. Each table corresponds to one group $S$, and the rows correspond to the element orders $o$ in $S$ for which we specify a set $\F_o$ of rational canonical forms as explained above and subsequently compute a lower bound on $\omega_o(S)$.

When $o'$ is a semisimple element order in $S$, then the number of distinct monic irreducible polynomials in $\IF_q[X]$ of order $o'$ (all of which have degree $\deg_q(o)$) is exactly $\frac{\phi(o)}{\deg_q(o)}$. If $o'$ is uneconomic, then these polynomials come in at least $\lceil\frac{\phi(o)}{2\deg_q(o)}\rceil$ pairs $\{P(X),P^{\ast}(X)\}$. We use the notation $P_{o'}=P_{o'}(X)$ to denote an arbitary monic irreducible polynomial in $\IF_q[X]$ of order $o'$.

For describing the forms in $\F_o$, we specify the multiplicities of their Frobenius blocks $\Comp(P(X)^k)$, with the convention that blocks which are not mentioned occur with multiplicity $0$. When doing so, we identify $P^k=P(X)^k$ with $\Comp(P(X)^k)$ for brevity, and we use $I_k$ to denote the $(k\times k)$-identity matrix over $\IF_q$; in particular, $I_1$ is a copy of the trivial Frobenius block over $\IF_q$. We write $(B,a)$ shorthand for \enquote{the Frobenius block $B$ occurs with multiplicity $a$}, and we separate these multiplicity specifications for one type of form by commas, with a semicolon separating descriptions of forms of different shape in the same set $\F_o$ (such as for $S=D_4(5)$ and $o=13$). When the form may involve several companion blocks of polynomials of the same order $o'$ which may or may not be equal, we denote those polynomials by $P_{o'}^{(1)}$, $P_{o'}^{(2)}$, and so on.

In order to see that in those cases where $o$ is even, the specified forms in $\F_o$ are indeed similar to squares of forms of elements in $\GO_{2d}^{\epsilon}(q)$, we make the following observations:
\begin{itemize}
\item When $q=3$ (relevant for $S=D_5(3),\leftidx{^2}D_5(3^2),D_6(3)$): $-I_2=\Comp(P_4(X))^2$ for the unique order $4$ (quadratic) monic irreducible polynomial $P_4(X)\in\IF_3[X]$, which is economic.
\item When $q=5$ (relevant for $S=D_4(5)$): $-I_2$ is the square of the rational canonical form whose nontrivial blocks (both occurring with multiplicity $1$) are the companion matrices of the two order $4$ (linear) monic irreducible polynomials $P_4(X)$ and $P^{\ast}_4(X)$ in $\IF_5[X]$.
\item When $q=7$ (relevant for $S=D_4(7)$):
\begin{itemize}
\item $-I_2=\Comp(P_4(X))^2$ for the unique order $4$ (quadratic) monic irreducible polynomial $P_4(X)\in\IF_7[X]$, which is economic;
\item $\Comp(P_4(X))$ is similar to $\Comp(P_8(X))^2$ for any of the two order $8$ (quadratic) monic irreducible polynomials $P_8(X)\in\IF_7[X]$, which are economic.
\end{itemize}
\end{itemize}

Finally, for the counting of forms of unipotent elements in $S$, we denote by $\Pi'(2d,p,e)$ (resp.~$\pi'(2d,p,e)$) the set (resp.~number) of ordered integer partitions of $2d$ such that all parts are at most $p^e$, at least one part is larger than $p^{e-1}$, and all multiplicities of even parts are even. We identify elements of $\Pi'(2d,p,e)$ with unipotent rational canonical forms in $S$ by assigning to a partitition $\lambda=(\lambda_1,\ldots,\lambda_s)\in\Pi'(2d,p,e)$ the form with blocks $\Comp((X-1)^{\lambda_1}),\ldots,\Comp((X-1)^{\lambda_s})$, listed with multiplicities.

\begin{center}
\begin{longtable}[H]{|c|c|c|c|c|}
\caption{$S=D_4(5)=\POmega_8^+(5)$.}
\label{remainingTable5}\\
\hline
$o$ & relevant info & forms in $\F_o$ & $|\F_o|$ & $\omega_o(S)\geqslant$ \\ \hline
$3$ & \thead{$3$ is eco., $\deg_5(3)=2$, \\ $\#P_3=\frac{\phi(3)}{2}=1$} & \thead{$(P_3,a),(I_1,8-2a)$ \\ for $a\in\{1,2,3,4\}$} & $4$ & $\lceil\frac{4}{3}\rceil=2$ \\ \hline
$5$ & Use \cite[Proposition 3.55(i)]{Bur07a} & $\Pi'(8,5,1)\setminus\{(2,2,2,2),(4,4)\}$ & $\pi'(8,5,1)-2=5$ & $5$ \\ \hline
$6$ & \thead{$3$ is eco., $\deg_5(3)=2$, \\ $\#P_3=\frac{\phi(3)}{2}=1$} & \thead{$(P_3,a),(-I_1,2b),(I_1,8-2(a+b))$ \\ for $a,b\geqslant1$, $a+b\leqslant4$} & $6$ & $\lceil\frac{6}{3}\rceil=2$ \\ \hline
$10$ & none & \thead{$(-I_1,2a),\Pi'(8-2a,5,1)$ \\ for $a\in\{1,2\}$} & $6$ & $\lceil\frac{6}{3}=2\rceil$ \\ \hline
$13$ & \thead{$13$ is eco., $\deg_5(13)=4$, \\ $\#P_{13}=\frac{\phi(13)}{4}=3$} & \thead{$(P_{13},1),(I_1,4)$; \\ $(P_{13}^{(1)},1),(P_{13}^{(2)},1)$} & $3+3+{3\choose 2}=9$ & $\lceil\frac{9}{3}\rceil=3$ \\ \hline
$21$ & \thead{$21$ is eco., $\deg_5(21)=6$, \\ $\#P_{21}=\frac{\phi(21)}{6}=2$; \\ $3$ is eco., $\deg_5(3)=2$, \\ $\#P_3=\frac{\phi(3)}{2}=1$} & \thead{$(P_{21},1),(I_1,2)$; \\ $(P_{21},1),(P_3,1)$} & $2+2=4$ & $\lceil\frac{4}{3}\rceil=2$ \\ \hline
$26$ & \thead{$13$ is eco., $\deg_5(13)=4$ \\ $\#P_{13}=\frac{\phi(13)}{4}=3$} & \thead{$(P_{13},1),(-I_1,2a),(I_1,4-2a)$ \\ for $a\in\{1,2\}$} & $2\cdot 3=6$ & $\lceil\frac{6}{3}\rceil=2$ \\ \hline
$31$ & \thead{$31$ is uneco., $\deg_5(31)=3$, \\ $\#\{P_{31},P^{\ast}_{31}\}=\frac{\phi(31)}{2\cdot3}=5$} & $(P_{31},1),(P^{\ast}_{31},1),(I_1,2)$ & $5$ & $\lceil\frac{5}{3}\rceil=2$ \\ \hline
$62$ & \thead{$31$ is uneco., $\deg_5(31)=3$, \\ $\#\{P_{31},P^{\ast}_{31}\}=\frac{\phi(31)}{2\cdot3}=5$} & $(P_{31},1),(P^{\ast}_{31},1),(-I_1,2)$ & $5$ & $\lceil\frac{5}{3}=2$ \\ \hline
$63$ & \thead{$63$ is eco., $\deg_5(63)=6$, \\ $\#P_{63}=\frac{\phi(63)}{6}=6$; \\ $3$ is eco., $\deg_5(3)=2$, \\ $\#P_3=\frac{\phi(3)}{2}=1$} & \thead{$(P_{63},1),(I_1,2)$; \\ $(P_{63},1),(P_3,1)$} & $6+6=12$ & $\lceil\frac{12}{3}\rceil=4$ \\ \hline
$126$ & \thead{$63$ is eco., $\deg_5(63)=6$, \\ $\#P_{63}=\frac{\phi(63)}{6}=6$; \\ $3$ is eco., $\deg_5(3)=2$, \\ $\#P_3=\frac{\phi(3)}{2}=1$} & $(P_{63},1),(-I_1,2)$ & $6$ & $\lceil\frac{6}{3}\rceil=2$ \\ \hline
\end{longtable}
\end{center}

\begin{center}
\begin{longtable}[H]{|c|c|c|c|c|}
\caption{$S=D_4(7)=\POmega^+_8(7)$.}
\label{remainingTable6}\\
\hline
$o$ & relevant info & forms in $\F_o$ & $|\F_o|$ & $\omega_o(S)\geqslant$ \\ \hline
$3$ & \thead{$3$ is uneco., $\deg_7(3)=1$, \\ $\#\{P_3,P^{\ast}_3\}=\frac{\phi(3)}{2\cdot1}=1$} & \thead{$(P_3,a),(P^{\ast}_3,a),(I_1,8-2a)$ \\ for $a\in\{1,2,3,4\}$} & $4$ & $\lceil\frac{4}{3}\rceil=2$ \\ \hline
$4$ & \thead{$4$ is eco., $\deg_7(4)=2$, \\ $\#P_4=\frac{\phi(4)}{2}=1$} & \thead{$(P_4,a),(-I_1,2b),(I_1,8-2(a+b))$ \\ for $1\leqslant a\leqslant 3,b\geqslant0,a+2b\leqslant4$} & $5$ & $\lceil\frac{5}{3}\rceil=2$ \\ \hline
$6$ & \thead{$3$ is uneco., $\deg_7(3)=1$, \\ $\#\{P_3,P^{\ast}_3\}=\frac{\phi(3)}{2\cdot1}=1$} & \thead{$(P_3,a),(P^{\ast}_3,a),(-I_1,2b),(I_1,8-2(a+b))$ \\ for $a,b\geqslant1,a+b\leqslant4$} & 6 & $\lceil\frac{6}{3}\rceil=2$ \\ \hline
$7$ & Use \cite[Proposition 3.55(i)]{Bur07a} & $\Pi'(8,7,1)\setminus\{(2,2,2,2),(4,4)\}$ & $6$ & $6$ \\ \hline
$12$ & \thead{$3$ is uneco., $\deg_7(3)=1$, \\ $\#\{P_3,P^{\ast}_3\}=\frac{\phi(3)}{2\cdot1}=1$; \\ $4$ is eco., $\deg_7(4)=2$, \\ $\#P_4=\frac{\phi(4)}{2}=1$} & \thead{$(P_3,a),(P^{\ast}_3,a),(P_4,b),(-I_1,2c),(I_1,8-2(a+b+c))$ \\ for $a,b\geqslant1,c\geqslant0,a+b+2c\leqslant4$, \\ and if $c=0$ and $a+b=4$, then $2\mid b$} & $8$ & $\lceil\frac{8}{3}\rceil=3$ \\ \hline
$25$ & \thead{$25$ is eco., $\deg_7(25)=4$, \\ $\#P_{25}=\frac{\phi(25}{4}=5$; \\ $5$ is eco., $\deg_7(5)=4$, \\ $\#P_5=\frac{\phi(5)}{4}=1$} & \thead{$(P_{25},1),(I_1,4)$; \\ $(P_{25},1),(P_5,1)$; \\ $(P_{25}^{(1)},1),(P_{25}^{(2)},1)$} & $25$ & $\lceil\frac{25}{3}\rceil=9$ \\ \hline
$43$ & \thead{$43$ is eco., $\deg_7(43)=6$, \\ $\#P_{43}=\frac{\phi(43)}{6}=7$} & $(P_{43},1),(I_1,2)$ & $7$ & $\lceil\frac{7}{3}\rceil=3$ \\ \hline
$50$ & \thead{$25$ is eco., $\deg_7(25)=4$, \\ $\#P_{25}=\frac{\phi(25}{4}=5$; \\ $4$ is eco., $\deg_7(4)=2$, \\ $\#P_4=\frac{\phi(4)}{2}=1$} & $(P_{25},1),(P_4,1),(I_1,2)$ & $10$ & $\lceil\frac{10}{3}\rceil=4$ \\ \hline
$57$ & \thead{$57$ is uneco., $\deg_7(57)=3$, \\ $\#\{P_{57},P^{\ast}_{57}\}=\frac{\phi(57)}{2\cdot3}=6$} & $(P_{57},1),(I_1,2)$ & $6$ & $\lceil\frac{6}{3}\rceil=2$ \\ \hline
$75$ & \thead{$25$ is eco., $\deg_7(25)=4$, \\ $\#P_{25}=\frac{\phi(25}{4}=5$; \\ $3$ is uneco., $\deg_7(3)=1$, \\ $\#\{P_3,P^{\ast}_3\}=\frac{\phi(3)}{2\cdot1}=1$} & $(P_{25},1),(P_3,1),(P^{\ast}_3,1),(I_1,2)$ & $5$ & $\lceil\frac{5}{3}\rceil=2$ \\ \hline
$86$ & \thead{$43$ is eco., $\deg_7(43)=6$, \\ $\#P_{43}=\frac{\phi(43)}{6}=7$} & $(P_{43},1),(-I_1,2)$ & $7$ & $\lceil\frac{7}{3}\rceil=3$ \\ \hline
$100$ & \thead{$25$ is eco., $\deg_7(25)=4$, \\ $\#P_{25}=\frac{\phi(25}{4}=5$; \\ $4$ is eco., $\deg_7(4)=2$, \\ $\#P_4=\frac{\phi(4)}{2}=1$} & $(P_{25},1),(P_4,1),(I_1,2)$ & $5$ & $\lceil\frac{5}{3}\rceil=2$ \\ \hline
$171$ & \thead{$171$ is uneco., $\deg_7(171)=3$, \\ $\#\{P_{171},P^{\ast}_{171}\}=\frac{\phi(171)}{2\cdot3}=18$} & $(P_{171},1),(P^{\ast}_{171},1),(I_1,2)$ & $18$ & $\lceil\frac{18}{3}\rceil=6$ \\ \hline
$172$ & \thead{$43$ is eco., $\deg_7(43)=6$, \\ $\#P_{43}=\frac{\phi(43)}{6}=7$; \\ $4$ is eco., $\deg_7(4)=2$, \\ $\#P_4=\frac{\phi(4)}{2}=1$} & $(P_{43},1),(P_4,1)$ & $7$ & $\lceil\frac{7}{3}\rceil=3$ \\ \hline
\end{longtable}
\end{center}

\begin{center}
\begin{longtable}[H]{|c|c|c|c|c|}
\caption{$S=D_4(9)=\POmega^+_8(9)$.}
\label{remainingTable7}\\
\hline
$o$ & relevant info & forms in $\F_o$ & $|\F_o|$ & $\omega_o(S)\geqslant$ \\ \hline
$3$ & Use \cite[Proposition 3.55(i)]{Bur07a} & $\Pi'(8,3,1)\setminus\{(2,2,2,2),(4,4)\}$ & $4$ & $4$ \\ \hline
$5$ & \thead{$5$ is eco., $\deg_9(5)=2$, \\ $\#P_5=\frac{\phi(5)}{2}=2$} & \thead{$(P_5^{(1)},a),(P_5^{(2)},b),(I_1,8-2(a+b))$ \\ for $P_5^{(1)}\not=P_5^{(2)}$, $a,b\geqslant0$, $1\leqslant a+b\leqslant4$} & $14$ & $\lceil\frac{14}{6}\rceil=3$ \\ \hline
$41$ & \thead{$41$ is eco., $\deg_9(41)=4$, \\ $\#P_{41}=\frac{\phi(41)}{4}=10$} & \thead{$(P_{41},1),(I_1,4)$; \\ $(P_{41}^{(1)},1),(P_{41}^{(2)},1)$} & $65$ & $\lceil\frac{65}{6}\rceil=11$ \\ \hline
$365$ & \thead{$365$ is eco., $\deg_9(365)=6$, \\ $\#P_{365}=\frac{\phi(365)}{6}=48$; \\ $5$ is eco., $\deg_9(5)=2$, \\ $\#P_5=\frac{\phi(5)}{2}=2$} & \thead{$(P_{365},1),(I_1,2)$; $(P_{365},1),(P_5,1)$} & $144$ & $\lceil\frac{144}{6}\rceil=24$ \\ \hline
\end{longtable}
\end{center}

\begin{center}
\begin{longtable}[H]{|c|c|c|c|c|}
\caption{$S=D_5(3)=\POmega^+_{10}(3)=\Omega^+_{10}(3)$.}
\label{remainingTable8}\\
\hline
$o$ & relevant info & forms in $\F_o$ & $|\F_o|$ & $\omega_o(S)\geqslant$ \\ \hline
$2$ & none & \thead{$(-I_1,2a),(I_1,10-2a)$ \\ for $a\in\{1,\ldots,5\}$} & $5$ & $5$ \\ \hline
$3$ & none & $\Pi'(10,3,1)$ & $7$ & $7$ \\ \hline
$5$ & \thead{$5$ is eco. $\deg_3(5)=4$, \\ $\#P_5=\frac{\phi(4)}{4}=1$} & \thead{$(P_4,a),(I_1,10-4a)$ \\ for $a\in\{1,2\}$} & $2$ & $2$ \\ \hline
$6$ & none & \thead{$(-I_1,2a),\Pi'(10-2a,3,1)$ \\ for $a\in\{1,2,3\}$} & $10$ & $10$ \\ \hline
$9$ & none & $\Pi'(10,3,2)$ & $8$ & $8$ \\ \hline
$10$ & \thead{$5$ is eco. $\deg_3(5)=4$, \\ $\#P_5=\frac{\phi(4)}{4}=1$} & \thead{$(P_5,1),(-I_1,2a),(I_1,6-2a)$ \\ for $a\in\{1,2,3\}$} & $4$ & $4$ \\ \hline
\end{longtable}
\end{center}

\begin{center}
\begin{longtable}[H]{|c|c|c|c|c|}
\caption{$S={^2}D_5(3^2)=\POmega^-_{10}(3)$.}
\label{remainingTable9}\\
\hline
$o$ & relevant info & forms in $\F_o$ & $|\F_o|$ & $\omega_o(S)\geqslant$ \\ \hline
$2$ & none & \thead{$(-I_1,2a),(I_1,10-2a)$ \\ for $a\in\{1,2\}$} & $2$ & $2$ \\ \hline
$3$ & none & $\Pi'(10,3,1)$ & $7$ & $7$ \\ \hline
$5$ & \thead{$5$ is eco. $\deg_3(5)=4$, \\ $\#P_5=\frac{\phi(4)}{4}=1$} & \thead{$(P_4,a),(I_1,10-4a)$ \\ for $a\in\{1,2\}$} & $2$ & $2$ \\ \hline
$6$ & none & \thead{$(-I_1,2a),\Pi'(10-2a,3,1)$ \\ for $a\in\{1,2,3\}$} & $10$ & $10$ \\ \hline
$10$ & \thead{$5$ is eco. $\deg_3(5)=4$, \\ $\#P_5=\frac{\phi(4)}{4}=1$} & \thead{$(P_5,1),(-I_1,2a),(I_1,6-2a)$ \\ for $a\in\{1,2,3\}$} & $4$ & $4$ \\ \hline
\end{longtable}
\end{center}

\begin{center}
\begin{longtable}[H]{|c|c|c|c|c|}
\caption{$S=D_6(3)=\POmega^+_{12}(3)$.}
\label{remainingTable10}\\
\hline
$o$ & relevant info & forms in $\F_o$ & $|\F_o|$ & $\omega_o(S)\geqslant$ \\ \hline
$2$ & none & \thead{$(-I_1,2a),(I_1,12-2a)$ \\ for $a\in\{1,2,3\}$} & $3$ & $3$ \\ \hline
$3$ & none & $\Pi'(12,3,1)$ & $10$ & $10$ \\ \hline
$6$ & none & \thead{$(-I_1,2a),\Pi'(12-2a,3,1)$ \\ for $a\in\{1,2,3,4\}$} & $17$ & $17$ \\ \hline
$9$ & none & $\Pi'(12,3,2)$ & $15$ & $15$ \\ \hline
$41$ & \thead{$41$ is eco., $\deg_3(41)=8$, \\ $\#P_{41}=\frac{\phi(41)}{8}=5$} & $(P_{41},1),(I_1,4)$ & $5$ & $5$ \\ \hline
$61$ & \thead{$61$ is eco., $\deg_3(61)=10$, \\ $\#P_{61}=\frac{\phi(61)}{10}=6$} & $(P_{61},1),(I_1,2)$ & $6$ & $6$ \\ \hline
$82$ & \thead{$41$ is eco., $\deg_3(41)=8$, \\ $\#P_{41}=\frac{\phi(41)}{8}=5$} & \thead{$(P_{41},1),(-I_1,2a),(I_1,4-2a)$ \\ for $a\in\{1,2\}$} & $10$ & $10$ \\ \hline
$91$ & \thead{$91$ is uneco., $\deg_3(91)=6$, \\ $\#\{P_{91},P^{\ast}_{91}\}=\frac{\phi(91)}{2\cdot6}=6$} & $(P_{91},1),(P^{\ast}_{91},1)$ & $6$ & $6$ \\ \hline
$122$ & \thead{$61$ is eco., $\deg_3(61)=10$, \\ $\#P_{61}=\frac{\phi(61)}{10}=6$} & $(P_{122},1),(-I_1,2)$ & $6$ & $6$ \\ \hline
\end{longtable}
\end{center}\qedhere
\end{enumerate}
\end{proof}

\numberwithin{equation}{section}
\section{Proof of Theorem \ref{mainTheo1}(1)}\label{sec3}

We start with the following lemma, which provides upper bounds for $\dfrak(N)$ and $\m(G/N)$ in terms of $\dfrak(G)$ (see Definition \ref{mainDef1}(4,a)), where $N$ is a characteristic subgroup of $G$:

\begin{lemma}\label{charQuotLem}
Let $G$ be a finite group, and let $N$ be a characteristic subgroup of $G$. Then

\begin{enumerate}
\item $\dfrak(N)\leqslant\dfrak(G)$.
\item $\m(G/N)\leqslant 2^{\dfrak(G)}+\dfrak(G)$.
\end{enumerate}
\end{lemma}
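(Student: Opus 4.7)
For Part~(1), the plan is a routine orbit-counting argument that exploits the characteristicness of $N$. Since $N$ is characteristic in $G$, restriction gives a well-defined homomorphism $\Aut(G)\to\Aut(N)$, and for every $o\in\Ord(N)\subseteq\Ord(G)$ the subset $N_o=\{g\in N:\ord(g)=o\}$ of $G_o$ is $\Aut(G)$-invariant. Consequently, the $\Aut(G)$-orbits on $N_o$ form a sub-collection of the $\Aut(G)$-orbits on $G_o$, so their number is at most $\omega_o(G)$; on the other hand these orbits refine the $\Aut(N)$-orbits on $N_o$, so their number is at least $\omega_o(N)$. In particular $\omega_o(N)-1\leqslant\omega_o(G)-1$ for each $o\in\Ord(N)$, and summing over $\Ord(N)$ while using that the missing summands $\omega_o(G)-1$ for $o\in\Ord(G)\setminus\Ord(N)$ are nonnegative yields $\dfrak(N)\leqslant\dfrak(G)$.

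For Part~(2), let $\pi\colon G\to G/N$ denote the canonical projection and $H\leqslant\Aut(G/N)$ the image of the induced restriction homomorphism $\Aut(G)\to\Aut(G/N)$; since $H$-orbits on $G/N$ refine $\Aut(G/N)$-orbits, it is enough to prove the stronger inequality $\omega_o^{H}(G/N)\leqslant 2^{\dfrak(G)}+\dfrak(G)$ for every $o\in\Ord(G/N)$. The key observation is that for each $H$-orbit $\overline{O}$ on $(G/N)_o$, the preimage $\pi^{-1}(\overline{O})$ is $\Aut(G)$-invariant, so it decomposes as a nonempty union of $\Aut(G)$-orbits of $G$; writing $\Psi(\overline{O})$ for this collection gives an injective map from $H$-orbits on $(G/N)_o$ to nonempty sets of $\Aut(G)$-orbits of $G$, with the additional feature that $\Psi(\overline{O})\cap\Psi(\overline{O}')=\emptyset$ whenever $\overline{O}\neq\overline{O}'$.

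The combinatorial input is then to fix, arbitrarily, a single ``canonical'' $\Aut(G)$-orbit of each order $o'\in\Ord(G)$; the remaining non-canonical orbits form a set $\mathcal{B}$ of cardinality exactly $\omega(G)-\omicron(G)=\dfrak(G)$. I would split the set of $H$-orbits $\overline{O}$ on $(G/N)_o$ according to whether $\Psi(\overline{O})\cap\mathcal{B}$ is empty. When $\Psi(\overline{O})\cap\mathcal{B}\neq\emptyset$, pairwise disjointness of the $\Psi(\overline{O})$'s forces the intersections $\Psi(\overline{O})\cap\mathcal{B}$ to be pairwise disjoint nonempty subsets of $\mathcal{B}$ and hence distinct, which readily gives a bound of $\dfrak(G)$ on the number of such $\overline{O}$. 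When $\Psi(\overline{O})\cap\mathcal{B}=\emptyset$, the set $\Psi(\overline{O})$ is contained in the canonical orbits, so it is determined by a nonempty subset of that canonical set.

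The hard part will be bounding the number of $H$-orbits of this second type by $2^{\dfrak(G)}$, closing the count. This seems to require a finer analysis of how canonical orbits of various orders in $G$ interact with the characteristic subgroup $N$ so as to project to a common $H$-orbit in $G/N$: one needs to encode each such $\overline{O}$ by an element of a set of size $2^{\dfrak(G)}$, presumably built from the combinatorial data attached to $\mathcal{B}$ (for instance via relations between the canonical orbits in $\Psi(\overline{O})$ and the $\mathcal{B}$-orbits whose images neighbour $\overline{O}$ in $G/N$). Establishing this injection is the genuine technical obstacle, whereas the rest of the argument is essentially bookkeeping.
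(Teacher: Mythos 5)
Your Part~(1) is correct and follows the same route as the paper: $\Aut(G)$-orbits on $N$ refine $\Aut(N)$-orbits, so $\omega_o(N)\leqslant\omega_o(G)$ for $o\in\Ord(N)$, and summing (dropping the nonnegative terms for $o\in\Ord(G)\setminus\Ord(N)$) gives the bound.

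For Part~(2), your setup captures part of the paper's strategy but leaves the essential step open, and you are right to flag it as the genuine obstacle. The ``$+\,\dfrak(G)$'' piece of your decomposition --- handling orbits $\overline{O}$ whose preimage meets the non-canonical set $\mathcal{B}$, using disjointness of the $\Psi(\overline{O})$'s --- is correct and mirrors a bookkeeping step the paper also uses (passing from a set of orders to a set of orbits costs at most $\dfrak(G)$). The gap is the $2^{\dfrak(G)}$ bound for the remaining orbits: your framework records only which \emph{subset of canonical orbits} appears in $\Psi(\overline{O})$, and there is no visible reason this should lie in a universe of size $2^{\dfrak(G)}$ rather than $2^{\omicron(G)}$. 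Indeed, in the boundary case $\dfrak(G)=0$ your argument would have to show that all such preimages coincide into a single orbit with no further input, which is exactly the nontrivial content of Zhang's original Lemma~1.1 and is not forced by your bookkeeping.

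The paper closes this gap by a different mechanism: rather than counting $H$-orbits directly, it shows that every element of $(G/N)_{\overline{o}}$ admits a \emph{normalized} lift (keeping only the prime-power components of the lift at primes dividing $\overline{o}$), and that the set $X_{\overline{o}}$ of possible orders of such normalized lifts has size at most $2^{\dfrak(G)}$. That bound rests on a structural observation not present in your sketch --- namely, that whenever the $p_i$-adic valuation of a normalized lift's order is not uniquely determined, there must exist $\dfrak(G)$-witnesses in the form of element orders $p_i^{m+1}$ realised both inside and outside $N$, contributing to $\dfrak(G)$ --- followed by an AM--GM argument to convert $\sum_i(u_i-1)\leqslant\dfrak(G)$ into $\prod_i u_i\leqslant 2^{\dfrak(G)}$. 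Your subset-encoding approach, without importing these order-of-lift and valuation ideas, does not seem completable as stated.
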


We note that the special case $\dfrak(G)=0$ (i.e., when $G$ is an AT-group) in Lemma \ref{charQuotLem} is just \cite[Lemma 1.1]{Zha92a}, and the proof of Lemma \ref{charQuotLem} is also a generalisation of the proof of \cite[Lemma 1.1]{Zha92a}.

\begin{proof}[Proof of Lemma \ref{charQuotLem}]
For statement (1): If two elements of $N$ are $\Aut(G)$-conjugate, then they are also $\Aut(N)$-conjugate (or, equivalently, $\Aut(N)$-orbits on $N$ are unions of $\Aut(G)$-orbits on $N$). In particular, $\omega_o(G)\geqslant\omega_o(N)$ for each $o\in\Ord(N)\subseteq\Ord(G)$. It follows that
\[
\dfrak(G)=\sum_{o\in\Ord(G)}{(\omega_o(G)-1)}\geqslant\sum_{o\in\Ord(N)}{(\omega_o(G)-1)}\geqslant\sum_{o\in\Ord(N)}{(\omega_o(N)-1)}=\dfrak(N),
\]
as required.

For statement (2): For a group $H$ and a positive integer $o$, we denote the set of order $o$ elements in $H$ by $H_o$. By definition, $\m(G/N)$ is the maxium value of $\omega_{\overline{o}}(G/N)$ where $\overline{o}$ ranges over the element orders of $G/N$. So the goal will be to show that $\omega_{\overline{o}}(G/N)\leqslant 2^{\dfrak(G)}+\dfrak(G)$ for all $\overline{o}\in\Ord(G/N)$. Consider the following two conditions on such an $\overline{o}$:
\begin{enumerate}
\item There is a set $M_{\overline{o}}$ of $\Aut(G)$-orbits on $G$ with $|M_{\overline{o}}|\leqslant 2^{\dfrak(G)}+\dfrak(G)$ such that for each $\overline{x}\in(G/N)_{\overline{o}}$, there is a lift $x$ of $\overline{x}$ in $G$ such that $x$ lies in one of the orbits from $M_{\overline{o}}$.
\item There is a set $N_{\overline{o}}$ of positive integers with $|N_{\overline{o}}|\leqslant 2^{\dfrak(G)}$ such that each $\overline{x}\in(G/N)_{\overline{o}}$ admits a lift $x$ in $G$ such that $\ord(x)\in N_{\overline{o}}$.
\end{enumerate}
Since $N$ is characteristic in $G$, if two elements of $G/N$ have lifts in the same $\Aut(G)$-orbit on $G$, then they are $\Aut(G/N)$-conjugate, and so the first condition implies that $\omega_{\overline{o}}(G/N)\leqslant 2^{\dfrak(G)}+\dfrak(G)$ (which is what we want to show). Moreover, the second condition implies the first, by letting $M_{\overline{o}}$ be the set of all $\Aut(G)$-orbits on $G$ consisting of elements whose order lies in $N_{\overline{o}}$ -- then by definition of $\dfrak(G)$, $|M_{\overline{o}}|\leqslant|N_{\overline{o}}|+\dfrak(G)\leqslant 2^{\dfrak(G)}+\dfrak(G)$.

Hence we will aim at verifying that the second condition holds for all $\overline{o}\in\Ord(G/N)$. So, fix such an $\overline{o}$, say with prime power factorisation $\overline{o}=p_1^{f_1}\cdots p_s^{f_s}$. Denote by $\pi$ the canonical projection $G\rightarrow G/N$, and consider the following function $\lambda_{\overline{o}}$, which maps the set $\pi^{-1}[(G/N)_{\overline{o}}]$, of all lifts in $G$ of order $\overline{o}$ elements of $G/N$, into itself: For $x\in\pi^{-1}[(G/N)_{\overline{o}}]$, let $x=x_1\cdots x_r$ be the unique (up to reordering the factors) factorisation of $x$ into pairwise commuting elements of pairwise coprime prime-power orders. Since $\pi(x)$ has order $\overline{o}$, we have that $p_i\mid\ord(x)$ for $i=1,\ldots,s$, so $r\geqslant s$, and we may assume w.l.o.g.~that $\ord(x_i)=p_i^{k_i}$ for some $k_i\in\IN^+$ for $i=1,\ldots,s$. Set $\lambda_{\overline{o}}(x):=x_1\cdots x_s$. Note that $x_{s+1},\ldots,x_r\in N$, and we have
\begin{equation}\label{lambdaEq}
\pi(\lambda_{\overline{o}}(x))=\pi(x),
\end{equation}
which shows in particular that $\lambda_{\overline{o}}(x)\in\pi^{-1}[(G/N)_{\overline{o}}]$, as asserted. We let $X_{\overline{o}}$ denote the set of orders of elements in the image of $\lambda_{\overline{o}}$. By Formula (\ref{lambdaEq}), each $\overline{x}\in(G/N)_{\overline{o}}$ has a lift in $\im(\lambda_{\overline{o}})$, and thus a lift with order in $X_{\overline{o}}$. It remains to show that $|X_{\overline{o}}|\leqslant 2^{\dfrak(G)}$.

For $i\in\{1,\ldots,s\}$, let $t_i\in\IN^+$ be maximal subject to $p_i^{t_i}\in\Ord(N)$, and let $u_i$ be the number of distinct $p_i$-adic valuations of $\ord(x^{p_i^{f_i}})$ where $x$ ranges over $\im(\lambda_{\overline{o}})$; hence by definition, $|X_{\overline{o}}|\leqslant\prod_{i=1}^s{u_i}$. Note that $u_i\leqslant t_i+1$, since the $p_i$-part of $x^{p_i^{f_i}}$ is (by definition of $f_i$) always an element of $N$. Observe also that for a fixed $i$, as long as each of the subsets $G_1,G_{p_i},G_{p_i^2},\ldots,G_{p_i^{t_i}}\subseteq G$ is a single $\Aut(G)$-orbit (which must hold for all but at most $\dfrak(G)$ of the indices $i\in\{1,\ldots,s\}$), then the argument in \cite[proof of Lemma 1.1]{Zha92a} gives that the $p_i$-adic valuation of $\ord(x)$ is $f_i+t_i$ for all $x\in\im(\lambda_{\overline{o}})$, and so $u_i=1$. Let the number of indices $i\in\{1,\ldots,s\}$ for which this is not the case be $e$, and let these $e$ \enquote{exceptional} indices be w.l.o.g.~just $1,\ldots,e$. Note that if $e=0$, then $u_i=1$ for all $i\in\{1,\ldots,s\}$, so that
\[
\prod_{i=1}^s{u_i}=1\leqslant 2^{\dfrak(G)},
\]
as required. We may thus assume that $e\geqslant1$. Moreover, we claim that
\begin{equation}\label{uEq}
\sum_{i=1}^s{(u_i-1)}=\sum_{i=1}^e{(u_i-1)}\leqslant\dfrak(G).
\end{equation}
Indeed, the equality in Formula (\ref{uEq}) is clear by the above remark that $i\in\{1,\ldots,s\}$ not being among the $e$ exceptional indices $1,\ldots,e$ implies that $u_i=1$. As for the inequality in Formula (\ref{uEq}), we will argue as follows: Consider the set $\Mcal$, of all pairs $(i,m)$ such that
\begin{itemize}
\item $i\in\{1,\ldots,e\}$,
\item $m\in\{0,\ldots,t_i-1\}$, and
\item there exists $x\in\im(\lambda_{\overline{o}})$ such that $\nu_{p_i}(\ord(x^{p_i^{f_i}}))=m$.
\end{itemize}

Note that if the second condition in the definition of $\Mcal$ was replaced by \enquote{$m\in\{0,\ldots,t_i\}$}, then by definition of $t_i$ and $u_i$, the cardinality of $\Mcal$ would be $\sum_{i=1}^e{u_i}$; excluding the possibility $m=t_i$ removes at most one pair $(i,m)$ for each $i$, and so the actual cardinality of $\Mcal$ is bounded from below by $\sum_{i=1}^e{(u_i-1)}$. Consider the injective function $f:\Mcal\rightarrow\IN^+$, $(i,m)\mapsto p_i^{m+1}$. Observe that the image of $f$ consists of element orders $o\in\Ord(G)$ such that $G$ contains elements of order $o$ both inside and outside of $N$ (the former since $m+1\leqslant t_i$, and the latter by considering the $p_i$-part of $x^{p_i^{f_i-1}}$ where $x$ is as in the third bullet point of the definition of $\Mcal$ above). In particular, $\omega_o(G)\geqslant 2$ for each such $o$, and thus
\[
\dfrak(G)=\sum_{o\in\Ord(G)}{(\omega_o(G)-1)}\geqslant|\im(f)|\geqslant|\Mcal|\geqslant\sum_{i=1}^e{(u_i-1)},
\]
as asserted. Using the now established Formula (\ref{uEq}) and the inequality of arithmetic and geometric means, we deduce that
\begin{equation}\label{uEq2}
|X_{\overline{o}}|\leqslant\prod_{i=1}^s{u_i}=\prod_{i=1}^e{u_i}\leqslant(\frac{\sum_{i=1}^e{u_i}}{e})^e\leqslant(\frac{\dfrak(G)+e}{e})^e=(\frac{\dfrak(G)}{e}+1)^e.
\end{equation}
Now for each real number $y\geqslant1$, we have $2^y\geqslant y+1$. Applied with $y:=\dfrak(G)/e$ (using that $e\leqslant\dfrak(G)$ by definition of $e$), we get that
\[
\frac{\dfrak(G)}{e}+1\leqslant 2^{\dfrak(G)/e},
\]
or equivalently,
\[
(\frac{\dfrak(G)}{e}+1)^e\leqslant 2^{\dfrak(G)},
\]
which together with Formula (\ref{uEq2}) implies that $|X_{\overline{o}}|\leqslant 2^{\dfrak(G)}$ and thus concludes the proof.
\end{proof}

Note that by applying Lemma \ref{charQuotLem}(2) with $N:=\Rad(G)$, the soluble radical of $G$, we get in particular that $\m(G/\Rad(G))\leqslant 2^{\dfrak(G)}+\dfrak(G)$. Since we want to bound the index $|G:\Rad(G)|$, i.e., the order of the group $G/\Rad(G)$, in terms of $\dfrak(G)$, and since $G/\Rad(G)$ is always semisimple (i.e., has no nontrivial soluble normal subgroups, see \cite[pp.~89 and 122]{Rob96a}), our next goal will be to bound the order of a finite semisimple group $H$ in terms of $\m(H)$. Consider the following simple bound:

\begin{lemma}\label{charSubLem}
Let $G$ be a finite group, and let $N$ be a characteristic subgroup of $G$. Then $\m(N)\leqslant\m(G)$.
\end{lemma}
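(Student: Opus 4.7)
The plan is to reduce the inequality $\m(N) \leq \m(G)$ to the pointwise inequality $\omega_o(N) \leq \omega_o(G)$ for every element order $o$ of $N$, and then to establish this pointwise inequality via a two-step comparison that exploits the characteristicness of $N$.

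First, I would observe that since $N$ is characteristic in $G$, restriction gives a well-defined homomorphism $\rho \colon \Aut(G) \to \Aut(N)$. Consequently, two elements of $N$ that are $\Aut(G)$-conjugate are automatically $\Aut(N)$-conjugate (since the restriction of any $\alpha \in \Aut(G)$ to $N$ is an element of $\Aut(N)$). In other words, each $\Aut(N)$-orbit on $N$ is a union of $\Aut(G)$-orbits on $N$. Restricting this comparison to elements of a fixed order $o \in \Ord(N)$ yields the inequality $\omega_o(N) \leq \omega_o^G(N)$, where $\omega_o^G(N)$ denotes the number of $\Aut(G)$-orbits on the set of order $o$ elements in $N$.

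Next, I would note that because $N$ is characteristic, the set of order $o$ elements of $N$ is a union of $\Aut(G)$-orbits on the set of order $o$ elements of $G$ (each such orbit is either contained in $N$ or disjoint from $N$). Therefore $\omega_o^G(N) \leq \omega_o(G)$. Combining this with the previous step gives $\omega_o(N) \leq \omega_o(G)$ for every $o \in \Ord(N)$. Since $\Ord(N) \subseteq \Ord(G)$, taking the maximum over $o \in \Ord(N)$ on both sides yields
\[
\m(N) = \max_{o \in \Ord(N)} \omega_o(N) \leq \max_{o \in \Ord(N)} \omega_o(G) \leq \max_{o \in \Ord(G)} \omega_o(G) = \m(G),
\]
which is the desired conclusion.

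There is no serious obstacle here; both inequalities are immediate consequences of the fact that $N$ is $\Aut(G)$-invariant. The only thing to be careful about is keeping straight the distinction between $\Aut(G)$-orbits and $\Aut(N)$-orbits, and noting that a priori, restrictions $\rho(\Aut(G))$ form merely a subgroup (not necessarily all) of $\Aut(N)$, so one indeed gets $\Aut(N)$-orbits as (possibly proper) unions of $\Aut(G)$-orbits rather than an equality.
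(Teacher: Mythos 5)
Your proof is correct and takes essentially the same route as the paper: the key pointwise inequality $\omega_o(N)\leqslant\omega_o(G)$ is established in both via the observation that $\Aut(G)$-conjugate elements of $N$ are $\Aut(N)$-conjugate (so $\Aut(N)$-orbits on $N$ are unions of $\Aut(G)$-orbits on $N$), combined with the $\Aut(G)$-invariance of $N$, and then one takes the maximum over $o\in\Ord(N)\subseteq\Ord(G)$. The paper simply cites this pointwise inequality from the proof of Lemma~\ref{charQuotLem}(1) rather than re-deriving it, and your closing caveat that $\rho(\Aut(G))$ may be a proper subgroup of $\Aut(N)$ is exactly right and correctly handled.
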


\begin{proof}
For each $o\in\Ord(N)$ we have $\omega_o(N)\leqslant\omega_o(G)$ (as was already observed in the proof of Lemma \ref{charQuotLem}(1)), and so
\begin{align*}
\m(G)&=\max\{\omega_o(G)\mid o\in\Ord(G)\}\geqslant\max\{\omega_o(G)\mid o\in\Ord(N)\} \\
&\geqslant\max\{\omega_o(N)\mid o\in\Ord(N)\}=\m(N),
\end{align*}
as required.
\end{proof}

By Lemma \ref{charSubLem}, $\m(\Soc(H))\leqslant\m(H)$. Hence if we can bound $|\Soc(H)|$ by a monotonically increasing function in $\m(\Soc(H))$, then $|\Soc(H)|$ is also bounded in terms of $\m(H)$, and this implies that $|H|$ is bounded in terms of $\m(H)$, because $H$ embeds into $\Aut(\Soc(H))$ (see e.g.~\cite[Lemma 1.1]{Ros75a}). Since we know by \cite[3.3.18, p.~89]{Rob96a} that $\Soc(H)$ is isomorphic to a direct product of nonabelian finite simple groups, the following will be useful:

\begin{lemma}\label{socLem}
Let $S_1,\ldots,S_r$ be pairwise nonisomorphic nonabelian finite simple groups, and let $n_1,\ldots,n_r\in\IN^+$. Then $\m(S_1^{n_1}\times\cdots\times S_r^{n_r})\geqslant\prod_{i=1}^r{(n_i\cdot\m(S_i))}$.
\end{lemma}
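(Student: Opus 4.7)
The plan is to produce, for a suitably chosen element order $o$ in $G:=S_1^{n_1}\times\cdots\times S_r^{n_r}$, at least $\prod_{i=1}^r(n_i\cdot\m(S_i))$ distinct $\Aut(G)$-orbits of order $o$ elements; since $\m(G)\geqslant\omega_o(G)$ by definition, this suffices.

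First, I would reduce to a coordinatewise picture. Because the $S_i$ are pairwise nonisomorphic nonabelian finite simple groups, the characteristically simple direct factors $S_i^{n_i}$ of $G$ are the minimal characteristic subgroups of $G$, and $\Aut(G)$ splits as the direct product
\[
\Aut(G)=\prod_{i=1}^r\Aut(S_i^{n_i})=\prod_{i=1}^r\bigl(\Aut(S_i)\wr\Sym(n_i)\bigr),
\]
acting coordinatewise. Consequently an $\Aut(G)$-orbit on $G$ is exactly a product of $\Aut(S_i^{n_i})$-orbits on the factors $S_i^{n_i}$, so
\[
\omega_o(G)\geqslant\prod_{i=1}^r\omega_{o_i}(S_i^{n_i})
\]
whenever $o_i\in\Ord(S_i^{n_i})$ and $o=\lcm(o_1,\ldots,o_r)$, where the inequality uses that taking one orbit of order $o_i$ in each factor produces an orbit whose representatives have order exactly $\lcm(o_1,\ldots,o_r)=o$.

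Next, for each $i$ I would fix $o_i\in\Ord(S_i)$ achieving $\omega_{o_i}(S_i)=\m(S_i)=:m_i$, together with representatives $C_{i,1},\ldots,C_{i,m_i}$ of the $\Aut(S_i)$-orbits of order $o_i$ elements of $S_i$. The core step is then the lower bound
\[
\omega_{o_i}(S_i^{n_i})\geqslant n_i\cdot m_i.
\]
To see this, note that $\Aut(S_i)\wr\Sym(n_i)$-orbits on $S_i^{n_i}$ are in bijection with multisets of $\Aut(S_i)$-orbits of size $n_i$. For each pair $(k,j)$ with $1\leqslant k\leqslant n_i$ and $1\leqslant j\leqslant m_i$ I would take the element $g_{i,k,j}\in S_i^{n_i}$ having $k$ components in $C_{i,j}$ and $n_i-k$ trivial components. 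Then $g_{i,k,j}$ has order $o_i$, and the $n_im_i$ elements so produced lie in pairwise distinct $\Aut(S_i^{n_i})$-orbits, because the number of nontrivial coordinates is a $\Sym(n_i)$-invariant and the common $\Aut(S_i)$-orbit of the nontrivial coordinates is an $\Aut(S_i)^{n_i}$-invariant.

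Combining the two ingredients, with $o:=\lcm(o_1,\ldots,o_r)$ the tuples $(g_{1,k_1,j_1},\ldots,g_{r,k_r,j_r})$ run over $\prod_i n_im_i$ pairwise distinct $\Aut(G)$-orbits, each consisting of order $o$ elements, yielding the claimed inequality. The only delicate point is verifying that coordinate-tuples with different parameters $(k,j)$ truly land in distinct wreath-product orbits; the invariants indicated above handle this cleanly, and no further combinatorial estimate (such as any bound on binomial coefficients) is needed.
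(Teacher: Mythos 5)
Your proof is correct and follows essentially the same route as the paper's: decompose $\Aut(G)$ as $\prod_i\Aut(S_i^{n_i})$, fix for each $i$ an order $o_i$ achieving $\m(S_i)$, and produce $n_i\m(S_i)$ distinct $\Aut(S_i^{n_i})$-orbits of order-$o_i$ elements by varying both the number of nontrivial coordinates and the $\Aut(S_i)$-orbit they lie in. Your write-up is merely a bit more explicit about naming the witnesses $g_{i,k,j}$ and the multiset invariant, but there is no substantive difference from the paper's argument.
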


\begin{proof}
First note that if two elements in $S_i^{n_i}$ have a different number of nontrivial entries, then they lie in different orbits of $\Aut(S_i^{n_i})=\Aut(S_i)\wr\Sym(n_i)$. Thus for each element order $o_i$ of $S_i$, there are at least $n_i\cdot\omega_{o_i}(S_i)$ many $\Aut(S_i^{n_i})$-orbits on the set of elements of $S_i^{n_i}$ of order $o_i$. Now for each $i\in\{1,\ldots,r\}$, let $o_i$ be an element order of $S_i$ such that $\omega_{o_i}(S_i)$ is as large as possible, that is, $\omega_{o_i}(S_i)=\m(S_i)$. Observe that
\[
\Aut(S_1^{n_1}\times\cdots\times S_r^{n_r})=\Aut(S_1^{n_1})\times\cdots\times\Aut(S_r^{n_r}),
\]
and so if there is an $i\in\{1,\ldots,r\}$ such that the projections of two elements $g,h\in S_1^{n_1}\times\cdots\times S_r^{n_r}$ to the $i$-th component $S_i^{n_i}$ lie in different $\Aut(S_i^{n_i})$-orbits, then $g,h$ lie in different $\Aut(S_1^{n_1}\times\cdots\times S_r^{n_r})$-orbits. Thus letting $o:=\lcm(o_1,\ldots,o_r)$, we see that $o\in\Ord(S_1^{n_1}\times\cdots\times S_r^{n_r})$ and
\[
\omega_o(S_1^{n_1}\times\cdots\times S_r^{n_r})\geqslant\prod_{i=1}^r{(n_i\m(S_i))}.
\]
Hence the lower bound in the statement holds.
\end{proof}

We are now ready for the

\begin{proof}[Proof of Theorem \ref{mainTheo1}(1)]
Let $G$ be an arbitrary finite group.  By Lemma \ref{charQuotLem}(2), applied with $N:=\Rad(G)$, we find that $\m(G/\Rad(G))\leqslant 2^{\dfrak(G)}+\dfrak(G)$. Set $H:=G/\Rad(G)$. Write $\Soc(H)=S_1^{n_1}\times\cdots\times S_r^{n_r}$ where $S_1,\ldots,S_r$ are pairwise nonisomorphic nonabelian finite simple groups and $n_1,\ldots,n_r\in\IN^+$. Then by combining the above and Lemmas \ref{charSubLem} and \ref{socLem}, we get
\begin{equation}\label{firstIneqChain}
2^{\dfrak(G)}+\dfrak(G)\geqslant\m(H)\geqslant\m(\Soc(H))\geqslant\prod_{i=1}^r{(n_i\cdot\m(S_i))}\geqslant\max\{n_i,\m(S_i)\mid i=1,\ldots,r\}.
\end{equation}
Hence for each $i\in\{1,\ldots,r\}$, we have
\begin{equation}\label{niEq}
n_i\leqslant 2^{\dfrak(G)}+\dfrak(G),
\end{equation}
and, setting $c':=\frac{\log\log{(413/73)}}{\log\log{|\M|}}\approx0.11404$ as in Theorem \ref{mainTheo2}(5), where $\M$ is the Fischer-Griess Monster group, we have
\begin{equation}\label{ineqChain}
\exp(\log^{c'}{|S_i|})-3\leqslant\q(S_i)\leqslant\m(S_i)\leqslant\m(\Soc(H))\leqslant 2^{\dfrak(G)}+\dfrak(G).
\end{equation}
Indeed, for the first inequality in Formula (\ref{ineqChain}), note that by Theorem \ref{mainTheo2}(5),
\[
\epsilon_{\q}(S_i)\geqslant\epsilon_{\q}(\M)=c',
\]
where $\epsilon_{\q}(S)$ is as defined in Formula (\ref{qsPlus3Eq}). Hence, using the said definition of $\epsilon_{\q}$,
\[
\frac{\log\log{(\q(S_i)+3)}}{\log\log{|S_i|}}\geqslant c',
\]
or equivalently,
\[
\log\log{(\q(S_i)+3)}\geqslant c'\log\log{|S_i|},
\]
and by applying $\exp$ to both sides twice, one obtains the first inequality in Formula (\ref{ineqChain}). The second inequality in Formula (\ref{ineqChain}) follows from the definitions of $\q$ and $\m$, see Definition \ref{mainDef1}(4) and also the first sentence after Definition \ref{mainDef1}(4). The third inequality in Formula (\ref{ineqChain}) is by Lemma \ref{socLem}, and the last inequality in Formula (\ref{ineqChain}) follows from the first two inequalities in Formula (\ref{firstIneqChain}).

Using Formula (\ref{ineqChain}), and noting that the value of $c$ from the statement of Theorem \ref{mainTheo1} is just $1/c'$, we conclude that
\begin{equation}\label{siEq}
|S_i|\leqslant\exp(\log^c{(2^{\dfrak(G)}+\dfrak(G)+3)}).
\end{equation}
In view of Formula (\ref{siEq}) and Kohl's bound $|\Out(S_i)|\leqslant\log_2{|S_i|}$ from \cite{Koh03a} already used at the beginning of Subsection \ref{subsec2P3}, we deduce that
\begin{equation}\label{autsiEq}
|\Aut(S_i)|\leqslant|S_i|\cdot\log_2{|S_i|}\leqslant\exp(\log^c{(2^{\dfrak(G)}+\dfrak(G)+3)})\cdot\frac{\log^c{(2^{\dfrak(G)}+\dfrak(G)+3)}}{\log{2}}
\end{equation}
Combining Formulas (\ref{niEq}) and (\ref{autsiEq}), we obtain, still for all $i=1,\ldots,r$,
\begin{align}\label{autsiniEq}
\notag &|\Aut(S_i)^{n_i}|\leqslant \\
&\exp((2^{\dfrak(G)}+\dfrak(G))\log^c{(2^{\dfrak(G)}+\dfrak(G)+3)})\cdot(\log^{-1}{2}\cdot(2^{\dfrak(G)}+\dfrak(G)+3))^{2^{\dfrak(G)}+\dfrak(G)}.
\end{align}
Recall from above that $S_1,\ldots,S_r$ are pairwise nonisomorphic nonabelian finite simple groups. For each $m\in\IN^+$, there are at most $m$ isomorphism types of nonabelian finite simple groups of order at most $m$, because all nonabelian finite simple groups are of even order, and for each given $k\in\IN^+$, there are at most two nonisomorphic nonabelian finite simple groups of order $k$. In particular, in view of Formula (\ref{siEq}), we have
\begin{equation}\label{rEq}
r\leqslant\exp(\log^c{(2^{\dfrak(G)}+\dfrak(G)+3)}).
\end{equation}
Formulas (\ref{autsiniEq}) and (\ref{rEq}) yield
\begin{align}\label{permPartCompEq}
\notag &|H\cap(\Aut(S_1)^{n_1}\times\cdots\times\Aut(S_r)^{n_r})|\leqslant|\Aut(S_1)^{n_1}\times\cdots\times\Aut(S_r)^{n_r}|\leqslant \\
\notag &\exp((2^{\dfrak(G)}+\dfrak(G))\log^c{(2^{\dfrak(G)}+\dfrak(G)+3)}\exp(\log^c{(2^{\dfrak(G)}+\dfrak(G)+3)})) \\
&\cdot(\log^{-1}{2}\cdot(2^{\dfrak(G)}+\dfrak(G)+3))^{(2^{\dfrak(G)}+\dfrak(G))\exp(\log^c{(2^{\dfrak(G)}+\dfrak(G)+3)})}.
\end{align}
Moreover, since $H/(H\cap(\Aut(S_1)^{n_1}\times\cdots\times\Aut(S_r)^{n_r}))$ embeds into $\Sym(n_1)\times\cdots\times\Sym(n_r)$, Formulas (\ref{niEq}) and (\ref{rEq}) imply that
\begin{equation}\label{permPartEq}
|H:(H\cap(\Aut(S_1)^{n_1}\times\cdots\times\Aut(S_r)^{n_r}))|\leqslant((2^{\dfrak(G)}+\dfrak(G))!)^{\exp(\log^c{(2^{\dfrak(G)}+\dfrak(G)+3)})}.
\end{equation}
Together, Formulas (\ref{permPartCompEq}) and (\ref{permPartEq}) yield that
\begin{align*}
&|G:\Rad(G)|=|H|\leqslant \\
&\exp((2^{\dfrak(G)}+\dfrak(G))\log^c{(2^{\dfrak(G)}+\dfrak(G)+3)}\exp(\log^c{(2^{\dfrak(G)}+\dfrak(G)+3)})) \\
&\cdot(\log^{-1}{2}\cdot(2^{\dfrak(G)}+\dfrak(G)+3))^{(2^{\dfrak(G)}+\dfrak(G))\exp(\log^c{(2^{\dfrak(G)}+\dfrak(G)+3)})} \\
&\cdot((2^{\dfrak(G)}+\dfrak(G))!)^{\exp(\log^c{(2^{\dfrak(G)}+\dfrak(G)+3)})},
\end{align*}
which is what we needed to show.
\end{proof}

\numberwithin{equation}{subsection}
\section{Proof of Theorem \ref{mainTheo1}(2)}\label{sec4}

\subsection{Reduction to semisimple groups}\label{subsec4P1}

We first make the following observation, which allows us to restrict our attention to finite \emph{semisimple} groups (recall that these are by definition groups without nontrivial soluble normal subgroups, or, equivalently, with trivial soluble radical, see \cite[pp.~89 and 122]{Rob96a}):

\begin{remmark}\label{semisimpleRem}
We claim that the following are equivalent:
\begin{enumerate}
\item The existence of a function $f_2:\left[0,\infty\right)^2\rightarrow\left[1,\infty\right)$ that is monotonically increasing in both variables and such that $|G:\Rad(G)|\leqslant f_2(\q(G),\omicron(\Rad(G)))$ for all finite groups $G$, as asserted by Theorem \ref{mainTheo1}(2).
\item The existence of a monotonically increasing function $\g:\left[1,\infty\right)\rightarrow\left[1,\infty\right)$ such that $|H|\leqslant \g(\q(H))$ for all finite semisimple groups $H$.
\end{enumerate}
Indeed, assuming the first statement and aiming at deriving the second, just observe that for each finite semisimple group $H$, since $\Rad(H)=\{1_H\}$,
\[
|H|=|H:\Rad(H)|\leqslant f_2(\q(H),\omicron(\Rad(H)))=f_2(\q(H),1),
\]
so one may choose $\g(x):=f_2(x,1)$ in the second statement.

On the other hand, assuming the second statement, we can infer the first as follows: Let $G$ be an arbitrary finite group. Observe that
\[
\omicron(G)\leqslant\omicron(\Rad(G))\cdot\omicron(G/\Rad(G)),
\]
and
\[
\omega(G)\geqslant\omega(G/\Rad(G)).
\]
It follows that
\[
\q(G)=\frac{\omega(G)}{\omicron(G)}\geqslant\frac{\omega(G/\Rad(G))}{\omicron(\Rad(G))\omicron(G/\Rad(G))}=\frac{\q(G/\Rad(G))}{\omicron(\Rad(G))},
\]
or equivalently,
\[
\q(G/\Rad(G))\leqslant\q(G)\cdot\omicron(\Rad(G)).
\]
Applying the assumed second statement with $H:=G/\Rad(G)$ (which is semisimple, as noted in \cite[p.~122]{Rob96a}), we get that 
\[
|G:\Rad(G)|=|G/\Rad(G)|\leqslant\g(\q(G/\Rad(G)))\leqslant\g(\q(G)\cdot\omicron(\Rad(G))).
\]
Hence (and since $\min\{\q(G),\omicron(\Rad(G))\}\geqslant1$ for all finite groups $G$),
\[
f_2(x,y):=\begin{cases}1, & \text{if }\min\{x,y\}<1, \\ \g(x\cdot y), & \text{if }\min\{x,y\}\geqslant1\end{cases}
\]
is a suitable choice for the function in the first statement. This proves the claim.
\end{remmark}

In the rest of this section, we will be concerned with proving the second statement in Remark \ref{semisimpleRem}, so we will primarily be concerned with finite \emph{semisimple} groups only.

\subsection{Two lemmas for working with partitions}\label{subsec4P2}

Given a finite group $G$, rather than determining $\omega(G)$ and bounding $\omicron(G)$ directly, it may be easier to determine corresponding parameters for each subset $M\subseteq G$ belonging to a suitable, fixed partition $\P$ of $G$ (i.e., to a family of \emph{nonempty}, pairwise disjoint subsets of $G$ that cover $G$). In this subsection, we present two simple, but important lemmas for deriving information on $\q(G)$ using such an approach. First, we extend the notations $\omega(G)$ and $\omicron(G)$ to subsets of $G$:

\begin{nottation}\label{subsetNot}
Let $G$ be a finite group and $M\subseteq G$.

\begin{enumerate}
\item We denote by $\omega_G(M)$ the number of $\Aut(G)$-orbits on $G$ whose intersection with $M$ is nonempty.
\item We denote by $\Ord_G(M)$ (or simply $\Ord(M)$, see below) the set of distinct orders of elements of $M$ and we define $\omicron_G(M)$ (also usually simplified to $\omicron(M)$, see below) as $|\Ord_G(M)|$.
\item We set $\q_G(M):=\frac{\omega_G(M)}{\omicron_G(M)}$.
\end{enumerate}
\end{nottation}

We note that while the concept $\Ord_G(M)$ (and, likewise, $\omicron_G(M)$) does depend on $G$ to the extent that $G$ provides the algebraic structure (which $M$ itself, being only a set, is lacking) to make talking about the \enquote{order of an element of $M$} meaningful, it does have the property that if $M\subseteq G_1\leqslant G_2$ for a finite group $G_2$, then $\Ord_{G_1}(M)=\Ord_{G_2}(M)$. So as long as the context of discussion provides a \enquote{natural} smallest finite group into which the given finite set $M$ embeds (which will always be the case in our paper), we can and will omit the subscript $G$ in $\Ord_G(M)$ and $\omicron_G(M)$. On the other hand, the subscript $G$ will \emph{always} be included in the notations $\omega_G(M)$ and $\q_G(M)$ for the sake of necessity.

\begin{lemmma}\label{partitionLem}
Let $G$ be a finite group, let $M\subseteq G$, and let $\P$ be a partition of $M$ into $\Aut(G)$-invariant subsets. Then $\q_G(M)\geqslant\min\{\q_G(N)\mid N\in\P\}$.
\end{lemmma}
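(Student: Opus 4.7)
The plan is to unpack the definitions of $\omega_G$ and $\omicron_G$ on subsets and exploit the additivity/subadditivity they satisfy with respect to $\Aut(G)$-invariant partitions.

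First I would observe that because every part $N\in\P$ is $\Aut(G)$-invariant, any $\Aut(G)$-orbit on $G$ that meets $M$ is entirely contained in a single $N\in\P$. Consequently the counts $\omega_G(N)$ partition $\omega_G(M)$ exactly, giving the identity
\[
\omega_G(M)=\sum_{N\in\P}\omega_G(N).
\]
On the element-order side, since $M=\bigsqcup_{N\in\P}N$ one has $\Ord_G(M)=\bigcup_{N\in\P}\Ord_G(N)$, and taking cardinalities yields only the inequality
\[
\omicron_G(M)\leqslant\sum_{N\in\P}\omicron_G(N),
\]
because distinct parts may share element orders. This asymmetry between equality for $\omega_G$ and inequality for $\omicron_G$ is precisely what makes the bound go in the favourable direction.

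Setting $c:=\min\{\q_G(N)\mid N\in\P\}$, the definition of $\q_G$ gives $\omega_G(N)\geqslant c\cdot\omicron_G(N)$ for every $N\in\P$. Summing these inequalities over $\P$ and combining with the two displayed relations above yields
\[
\omega_G(M)=\sum_{N\in\P}\omega_G(N)\geqslant c\sum_{N\in\P}\omicron_G(N)\geqslant c\cdot\omicron_G(M),
\]
which upon dividing by $\omicron_G(M)$ produces $\q_G(M)\geqslant c$, as required.

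There is no real obstacle here; the only thing to be careful about is the directionality of the two counting relations (equality for orbits, inequality for element orders), and the mild fact that the parts $N$ are nonempty so that $\q_G(N)$ is well defined and the minimum on the right-hand side makes sense. Everything else is a one-line computation.
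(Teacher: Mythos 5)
Your proof is correct and follows essentially the same route as the paper: establish $\omega_G(M)=\sum_{N\in\P}\omega_G(N)$ from $\Aut(G)$-invariance of the parts, combine it with the subadditivity $\omicron_G(M)\leqslant\sum_{N\in\P}\omicron_G(N)$, and push the minimum $c$ through the sums. The only cosmetic difference is that you spell out the observation that each orbit meeting $M$ lies entirely in one part, which the paper leaves implicit.
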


\begin{proof}
As each $N\in\P$ is $\Aut(G)$-invariant, we have $\omega_G(M)=\sum_{N\in\P}{\omega_G(N)}$. Moreover, $\omicron(M)\leqslant\sum_{N\in\P}{\omicron(N)}$, since the $N\in\P$ cover $M$. Setting $c:=\min\{\q_G(N)\mid N\in\P\}$, it follows that
\[
\q_G(M)=\frac{\omega_G(M)}{\omicron(M)}\geqslant\frac{\sum_{N\in\P}{\omega_G(N)}}{\sum_{N\in\P}{\omicron(N)}}\geqslant\frac{\sum_{N\in\P}{c\omicron(N)}}{\sum_{N\in\P}{\omicron(N)}}=c,
\]
as required.
\end{proof}

We will be applying Lemma \ref{partitionLem} in the special case where $G$ is a finite semisimple group. With $M:=G$, Lemma \ref{partitionLem} says in particular that if we can find a partition of $G$ into $\Aut(G)$-invariant subsets each of which has \enquote{large} $\q_G$-value, then $\q_G(G)=\q(G)$ will be large. However, sometimes it is easier to consider partitions where not every partition member has large $q_G$-value, forcing us to distinguish between \enquote{good} and \enquote{bad} partition members. The following lemma basically says that as long as the total number of element orders in the \enquote{bad} partition members  is suitably bounded from above, one may still produce a useful lower bound on $\q(G)$ from such a \enquote{mixed} partition:

\begin{lemmma}\label{mixedPartitionLem}
Let $G$ be a finite group, let $M\subseteq G$, and let $\P=\{M_{\good},M_{\bad}\}$ be a partition of $M$ into two distinct (nonempty) $\Aut(G)$-invariant subsets. Then
\[
\q_G(M)\geqslant\frac{\q_G(M_{\good})}{1+\omicron(M_{\bad})}.
\]
\end{lemmma}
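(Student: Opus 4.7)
The plan is to chase the definitions and exploit two elementary facts: $\Aut(G)$-invariance lets the orbit count on $M$ split cleanly across the partition, while a straightforward union bound controls the order count. More precisely, first I would observe that since $M_{\good}$ and $M_{\bad}$ are both $\Aut(G)$-invariant, every $\Aut(G)$-orbit meeting $M$ is entirely contained in exactly one of them, giving $\omega_G(M) = \omega_G(M_{\good}) + \omega_G(M_{\bad}) \geq \omega_G(M_{\good})$. Second, since $M_{\good} \cup M_{\bad} = M$, we have $\Ord(M) = \Ord(M_{\good}) \cup \Ord(M_{\bad})$ and hence $\omicron(M) \leq \omicron(M_{\good}) + \omicron(M_{\bad})$.

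Combining these, one gets
\[
\q_G(M) = \frac{\omega_G(M)}{\omicron(M)} \geq \frac{\omega_G(M_{\good})}{\omicron(M_{\good}) + \omicron(M_{\bad})}.
\]
To convert this into the claimed bound, I would factor $\omicron(M_{\good})$ out of the denominator, writing the right-hand side as $\q_G(M_{\good}) \cdot \omicron(M_{\good}) / (\omicron(M_{\good}) + \omicron(M_{\bad}))$, and then verify the purely numerical inequality
\[
\frac{\omicron(M_{\good})}{\omicron(M_{\good}) + \omicron(M_{\bad})} \geq \frac{1}{1 + \omicron(M_{\bad})}.
\]
Cross-multiplying, this reduces to $\omicron(M_{\good}) \cdot \omicron(M_{\bad}) \geq \omicron(M_{\bad})$, which holds because $M_{\good}$ is nonempty and so $\omicron(M_{\good}) \geq 1$.

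There is essentially no obstacle here: the nontriviality assumption on $M_{\good}$ (built into the partition being of two \emph{nonempty} subsets) is exactly what is needed, and the argument is two lines of algebra once the invariance observation and the order-count union bound are in place. The only thing worth flagging in the writeup is that $\omega_G$ is additive across $\Aut(G)$-invariant pieces while $\omicron$ is only subadditive, which is precisely what creates the asymmetric role of $M_{\good}$ and $M_{\bad}$ in the conclusion.
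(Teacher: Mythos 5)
Your argument is correct and follows essentially the same path as the paper's proof: additivity of $\omega_G$ over $\Aut(G)$-invariant pieces, subadditivity of $\omicron$, and the observation that $\omicron(M_{\good})\geqslant 1$ because $M_{\good}\neq\varnothing$. The only cosmetic difference is that the paper bundles $\omicron(M)\leqslant\omicron(M_{\good})+\omicron(M_{\bad})\leqslant(1+\omicron(M_{\bad}))\omicron(M_{\good})$ in one line, whereas you derive the intermediate bound and then check the equivalent numerical inequality separately.
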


\begin{proof}
Since $M_{\good}\not=\varnothing$, we have $\omicron(M_{\good})\geqslant1$, and therefore
\[
\omicron(M)\leqslant\omicron(M_{\good})+\omicron(M_{\bad})\leqslant(1+\omicron(M_{\bad}))\omicron(M_{\good}).
\]
Furthermore,
\[
\omega_G(M)=\omega_G(M_{\good})+\omega_G(M_{\bad})\geqslant\omega_G(M_{\good}),
\]
and so
\[
\q_G(M)=\frac{\omega_G(M)}{\omicron(M)}\geqslant\frac{\omega_G(M_{\good})}{(1+\omicron(M_{\bad}))\omicron(M_{\good})}=\frac{\q_G(M_{\good})}{1+\omicron(M_{\bad})}.\qedhere
\]
\end{proof}

When using Lemma \ref{partitionLem} to study $\q(H)$ for finite semisimple groups $H$, an important partition of $H$ to consider is $\P_H$, defined as follows: Say $\Soc(H)=S_1^{n_1}\times\cdots\times S_r^{n_r}$ where $S_1,\ldots,S_r$ are pairwise nonisomorphic nonabelian finite simple groups and $n_1,\ldots,n_r\in\IN^+$ (see e.g.~\cite[3.3.18, p.~89]{Rob96a}). Note that $H$ may be viewed (via its conjugation action on $\Soc(H)$) as a subgroup of
\[
\Aut(\Soc(H))=(\Aut(S_1)\wr\Sym(n_1))\times\cdots\times(\Aut(S_r)\wr\Sym(n_r)),
\]
so that each coset of $\Soc(H)$ in $H$ can be written as $\Soc(H)\alphabv\psiv$ where $\alphabv\in\Aut(S_1)^{n_1}\times\cdots\times\Aut(S_r)^{n_r}$ and $\psiv\in\Sym(n_1)\times\cdots\times\Sym(n_r)$. Using these notational conventions, we set
\begin{align*}
\P_H:=\{(\Soc(H)\alphabv\psiv)^{\Aut(H)}\mid &\alphabv\in\Aut(S_1)^{n_1}\times\cdots\times\Aut(S_r)^{n_r}, \\
&\psiv\in\Sym(n_1)\times\cdots\times\Sym(n_r), \\
&\alphabv\psiv\in H\}.
\end{align*}
Equivalently, $\P_H$ is the unique finest partition of $H$ into subsets that are both $\Aut(H)$-invariant and unions of cosets of $\Soc(H)$. By applying Lemma \ref{partitionLem} with $G:=H$ and $\P:=\P_H$, we will be able to show that $\q(H)$ is large if $\max\{\tilde{\q}(S_i)\mid i=1,\ldots,r\}$ is large (see Lemma \ref{qTildeLem}(5)), where $\tilde{\q}(S)$ is a certain parameter associated with each nonabelian finite simple group $S$, which will be introduced and studied in the next subsection.

\subsection{Some auxiliary results on finite simple groups}\label{subsec4P3}

We begin with the following definition, most of which is taken from \cite[Definition 2.2.1]{Bor19a}:

\begin{deffinition}\label{sTypeDef}
Let $S$ be a nonabelian finite simple group, and let $\pi:\Aut(S)\rightarrow\Out(S)$ be the canonical projection.
\begin{enumerate}
\item The term \emph{$S$-type} is a synonym for \enquote{$\Out(S)$-conjugacy class}.
\item For each $\Aut(S)$-conjugacy class $c$, we call the element-wise image of $c$ under $\pi$ the \emph{$S$-type of $c$}.
\item For each $\alpha\in\Aut(S)$, the \emph{$S$-type of $\alpha$} is defined as the $S$-type of $\alpha^{\Aut(S)}$.
\end{enumerate}
\end{deffinition}

$S$-types played an important role in the first author's result \cite[Lemma 2.2.5(2)]{Bor19a}, which gave an upper bound on the size of a conjugacy class in a finite semisimple group and which is based on James and Kerber's characterisation of conjugacy in wreath products of the form $G\wr\Sym(n)$ \cite[Theorem 4.2.8, p.~141]{JK81a}. Likewise, we will use James and Kerber's result to give, for each finite semisimple group $H$ and each coset $C$ of $\Soc(H)$ in $H$, bounds on $\omega_H(C)$, $\omicron(C)$ and $\q_H(C)$, see Lemma \ref{qTildeLem}(2,4,5). These bounds will also involve $S$-types, and one of the (lower) bounds on $\q_H(C)$ from Lemma \ref{qTildeLem}(5) will also involve the parameters $\tilde{q}(S)$ for nonabelian finite simple groups $S$, defined as follows:

\begin{nottation}\label{tildeNot}
Let $S$ be a nonabelian finite simple group.
\begin{enumerate}
\item We set $\tilde{\omega}(S):=\min\{\omega_{\Aut(S)}(S\alpha)\mid \alpha\in\Aut(S)\}$ and $\tilde{\q}(S):=\min\{\q_{\Aut(S)}(S\alpha)\mid \alpha\in\Aut(S)\}$.
\item For each $S$-type $\tau$, denote by $\alpha(\tau)$ some fixed automorphism of $S$ such that $\alpha(\tau)^{\Aut(S)}$ has $S$-type $\tau$, and set $\omega(\tau):=\omega_{\Aut(S)}(S\alpha(\tau))$ and $\omicron(\tau):=\omicron(S\alpha(\tau))$ (note that $\omega(\tau)$ and $\omicron(\tau)$ do not depend on the choice of $\alpha(\tau)$).
\end{enumerate}
\end{nottation}

For later reference, we note the following:

\begin{lemmma}\label{omegaTildeLem}
For every nonabelian finite simple group $S$, $\tilde{\omega}(S)\geqslant 2$.
\end{lemmma}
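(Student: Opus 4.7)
The case $\alpha\in\Inn(S)$ (so that $S\alpha=S$) is immediate: the identity $1_S$ forms a singleton $\Aut(S)$-orbit, while the assumption $|S|\geqslant 60$ yields at least one further orbit, giving $\omega_{\Aut(S)}(S)\geqslant 2$.

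For $\alpha\in\Aut(S)\setminus\Inn(S)$, the plan is to argue by contradiction, supposing $S\alpha$ to be a single $\Aut(S)$-conjugacy class. Then $|\alpha^{\Aut(S)}|=|S|$, so $|C_{\Aut(S)}(\alpha)|=|\Out(S)|$. Using the identity $\alpha\cdot\conj_s=\conj_{\alpha(s)}\cdot\alpha$, one checks that $C_{\Inn(S)}(\alpha)\cong\fix(\alpha)$ (as subgroups of $S$); since $\langle\alpha\rangle\cap\Inn(S)=\langle\alpha^e\rangle\leqslant C_{\Inn(S)}(\alpha)$ (where $e:=\ord(\bar\alpha)\geqslant 2$), the subgroup $\langle\alpha\rangle\cdot C_{\Inn(S)}(\alpha)\leqslant C_{\Aut(S)}(\alpha)$ has order exactly $e\cdot|\fix(\alpha)|$, forcing $e\cdot|\fix(\alpha)|\leqslant|\Out(S)|$. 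Combined with Kohl's bound \cite{Koh03a} $|\Out(S)|\leqslant\log_2|S|$, this yields $|\fix(\alpha)|\leqslant\tfrac12\log_2|S|$ and $n:=\ord(\alpha)\leqslant\log_2|S|$. Moreover, as $S\alpha$ is a single $\Aut(S)$-orbit, all its elements must share the common order $n$.

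The crux of the argument is then the observation that for any $s\in\fix(\alpha)$ the elements $\conj_s$ and $\alpha$ commute in $\Aut(S)$, so $\ord(\conj_s\cdot\alpha)$ can be computed inside the abelian group $\langle\conj_s,\alpha\rangle$. Choosing $s\neq 1_S$ of prime order $p\nmid n$ gives $\langle\conj_s\rangle\cap\langle\alpha\rangle=1$ (as $\gcd(p,n)=1$), and hence $\ord(\conj_s\alpha)=\lcm(p,n)=pn>n$, contradicting the common-order assertion. By Cauchy's theorem, this forces every prime divisor of $|\fix(\alpha)|$ to divide $n$.

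Finally, combining these strong numerical restrictions on $\fix(\alpha)$ (very small order, bounded element orders, prime support contained in that of $n$) with the known structural information about nonabelian finite simple groups produces the desired contradiction. The main obstacle is completing this final step uniformly across all $S$: a purely elementary treatment seems to require an auxiliary input such as Thompson's classical theorem that a fixed-point-free automorphism of prime order of a finite group forces nilpotency (applied to a suitable prime-power power of $\alpha$). The alternative, and in practice perhaps cleaner, route is a case-by-case verification via the classification of finite simple groups, using that for each such $S$ the element orders in each coset $S\alpha\subseteq\Aut(S)$ are well-tabulated in the literature: the ATLAS for sporadic groups, elementary permutation calculations for alternating groups, and the standard references on $\Aut$ and $\Out$ (as already marshalled in Section~\ref{sec2}) for groups of Lie type.
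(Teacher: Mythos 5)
Your reduction is essentially sound as far as it goes: for $\alpha\notin\Inn(S)$, assuming $S\alpha$ is a single $\Aut(S)$-class does give $|C_{\Aut(S)}(\alpha)|=|\Out(S)|$, the identification $C_{\Inn(S)}(\alpha)\cong\fix(\alpha)$ is correct (using $Z(S)=1$), the subgroup count $|\langle\alpha\rangle\cdot C_{\Inn(S)}(\alpha)|=e\cdot|\fix(\alpha)|$ is right, and the commuting-element observation that any $s\in\fix(\alpha)$ of prime order $p\nmid n$ would force $\ord(\conj_s\alpha)=pn>n$ is a nice way to constrain the prime support of $|\fix(\alpha)|$. However, the proof is not finished, and you say so yourself: you arrive at ``strong numerical restrictions on $\fix(\alpha)$'' and then defer the contradiction to either Thompson's theorem or a CFSG case-by-case verification, carrying out neither. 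This is a genuine gap, not a cosmetic one. The Thompson route does not go through directly: Thompson's theorem requires a fixed-point-free automorphism of \emph{prime} order, and a prime-order power $\alpha^{n/p}$ of a fixed-point-free $\alpha$ need not itself be fixed-point-free (one only has $\fix(\alpha)\subseteq\fix(\alpha^{n/p})$, in the wrong direction). Getting $\fix(\alpha)\neq 1$ in general requires the fixed-point-free automorphism theorem for arbitrary order, which already rests on CFSG; and even granting $\fix(\alpha)\neq1$ with all its prime divisors dividing $n\leqslant\log_2|S|$, you have not shown why this is impossible for a nonabelian simple $S$. So the argument is an honest reduction, not a proof.

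For comparison, the paper's proof is a one-line citation: by \cite[Lemma 2.4.2(1)]{Bor19a}, for each $\alpha\in\Aut(S)$ the intersection of $S\alpha$ with any single $\Aut(S)$-conjugacy class has size at most $\tfrac{18}{19}|S|<|S|$, so $S\alpha$ always meets at least two $\Aut(S)$-orbits. That result is itself CFSG-flavoured, so neither route is elementary, but the citation is immediate and quantitatively stronger than what you would obtain even if your approach were completed. If you want to keep a self-contained argument, you would need to actually execute the final step; as written, the proposal stops short of establishing the lemma.
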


\begin{proof}
By \cite[Lemma 2.4.2(1)]{Bor19a}, for each $\alpha\in\Aut(S)$, the size of the intersection of $S\alpha$ with any $\Aut(S)$-conjugacy class is at most $\frac{18}{19}|S|$; in particular, $S\alpha$ is never fully contained in a single $\Aut(S)$-conjugacy class. Hence $\omega_{\Aut(S)}(S\alpha)\geqslant 2$ for all $\alpha\in\Aut(S)$, which by definition of $\tilde{\omega}(S)$ entails that $\tilde{\omega}(S)\geqslant 2$.
\end{proof}

We note that the bound in Lemma \ref{omegaTildeLem} is optimal, as
\[
\tilde{\omega}(\Alt(6))=\omega_{\Aut(\Alt(6))}(\M_{10}\setminus\Alt(6))=2.
\]
As noted above, the parameter $\tilde{q}(S)$ from Notation \ref{tildeNot} will appear in a lower bound in Lemma \ref{qTildeLem}(5), and thus we will be interested in knowing for which nonabelian finite simple groups $S$ this parameter is large. To state a corresponding asymptotic result (see Lemma \ref{qTildeLem2} below), we need some more preparation, including the following notation, which is motivated by \cite[Propositions 4.1 and 4.2]{Har92a} and part of which already appeared in \cite{Har92a}:

\begin{nottation}\label{gNot}
Let $S$ be a nonabelian finite simple group, and let $\alpha\in\Aut(S)$. We introduce the following numerical parameters $f(S)$ and $g(\alpha)$:
\begin{enumerate}
\item If $S$ is isomorphic to some alternating or sporadic finite simple group, we set $f(S):=1$ and $g(\alpha):=1$.
\item If $S$ is not isomorphic to any alternating or sporadic finite simple group, then $S$ is in particular of Lie type, so as in Section \ref{secNot}, we can write $S=\O^{p'}(\overline{S}_{\sigma})$ where $\overline{S}=X_d(\overline{\IF_p})$ is a simple linear algebraic group of adjoint type and $\sigma$ is a Lang-Steinberg endomorphism of $\overline{S}$. We then set $f(S):=6f(\sigma)$, where $f(\sigma)$ is as in the paragraph on simple Lie type groups in Section \ref{secNot} (i.e., $f(\sigma)$ is the $f$ in the notation $S=\leftidx{^t}X_d(p^{ft})$). As for $g(\alpha)$:
\begin{enumerate}
\item Assume that $X_d\notin\{B_2,F_4,G_2\}$. Then, as explained at the end of Section \ref{secNot}, we can write $\alpha=s\phi\delta$ where $s$ is the inner diagonal, $\phi$ is the field and $\delta$ is the graph component of $\alpha$, and we set $g(\alpha):=\ord(\phi)$.
\item Assume that $X_d\in\{B_2,F_4,G_2\}$. Then we can write $\alpha=s\phi$ where $s$ is the inner diagonal and $\phi$ is the graph-field component of $\alpha$, and we set $g(\alpha):=\ord(\phi)$.
\end{enumerate}
\end{enumerate}
Moreover, for each $S$-type $\tau$, we set $g(\tau):=g(\alpha(\tau))$ (which is independent of the choice of $\alpha(\tau)$ as in Notation \ref{tildeNot}(2)).
\end{nottation}

Note that by definition, $f(S)$ and $g(\alpha)$ are always positive integers, and one has that $g(\alpha)\mid f(S)$. The following lemma provides some restrictions, in terms of $g(\alpha)$, on the possible orders of elements of a coset $S\alpha$ where $S$ is a finite simple group of Lie type and $\alpha\in\Aut(S)$ (this will be useful for studying $\tilde{q}(S)$):

\begin{lemmma}\label{gAlphaLem}
Let $S=\leftidx{^t}X_d(p^{ft})$ be a finite simple group of Lie type, and let $\alpha\in\Aut(S)$. The following hold:
\begin{enumerate}
\item If $X_d\in\{B_2,F_4,G_2\}$, then
\[
\Ord(\Inndiag(S)\alpha)\subseteq g(\alpha)\cdot\Ord(\Inndiag(\leftidx{^t}X_d(p^{(f/g(\alpha))t}))).
\]
\item If $X_d\notin\{B_2,F_4,G_2\}$, then
\[
\Ord(\Inndiag(S)\alpha)\subseteq g(\alpha)\cdot\Ord(\Inndiag(\leftidx{^u}X_d(p^{(f/g(\alpha))\cdot v}))),
\]
where, denoting by $t'$ the order of the graph component of $\alpha$,
\[
(u,v)=\begin{cases}(1,1), & \text{if }t=t'=1, \\ (1,1), & \text{if }t=1,t'>1,t'\nmid g(\alpha), \\ (t',t'), & \text{if }t=1,t'>1,t'\mid g(\alpha), \\ (t,t), & \text{if }t>1,t\nmid g(\alpha), \\ (1,t), & \text{if }t>1,t\mid g(\alpha).\end{cases}
\]
\end{enumerate}
\end{lemmma}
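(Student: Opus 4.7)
My plan is to raise an arbitrary element $x\alpha \in \Inndiag(S)\alpha$ to the power $e := g(\alpha)$ and identify the result as an element of the inner-diagonal group of the smaller Lie type group appearing on the right-hand side of the containment. First, I would embed $\alpha$ inside $\overline{S} \rtimes \langle \tilde{\alpha}\rangle$, where $\overline{S} = X_d(\overline{\IF_p})$ and $\tilde{\alpha}$ is the lift of $\alpha$ to an algebraic endomorphism of $\overline{S}$ provided by \cite[pp.~104--105]{Har92a}. Since $\Inndiag(S) = \overline{S}_\sigma$, the order of $x\alpha$ in $\Aut(S)$ equals the order of $x\tilde{\alpha}$ in the semidirect product, and the standard expansion
\[
(x\tilde{\alpha})^{e} \;=\; x \cdot \tilde{\alpha}(x) \cdot \tilde{\alpha}^2(x) \cdots \tilde{\alpha}^{e-1}(x) \cdot \tilde{\alpha}^{e}
\]
reduces the problem to analysing $\tilde{\alpha}^e$ and locating this product in an appropriate fixed-point subgroup of $\overline{S}$.

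The crucial observation is that $\tilde{\alpha}^e$ is itself a Steinberg endomorphism $\sigma'$ of $\overline{S}$. In the setting of statement (2), write $\tilde{\alpha} = \iota \cdot \phi \cdot \delta$ with $\iota$ inner-diagonal, $\phi$ the field endomorphism of order $e$, and $\delta$ the graph component of order $t'$; then $\phi^{e} = \id$, while $\delta^{e}$ has order $t'/\gcd(t',e)$. The five-case distinction on $(u,v)$ is precisely the bookkeeping for how $\delta^{e}$ interacts with the twist $t$ already carried by $\sigma$: the original $t$-twist persists when $t > 1,\ t \nmid e$; it is absorbed by the iteration when $t > 1,\ t \mid e$; it is installed anew by $\delta^{e}$ when $t = 1 < t',\ t' \mid e$; and it vanishes when $t = t' = 1$ or when $t = 1 < t',\ t' \nmid e$. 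In each subcase, a direct comparison with \cite[pp.~104--105]{Har92a} identifies $\overline{S}_{\sigma'}$ with $\Inndiag(\leftidx{^u}X_d(p^{(f/e)\,v}))$, and statement (1) is handled by the same mechanism applied to the single graph-field component $\phi$, which is why no separate case distinction appears there.

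It then remains to check that $e \mid \ord(x\alpha)$, so that $\ord(x\alpha) = e \cdot \ord((x\tilde{\alpha})^{e}) \in e \cdot \Ord(\overline{S}_{\sigma'})$, yielding the required containment. This divisibility follows because the image of $x\alpha$ in $\Out(S)$ agrees with that of $\alpha$, and the unique factorisation through $\Outdiag(S) \cdot \Phi_S \cdot \Gamma_S$ guaranteed by \cite[Theorem 2.5.12(b)]{GLS98a} ensures that its $\Phi_S$-component has order exactly $e = g(\alpha)$. The main obstacle I expect is the careful verification, in each subcase, of the identification $\overline{S}_{\sigma'} = \Inndiag(\leftidx{^u}X_d(p^{(f/e)\,v}))$: in particular, the case $t > 1,\ t \mid e$, where $u$ drops from $t$ to $1$ while $v$ jumps up to $t$, requires confirming that the absorbed twist combines with the field scaling exactly as claimed, since here the isogeny type of the ambient twisted group genuinely changes.
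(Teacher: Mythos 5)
Your overall strategy -- raise $x\alpha$ to the power $e=g(\alpha)$, factor $\alpha$ into its diagonal, field and graph parts, and locate the $e$-th power inside a smaller group of Lie type -- is the right skeleton, and it is essentially the route taken in \cite[Propositions 4.1 and 4.2]{Har92a} and \cite[Proposition 2.4.3]{Bor19a}, which the paper simply cites for statement (1) and the first four cases of statement (2). You also correctly identify that the subcase $t>1$, $t\mid g(\alpha)$ is the problematic one. But that is precisely where your proposal stops short: you flag it as an obstacle and assert that a ``careful verification'' would be needed, without supplying the argument. In the paper, this case is handled by a genuinely separate argument borrowed from \cite[proof of Theorem 2.16]{GMPS15a}: since $S$ is twisted it has no nontrivial graph automorphisms, so every element of $\Inndiag(S)\alpha$ is $\delta\phi^{-1}$ with $\delta\in\Inndiag(S)$ and $\phi$ an \emph{untwisted} Lang--Steinberg endomorphism of $X_d(\overline{\IF_p})$ restricting to a field automorphism of order $g(\alpha)$; Lang's theorem then produces $\epsilon$ with $\epsilon\epsilon^{-\phi}=\delta$, and conjugating $(\delta\phi^{-1})^{g(\alpha)}=\delta\delta^{\phi}\cdots\delta^{\phi^{g(\alpha)-1}}$ by $\epsilon$ yields a $\phi$-fixed element $\eta$ of the same order, hence an element of $\Inndiag(X_d(p^{tf/g(\alpha)}))$. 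Without this Lang--theorem conjugation the ``locating this product in an appropriate fixed-point subgroup'' step in your sketch is unjustified: the cocycle $x\cdot\tilde\alpha(x)\cdots\tilde\alpha^{e-1}(x)$ has no a priori reason to lie in (or even be conjugate into) $\overline{S}_{\sigma'}$.

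A secondary but real confusion is the assertion ``$\phi^e=\id$.'' That holds for $\phi$ as a \emph{finite} automorphism of $S$, but not for its lift to an algebraic endomorphism of $\overline{S}=X_d(\overline{\IF_p})$, where $\phi$ is a Frobenius of infinite order. Your proof slides between these two settings: the expansion $(x\tilde\alpha)^e=x\cdot\tilde\alpha(x)\cdots\tilde\alpha^{e-1}(x)\cdot\tilde\alpha^e$ and the identification of $\tilde\alpha^e$ as a Steinberg endomorphism $\sigma'$ live in the algebraic group (where $\phi^e$ is a nontrivial Frobenius power, closely tied to $\sigma$), while $\phi^e=\id$ and the divisibility $e\mid\ord(x\alpha)$ live in $\Aut(S)$. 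Keeping these apart is exactly what makes the fifth subcase delicate, since there the isogeny type of the fixed-point group really does change, and the paper's explicit $\epsilon$-conjugation is what reconciles the two pictures. So the proposal captures the shape of the argument and the right dichotomy, but it is missing the one new ingredient (Lang's theorem for the $t>1$, $t\mid g(\alpha)$ case) that the paper actually had to supply beyond the literature.
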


\begin{proof}
Statement (1) as well as the first four cases in statement (2) follow from \cite[Proposition 2.4.3]{Bor19a} (more precisely, the properties of the Lang-Steinberg endomorphism $\mu$ mentioned there and which fixes $\alpha^g=\alpha^{g(\alpha)}$), which is really just a more detailed version of \cite[Propositions 4.1 and 4.2]{Har92a}.

In the last case in statement (2), i.e., when $t>1$ and $t\mid g(\alpha)$, neither \cite[Proposition 2.4.3]{Bor19a} nor \cite[Propositions 4.1 and 4.2]{Har92a} explicitly mention a Lang-Steinberg endomorphism fixing $\alpha^{g(\alpha)}$ (or a suitable other power of $\alpha$), so we resort to an argument from \cite[proof of Theorem 2.16, pp.~7678f.]{GMPS15a} to deal with this case.

More precisely, since $S$ is twisted, it has no (nontrivial) graph automorphisms. In particular, $\alpha$ does not involve any (nontrivial) graph automorphisms, and so we can write an arbitrary element of $\Inndiag(S)\alpha$ as $\delta\phi^{-1}$ where $\delta\in\Inndiag(S)$ and $\phi$ is a field automorphism of $S$ of order $g(\alpha)$. By \cite[proof of Proposition 4.1, Case 4]{Har92a}, $\phi$ is the restriction to $S$ of some untwisted Lang-Steinberg endomorphism of $X_d(\overline{\IF_p})$, which we, by abuse of notation, also denote by $\phi$, and which satisfies $q(\phi)=p^{ft/g(\alpha)}$ (see the paragraph on Lie type groups in Section \ref{secNot} for the notation $q(\mu)$ where $\mu$ is a Lang-Steinberg endomorphism of a simple linear algebraic group).

By Lang's theorem, there is an $\epsilon\in X_d(\overline{\IF_p})$ such that $\epsilon\epsilon^{-\phi}=\delta$. Set $\eta:=\epsilon^{-1}(\delta\phi^{-1})^{g(\alpha)}\epsilon$, and note that
\[
(\delta\phi^{-1})^{g(\alpha)}=\delta\delta^{\phi}\cdots\delta^{\phi^{g(\alpha)-2}}\delta^{\phi^{g(\alpha)-1}},
\]
which implies that
\begin{align*}
\eta^{\phi} &=\epsilon^{-\phi}(\delta^{\phi}\delta^{\phi^2}\cdots\delta^{\phi^{g(\alpha)-1}}\delta^{\phi^{g(\alpha)}})\epsilon^{\phi} \\
&=\epsilon^{-\phi}(\delta^{\phi}\delta^{\phi^2}\cdots\delta^{\phi^{g(\alpha)-1}}\delta)\epsilon^{\phi} \\
&=(\epsilon^{-\phi}\delta^{-1})(\delta\delta^{\phi}\cdots\delta^{\phi^{g(\alpha)-1}})(\delta\epsilon^{\phi}) \\
&=\epsilon^{-1}(\delta\phi^{-1})^{g(\alpha)}\epsilon=\eta.
\end{align*}
This shows that $\eta$, which is conjugate in $X_d(\overline{\IF_p})$ to $(\delta\phi^{-1})^{g(\alpha)}$ and thus has order $\frac{1}{g(\alpha)}\ord(\delta\phi^{-1})$, lies in $(X_d(\overline{\IF_p}))_{\phi}$, which, since $\phi$ is untwisted and has $q$-value $p^{ft/g(\alpha)}$, is isomorphic to $\Inndiag(X_d(p^{tf/g(\alpha)}))$. Since $\delta\phi^{-1}$ was an arbitrary element of $\Inndiag(S)\alpha$, we are done.
\end{proof}

We will not need the full level of detail of Lemma \ref{gAlphaLem}; in fact, the following weaker version of it will suffice for our purposes:

\begin{lemmma}\label{gAlphaLem2}
Let $S=\leftidx{^t}X_d(p^{ft})$ be a finite simple group of Lie type, and let $\alpha\in\Aut(S)$. Then there exists $t'\in\{1,2,3\}$ such that
\begin{align*}
&\Ord(\Inndiag(S)\alpha)\subseteq \\
&g(\alpha)\cdot\begin{cases}\Ord(\Inndiag(\leftidx{^{t'}}X_d(p^{(f/g(\alpha))t'}))), & \text{if }t=1\text{ or }X_d\in\{B_2,F_4,G_2\}\text{ or }t\nmid g(\alpha), \\
\Ord(\Inndiag(\leftidx{^{t'}}X_d(p^{(tf/g(\alpha))t'}))), & \text{else.}\end{cases}
\end{align*}\qed
\end{lemmma}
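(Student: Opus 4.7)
The plan is to derive Lemma~\ref{gAlphaLem2} as a direct corollary of Lemma~\ref{gAlphaLem} by a straightforward case distinction that simply absorbs the five table rows of Lemma~\ref{gAlphaLem}(2) into the two-case statement of Lemma~\ref{gAlphaLem2}. The crucial numerical point is that the twisting parameter $t$ of any finite simple group of Lie type lies in $\{1,2,3\}$ (with $t=3$ occurring only for $\leftidx{^3}D_4$) and, similarly, the order of the graph component of any $\alpha\in\Aut(S)$ lies in $\{1,2,3\}$ (with the value $3$ occurring only when $X_d=D_4$); hence the values of $u$ appearing in the table of Lemma~\ref{gAlphaLem}(2) are automatically in $\{1,2,3\}$, which is what makes the desired $t'$ exist. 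Before starting the case analysis I would rename the symbol $t'$ in the statement of Lemma~\ref{gAlphaLem}(2) (the order of the graph component of $\alpha$) to avoid collision with the free parameter $t'$ appearing in the conclusion of Lemma~\ref{gAlphaLem2}; call it $s$ instead.

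In the case $X_d\in\{B_2,F_4,G_2\}$, I would invoke Lemma~\ref{gAlphaLem}(1) directly. This places us in the first of the two cases of Lemma~\ref{gAlphaLem2}, and the choice $t':=t\in\{1,2\}$ reproduces the conclusion verbatim.

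Suppose now $X_d\notin\{B_2,F_4,G_2\}$, so that Lemma~\ref{gAlphaLem}(2) applies with its five subcases, indexed by the pair $(t,s)$ with $s$ denoting the graph-component order. In subcases (a) $t=s=1$ and (b) $t=1,\ s>1,\ s\nmid g(\alpha)$, the table yields $(u,v)=(1,1)$, which lies in the first case of Lemma~\ref{gAlphaLem2} (since $t=1$) with $t':=1$. In subcase (c) $t=1,\ s>1,\ s\mid g(\alpha)$, the table gives $(u,v)=(s,s)$; again $t=1$ places us in the first case of Lemma~\ref{gAlphaLem2}, and $t':=s\in\{2,3\}$ works. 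In subcase (d) $t>1,\ t\nmid g(\alpha)$, the table gives $(u,v)=(t,t)$, and the hypothesis $t\nmid g(\alpha)$ puts us in the first case of Lemma~\ref{gAlphaLem2}, with $t':=t\in\{2,3\}$. Finally, in subcase (e) $t>1,\ t\mid g(\alpha)$, the table gives $(u,v)=(1,t)$, producing $\Inndiag(\leftidx{^1}X_d(p^{(f/g(\alpha))\cdot t}))=\Inndiag(\leftidx{^1}X_d(p^{(tf/g(\alpha))\cdot 1}))$; since in this subcase we have $t\neq 1$, $X_d\notin\{B_2,F_4,G_2\}$, and $t\mid g(\alpha)$, we fall into the \emph{second} (the ``else'') case of Lemma~\ref{gAlphaLem2}, and $t':=1$ matches the formula there.

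The five subcases exhaust Lemma~\ref{gAlphaLem}(2), and the union of the disjunctive conditions ``$t=1$'', ``$X_d\in\{B_2,F_4,G_2\}$'', ``$t\nmid g(\alpha)$'' in Lemma~\ref{gAlphaLem2} is precisely complementary to the combined condition of subcase~(e), so the case partition is consistent. There is no substantive obstacle here: the lemma is literally a lossy repackaging of Lemma~\ref{gAlphaLem}, and the only mild care required is the notational renaming of $t'$ mentioned above together with keeping track of which disjunct of the hypothesis of Lemma~\ref{gAlphaLem2} each subcase lands in.
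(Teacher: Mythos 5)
Your proposal is correct and takes exactly the approach the paper implies: the paper marks Lemma~\ref{gAlphaLem2} with an immediate \textup{\qedsymbol} and no proof body, treating it as an obvious corollary of Lemma~\ref{gAlphaLem}, and your argument simply spells out the straightforward case analysis that justifies this. Your renaming of the collision-prone variable and your bookkeeping (in particular the observation that twisted $S$ has no nontrivial graph automorphisms, so the five subcases of Lemma~\ref{gAlphaLem}(2) are exhaustive, and that both $t$ and the graph-component order lie in $\{1,2,3\}$) are accurate.
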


At last, we are now able to state and prove the following asymptotic result on the parameter $\tilde{\q}(S)$ defined in Notation \ref{tildeNot}(1):

\begin{lemmma}\label{qTildeLem2}
The following hold:

\begin{enumerate}
\item As $m\to\infty$,
\begin{enumerate}
\item $\tilde{\q}(\Alt(m))\to\infty$, and
\item $\min_{\alpha\in\Aut(\Alt(m))}{\frac{\log{\omega_{\Aut(\Alt(m))}(S\alpha)}}{\log{\omicron(S\alpha)}}}\to\infty$.
\end{enumerate}
\item Let $S=\O^{p'}(X_d(\overline{\IF_p})_{\sigma})=\leftidx{^{t(\sigma)}}{X_d(p^{f(\sigma)t(\sigma)})}$ be a finite simple group of Lie type, where $\sigma$ is a Lang-Steinberg endomorphism of $X_d(\overline{\IF_p})$, and let $\alpha\in\Aut(S)$. Then as $\max\{p,d,f(\sigma)/g(\alpha)\}\to\infty$,
\begin{enumerate}
\item $\q_{\Aut(S)}(S\alpha)\to\infty$, and
\item $\frac{\log{\omega_{\Aut(S)}(S\alpha)}}{\log{(\omicron(S\alpha)+1)}}\to\infty$.
\end{enumerate}
In particular, $\tilde{\q}(S)\to\infty$ as $\max\{p,d\}\to\infty$.
\end{enumerate}
\end{lemmma}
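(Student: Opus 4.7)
The lemma splits into the alternating case~(1) and the Lie type case~(2). For~(1), the plan is to exploit the fact that $\Aut(\Alt(m))=\Sym(m)$ for all $m\geqslant 7$, so there are only two cosets of $\Alt(m)$ in $\Aut(\Alt(m))$: the coset $\Alt(m)$ itself and the odd coset $\Sym(m)\setminus\Alt(m)$. For each such coset $C$, the parameter $\omega_{\Sym(m)}(C)$ equals the number of cycle types of elements of $C$; an elementary parity-of-signature argument shows this quantity is asymptotically $p(m)/2$, which by Hardy--Ramanujan (Formula~(\ref{partitionAsymptoticsEq})) is $\exp(\Theta(\sqrt{m}))$. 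Meanwhile $\omicron(C)\leqslant\omicron(\Sym(m))=\exp(\O(\sqrt{m/\log m}))$ by Erd\H{o}s--Tur\'an (Formula~(\ref{orderAsymptoticsEq})). Dividing gives~(1)(a); comparing the logarithms (of orders $\sqrt{m}$ and $\sqrt{m/\log m}$ respectively) gives~(1)(b).

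For~(2), I would bound $\omicron(S\alpha)$ from above and $\omega_{\Aut(S)}(S\alpha)$ from below, in both cases in terms of the rank-$d$ Lie type group $S':=\leftidx{^{t'}}X_d(p^{(f/g(\alpha))t'})$ appearing in Lemma~\ref{gAlphaLem2}. The upper bound follows directly from Lemma~\ref{gAlphaLem2} together with the argument leading to Formula~(\ref{lieOmicronEq}) (conjugacy-class count of maximal tori~\cite[Section 3]{Car81a}, torus-exponent bound~\cite[Lemma 3.3]{Har92a}, and divisor-function asymptotics~\cite[Th\'eor\`eme 1]{NR83a}): $\omicron(S\alpha)\leqslant g(\alpha)\cdot\omicron(\Inndiag(S'))=p^{\o(1)\cdot df/g(\alpha)}$ as $\max\{p,d,f/g(\alpha)\}\to\infty$. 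For the lower bound on $\omega_{\Aut(S)}(S\alpha)$, realise $\alpha$ as an abstract endomorphism of $\overline{S}=X_d(\overline{\IF_p})$ so that $\sigma':=\alpha\sigma$ is a Lang--Steinberg endomorphism with $q$-parameter $p^{f/g(\alpha)}$ and fixed-point subgroup $\overline{S}_{\sigma'}=\Inndiag(S')$; this is routine for $\alpha$ of inner/diagonal/field type and follows for the remaining cases along the lines of the proof of Lemma~\ref{gAlphaLem}. The Lang--Steinberg correspondence then identifies the $\alpha$-stable conjugacy classes of $\Inndiag(S)=\overline{S}_\sigma$ with the conjugacy classes of $\overline{S}_{\sigma'}$, of which there are at least $q(\sigma')^d=(p^{f/g(\alpha)})^d$ by~\cite[Theorem 1.1(1)]{FG12a}. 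Applying twisted Burnside to the $\alpha$-action on conjugacy classes of $\Inndiag(S)$, then losing a factor of $[\Inndiag(S):S]\leqslant d+1$ to pass to $S$-classes in $S\alpha$ (equivalently, to select the cosets contained in $\Inndiag(S)\alpha$ that lie in $S\alpha$), and finally dividing by $|\Out(S)|\leqslant\log_2|S|$ (Kohl~\cite{Koh03a}) to account for the fusion of $S$-orbits into $\Aut(S)$-orbits yields $\omega_{\Aut(S)}(S\alpha)\geqslant(p^{f/g(\alpha)})^d/((d+1)\log_2|S|)$. Combining the two bounds yields~(2)(a), and taking logarithms yields~(2)(b). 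The ``in particular'' clause on $\tilde{\q}(S)$ is immediate from~(2)(a), since $\max\{p,d\}\to\infty$ uniformly forces $\max\{p,d,f/g(\alpha)\}\to\infty$ for every~$\alpha$.

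The main obstacle will be the Lang--Steinberg step: one must extend the abstract outer automorphism $\alpha$ of $S$ to an endomorphism of $\overline{S}$ so that $\alpha\sigma$ is a Lang--Steinberg endomorphism with $q$-parameter exactly $p^{f/g(\alpha)}$ (matching the upper-bound side), and then invoke the appropriate version of the class bijection for the full range of outer automorphisms $\alpha$. The exceptional cases flagged by Lemma~\ref{gAlphaLem2} -- the Suzuki and Ree families, where $g(\alpha)$ involves half-integer exponents, and the case $t\mid g(\alpha)$ where the twisting type changes -- will require separate bookkeeping. Beyond this, the passage from $\Inndiag(S)$-classes to $\Aut(S)$-orbits meeting $S\alpha$, and the asymptotic estimates for $\omicron(\Inndiag(S'))$, rely only on bounds already developed elsewhere in the paper.
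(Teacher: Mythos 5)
Your part~(1) is essentially the paper's argument: both use $\Aut(\Alt(m))=\Sym(m)$ for $m\geqslant7$, then compare the $\exp(\Theta(\sqrt{m}))$ growth of the class count (via Hardy--Ramanujan) with the $\exp(\O(\sqrt{m/\log m}))$ growth of $\omicron(\Sym(m))$ (via Erd\H{o}s--Tur\'an). The paper reaches the lower bound on $\omega_{\Sym(m)}(\Sym(m)\setminus\Alt(m))$ via $\k(\Alt(m))\sim\frac{1}{2}\k(\Sym(m))$ from \cite{DET69a} rather than your parity-of-cycle-types argument, but these are cosmetic variants of the same idea; likewise your upper bound on $\omicron(S\alpha)$ in part~(2), via Lemma~\ref{gAlphaLem2} and Formula~(\ref{lieOmicronEq}), is exactly what the paper does.

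Where you genuinely diverge is the lower bound on $\omega_{\Aut(S)}(S\alpha)$ in part~(2). The paper does \emph{not} count classes via any Lang--Steinberg or Shintani-descent bijection. Instead it bounds centralisers from below: combining the bound $\MCS(X_d(\overline{\IF_p})_{\mu})\geqslant q(\mu)^{d/8}$ from \cite[Proposition 2.4.4]{Bor19a} and \cite[Section 6]{FG12a} with Hartley's \cite[Propositions 4.1 and 4.2]{Har92a} to produce a Lang--Steinberg endomorphism $\mu$ with $q(\mu)\geqslant q(\sigma)^{1/g(\alpha)}$ and $|\Cent_{\Aut(S)}(s\alpha)|\geqslant g(\alpha)\MCS(X_d(\overline{\IF_p})_{\mu})$, one gets $\omega_{\Aut(S)}(S\alpha)\geqslant p^{(d/8)f(\sigma)/g(\alpha)}/(6df(\sigma)/g(\alpha))$ by pigeonhole, which is already ample for the asymptotic claim. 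Your route would give a much stronger bound of order $q^d$, but it has two unresolved gaps that the centraliser route avoids entirely. First, the Shintani-type class bijection between $\alpha$-stable $\Inndiag(S)$-classes and classes of $\overline{S}_{\sigma'}$ is genuinely delicate to establish uniformly: for graph-field automorphisms, for the Suzuki and Ree families (half-integer $f$), and for the case $t\mid g(\alpha)$ (where the twisting type changes) the construction requires careful case-by-case verification, and ``follows along the lines of Lemma~\ref{gAlphaLem}'' understates the work, since Lemma~\ref{gAlphaLem} only produces an order-divisibility statement, not a conjugacy-class bijection. Second, the bookkeeping from $\Inndiag(S)\langle\alpha\rangle$-classes in $\Inndiag(S)\alpha$ down to $\Aut(S)$-classes meeting $S\alpha$ is not a simple division by $[\Inndiag(S):S]\cdot|\Out(S)|$: the coset $\Inndiag(S)\alpha$ decomposes into several $S$-cosets that are permuted by $\Inndiag(S)\langle\alpha\rangle$-conjugation, and an $\Inndiag(S)\langle\alpha\rangle$-class may fail to meet $S\alpha$ at all, so the ``loss'' is not obviously bounded by that factor. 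You flag the Lang--Steinberg obstacle yourself, so the gap is an honest one; still, the simpler centraliser-based pigeonhole argument that the paper actually uses is worth keeping in mind as a more robust fallback when only the order of growth is needed.
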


\begin{proof}
For statement (1): We may assume throughout that $m\geqslant7$, so that $\Aut(\Alt(m))=\Sym(m)$ and there are exactly two cosets of $\Alt(m)$ in $\Aut(\Alt(m))$. Note that as $m\to\infty$,
\[
\min_{\alpha\in\Sym(m)}{\omicron(\Alt(m)\alpha)}\to\infty,
\]
and so statement (1,a) follows from statement (1,b), because statement (1,b) implies that for sufficiently large $m$ and all $\alpha\in\Sym(m)$,
\[
\omega_{\Sym(m)}(\Alt(m)\alpha)\geqslant\omicron(\Alt(m)\alpha)^2,
\]
or equivalently,
\[
\q_{\Sym(m)}(\Alt(m)\alpha)\geqslant\omicron(\Alt(m)\alpha).
\]
We will thus restrict our attention to showing statement (1,b). It is clear by Theorem \ref{mainTheo2}(3) that
\[
\frac{\log{\omega(\Alt(m))}}{\log{\omicron(\Alt(m))}}\to\infty
\]
as $m\to\infty$, which deals with the case $\alpha\in S=\Alt(m)$, so consider the nontrivial coset $\Sym(m)\setminus\Alt(m)$. Recall from Section \ref{sec2} that for a finite group $G$, $\k(G)$ denotes the number of conjugacy classes of $G$. By \cite[Formula (1.5), p.~90]{DET69a},
\[
\k(\Alt(m))\sim\frac{1}{2}\k(\Sym(m)),
\]
so for sufficiently large $m$,
\[
\omega(\Alt(m))\leqslant\k(\Alt(m))\leqslant\frac{2}{3}\k(\Sym(m))=\frac{2}{3}\omega(\Sym(m)).
\]
Hence, using Formula (\ref{partitionAsymptoticsEq}) and recalling that $p(m)$ denotes the number of ordered integer partitions of $m$,
\[
\omega_{\Sym(m)}(\Sym(m)\setminus\Alt(m))\geqslant\frac{1}{3}\omega(\Sym(m))=\frac{1}{3}p(m)\sim\frac{1}{12\sqrt{3}m}\exp(\frac{2\pi}{\sqrt{6}}\sqrt{m}).
\]
On the other hand, recalling Formula (\ref{orderAsymptoticsEq}),
\[
\omicron(\Sym(m)\setminus\Alt(m))\leqslant\omicron(\Sym(m))=\exp(\frac{2\pi}{\sqrt{6}}\sqrt{\frac{m}{\log{m}}}+\O(\frac{\sqrt{m}\log\log{m}}{\log{m}})),
\]
so clearly, $\omega_{\Sym(m)}(\Sym(m)\setminus\Alt(m))$ grows faster than any power of $\omicron(\Sym(m)\setminus\Alt(m))$, which is just what we wanted to show.

For statement (2): For a finite group $G$, denote by
\[
\MCS(G):=\min\{|\C_G(g)|\mid g\in G\}
\]
the minimum size of an element centraliser in $G$. The bounds in \cite[Proposition 2.4.4]{Bor19a} (which are based on earlier work of Hartley and Kuzucuo\u{g}lu from \cite[proof of Theorem A1, pp.~319f.]{HK91a}) together with Fulman and Guralnick's bounds from \cite[Section 6]{FG12a} are strong enough to show that $\MCS(X_d(\overline{\IF_p})_{\mu})\geqslant q(\mu)^{d/8}$ for any Lang-Steinberg endomorphism $\mu$ of $X_d(\overline{\IF_p})$. But by \cite[Propositions 4.1 and 4.2]{Har92a}, there is a Lang-Steinberg endomorphism $\mu$ on $X_d(\overline{\IF_p})$ such that $q(\mu)\geqslant q(\sigma)^{1/g(\alpha)}$ and for all $s\in S$,
\[
|\Cent_{\Aut(S)}(s\alpha)|\geqslant g(\alpha)\MCS(X_d(\overline{\IF_p})_{\mu})\geqslant g(\alpha)q(\mu)^{d/8}\geqslant g(\alpha)q(\sigma)^{d/(8g(\alpha))}=g(\alpha)p^{\frac{d}{8}\cdot\frac{f(\sigma)}{g(\alpha)}},
\]
and so
\[
|(s\alpha)^{\Aut(S)}|\leqslant\frac{|\Aut(S)|}{g(\alpha)\cdot p^{\frac{d}{8}\cdot\frac{f(\sigma)}{g(\alpha)}}}\leqslant\frac{6df(\sigma)|S|}{g(\alpha)\cdot p^{\frac{d}{8}\cdot\frac{f(\sigma)}{g(\alpha)}}}=6\frac{df(\sigma)}{g(\alpha)}p^{-\frac{d}{8}\frac{f(\sigma)}{g(\alpha)}}|S|.
\]
Using that $\Aut(S)$ is a complete group, it follows that
\[
\omega_{\Aut(S)}(S\alpha)\geqslant(6\frac{df(\sigma)}{g(\alpha)}p^{-\frac{d}{8}\frac{f(\sigma)}{g(\alpha)}})^{-1}=\frac{p^{\frac{d}{8}\cdot\frac{f(\sigma)}{g(\alpha)}}}{6\frac{df(\sigma)}{g(\alpha)}}.
\]
In particular, if $\max\{p,d,f(\sigma)/g(\alpha)\}$ is large enough, then
\[
\omega_{\Aut(S)}(S\alpha)\geqslant p^{\frac{d}{16}\cdot\frac{f(\sigma)}{g(\alpha)}},
\]
in particular $\omega_{\Aut(S)}(S\alpha)\to\infty$ as $\max\{p,d,f(\sigma)/g(\alpha)\}\to\infty$.

What about $\omicron(S\alpha)$? In view of Lemma \ref{gAlphaLem2} and Formula (\ref{lieOmicronEq}) from Subsection \ref{subsec2P3}, as $\max\{p,d,f(\sigma)/g(\alpha)\}\to\infty$,
\[
\omicron(S\alpha)\leqslant p^{\omicron(1)df(\sigma)/g(\alpha)},
\]
so $\omicron(S\alpha)$ does indeed grow more slowly than any power of $\omega_{\Aut(S)}(S\alpha)$ as
\[
\max\{p,d,f(\sigma)/g(\alpha)\}\to\infty,
\]
which is just statement (2,b). Statement (2,a) follows from this since therefore, if $\max\{p,d,f(\sigma)/g(\alpha)\}$ is large enough,
\[
\q_{\Aut(S)}(S\alpha)\geqslant\sqrt{\omega_{\Aut(S)}(S\alpha)},
\]
which also converges to $\infty$ as $\max\{p,d,f(\sigma)/g(\alpha)\}\to\infty$.
\end{proof}

We conclude this subsection with the following lemma concerning $\q$-values of direct products of nonabelian finite simple groups, which will be used in the proof of Lemma \ref{hcalLem2}:

\begin{lemmma}\label{qsnLem}
Let $S,S_1,\ldots,S_r$ be nonabelian finite simple groups, $S_i\not\cong S_j$ for $1\leqslant i<j\leqslant r$, and let $n,n_1,\ldots,n_r\in\IN^+$. Set $T:=S_1^{n_1}\times\cdots\times S_r^{n_r}$. Then:
\begin{enumerate}
\item $\q(T)\geqslant\prod_{i=1}^r{\q(S_i^{n_i})}\geqslant\max\{\q(S_i^{n_i})\mid i=1,\ldots,r\}$.
\item $\q(S^n)\geqslant\q(S)$.
\item $\q(T)\to\infty$ as $|T|\to\infty$.
\end{enumerate}
\end{lemmma}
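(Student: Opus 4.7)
The plan is to establish the three statements in order, with (2) and (3) both building on (1). For (1), I would use that the isotypic components $S_i^{n_i}$ are characteristic subgroups of $T$ -- they constitute the unique decomposition, up to reordering, of $T$ into powers of pairwise nonisomorphic nonabelian finite simple groups, which any automorphism must preserve. Hence $\Aut(T)=\prod_{i=1}^{r}\Aut(S_i^{n_i})$, and so $\omega(T)=\prod_{i=1}^{r}\omega(S_i^{n_i})$. Since the order of an $r$-tuple is the lcm of its coordinate orders, $\Ord(T)\subseteq\{\lcm(o_1,\dots,o_r):o_i\in\Ord(S_i^{n_i})\}$ and thus $\omicron(T)\leqslant\prod_{i=1}^{r}\omicron(S_i^{n_i})$. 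Dividing these two bounds yields $\q(T)\geqslant\prod_{i=1}^{r}\q(S_i^{n_i})$, and since each factor is at least $1$ the product dominates the maximum.

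For (2), I would use that $\Aut(S^n)=\Aut(S)\wr\Sym(n)$ acts naturally on $S^n$: two tuples lie in the same $\Aut(S^n)$-orbit iff the multisets of $\Aut(S)$-orbits of their coordinates agree, which gives $\omega(S^n)=\binom{\omega(S)+n-1}{n}$. On the order side, $\ord(s_1,\dots,s_n)=\lcm(\ord(s_1),\dots,\ord(s_n))$, so counting multisets of size $n$ drawn from $\Ord(S)$ gives $\omicron(S^n)\leqslant\binom{\omicron(S)+n-1}{n}$. From the identity
\[
\frac{\binom{x+n-1}{n}}{x}=\frac{(x+1)(x+2)\cdots(x+n-1)}{n!}
\]
one sees that $\binom{x+n-1}{n}/x$ is monotonically non-decreasing in $x\geqslant 1$, and applying this at $x=\omega(S)\geqslant\omicron(S)$ rearranges to $\q(S^n)\geqslant\q(S)$.

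For (3), I would argue by contradiction: suppose a sequence $T_1,T_2,\dots$ of such groups satisfies $|T_k|\to\infty$ but $\q(T_k)\leqslant C$ for some constant $C$. Combining (1) and (2), every nonabelian finite simple factor $S$ appearing in any $T_k$ must satisfy $\q(S)\leqslant C$, so by Theorem~\ref{mainTheo2}(4) all such $S$ have bounded order and hence lie in a fixed finite set of isomorphism types. Then $|T_k|\to\infty$ forces some multiplicity $n_k$ of some simple factor $S_k$ to tend to infinity. Now $\omega(S_k^{n_k})=\binom{\omega(S_k)+n_k-1}{n_k}$ is a polynomial in $n_k$ of degree $\omega(S_k)-1\geqslant 3$ (using $\omega(S_k)\geqslant\omicron(S_k)\geqslant 4$ by Burnside's $p^aq^b$-theorem), while the key observation that the lcm of a multiset depends only on its underlying \emph{set} of distinct values gives $\omicron(S_k^{n_k})\leqslant 2^{\omicron(S_k)}$, a bound independent of $n_k$. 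Hence $\q(S_k^{n_k})\to\infty$, and (1) gives $\q(T_k)\geqslant\q(S_k^{n_k})\to\infty$, contradicting the standing hypothesis.

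The conceptual heart of the argument, and really the only nontrivial point, is the multiset-to-set collapse for lcm exploited in (3): although distinct multisets of $\Aut(S)$-orbits always give distinct $\Aut(S^n)$-orbits (so $\omega(S^n)$ grows polynomially in $n$), distinct multisets of element orders frequently share the same lcm, so $\omicron(S^n)$ remains bounded in $n$. Without this asymmetry, the naive multiset bound on $\omicron(S^n)$ would match $\omega(S^n)$ whenever $S$ is an AT-group, and (3) would fail along any sequence dominated by AT-group factors.
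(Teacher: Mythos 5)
Your proposal is correct and follows essentially the same route as the paper: statement (1) via the characteristic decomposition $\Aut(T)=\prod\Aut(S_i^{n_i})$ combined with the $\lcm$ bound on element orders, statement (2) via the multiset-counting formulas $\omega(S^n)=\binom{n+\omega(S)-1}{\omega(S)-1}$ and $\omicron(S^n)\leqslant\binom{n+\omicron(S)-1}{\omicron(S)-1}$ (the paper establishes the resulting inequality by telescoping the ratio rather than by your explicit monotonicity lemma for $\binom{x+n-1}{n}/x$, but these are equivalent), and statement (3) via the crucial uniform bound $\omicron(S^n)\leqslant 2^{\omicron(S)}$ contrasted with polynomial growth of $\omega(S^n)$ in $n$, together with Burnside's $p^aq^b$-theorem and Theorem~\ref{mainTheo2} to cap the simple factors. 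The only cosmetic difference is that you argue (3) by contradiction and cite Theorem~\ref{mainTheo2}(4), whereas the paper argues directly and cites Theorem~\ref{mainTheo2}(5); both serve the same purpose of bounding the orders of the composition factors.
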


\begin{proof}
For statement (1): Since $\Aut(T)=\Aut(S_1^{n_1})\times\cdots\times\Aut(S_r^{n_r})$, it is clear that $\omega(T)=\prod_{i=1}^r{\omega(S_i^{n_i})}$, and since every element order in $T$ is a least common multiple over an $r$-tuple of one element order choice from each $S_i^{n_i}$, it is also clear that $\omicron(T)\leqslant\prod_{i=1}^r{\omicron(S_i^{n_i})}$. From this, the first inequality follows, and the second inequality holds because every $\q$-value is at least $1$.

For statement (2): It is clear from the fact that $\Aut(S^n)=\Aut(S)\wr\Sym(n)$ that the set of $\Aut(S^n)$-orbits on $S^n$ is in bijection with the set of cardinality $n$ multisets formed from $\Aut(S)$-orbits on $S$, whose total number is by definition $\omega(S)$. Hence
\begin{equation}\label{omegasnEq}
\omega(S^n)={n+\omega(S)-1 \choose \omega(S)-1}.
\end{equation}
On the other hand, each element order in $S^n$ can be written as a least common multiple over an $n$-tuple of element orders in $S$. In particular, $\omicron(S^n)$ is bounded from above by the number of cardinality $n$ multisets formed from element orders in $S$, whose total number is by definition $\omicron(S)$. It follows that
\begin{equation}\label{omicronsnEq}
\omicron(S^n)\leqslant{n+\omicron(S)-1 \choose \omicron(S)-1}.
\end{equation}
Combining Formulas (\ref{omegasnEq}) and (\ref{omicronsnEq}), we get
\begin{align*}
\q(S^n)&=\frac{\omega(S^n)}{\omicron(S^n)}\geqslant\frac{{n+\omega(S)-1 \choose \omega(S)-1}}{{n+\omicron(S)-1 \choose \omicron(S)-1}}=\frac{\frac{(n+\omega(S)-1)!}{(\omega(S)-1)!n!}}{\frac{(n+\omicron(S)-1)!}{(\omicron(S)-1)!n!}}=\frac{(n+\omega(S)-1)!(\omicron(S)-1)!}{(n+\omicron(S)-1)!(\omega(S)-1)!} \\
&=\frac{\omega(S)+n-1}{\omicron(S)+n-1}\cdot\frac{\omega(S)+n-2}{\omicron(S)+n-2}\cdot\cdots\cdot\frac{\omega(S)+1}{\omicron(S)+1}\cdot\frac{\omega(S)!(\omicron(S)-1)!}{\omicron(S)!(\omega(S)-1)!} \\
&\geqslant 1\cdot1\cdot\cdots\cdot 1\cdot\frac{\omega(S)}{\omicron(S)}=\q(S),
\end{align*}
as required.

For statement (3): By statements (1) and (2) and Theorem \ref{mainTheo2}(5), it is clear that $\q(T)$ is large if $T$ has a large (nonabelian) composition factor, so assume that the orders of the composition factors of $T$ are bounded. Then $T$ contains some small nonabelian finite simple group, say $S$, with large multiplicity, say $n$. As already noted in the proof of statement (2), every element order in $S^n$ is a least common multiple over an $n$-tuple of element orders in $S$, whose total number is $\omicron(S)$, and so $\omicron(S^n)\leqslant 2^{\omicron(S)}$, an upper bound which does not depend on $n$. On the other hand, using Formula (\ref{omegasnEq}) and that $\omega(S)\geqslant\omicron(S)\geqslant4$ by Burnside's $p^aq^b$-theorem,
\[
\omega(S^n)={n+\omega(S)-1 \choose \omega(S)-1}\geqslant{n+3 \choose 3},
\]
so that $\q(S^n)\to\infty$ as $n\to\infty$, and thus $\q(T)\to\infty$ by statement (1).
\end{proof}

\subsection{Gaining some control over socle cosets in finite semisimple groups}\label{subsec4P4}

Recall Notation \ref{subsetNot}. The main purpose of this subsection is to obtain some bounds on the parameters $\omega_H(C)$, $\omicron(C)$ and $\q_H(C)$, where $H$ is a finite semisimple group and $C$ is a coset of $\Soc(H)$ in $H$. This is achieved via Lemma \ref{qTildeLem} below, which was already announced in Subsection \ref{subsec4P3} and which will involve the concept of an $S$-type, see Definition \ref{sTypeDef}(1). Before being able to formulate Lemma \ref{qTildeLem}, we will need to introduce quite a few more notations, see Notations \ref{lambdaNot}, \ref{gammaNot}, \ref{heavyNot} and \ref{bcpNot} below:

\begin{nottation}\label{lambdaNot}
Let $I$ be a finite set, and let $\F=(M_i)_{i\in I}$ be a family of finite subsets of $\IN^+$ indexed by the elements of $I$.

\begin{enumerate}
\item We denote by $\Lambda(\F)$ the set of all numbers of the form $\lcm_{i\in I}{a_i}$ where $a_i\in M_i$ for $i\in I$.
\item We set $\lambda(\F):=|\Lambda(\F)|$.
\end{enumerate}
\end{nottation}

For example,
\[
\Lambda((\{2,3\},\{3,4\}))=\{\lcm(2,3),\lcm(2,4),\lcm(3,3),\lcm(3,4)\}=\{3,4,6,12\},
\]
and so $\lambda(\{2,3\},\{3,4\})=4$. Note that
\[
\lambda((M_i)_{i\in I})\leqslant\prod_{i\in I}{|M_i|},
\]
and that $\Lambda((M_i)_{i\in I})=\varnothing$ if and only if at least one of the sets $M_i$ is empty; as a small technicality, we note that if $I=\varnothing$, and thus $(M_i)_{i\in I}=\varnothing$, then by definition,
\[
\Lambda((M_i)_{i\in I})=\Lambda(\varnothing)=\{\lcm(\varnothing)\}=\{1\},
\]
which is also the only way to define $\Lambda(\varnothing)$ so that Lemma \ref{qTildeLem}(3) also applies when the semisimple group $H$ in its formulation is trivial.

\begin{nottation}\label{gammaNot}
Let $n\in\IN^+$ and $\psi\in\Sym(n)$. We denote by $\Gamma(\psi)$ the number of distinct cycles of $\psi$ on $\{1,\ldots,n\}$ including fixed points.
\end{nottation}

This notation will mainly be applied to an element $\psiv=(\psi_1,\ldots,\psi_r)$ of the direct product $\prod_{i=1}^r{\Sym(n_i)}$, and in order to make sense of this, we identify the abstract direct product $\prod_{i=1}^r{\Sym(n_i)}$ with the subgroup of the symmetric group over the size $\sum_{i=1}^r{n_i}$ set
\[
\bigcup_{i=1}^r{(\{i\}\times\{1,\ldots,n_i\})}
\]
which consists of all permutations of the form $(i,j)\mapsto (i,\sigma_i(j))$ for some given element $(\sigma_1,\ldots,\sigma_r)$ of the abstract direct product $\prod_{i=1}^r{\Sym(n_i)}$. Under this identification, $\psiv$ corresponds to the permutation $(i,j)\mapsto (i,\psi_i(j))$, so that $\Gamma(\psiv)=\sum_{i=1}^r{\Gamma(\psi_i)}$ and $\ord(\psiv)=\lcm(\ord(\psi_1),\ldots,\ord(\psi_r))$. Finally, if $C=\Soc(H)\alphabv\psiv$ is a socle coset in some finite semisimple group $H$ (with notation as introduced in the paragraph after the proof of Lemma \ref{mixedPartitionLem} above), we set $\Gamma(C):=\Gamma(\psiv)$.

\begin{nottation}\label{bcpNot}
Let $\Omega$ be a finite set, let $\psi\in\Sym(\Omega)$, and let $\zeta=(\gamma_1,\ldots,\gamma_{\ell})$ be an $\ell$-cycle of $\psi$ (possibly with $\ell=1$).
\begin{enumerate}
\item We set $\supp(\zeta):=\{\gamma_1,\ldots,\gamma_{\ell}\}$.
\item Assume now additionally that $S$ is a nonabelian finite simple group and $\vec{\alpha}=(\alpha_{\gamma})_{\gamma\in\Omega}\in\Aut(S)^{\Omega}$ is a family of automorphisms of $S$ labelled by the elements of $\Omega$. For each $\gamma\in\supp(\zeta)$, we set
\[
\bcp_{\gamma}(\psi,\vec{\alpha}):=\alpha_{\gamma}\alpha_{\psi^{-1}(\gamma)}\alpha_{\psi^{-2}(\gamma)}\cdots\alpha_{\psi^{-\ell+1}(\gamma)}\in\Aut(S),
\]
the \emph{index $\gamma$ backward cycle product associated with $\psi$ and $\vec{\alpha}$}. Moreover, we set
\[
\bcpc_{\zeta}(\vec{\alpha}):=\bcp_{\gamma}(\psi,\vec{\alpha})^{\Aut(S)}
\]
for any $\gamma\in\supp(\zeta)$, the \emph{backward cycle product class associated with $\zeta$ and $\vec{\alpha}$}.
\end{enumerate}
\end{nottation}

Note that the definition of $\bcpc_{\zeta}(\vec{\alpha})$ does not depend on the choice of $\gamma\in\supp(\zeta)$, because if $\gamma,\gamma'\in\supp(\zeta)$, then $\bcp_{\gamma}(\psi,\vec{\alpha})$ and $\bcp_{\gamma'}(\psi,\vec{\alpha})$ are cyclic shifts of each other; in particular, they are conjugate in $\Aut(S)$.

The parameters introduced in the following notation all play a role in Lemma \ref{qTildeLem}:

\begin{nottation}\label{heavyNot}
Let $r\in\IN^+$, $S_1,\ldots,S_r$ be pairwise nonisomorphic nonabelian finite simple groups, let $n_1,\ldots,n_r\in\IN^+$, and let $\psiv=(\psi_1,\ldots,\psi_r)\in\Sym(n_1)\times\cdots\times\Sym(n_r)$ and $\alphabv=(\vec{\alpha_1},\ldots,\vec{\alpha_r})\in\Aut(S_1)^{n_1}\times\cdots\times\Aut(S_r)^{n_r}$. We view $\alphabv$ as an array of automorphisms of nonabelian finite simple groups whose entries are labelled by pairs $(i,j)$ with $i\in\{1,\ldots,r\}$ and $j\in\{1,\ldots,n_i\}$.

\begin{enumerate}
\item For $i=1,\ldots,r$, we denote by $\overline{\psi_i}$ the permutation of $\{i\}\times\{1,\ldots,n_i\}$ mapping $(i,j)\mapsto(i,\psi_i(j))$.
\item We say that a triple $(i,\ell,\tau)$ where $i\in\{1,\ldots,r\}$, $\ell\in\{1,\ldots,n_i\}$ and $\tau$ is an $S_i$-type is \emph{$(\psiv,\alphabv)$-admissible} if and only if for some $\ell$-cycle $\zeta$ of $\overline{\psi_i}$, $\bcpc_{\zeta}(\alphabv)$ has $S_i$-type $\tau$, and each such cycle $\zeta$ is called an \emph{$(i,\ell,\tau)$-cycle of $(\psiv,\alphabv)$}.
\item We denote by $\Adm(\psiv,\alphabv)$ the set of all $(\psiv,\alphabv)$-admissible triples.
\item Assume that $(i,\ell,\tau)\in\Adm(\psiv,\alphabv)$.
\begin{enumerate}
\item We denote by $\Gamma_{i,\ell,\tau}(\psiv,\alphabv)$ the number of $(i,\ell,\tau)$-cycles of $(\psiv,\alphabv)$.
\item We denote by $\Omega_{i,\ell,\tau}(\psiv,\alphabv)$ the number of multisets with elements from $\{1,\ldots,\omega(\tau)\}$ and with cardinality $\Gamma_{i,\ell,\tau}(\psiv,\alphabv)$, where $\omega(\tau)$ is as in Notation \ref{tildeNot}(2).
\item We denote by $\Omicron_{i,\ell,\tau}(\psiv,\alphabv)$ the number of subsets of $\{1,\ldots,\omicron(\tau)\}$ of cardinality at most $\Gamma_{i,\ell,\tau}(\psiv,\alphabv)$, where $\omicron(\tau)$ is as in Notation \ref{tildeNot}(2).
\item We denote by $\F_{i,\ell,\tau}(\psiv,\alphabv)$ the tuple of length $\Gamma_{i,\ell,\tau}(\psiv,\alphabv)$ whose entries are all equal to the set $\Ord_{\Aut(S_i)}((S_i\alpha(\tau))^{\ord(\psiv)/\ell})$ of orders of $(\ord(\psiv)/\ell)$-th powers of elements of the coset $S_i\alpha(\tau)$.
\item We set $M_{i,\ell,\tau}(\psiv,\alphabv):=\Lambda(\F_{i,\ell,\tau}(\psiv,\alphabv))$, where $\Lambda$ is as in Notation \ref{lambdaNot}.
\item We set $\G(\psiv,\alphabv):=\Lambda((M_{i,\ell,\tau}(\psiv,\alphabv))_{(i,\ell,\tau)\in\Adm(\psiv,\alphabv)})$.
\end{enumerate}
\end{enumerate}
\end{nottation}

Observe that by definition,
\begin{equation}\label{omegailtEq}
\Omega_{i,\ell,\tau}(\psiv,\alphabv)={\Gamma_{i,\ell,\tau}(\psiv,\alphabv)+\omega(\tau)-1 \choose \omega(\tau)-1},
\end{equation}
and that $\Omicron_{i,\ell,\tau}(\psiv,\alphabv)$ is bounded from above by the number of multisets with elements from $\{1,\ldots,\omicron(\tau)\}$ and with cardinality $\Gamma_{i,\ell,\tau}(\psiv,\alphabv)$, which is
\[
{\Gamma_{i,\ell,\tau}(\psiv,\alphabv)+\omicron(\tau)-1 \choose \omicron(\tau)-1}.
\]
At last, we are able to formulate and prove Lemma \ref{qTildeLem}, providing bounds on $\omega_H$-, $\omicron$- and $\q_H$-values (see Notation \ref{subsetNot}) of socle cosets in finite semisimple groups:

\begin{lemmma}\label{qTildeLem}
Let $r\in\IN^+$, $S_1,\ldots,S_r$ be pairwise nonisomorphic nonabelian finite simple groups, let $n_1,\ldots,n_r\in\IN^+$, let $H$ be a finite semisimple group with $\Soc(H)=S_1^{n_1}\times\cdots\times S_r^{n_r}$, and let $\psiv=(\psi_1,\ldots,\psi_r)\in\Sym(n_1)\times\cdots\times\Sym(n_r)$ and $\alphabv=(\vec{\alpha_1},\ldots,\vec{\alpha_r})\in\Aut(S_1)^{n_1}\times\cdots\times\Aut(S_r)^{n_r}$ be such that $\alphabv\psiv\in H$. Let $C:=\Soc(H)\alphabv\psiv$ be the associated socle coset in $H$. Then:

\begin{enumerate}
\item $\omega_H(C)=\omega_H(C^{\Aut(H)})$ and $\omicron(C)=\omicron(C^{\Aut(H)})$.
\item $\omega_H(C)\geqslant\omega_{\Aut(\Soc(H))}(C)=\prod_{(i,\ell,\tau)\in\Adm(\psiv,\alphabv)}{\Omega_{i,\ell,\tau}(\psiv,\alphabv)}$.
\item $\Ord(C)=\ord(\psiv)\cdot\G(\psiv,\alphabv)$.
\item $\omicron(C)=|\G(\psiv,\alphabv)|\leqslant\prod_{(i,\ell,\tau)\in\Adm(\psiv,\alphabv)}{\Omicron_{i,\ell,\tau}(\psiv,\alphabv)}$.
\item \begin{align*}\q_H(C)&\geqslant\q_{\Aut(\Soc(H))}(C)\geqslant\prod_{(i,\ell,\tau)\in\Adm(\psiv,\alphabv)}{\frac{\Omega_{i,\ell,\tau}(\psiv,\alphabv)}{\Omicron_{i,\ell,\tau}(\psiv,\alphabv)}} \\ &\geqslant\prod_{(i,\ell,\tau)\in\Adm(\psiv,\alphabv)}{\q_{\Aut(S_i)}(S_i\alpha(\tau))}\geqslant\max\{\tilde{\q}(S_i)\mid i=1,\ldots,r\},\end{align*} where $\alpha(\tau)$ and $\tilde{\q}(S_i)$ are as in Notation \ref{tildeNot}.
\end{enumerate}
\end{lemmma}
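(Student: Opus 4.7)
The plan is to use the classical description, due to James and Kerber \cite[Theorem 4.2.8]{JK81a}, of conjugacy classes in wreath products $G\wr\Sym(n)$, applied factor-by-factor to $\Aut(\Soc(H))=\prod_{i=1}^r(\Aut(S_i)\wr\Sym(n_i))$. Part (1) is immediate: since $\Soc(H)$ is characteristic in $H$, the set $C^{\Aut(H)}$ is a union of $\Soc(H)$-cosets, so any $\Aut(H)$-orbit meeting $C^{\Aut(H)}$ already meets $C$, and orders are preserved by automorphisms.

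For part (2), I would first show $\omega_H(C)\geqslant\omega_{\Aut(\Soc(H))}(C)$ by embedding $H$ into $\Aut(\Soc(H))$ through its faithful conjugation action on $\Soc(H)$: every $\varphi\in\Aut(H)$ then restricts to an element of $\Aut(\Soc(H))$ that sends the image of $h$ to the image of $\varphi(h)$, so each $\Aut(\Soc(H))$-conjugacy class meeting $C$ contains at least one $\Aut(H)$-orbit meeting $C$. To compute $\omega_{\Aut(\Soc(H))}(C)$ I apply James--Kerber: since all elements of $C=\Soc(H)\alphabv\psiv$ share the permutation part $\psiv$, their $\Aut(\Soc(H))$-classes are parametrised by, for each cycle length $\ell$ and each index $i$, the multiset of $\Aut(S_i)$-classes of the backward cycle products along the $\ell$-cycles of $\overline{\psi_i}$. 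Left-multiplying $\alphabv$ by $s\in\Soc(H)$ preserves the $S_i$-type of every backward cycle product, but within a single cycle, varying $s$ at one support position realises every $S_i$-translate of that backward cycle product and hence every one of the $\omega(\tau)$ possible $\Aut(S_i)$-classes; independence across distinct cycles and across the $r$ factors yields the product formula.

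For part (3), I would invoke the identity $\ord(g)=\ord(gN)\cdot\ord(g^{\ord(gN)})$ with $N=\Soc(H)$ and $g=s\alphabv\psiv$: the first factor is $\ord(\psiv)$, and $(s\alphabv\psiv)^{\ord(\psiv)}$ has trivial permutation part, with coordinate at any $\gamma\in\supp(\zeta)$ (for an $\ell$-cycle $\zeta$ of type $(i,\ell,\tau)$) equal to the $(\ord(\psiv)/\ell)$-th power of the corresponding backward cycle product, an element that ranges over $(S_i\alpha(\tau))^{\ord(\psiv)/\ell}$ as $s$ varies. Since $s$ can be chosen independently on disjoint cycle supports and distinct socle factors, the set of attained lcms is exactly $\G(\psiv,\alphabv)$. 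Part (4) then follows because an lcm depends only on the support of its multiset argument, so $|M_{i,\ell,\tau}|$ is bounded by the number of subsets of the $\omicron(\tau)$-element set of orders of cardinality at most $\Gamma_{i,\ell,\tau}$, which is $\Omicron_{i,\ell,\tau}$.

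For part (5) I would chain four inequalities: the first, $\q_H(C)\geqslant\q_{\Aut(\Soc(H))}(C)$, follows by dividing the inequality in (2) by the group-independent value of $\omicron(C)$; the second combines (2) and (4); the third reduces to the binomial comparison $\Omega_{i,\ell,\tau}/\Omicron_{i,\ell,\tau}\geqslant\omega(\tau)/\omicron(\tau)$, in the same spirit as the calculation in the proof of Lemma \ref{qsnLem}(2); and the fourth is immediate since every factor $\q_{\Aut(S_i)}(S_i\alpha(\tau))\geqslant\tilde{\q}(S_i)$ is at least $1$, so the product is bounded below by any single factor, in particular by one maximising $\tilde{\q}(S_i)$. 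The main obstacle, I expect, lies in parts (2) and (3): both require careful combinatorial bookkeeping, via James--Kerber and via the cycle decomposition of powers in wreath products, to verify both that every compatible combinatorial datum is realised by some $s\in\Soc(H)$ and that distinct data give genuinely distinct $\Aut(\Soc(H))$-orbits or distinct orders.
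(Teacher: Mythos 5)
Your overall strategy mirrors the paper's exactly: the inequality $\omega_H(C)\geqslant\omega_{\Aut(\Soc(H))}(C)$ via the embedding $\Aut(H)\hookrightarrow\Aut(\Soc(H))$ (using that $\Aut(\Soc(H))$ is complete), the application of James--Kerber to parametrise $\Aut(\Soc(H))$-classes in $C$ by multisets of $\Aut(S_i)$-classes of backward cycle products, the coordinate-wise analysis of $(s\alphabv\psiv)^{\ord(\psiv)}$ in part (3), the remark that lcm depends only on the support in part (4), and the binomial-quotient chain in part (5). Parts (1), (2), (4) and (5) are all fine as sketched.

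There is one genuine misstep in your argument for part (3). You invoke the identity $\ord(g)=\ord(gN)\cdot\ord\bigl(g^{\ord(gN)}\bigr)$ with $N:=\Soc(H)$ and assert that $\ord(gN)=\ord(\psiv)$. That is false in general: the image of $g=s\alphabv\psiv$ in $\Aut(\Soc(H))/\Soc(H)\cong\prod_{i=1}^r\bigl(\Out(S_i)\wr\Sym(n_i)\bigr)$ carries the outer-automorphism data of $\alphabv$, so $\ord(gN)$ can be a proper multiple of $\ord(\psiv)$ (the simplest failure: $r=1$, $n_1=1$, $\psiv$ trivial, $\alphabv$ a nontrivial outer automorphism of $S_1$, where $\ord(\psiv)=1$ but $\ord(gN)>1$). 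What you actually need, and what makes the rest of your computation of $g^{\ord(\psiv)}$ go through, is to take $N$ to be the kernel of the canonical projection $\Aut(\Soc(H))\twoheadrightarrow\prod_{i=1}^r\Sym(n_i)$, i.e.\ $N=\prod_{i=1}^r\Aut(S_i)^{n_i}$. For this $N$, the image of $g$ in $\Aut(\Soc(H))/N\cong\prod_i\Sym(n_i)$ is $\psiv$, so $\ord(gN)=\ord(\psiv)$, and $g^{\ord(\psiv)}\in N$ as you use. With that single correction your coordinate-wise description of $g^{\ord(\psiv)}$ (and hence of $\Ord(C)=\ord(\psiv)\cdot\G(\psiv,\alphabv)$) is exactly the paper's argument.
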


\begin{proof}
For statement (1): The second equality (of the $\omicron$-values) is clear since group automorphisms preserve the orders of elements. The first equality holds because each coset of $\Soc(H)$ that is contained in the union of socle cosets $(\Soc(H)\alphabv\psiv)^{\Aut(H)}$ intersects the same $\Aut(H)$-orbits, namely those that are contained in $(\Soc(H)\alphabv\psiv)^{\Aut(H)}$.

For statement (2): The inequality $\omega_H(\Soc(H)\alphabv\psiv)\geqslant\omega_{\Aut(\Soc(H))}(\Soc(H)\alphabv\psiv)$ holds because $\Aut(H)$ embeds naturally into $\Aut(\Soc(H))$ (see e.g.~\cite[Lemma 1.1]{Ros75a}), which is complete.

The asserted formula for $\omega_{\Aut(\Soc(H))}(\Soc(H)\alphabv\psiv)$ is an immediate consequence of \cite[Lemma 2.2.5(1)]{Bor19a}, which is an equivalent reformulation of James and Kerber's characterisation of conjugacy in wreath products $G\wr\Sym(n)$ \cite[Theorem 4.2.8, p.~141]{JK81a}.

For statement (3): For the proof of this statement, view $\psiv$ as a permutation on
\[
\bigcup_{i=1}^r{(\{i\}\times\{1,\ldots,n_i\})}
\]
as explained after Notation \ref{gammaNot}. Observe that each element
\[
h\alphabv\psiv\in \Soc(H)\alphabv\psiv
\]
has order divisible by $\ord(\psiv)$ and that it thus suffices to show that the set of orders of the powers
\[
(h\alphabv\psiv)^{\ord(\psiv)}\in\prod_{i=1}^r{\Aut(S_i)^{n_i}},
\]
where $h$ ranges over $\Soc(H)$, is just $\G(\psiv,\alphabv)$.

Now, for each $(\psiv,\alphabv)$-admissible triple $(i,\ell,\tau)$, for each $(i,\ell,\tau)$-cycle $\zeta$ of $(\psiv,\overline{\vec{\alpha}})$ and each $(i,j)\in\supp(\zeta)$, the $(i,j)$ entry of
\[
(h\alphabv\psiv)^{\ord(\psiv)}
\]
is the $(\ord(\psiv)/\ell)$-th power of
\[
\bcp_{(i,j)}(\psiv,\alphabv).
\]
In particular, with $h$ ranging over $\Soc(H)$, we have the following:

\begin{itemize}
\item for each given $(i,\ell,\tau)\in\Adm(\psiv,\alphabv)$, the possible orders of each entry of
\[
(h\alphabv\psiv)^{\ord(\psiv)}
\]
whose index lies on an $(i,\ell,\tau)$-cycle of $(\psiv,\alphabv)$ are just the elements of
\[
\Ord((S_i\alpha(\tau))^{\ord(\psi)/\ell};
\]
\item entries of
\[
(h\alphabv\psiv)^{\ord(\psiv)}
\]
whose indices lie on the same cycle of $\psiv$ are conjugate (in particular, of the same order); and
\item the orders of entries of
\[
(h\alphabv\psiv)^{\ord(\psiv)}
\]
whose indices $(i_k,j_k)$ lie on pairwise distinct cycles, of lengths $\ell_k$ and with backward cycle product type $\tau_k$, of $\psiv$ can be chosen independently of each other from the respective set
\[
\Ord((S_{i_k}\alpha(\tau_k))^{\ord(\psiv)/\ell_k}).
\]
\end{itemize}

Hence for each given $(i,\ell,\tau)\in\Adm(\psiv,\alphabv)$, the possible orders (in a suitable power of $\Aut(S_i)$) of the projection of
\[
(h\alphabv\psiv)^{\ord(\psiv)}
\]
to the coordinates lying on one of the $(i,\ell,\tau)$-cycles of $(\psiv,\alphabv)$ are just the least common multiples formed from $\Gamma_{i,\ell,\tau}(\psiv,\alphabv)$-tuples with entries from
\[
\Ord((S_i\alpha(\tau))^{\ord(\psiv)/\ell},
\]
i.e., the elements of $M_{i,\ell,\tau}(\psiv,\alphabv)$ by definition. Moreover, the possible orders of the entire power
\[
(h\alphabv\psiv)^{\ord(\psiv)}
\]
are just the least common multiples formed from a tuple consisting of one element choice from each set $M_{i,\ell,\tau}(\psiv,\alphabv)$ with $(i,\ell,\tau)$ ranging over $\Adm(\psiv,\alphabv)$, i.e., the elements of $\G(\psiv,\alphabv)$ by definition, as required.

For statement (4): By statement (3),
\[
\omicron(C)=|\G(\psiv,\alphabv)|\leqslant\prod_{(i,\ell,\tau)\in\Adm(\psiv,\alphabv)}{\lambda(\F_{i,\ell,\tau}(\psiv,\alphabv))}.
\]
By the definitions of the notations $\lambda$ and $\Omicron_{i,\ell,\tau}$, the result now follows.

For statement (5): The first two inequalities are consequences of statements (2) and (4), and the last inequality is clear from the definition of $\tilde{q}(S)$ (which entails that $\q_{\Aut(S)}(S\alpha)\geqslant\tilde{\q}(S)\geqslant1$ for all nonabelian finite simple groups $S$ and all $\alpha\in\Aut(S)$). Hence we may restrict our attention to the third inequality. We are done if we can show that for each $(i,\ell,\tau)\in\Adm(\psiv,\alphabv)$,
\[
\frac{\Omega_{i,\ell,\tau}(\psiv,\alphabv)}{\Omicron_{i,\ell,\tau}(\psiv,\alphabv)}\geqslant\q_{\Aut(S_i)}(S_i\alpha(\tau)).
\]
And indeed, by the remarks after Notation \ref{heavyNot} and the fact that $\omicron(\tau)\leqslant\omega(\tau)$, we conclude that
\begin{align*}
&\frac{\Omega_{i,\ell,\tau}(\psiv,\alphabv)}{\Omicron_{i,\ell,\tau}(\psiv,\alphabv)}\geqslant\frac{{\Gamma_{i,\ell,\tau}(\psiv,\alphabv)+\omega(\tau)-1 \choose \omega(\tau)-1}}{{\Gamma_{i,\ell,\tau}(\psiv,\alphabv)+\omicron(\tau)-1 \choose \omicron(\tau)-1}}=\frac{(\Gamma_{i,\ell,\tau}(\psiv,\alphabv)+\omega(\tau)-1)!(\omicron(\tau)-1)!}{(\Gamma_{i,\ell,\tau}(\psiv,\alphabv)+\omicron(\tau)-1)!(\omega(\tau)-1)!} \\
&=\frac{\omega(\tau)+\Gamma_{i,\ell,\tau}(\psiv,\alphabv)-1}{\omicron(\tau)+\Gamma_{i,\ell,\tau}(\psiv,\alphabv)-1}\cdot\frac{\omega(\tau)+\Gamma_{i,\ell,\tau}(\psiv,\alphabv)-2}{\omicron(\tau)+\Gamma_{i,\ell,\tau}(\psiv,\alphabv)-2}\cdot\cdots\cdot\frac{\omega(\tau)+1}{\omicron(\tau)+1}\cdot\frac{\omega(\tau)!(\omicron(\tau)-1)!}{\omicron(\tau)!(\omega(\tau)-1)!} \\
&\geqslant 1\cdot 1\cdot\cdots\cdot 1\cdot\frac{\omega(\tau)!(\omicron(\tau)-1)!}{\omicron(\tau)!(\omega(\tau)-1)!}=\frac{\omega(\tau)}{\omicron(\tau)}=\q_{\Aut(S_i)}(S_i\alpha(\tau)),
\end{align*}
as required.
\end{proof}

\subsection{Another equivalent reformulation of Theorem \ref{mainTheo1}(2)}\label{subsec4P5}

Consider the following notation:

\begin{nottation}\label{classNot}
Let $\hat{m},\hat{d},\hat{p},c\geqslant1$.
\begin{enumerate}
\item We denote by $\Hcal^{(c)}$ the class of finite semisimple groups $H$ with $\q(H)\leqslant c$.
\item We denote by $\Scal_{\hat{m},\hat{d},\hat{p}}$ the class of nonabelian finite simple groups $S$ which are one of the following:
\begin{itemize}
\item a sporadic nonabelian finite simple group,
\item an alternating group $\Alt(m)$ where $m\leqslant\hat{m}$, or
\item a finite simple group of Lie type $\leftidx{^t}X_d(p^{ft})$ where $d\leqslant\hat{d}$ and $p\leqslant\hat{p}$.
\end{itemize}
\item We denote by $\Hcal_{\hat{m},\hat{d},\hat{p}}$ the class of finite semisimple groups $H$ such that $\Soc(H)$ only has composition factors from $\Scal_{\hat{m},\hat{d},\hat{p}}$.
\end{enumerate}
\end{nottation}

As an extension of Remark \ref{semisimpleRem}, we note the following:

\begin{remmark}\label{classRem}
The following are equivalent:
\begin{enumerate}
\item The existence of a function $f_2:\left[0,\infty\right)^2\rightarrow\left[1,\infty\right)$ that is monotonically increasing in each variable and such that $|G:\Rad(G)|\leqslant f_2(\q(G),\omicron(\Rad(G)))$ for all finite groups $G$, as asserted by Theorem \ref{mainTheo1}(2).
\item The existence of a monotonically increasing function $\g:\left[1,\infty\right)\rightarrow\left[1,\infty\right)$ such that $|H|\leqslant \g(\q(H))$ for all finite semisimple groups $H$.
\item The finiteness (up to isomorphism of the elements) of the classes $\Hcal^{(c)}$ for all $c\geqslant1$.
\end{enumerate}
Indeed, the equivalence of (1) and (2) was already shown in Remark \ref{semisimpleRem}, and the equivalence of (2) and (3) is obvious.
\end{remmark}

We will prove Theorem \ref{mainTheo1}(2) by ultimately showing that the classes $\Hcal^{(c)}$ are all finite. As an intermediate step toward proving this, we will now show the following, as an application of the theory developed so far:

\begin{lemmma}\label{hcalLem}
For each constant $c\geqslant1$, there are constants $\hat{m}=\hat{m}(c)$, $\hat{d}=\hat{d}(c)$ and $\hat{p}=\hat{p}(c)$, all in $\left[1,\infty\right)$, such that $\Hcal^{(c)}\subseteq\Hcal_{\hat{m},\hat{d},\hat{p}}$.
\end{lemmma}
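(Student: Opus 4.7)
The plan is to deduce the statement directly from the tools already established, namely Lemma~\ref{qTildeLem}(5) combined with Lemma~\ref{partitionLem} on the one hand, and the asymptotic result Lemma~\ref{qTildeLem2} on the other. The guiding idea is that $\q(H) \leqslant c$ forces every nonabelian composition factor $S_i$ of $\Soc(H)$ to have $\tilde{\q}(S_i) \leqslant c$, and the asymptotic results say that only finitely many (families of) simple groups can satisfy such a bound.

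More concretely, fix $c \geqslant 1$ and $H \in \Hcal^{(c)}$, and write $\Soc(H) = S_1^{n_1} \times \cdots \times S_r^{n_r}$ with $S_1, \ldots, S_r$ pairwise nonisomorphic nonabelian finite simple groups. I would apply Lemma~\ref{partitionLem} with $G := H$, $M := H$, and $\P := \P_H$, the partition of $H$ into $\Aut(H)$-orbits of socle cosets introduced after the proof of Lemma~\ref{mixedPartitionLem}, to obtain
\[
c \;\geqslant\; \q(H) \;\geqslant\; \min_{N \in \P_H} \q_H(N).
\]
Every $N \in \P_H$ has the form $N = (\Soc(H)\alphabv\psiv)^{\Aut(H)}$ for some representatives $\alphabv, \psiv$ with $\alphabv\psiv \in H$, and by Lemma~\ref{qTildeLem}(1) we have $\q_H(N) = \q_H(C)$ for $C := \Soc(H)\alphabv\psiv$. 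By Lemma~\ref{qTildeLem}(5), $\q_H(C) \geqslant \max\{\tilde{\q}(S_i) \mid i = 1, \ldots, r\}$. Stringing these inequalities together yields
\[
c \;\geqslant\; \max\{\tilde{\q}(S_i) \mid i = 1, \ldots, r\},
\]
so $\tilde{\q}(S_i) \leqslant c$ for every composition factor $S_i$ of $\Soc(H)$.

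Next, I would feed this constraint into Lemma~\ref{qTildeLem2}. Part~(1) of that lemma says $\tilde{\q}(\Alt(m)) \to \infty$ as $m \to \infty$, so there exists a threshold $\hat{m}(c) \in [1,\infty)$ such that every alternating composition factor $S_i = \Alt(m)$ of $\Soc(H)$ satisfies $m \leqslant \hat{m}(c)$. Part~(2) of the same lemma says that for any simple group of Lie type $S = \leftidx{^t}X_d(p^{ft})$, $\tilde{\q}(S) \to \infty$ as $\max\{p, d\} \to \infty$, yielding thresholds $\hat{d}(c), \hat{p}(c) \in [1,\infty)$ such that every Lie type composition factor of $\Soc(H)$ has untwisted Lie rank at most $\hat{d}(c)$ and defining characteristic at most $\hat{p}(c)$. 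Since the sporadic factors are automatically accommodated in $\Scal_{\hat{m},\hat{d},\hat{p}}$ (being finite in number and unrestricted by the definition of that class), every composition factor of $\Soc(H)$ lies in $\Scal_{\hat{m}(c),\hat{d}(c),\hat{p}(c)}$, so $H \in \Hcal_{\hat{m}(c),\hat{d}(c),\hat{p}(c)}$, as required.

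There is essentially no genuine obstacle here: the lemma is a bookkeeping assembly of Lemmas~\ref{partitionLem},~\ref{qTildeLem}, and~\ref{qTildeLem2}. The only mild point of care is making sure that the lower bound $\max_i \tilde{\q}(S_i)$ from Lemma~\ref{qTildeLem}(5), which a priori applies only coset-by-coset, really propagates to a lower bound on $\q(H) = \q_H(H)$ itself; this is exactly what the combination of Lemma~\ref{qTildeLem}(1) with the $\Aut(H)$-invariance of the members of $\P_H$ (and Lemma~\ref{partitionLem}) provides.
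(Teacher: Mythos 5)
Your proof is correct and follows essentially the same route as the paper's: both apply Lemma~\ref{partitionLem} with the partition $\P_H$, invoke Lemma~\ref{qTildeLem}(1,5) to turn the socle-coset bound $\q_H(C)\geqslant\max_i\tilde{\q}(S_i)$ into the chain $\max_i\tilde{\q}(S_i)\leqslant\q(H)\leqslant c$, and then read off the thresholds $\hat{m}(c),\hat{d}(c),\hat{p}(c)$ from Lemma~\ref{qTildeLem2}. The only difference is that you spell out the reduction $\q_H(N)=\q_H(C)$ explicitly via Lemma~\ref{qTildeLem}(1), whereas the paper compresses this into a single displayed inequality chain.
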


\begin{proof}
Let $H\in\Hcal^{(c)}$, i.e., $H$ is a finite semisimple group with $\q(H)\leqslant c$. Write $\Soc(H)=S_1^{n_1}\times\cdots\times S_r^{n_r}$, where $S_1,\ldots,S_r$ are pairwise nonisomorphic nonabelian finite simple groups and $n_1,\ldots,n_r\in\IN^+$. Recall the partition $\P_H$ of $H$ introduced after the proof of Lemma \ref{mixedPartitionLem}. By Lemma \ref{partitionLem}, applied with $G:=H$, $M:=H$ and $\P:=\P_H$, as well as Lemma \ref{qTildeLem}(1,5), we find that (recalling $\tilde{\q}$ from Notation \ref{tildeNot}(1))
\begin{align}\label{partApplEq}
\notag &\max\{\tilde{\q}(S_i)\mid i=1,\ldots,r\}\leqslant\min\{\q_H(C)\mid C\text{ is a socle coset in }H\} \\
&=\min\{\q_H(N)\mid N\in\P_H\}\leqslant\q(H)\leqslant c.
\end{align}
By Lemma \ref{qTildeLem2}, there are constants $\hat{m}=\hat{m}(c)$, $\hat{d}=\hat{d}(c)$ and $\hat{p}=\hat{p}(c)$ in $\left[1,\infty\right)$ such that
\begin{itemize}
\item For all $m\geqslant5$, if $\tilde{\q}(\Alt(m))\leqslant c$, then $m\leqslant\hat{m}$.
\item For all $d\geqslant1$ and all primes $p$, if $S$ is a finite simple group of Lie type of rank $d$ and definining characteristic $p$ such that $\tilde{\q}(S)\leqslant c$, then $d\leqslant\hat{d}$ and $p\leqslant\hat{p}$.
\end{itemize}
Combining this with Formula (\ref{partApplEq}) shows that each composition factor $S_i$ of $\Soc(H)$ must lie in the class $\Scal_{\hat{m},\hat{d},\hat{p}}$, defined in Notation \ref{classNot}(2) above, and this just means by definition that $H\in\Hcal_{\hat{m},\hat{d},\hat{p}}$, as required.
\end{proof}

In order to derive further restrictions on the classes $\Hcal^{(c)}$, it is, in view of Lemma \ref{hcalLem}, natural to study the classes $\Hcal_{\hat{m},\hat{d},\hat{p}}$. This will be done in Subsection \ref{subsec4P7}, but before that, we will need some elementary number-theoretic preparations, which will be carried out in Subsection \ref{subsec4P6}.

\subsection{A bit of elementary number theory}\label{subsec4P6}

We start with the following notation:

\begin{nottation}\label{tfracNot}
Let $k\in\IN^+$.
\begin{enumerate}
\item For a positive integer $m$, set $m//k:=\frac{m}{\gcd(m,k)}$.
\item For a set (or multiset) $M$ of positive integers, set $M//k:=\{m//k \mid m\in M\}$.
\end{enumerate}
\end{nottation}

This occurs naturally in the following basic group-theoretic lemma:

\begin{lemmma}\label{tfracLem}
Let $G$ be a finite group, $g\in G$, $M\subseteq G$, and $k\in\IN^+$. Denote by $M^k$ the set of $k$-th powers of elements of $M$. Then
\begin{enumerate}
\item $\ord(g^k)=\ord(g)//k$.
\item $\Ord(M^k)=\Ord(M)//k$.
\end{enumerate}\qed
\end{lemmma}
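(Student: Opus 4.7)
The plan is to prove statement (1) first by a direct classical computation, then derive statement (2) from (1) by an elementwise argument.

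For statement (1), I would argue as follows. Write $n := \ord(g)$ and $d := \gcd(n,k)$. Then $(g^k)^{n/d} = g^{kn/d} = (g^n)^{k/d} = 1_G$, since $k/d$ is an integer and $g^n = 1_G$. This shows $\ord(g^k) \mid n/d = n//k$. Conversely, if $(g^k)^{\ell} = 1_G$ for some positive integer $\ell$, then $n \mid k\ell$, hence $n/d \mid (k/d) \ell$; since $\gcd(n/d, k/d) = 1$, it follows that $n/d \mid \ell$. Therefore $\ord(g^k) = n/d = \ord(g)//k$.

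For statement (2), observe that
\[
\Ord(M^k) = \{ \ord(h) : h \in M^k \} = \{ \ord(g^k) : g \in M \},
\]
and by (1) the right-hand side is $\{ \ord(g) // k : g \in M \} = \Ord(M) // k$ by the definition of $M // k$ for a (multi)set $M$ of positive integers from Notation \ref{tfracNot}(2).

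No real obstacle is anticipated; the only minor subtlety is the standard coprimality step $\gcd(n/d, k/d) = 1$ used to deduce divisibility of the exponent, which is immediate from the definition of the greatest common divisor. The whole argument should fit in a few lines.
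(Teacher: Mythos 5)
Your proof is correct. The paper gives no proof of this lemma (the statement ends with a \verb|\qed|, indicating the authors regard it as standard), and your argument supplies exactly the routine computation one would expect: the divisibility argument for statement (1) via the coprimality of $n/d$ and $k/d$, and statement (2) as an immediate consequence of (1) applied elementwise together with the definition of $M//k$.
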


We also have a number-theoretic lemma associated with Notation \ref{tfracNot}, and with Notation \ref{lambdaNot}:

\begin{lemmma}\label{ntLem4}
Let $(M_i)_{i\in I}$ be a finite family of finite subsets of $\IN^+$, and let $k\in\IN^+$. Then
\[
\Lambda((M_i//k)_{i\in I})=\Lambda((M_i)_{i\in I})//k.
\]
\end{lemmma}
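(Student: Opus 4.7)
The plan is to reduce the lemma to a purely arithmetical identity, namely that
\[
\lcm_{i\in I}(a_i/\!/k)\;=\;\bigl(\lcm_{i\in I}a_i\bigr)/\!/k
\]
for every finite family of positive integers $(a_i)_{i\in I}$ and every $k\in\IN^+$. Once this identity is in hand, the lemma drops out: an arbitrary element of $\Lambda((M_i/\!/k)_{i\in I})$ is a number of the form $\lcm_i(a_i/\!/k)$ with $a_i\in M_i$, which by the identity equals $(\lcm_i a_i)/\!/k$ and therefore lies in $\Lambda((M_i)_{i\in I})/\!/k$; conversely, every element of $\Lambda((M_i)_{i\in I})/\!/k$ has the form $(\lcm_i a_i)/\!/k$ with $a_i\in M_i$, and the identity rewrites this as $\lcm_i(a_i/\!/k)\in\Lambda((M_i/\!/k)_{i\in I})$. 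Hence the two sets coincide.

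To establish the arithmetical identity I would argue prime by prime. Fix a prime $p$ and write $\alpha_i:=\nu_p(a_i)$ and $\beta:=\nu_p(k)$. Since $\nu_p(m/\!/k)=\nu_p(m)-\min(\nu_p(m),\beta)=\max(\nu_p(m)-\beta,0)$, the $p$-adic valuation of the left-hand side is
\[
\max_{i\in I}\max(\alpha_i-\beta,\,0),
\]
while that of the right-hand side is
\[
\max\!\bigl(\max_{i\in I}\alpha_i-\beta,\,0\bigr).
\]
A short case analysis (separating whether all $\alpha_i\leqslant\beta$ or not) shows these two expressions agree; since this holds for every prime $p$, the identity follows.

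The only step that might be called an obstacle is the interchange of the two $\max$ operations in the prime-by-prime calculation, and even that is routine. Once the identity $\lcm_i(a_i/\!/k)=(\lcm_i a_i)/\!/k$ is verified, the lemma is immediate from the definition of $\Lambda$ given in Notation \ref{lambdaNot}.
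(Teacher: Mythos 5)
Your proposal is correct and follows essentially the same route as the paper's proof: reduce to the scalar identity $\lcm_i(a_i/\!/k)=(\lcm_i a_i)/\!/k$ and verify it prime by prime via $p$-adic valuations. The only cosmetic difference is that you rewrite $\nu_p(m/\!/k)$ as $\max(\nu_p(m)-\nu_p(k),0)$ and compare two nested maxima, whereas the paper leaves it as $\nu_p(m)-\min(\nu_p(m),\nu_p(k))$ and invokes the monotonicity of $\max$ and $\min$; these are the same calculation.
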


\begin{proof}
Assume w.l.o.g.~that $I=\{1,\ldots,r\}$. Note that a general element of $\Lambda((M_i//k)_{i\in I})$ is of the form
\begin{equation}\label{expr1Eq}
\lcm(\frac{m_1}{\gcd(m_1,k)},\ldots,\frac{m_r}{\gcd(m_r,k)})
\end{equation}
for $m_i\in M_i$ for $i=1,\ldots,r$, whereas a general element of $\Lambda((M_i)_{i\in I})//k$ is of the form
\begin{equation}\label{expr2Eq}
\frac{\lcm(m_1,\ldots,m_r)}{\gcd(\lcm(m_1,\ldots,m_r),k)}
\end{equation}
for $m_i\in M_i$ for $i=1,\ldots,r$. We are done if we can show that for each $(m_1,\ldots,m_r)\in\prod_{i=1}^r{M_i}$, the numbers in Formulas (\ref{expr1Eq}) and (\ref{expr2Eq}) are equal. Now, for each prime $p$, write $\nu_p(m)$ for the \emph{$p$-adic valuation of $m$}, i.e., the largest nonnegative integer $a$ such that $p^a$ divides $m$. Then
\begin{align*}
&\nu_p(\lcm(\frac{m_1}{\gcd(m_1,k)},\ldots,\frac{m_r}{\gcd(m_r,k)})) \\
&=\max\{\nu_p(m_i)-\min\{\nu_p(m_i),\nu_p(k)\}\mid i=1,\ldots,r\} \\
&=\max\{\nu_p(m_i)\mid i=1,\ldots,r\}-\min\{\max\{\nu_p(m_i)\mid i=1,\ldots,r\},\nu_p(k)\} \\
&=\nu_p(\frac{\lcm(m_1,\ldots,m_r)}{\gcd(\lcm(m_1,\ldots,m_r),k)}),
\end{align*}
where the second equality uses the monotonicity of $\max$ and $\min$, which implies that the maximum value in the second expression is assumed for those $i\in\{1,\ldots,r\}$ where $\nu_p(m_i)$ is maximal. Hence the third expression (obtained from the second by substituting $\max\{\nu_p(m_i)\mid i=1,\ldots,r\}$ into $\nu_p(m_i)$) is the said maximum value, as asserted.
\end{proof}

Note that a subset $M\subseteq\IN^+$ is equal to the set $\Div(n)$ of positive divisors of some fixed positive integer $n$ if and only if $M$ is a finite subset of $\IN^+$ that is closed under the binary operations $\gcd$ and $\lcm$ (i.e., $M$ is a sublattice of $(\IN^+,\mid)$) as well as closed under taking divisors of its elements; we will henceforth call such sets $M$ \emph{divisors sets}. For example, $\Div(12)=\{1,2,3,4,6,12\}$ is a divisors set.

\begin{lemmma}\label{ntLem1}
Let $M\subseteq\IN^+$ be a divisors set, and let $a_1,\ldots,a_k\in\IN^+$ and $m_1,\ldots,m_k\in M$. Then $\lcm(a_1m_1,\ldots,a_km_k)\in\lcm(a_1,\ldots,a_k)M$.
\end{lemmma}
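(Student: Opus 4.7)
The plan is to reduce the statement to divisibility considerations using the characterisation of divisors sets given right before the lemma: a set $M\subseteq\IN^+$ is a divisors set if and only if $M=\Div(n)$ for some positive integer $n$. So I would begin by fixing such an $n$ with $M=\Div(n)$, and then set $L:=\lcm(a_1,\ldots,a_k)$ and $D:=\lcm(a_1m_1,\ldots,a_km_k)$. The goal is to show that $D=Lm$ for some $m\in M$, i.e., some $m\mid n$.

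First I would show that $L$ divides $D$. This is immediate: for each $i\in\{1,\ldots,k\}$, we have $a_i\mid a_im_i\mid D$, so $L=\lcm(a_1,\ldots,a_k)\mid D$. Hence $D=Lm$ for some $m\in\IN^+$, which leaves only the task of showing $m\in M=\Div(n)$.

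Next I would show that $D$ divides $Ln$, whence $m=D/L$ divides $n$, as required. For this, observe that since $m_i\mid n$, we have $a_im_i\mid a_in$, and $a_in\mid Ln$ since $a_i\mid L$; therefore $a_im_i\mid Ln$ for each $i$, and consequently $D=\lcm(a_1m_1,\ldots,a_km_k)\mid Ln$. Dividing the relation $D=Lm\mid Ln$ by $L$ yields $m\mid n$, so $m\in M$ and $D=Lm\in L\cdot M=\lcm(a_1,\ldots,a_k)M$.

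There is no real obstacle here; the lemma is a short elementary divisibility fact, and the only observation one has to make is the equivalent reformulation of \emph{divisors set} as \enquote{set of divisors of a single integer}, which is already noted in the paragraph preceding the lemma statement. A per-prime ($p$-adic valuation) argument would also work and would give the analogous explicit formula $\nu_p(m)=\max_i(\nu_p(a_i)+\nu_p(m_i))-\max_i\nu_p(a_i)$, but the divisibility argument sketched above is cleaner and avoids casework on primes.
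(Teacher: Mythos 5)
Your proof is correct, and it takes a genuinely different route from the paper's. The paper argues prime-by-prime: for each prime $p$ it computes $\nu_p(\lcm(a_1m_1,\ldots,a_km_k)) = \max_i(\nu_p(a_i)+\nu_p(m_i))$ and bounds $\nu_p$ of the quotient $\lcm(a_1m_1,\ldots,a_km_k)/\lcm(a_1,\ldots,a_k)$ by $\max_i\nu_p(m_i)$, then uses the closure of $M$ under taking divisors and under finitary $\lcm$'s to put the quotient in $M$ as a product of pairwise coprime prime powers lying in $M$. You instead invoke the reformulation $M=\Div(n)$ and reduce everything to three clean divisibility facts ($L\mid D$, $D\mid Ln$, and cancelling $L$), with no per-prime casework. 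Both proofs lean on the same underlying structure of divisors sets, but yours exploits the global object $n$ to sidestep the valuation bookkeeping, while the paper's stays local at each prime and therefore never needs to name $n$. Your version is arguably cleaner to read; the paper's has the small advantage of exhibiting the explicit $p$-adic formula for the quotient, which you also note would fall out of the valuation approach.
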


\begin{proof}
For each prime $p$, we have that
\begin{align*}
&\nu_p(\lcm(a_1m_1,\ldots,a_km_k))=\max\{\nu_p(a_i)+\nu_p(m_i)\mid i=1,\ldots,k\}\leqslant \\
&\max\{\nu_p(a_i)\mid i=1,\ldots,k\}+\max\{\nu_p(m_i)\mid i=1,\ldots,k\},
\end{align*}
and thus
\[
\nu_p(\frac{\lcm(a_1m_1,\ldots,a_km_k)}{\lcm(a_1,\ldots,a_k)})\leqslant\max\{\nu_p(m_i)\mid i=1,\ldots,k\}\leqslant\max\{\nu_p(m)\mid m\in M\}.
\]
As $M$ is closed under divisibility, the quotient $\lcm(a_1m_1,\ldots,a_km_k)/\lcm(a_1,\ldots,a_k)$ is thus a product of pairwise coprime prime powers that are in $M$, and since $M$ is closed under taking finitary least common multiples, it follows that said quotient is an element of $M$, as required.
\end{proof}

\begin{lemmma}\label{ntLem2}
Let $M\subseteq\IN^+$ be a divisors set, let $t,a\in\IN^+$ and $m\in M$. Then, using Notation \ref{tfracNot}, $((mt)//a)\in(t//a)M$.
\end{lemmma}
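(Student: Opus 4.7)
The plan is to construct an explicit candidate $m'\in M$ witnessing the claim and then verify via $p$-adic valuations that it really does lie in $M$. The natural choice is
\[
m' := \frac{m\cdot\gcd(t,a)}{\gcd(mt,a)},
\]
which I would first check is the right one purely formally: if $m'$ turns out to be a positive integer belonging to $M$, then
\[
(t//a)\cdot m' = \frac{t}{\gcd(t,a)}\cdot\frac{m\gcd(t,a)}{\gcd(mt,a)} = \frac{mt}{\gcd(mt,a)} = (mt)//a,
\]
so $(mt)//a\in(t//a)M$, as desired.

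The whole content then reduces to showing that $m'$ is a positive integer with $m'\mid m$; once that is done, the hypothesis that $M$ is a divisors set (hence closed under taking divisors) together with $m\in M$ immediately gives $m'\in M$. To check $m'\mid m$, I would fix a prime $p$ and, writing $\mu:=\nu_p(m)$, $\tau:=\nu_p(t)$, $\alpha:=\nu_p(a)$, compute
\[
\nu_p(m') = \mu + \min(\tau,\alpha) - \min(\mu+\tau,\alpha).
\]
A short case analysis on whether $\tau\geq\alpha$, and if not whether $\mu+\tau\leq\alpha$ or $\mu+\tau>\alpha$, shows respectively that $\nu_p(m')$ equals $\mu$, $0$, or $\mu+\tau-\alpha\in(0,\mu)$. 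In every case $0\leq\nu_p(m')\leq\mu$, which simultaneously yields integrality of $m'$ and $m'\mid m$.

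I do not anticipate any serious obstacle: the argument is a routine $p$-adic valuation bookkeeping, strictly parallel to (and slightly simpler than) the case analysis carried out in the proof of Lemma \ref{ntLem4}. The only mild care is to isolate the subcase $\tau<\alpha$ and $\mu+\tau>\alpha$, where $\nu_p(m')$ is strictly between $0$ and $\mu$, to make sure that the bound $\nu_p(m')\leq\mu$ holds there as well.
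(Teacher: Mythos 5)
Your proposal is correct and takes essentially the same route as the paper: both identify $m' = m\gcd(t,a)/\gcd(mt,a)$ as the candidate quotient and then appeal to $M$ being closed under divisors once $m'\mid m$ is established. The paper simply asserts $m'$ is a positive integer dividing $m$ as obvious, whereas you fill in the $p$-adic valuation case analysis, which is a correct and harmless amount of extra detail.
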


\begin{proof}
The quotient
\[
\frac{(mt)//a}{t//a}=\frac{\frac{mt}{\gcd(mt,a)}}{\frac{t}{\gcd(t,a)}}=\frac{m\cdot\gcd(t,a)}{\gcd(mt,a)}
\]
is a positive integer and divides $m$, and thus it lies in $M$.
\end{proof}

\begin{nottation}\label{coBDNot}
Let $f\in\IN^+$ and $h\geqslant1$. We denote by $\coBD_h(f)$ the set of divisors $g$ of $f$ such that $\frac{f}{g}\leqslant h$.
\end{nottation}

\begin{lemmma}\label{ntLem3}
The following hold:
\begin{enumerate}
\item Let $f,a\in\IN^+$, $h\geqslant1$ and $g\in\coBD_h(f)$. Then $(g//a)\in\coBD_h(f//a)$.
\item Let $f_1,\ldots,f_k\in\IN^+$, $h\geqslant1$ and $g_i\in\coBD_h(f_i)$ for $i=1,\ldots,k$. Then $\lcm(g_1,\ldots,g_k)\in\coBD_{\lcm(1,\ldots,\lfloor h\rfloor)}(\lcm(f_1,\ldots,f_k))$.
\end{enumerate}
\end{lemmma}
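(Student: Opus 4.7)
The plan is to prove both statements by a straightforward $p$-adic valuation analysis, reducing each claim to an inequality on $\nu_p$ for each prime $p$. The main tool is the formula $\nu_p(m//a)=\max\{0,\nu_p(m)-\nu_p(a)\}$, which lets us transfer statements about $//a$ to statements about ordinary $\nu_p$.

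For statement (1), I would first observe that since $g\mid f$, we have $\nu_p(g)\leqslant\nu_p(f)$ for every prime $p$, hence $\max\{0,\nu_p(g)-\nu_p(a)\}\leqslant\max\{0,\nu_p(f)-\nu_p(a)\}$, which yields $(g//a)\mid(f//a)$. The key quantitative step is to show that the quotient $(f//a)/(g//a)$ even divides $f/g$: for each prime $p$ one checks
\[
\max\{0,\nu_p(f)-\nu_p(a)\}-\max\{0,\nu_p(g)-\nu_p(a)\}\leqslant\nu_p(f)-\nu_p(g),
\]
by splitting into cases according to whether $\nu_p(a)$ lies below $\nu_p(g)$, between $\nu_p(g)$ and $\nu_p(f)$, or above $\nu_p(f)$. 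Hence $(f//a)/(g//a)$ divides $f/g$, and so is bounded by $f/g\leqslant h$, giving $(g//a)\in\coBD_h(f//a)$.

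For statement (2), set $L:=\lcm(f_1,\ldots,f_k)$ and $G:=\lcm(g_1,\ldots,g_k)$. Clearly $G\mid L$ since each $g_i\mid f_i\mid L$, so it remains to bound $L/G$. For each prime $p$, pick an index $i^\ast=i^\ast(p)$ with $\nu_p(f_{i^\ast})=\nu_p(L)$; then $\nu_p(G)\geqslant\nu_p(g_{i^\ast})$, so
\[
\nu_p(L/G)\leqslant\nu_p(f_{i^\ast})-\nu_p(g_{i^\ast})=\nu_p(f_{i^\ast}/g_{i^\ast}).
\]
Consequently, the prime power $p^{\nu_p(L/G)}$ divides $f_{i^\ast}/g_{i^\ast}\leqslant h$, and is therefore at most $\lfloor h\rfloor$. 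The conclusion follows because $L/G$ is the product of its pairwise coprime prime-power factors, each of which is at most $\lfloor h\rfloor$, hence each divides $\lcm(1,2,\ldots,\lfloor h\rfloor)$; since these prime powers are pairwise coprime, their product divides $\lcm(1,\ldots,\lfloor h\rfloor)$, which gives $\lcm(g_1,\ldots,g_k)\in\coBD_{\lcm(1,\ldots,\lfloor h\rfloor)}(\lcm(f_1,\ldots,f_k))$.

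Neither step presents any real obstacle; this is essentially a bookkeeping exercise with $p$-adic valuations. The only subtlety worth highlighting is that in (2) one cannot hope to improve the bound from $\lcm(1,\ldots,\lfloor h\rfloor)$ down to $\lfloor h\rfloor$, since distinct primes in different indices may each contribute their own factor up to $h$ to $L/G$, and these factors combine multiplicatively rather than being absorbed into a single $f_{i}/g_{i}$.
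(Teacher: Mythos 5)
Your proof is correct. For part (1), both you and the paper establish divisibility $(g//a)\mid(f//a)$ by the same $\nu_p$-monotonicity observation; where you differ is in bounding the quotient. The paper computes $\frac{f//a}{g//a}=\frac{f}{g}\cdot\frac{\gcd(g,a)}{\gcd(f,a)}\leqslant\frac{f}{g}$ algebraically, while you verify the slightly stronger statement that $(f//a)/(g//a)$ divides $f/g$ via a three-case analysis on the position of $\nu_p(a)$; both routes give the needed bound by $h$.

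For part (2), the decompositions genuinely differ. The paper writes $f_i=g_ig'_i$, proves that $\lcm(f_1,\ldots,f_k)/\lcm(g'_1,\ldots,g'_k)$ divides $\lcm(g_1,\ldots,g_k)$, and then passes from $\lcm(g'_1,\ldots,g'_k)$ to the larger $\lcm(1,\ldots,\lfloor h\rfloor)$ to obtain the stated inequality. You instead bound $L/G$ one prime at a time: choosing a witness index $i^\ast(p)$ realising $\nu_p(L)$, you get $p^{\nu_p(L/G)}\mid f_{i^\ast}/g_{i^\ast}\leqslant h$, hence each prime-power factor of $L/G$ is at most $\lfloor h\rfloor$ and so divides $\lcm(1,\ldots,\lfloor h\rfloor)$. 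Your version is a little more direct and even gives the stronger conclusion $L/G\mid\lcm(1,\ldots,\lfloor h\rfloor)$ rather than merely the inequality $L/G\leqslant\lcm(1,\ldots,\lfloor h\rfloor)$ that $\coBD$ requires; the paper's detour through $\lcm(g'_1,\ldots,g'_k)$ is not wrong, but it introduces an intermediate object that your argument avoids.
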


\begin{proof}
For statement (1): Firstly, we note that
\[
\frac{f//a}{g//a}=\frac{\frac{f}{\gcd(f,a)}}{\frac{g}{\gcd(g,a)}}=\frac{f}{g}\cdot\frac{\gcd(g,a)}{\gcd(f,a)}\leqslant\frac{f}{g}\cdot 1\leqslant h.
\]
Secondly, we show that $g//a$ divides $f//a$. Indeed, for each prime $p$, we can write $\nu_p(f)=\nu_p(g)+v_p$ where $v_p\in\IN$. Then
\[
\nu_p(g//a)=\nu_p(\frac{g}{\gcd(g,a)})=\nu_p(g)-\min\{\nu_p(g),\nu_p(a)\}
\]
and
\[
\nu_p(f//a)=\nu_p(\frac{f}{\gcd(f,a)})=\nu_p(f)-\min\{\nu_p(f),\nu_p(a)\}=\nu_p(g)+v_p-\min\{\nu_p(g)+v_p,\nu_p(a)\}.
\]
Hence the inequality $\nu_p(g//a)\leqslant\nu_p(f//a)$ is equivalent to
\[
\min\{\nu_p(g)+v_p,\nu_p(a)\}\leqslant v_p+\min\{\nu_p(g),\nu_p(a)\},
\]
which holds in view of the formula $\min\{\alpha,\beta\}+\gamma=\min\{\alpha+\gamma,\beta+\gamma\}$ and the monotonicity of $\min$ in each component. Thus $(g//a)\in\coBD_h(f//a)$.

For statement (2): It is clear that $\lcm(g_1,\ldots,g_k)$ divides $\lcm(f_1,\ldots,f_k)$, so we only need to show that their quotient is bounded from above by $\lcm(1,\ldots,\lfloor h\rfloor)$. For $i=1,\ldots,k$, let us write $f_i=g_i\cdot g'_i$, where $g'_i\leqslant h$. We claim and will prove that the quotient $\lcm(f_1,\ldots,f_k)/\lcm(g'_1,\ldots,g'_k)$ divides $\lcm(g_1,\ldots,g_k)=\lcm(\frac{f_1}{g'_1},\ldots,\frac{f_k}{g'_k})$.

Indeed, for each prime $p$,
\[
\nu_p(\frac{\lcm(f_1,\ldots,f_k)}{\lcm(g'_1,\ldots,g'_k)})=\max\{\nu_p(f_i)\mid i=1,\ldots,k\}-\max\{\nu_p(g'_i)\mid i=1,\ldots,k\},
\]
and
\[
\nu_p(\lcm(g_1,\ldots,g_k))=\max\{\nu_p(f_i)-\nu_p(g'_i)\mid i=1,\ldots,k\}.
\]
Hence our claim is equivalent to
\begin{align*}
&\max\{\nu_p(f_i)\mid i=1,\ldots,k\} \\
&\leqslant\max\{\nu_p(f_i)-\nu_p(g'_i)\mid i=1,\ldots,k\}+\max\{\nu_p(g'_j)\mid j=1,\ldots,k\}= \\
&\max\{\nu_p(f_i)-\nu_p(g'_i)+\max\{\nu_p(g'_j)\mid j=1,\ldots,k\}\mid i=1,\ldots,k\},
\end{align*}
which is clearly true, thus concluding the proof of the claim. Now the claim yields in particular that
\[
\frac{\lcm(f_1,\ldots,f_k)}{\lcm(1,\ldots,\lfloor h\rfloor)}\leqslant\frac{\lcm(f_1,\ldots,f_k)}{\lcm(g'_1,\ldots,g'_k)}\leqslant\lcm(g_1,\ldots,g_k),
\]
as required.
\end{proof}

\subsection{Some results concerning the classes \texorpdfstring{$\Hcal_{\hat{m},\hat{d},\hat{p}}$}{H(m,d,p)}}\label{subsec4P7}

Recall from the end of Subsection \ref{subsec4P5} that our next goal is to study the classes $\Hcal_{\hat{m},\hat{d},\hat{p}}$ introduced in Notation \ref{classNot}(2,3). We will do so in the form of Lemmas \ref{constantLem}, \ref{qGammaLem} and \ref{aLem} below. Before formulating and proving each of them, we introduce some more notation and give some motivation:

\begin{nottation}\label{ctClNot}
We introduce the following notation:
\begin{enumerate}
\item Let $n\in\IN^+$, and let $\psi\in\Sym(n)$. We denote by $\cl(\psi)$ the set of distinct cycle lengths of $\psi$ (where fixed points count as $1$-cycles).
\item Let $r\in\IN^+$, let $n_1,\ldots,n_r\in\IN^+$, and let $\psiv=(\psi_1,\ldots,\psi_r)\in\Sym(n_1)\times\cdots\times\Sym(n_r)$. We set $\cl(\psiv):=(\cl(\psi_1),\ldots,\cl(\psi_r))$.
\item For each finite semisimple group $H$ with $\Soc(H)=S_1^{n_1}\times\cdots\times S_r^{n_r}$ where $S_1,\ldots,S_r$ are pairwise nonisomorphic nonabelian finite simple groups and $n_1,\ldots,n_r\in\IN^+$, and for each socle coset $C$ of $H$, writing $C=\Soc(H)\alphabv\psiv$, we set $\cl(C):=\cl(\psiv)$ (note that this is independent of the choice of coset representative $\alphabv\psiv$).
\end{enumerate}
\end{nottation}

For the subsequent Notation \ref{hNot}, recall Definition \ref{sTypeDef}(1) as well as Notations \ref{gNot} and \ref{heavyNot}(3).

\begin{nottation}\label{hNot}
Let $S$ be a nonabelian finite simple group, let $\alpha\in\Aut(S)$, and let $\tau$ be an $S$-type.
\begin{enumerate}
\item We set $h(\alpha):=\frac{f(S)}{g(\alpha)}$, where $f(S)$ and $g(\alpha)$ are as in Notation \ref{gNot}.
\item We set $h(\tau):=\frac{f(S)}{g(\tau)}$, where $f(S)$ and $g(\tau)$ are as in Notation \ref{gNot}.
\end{enumerate}
Moreover, for a socle coset $C=\Soc(H)\alphabv\psiv$ in a finite semisimple group $H$ with $\Soc(H)\cong S_1^{n_1}\times\cdots\times S_r^{n_r}$, we set
\[
h(C):=\max\{h(\tau)\mid (i,\ell,\tau)\in\Adm(\psiv,\alphabv)\},
\]
which is independent of the choice of coset representative $\alphabv\psiv$. For a constant $\hat{h}\geqslant1$, we say that $C$ is \emph{$\hat{h}$-large} if and only if $h(C)>\hat{h}$, and \emph{$\hat{h}$-small} otherwise.
\end{nottation}

Note that by definition, if
\begin{itemize}
\item $\hat{h}\geqslant1$ is a constant,
\item $H$ is a finite semisimple group with $\Soc(H)=S_1^{n_1}\times\cdots\times S_r^{n_r}$ where $S_1,\ldots,S_r$ are pairwise nonisomorphic nonabelian finite simple groups and $n_1,\ldots,n_r\in\IN^+$,
\item $C=\Soc(H)\alphabv\psiv$ is a socle coset in $H$, and
\item $(i,\ell,\tau)$ is a $(\psiv,\alphabv)$-admissible triple as defined in Notation \ref{heavyNot}(2),
\end{itemize}
then $h(\tau)=1\leqslant\hat{h}$ for all admissible triples $(i,\ell,\tau)$ such that $S_i$ is alternating or sporadic (see Notation \ref{gNot}(1)). Thus the assumption that $C$ be $\hat{h}$-small gives no additional restrictions. On the other hand, if $S_i$ is neither alternating nor sporadic, then $h(\tau)\leqslant\hat{h}$ is by definition (see Notation \ref{gNot}(2)) equivalent to $g(\tau)\geqslant\frac{f(S_i)}{\hat{h}}$. This, in turn, is equivalent to the assumption that the common field or graph-field automorphism part order (note the case distinction in Notation \ref{gNot}(2)) of automorphisms of $S_i$ with $S_i$-type $\tau$ is at least $\frac{f(S_i)}{\hat{h}}$, thus \enquote{close to being maximal}.

The point behind introducing the concepts of $\hat{h}$-small and $\hat{h}$-large socle cosets is the following:

\begin{lemmma}\label{hHatLargeLem}
For each constant $c\geqslant1$, there is a constant $\hat{h}=\hat{h}(c)$ such that $\q_H(C)>c$ for every finite semisimple group $H$ and all $\hat{h}$-large socle cosets $C$ in $H$.
\end{lemmma}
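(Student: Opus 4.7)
The plan is to combine the product lower bound on $\q_H(C)$ supplied by Lemma \ref{qTildeLem}(5) with the asymptotic statement (2,a) of Lemma \ref{qTildeLem2} on cosets of finite simple groups of Lie type. First, I would write $\Soc(H)=S_1^{n_1}\times\cdots\times S_r^{n_r}$ and $C=\Soc(H)\alphabv\psiv$, and recall that Lemma \ref{qTildeLem}(5) gives
\[
\q_H(C)\;\geqslant\;\prod_{(i,\ell,\tau)\in\Adm(\psiv,\alphabv)}\q_{\Aut(S_i)}(S_i\alpha(\tau)),
\]
where each factor is at least $1$ (since $\omega_{\Aut(S_i)}(S_i\alpha(\tau))\geqslant\omicron(S_i\alpha(\tau))$). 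So it suffices to exhibit one factor in this product that can be made larger than $c$ by assuming that $C$ is $\hat{h}$-large for a sufficiently large $\hat{h}=\hat{h}(c)$.

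The assumption $h(C)>\hat{h}$ hands us a $(\psiv,\alphabv)$-admissible triple $(i_0,\ell_0,\tau_0)$ with $h(\tau_0)>\hat{h}$. Next, I would observe that $S_{i_0}$ must be of Lie type: for alternating or sporadic $S$ one has $f(S)=1$ by Notation \ref{gNot}(1), hence $h(\tau)=1/g(\tau)\leqslant 1\leqslant\hat{h}$ for any $S$-type $\tau$, contradicting $h(\tau_0)>\hat{h}$. Writing $S_{i_0}=\O^{p'}(X_{d_{i_0}}(\overline{\IF_{p_{i_0}}})_{\sigma_{i_0}})$, Notation \ref{gNot}(2) gives $f(S_{i_0})=6f(\sigma_{i_0})$, so the inequality $h(\tau_0)>\hat{h}$ unpacks to
\[
\frac{f(\sigma_{i_0})}{g(\alpha(\tau_0))}\;>\;\frac{\hat{h}}{6}.
\]

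Now I would invoke Lemma \ref{qTildeLem2}(2,a): as $\max\{p,d,f(\sigma)/g(\alpha)\}\to\infty$, the quantity $\q_{\Aut(S)}(S\alpha)\to\infty$ uniformly over all finite simple Lie type groups $S=\O^{p'}(X_d(\overline{\IF_p})_{\sigma})$ and all $\alpha\in\Aut(S)$. In particular, there exists $N=N(c)\in\IN^+$ such that $f(\sigma)/g(\alpha)\geqslant N$ forces $\q_{\Aut(S)}(S\alpha)>c$. Set $\hat{h}(c):=6N(c)$. Then any $\hat{h}(c)$-large coset $C$ supplies an admissible triple $(i_0,\ell_0,\tau_0)$ with $f(\sigma_{i_0})/g(\alpha(\tau_0))>N(c)$, so $\q_{\Aut(S_{i_0})}(S_{i_0}\alpha(\tau_0))>c$, and hence
\[
\q_H(C)\;\geqslant\;\q_{\Aut(S_{i_0})}(S_{i_0}\alpha(\tau_0))\cdot\!\!\!\prod_{\substack{(i,\ell,\tau)\in\Adm(\psiv,\alphabv)\\(i,\ell,\tau)\neq(i_0,\ell_0,\tau_0)}}\!\!\!1\;>\;c.
\]

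The argument is mostly just bookkeeping once the correct reductions are in place, and I expect no serious obstacle: the only subtlety is checking that the admissible triple responsible for $h(C)>\hat{h}$ necessarily sits in a Lie type composition factor (so that Lemma \ref{qTildeLem2}(2,a) applies), and keeping straight that the factor of $6$ in the definition $f(S)=6f(\sigma)$ from Notation \ref{gNot}(2) must be absorbed into the choice of $\hat{h}(c)$.
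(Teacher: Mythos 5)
Your proof is correct and takes essentially the same route as the paper's: both isolate an admissible triple $(i_0,\ell_0,\tau_0)$ realising $h(C)$, note $S_{i_0}$ must be of Lie type since $h(\tau)=1$ for alternating/sporadic factors, unpack $h(\tau_0)>\hat{h}$ into $f(\sigma_{i_0})/g(\alpha(\tau_0))>\hat{h}/6$, and then absorb the factor $6$ into $\hat{h}(c)$ after invoking Lemma \ref{qTildeLem2}(2,a). The only cosmetic difference is that the paper cites the inequality $\q_H(C)\geqslant\max\{\q_{\Aut(S_i)}(S_i\alpha(\tau))\mid(i,\ell,\tau)\in\Adm(\psiv,\alphabv)\}$ directly, whereas you derive it from the product bound in Lemma \ref{qTildeLem}(5) by noting each factor is at least $1$ — which is exactly how that intermediate max bound is justified in the first place.
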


\begin{proof}
Write $\Soc(H)=S_1^{n_1}\times\cdots\times S_r^{n_r}$ where $S_1,\ldots,S_r$ are pairwise nonisomorphic nonabelian finite simple groups and $n_1,\ldots,n_r\in\IN^+$. If $C$ is $\hat{h}$-large (i.e., $h(C)>\hat{h}$) for some constant $\hat{h}\geqslant1$, then
\[
h(C)=\max\{h(\tau)\mid (i,\ell,\tau)\in\Adm(\psiv,\alphabv)\}
\]
is attained at some $(\psiv,\alphabv)$-admissible triple $(i_0,\ell_0,\tau_0)$ such that $S_{i_0}$ is neither alternating nor sporadic (otherwise, $h(\tau_0)=1$ by definition). In particular, $S_{i_0}=\leftidx{^t}X_d(p^{ft})$ is of Lie type, and
\[
\hat{h}<h(\tau_0)=\frac{f(S_{i_0})}{g(\tau_0)}=\frac{6f}{g(\tau_0)},
\]
or equivalently,
\begin{equation}\label{hSixthEq}
\frac{f}{g(\tau_0)}>\frac{\hat{h}}{6}.
\end{equation}
By Lemma \ref{qTildeLem2}(2), there is a constant $h'=h'(c)\geqslant1$ such that if
\begin{equation}\label{hPrimeEq}
\frac{f}{g(\tau_0)}>h'(c),
\end{equation}
then $\q_{\Aut(S_{i_0})}(S_{i_0}\alpha(\tau_0))>c$ (see also Notation \ref{tildeNot}(1)). But in view of Formula (\ref{hSixthEq}), Formula (\ref{hPrimeEq}) can be forced to be true by setting $\hat{h}(c):=6h'(c)$, and then, applying Lemma \ref{qTildeLem}(5) (see also Notation \ref{heavyNot}),
\[
\q_H(C)\geqslant\max\{\q_{\Aut(S_i)}(S_i\alpha(\tau))\mid (i,\ell,\tau)\in\Adm(\psiv,\alphabv)\}\geqslant\q_{\Aut(S_{i_0})}(S_{i_0}\alpha(\tau_0))>c,
\]
as required.
\end{proof}

So, whenever we are in a situation where we need to show that $\q(H)>c$, Lemmas \ref{partitionLem}, \ref{mixedPartitionLem} and \ref{hHatLargeLem} imply that it is only the $\hat{h}(c)$-small socle cosets that we need to worry about. Also, the following Lemma \ref{constantLem} gives us some control over the number of element orders in $\hat{h}$-small socle cosets, which is useful with regard to the nature of the bound in Lemma \ref{mixedPartitionLem}:

\begin{lemmma}\label{constantLem}
Let $\hat{m},\hat{d},\hat{p},\hat{h}\geqslant1$. There is a constant $D=D(\hat{m},\hat{d},\hat{p},\hat{h})>0$ such that for all $H\in\Hcal_{\hat{m},\hat{d},\hat{p}}$, each union $U$ of all $\hat{h}$-small socle cosets $C$ in $H$ \emph{with a fixed $\cl$-value} satisfies $\omicron(U)\leqslant D$.
\end{lemmma}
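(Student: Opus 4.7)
The plan is to show that every element of $\Ord(U)$ has the shape $e\cdot A\cdot k$, where $e=\ord(\psiv)$ is forced by the fixed $\cl$-value, $k$ ranges over a fixed finite divisors set $K=\Div(E^\ast)$ depending only on $\hat{m},\hat{d},\hat{p},\hat{h}$, and $A$ lives in a set of size bounded in terms of $\hat{h}$ alone. To construct $E^\ast$, note that for any $S\in\Scal_{\hat{m},\hat{d},\hat{p}}$ and any $\hat{h}$-small $S$-type $\tau$, the assumption $h(\tau)\leqslant\hat{h}$ forces $g(\tau)\geqslant f(S)/\hat{h}$, so that in Lemma \ref{gAlphaLem2} the parameter $f/g(\tau)$ is at most $\hat{h}/6$. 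Together with $d\leqslant\hat{d}$, $p\leqslant\hat{p}$, $t'\leqslant 3$, this means that only finitely many auxiliary Lie type groups $\leftidx{^{t'}}X_d(p^{(f/g(\tau))t'})$ can occur, and each has bounded exponent; together with $\Exp(\Aut(S))$ for the finitely many alternating and sporadic $S$ in $\Scal_{\hat{m},\hat{d},\hat{p}}$, take $E^\ast$ to be the lcm of all these exponents. Lemma \ref{gAlphaLem2} (and, in the alternating/sporadic cases, the trivial bound $\Ord(S\alpha(\tau))\subseteq\Div(\Exp(\Aut(S)))$) then gives $\Ord(S\alpha(\tau))\subseteq g(\tau)\cdot K$ uniformly in $S$ and $\tau$.

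Next, propagate this containment through the iterated lcm construction of $\G(\psiv,\alphabv)$. By Lemma \ref{tfracLem}(2), $\Ord((S_i\alpha(\tau))^{e/\ell})=\Ord(S_i\alpha(\tau))//(e/\ell)$; since $K$ is a divisors set, Lemma \ref{ntLem2} gives
\[
\Ord((S_i\alpha(\tau))^{e/\ell})\subseteq (g(\tau)//(e/\ell))\cdot K.
\]
Two successive applications of Lemma \ref{ntLem1} (first to the tuple defining $M_{i,\ell,\tau}$, then to the tuple indexed by $\Adm(\psiv,\alphabv)$ that defines $\G$) yield $M_{i,\ell,\tau}(\psiv,\alphabv)\subseteq (g(\tau)//(e/\ell))\cdot K$ and then $\G(\psiv,\alphabv)\subseteq A(\psiv,\alphabv)\cdot K$, where
\[
A(\psiv,\alphabv):=\lcm_{(i,\ell,\tau)\in\Adm(\psiv,\alphabv)}\bigl(g(\tau)//(e/\ell)\bigr).
\]
Combining with Lemma \ref{qTildeLem}(3) gives $\Ord(C)\subseteq e\cdot A_C\cdot K$ for every socle coset $C\subseteq U$.

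Finally, control the set $\{A_C:C\subseteq U\}$. For every $\hat{h}$-small triple $(i,\ell,\tau)$, the definitions give $g(\tau)\in\coBD_{\hat{h}}(f(S_i))$, so Lemma \ref{ntLem3}(1) places each local factor $g(\tau)//(e/\ell)$ in $\coBD_{\hat{h}}(f(S_i)//(e/\ell))$, and Lemma \ref{ntLem3}(2) places $A_C$ in $\coBD_{\lcm(1,\ldots,\lfloor\hat{h}\rfloor)}(F)$, where $F:=\lcm_{i,\,\ell\in\cl(\psi_i)}(f(S_i)//(e/\ell))$ depends on $H$ and on the $\cl$-value but whose value is irrelevant beyond its mere existence. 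Since $|\coBD_c(n)|\leqslant\lfloor c\rfloor$ for any $n\in\IN^+$ and $c\geqslant 1$, the number of possible values of $A_C$ is at most $\lcm(1,\ldots,\lfloor\hat{h}\rfloor)$. Therefore
\[
\omicron(U)=|\Ord(U)|\leqslant\bigl|\{A_C:C\subseteq U\}\bigr|\cdot|K|\leqslant\lcm(1,\ldots,\lfloor\hat{h}\rfloor)\cdot\tau(E^\ast)=:D(\hat{m},\hat{d},\hat{p},\hat{h}),
\]
which is independent of $H$ and of the $\cl$-value. The main obstacle is purely bookkeeping: one must verify that the single divisors set $K=\Div(E^\ast)$ is large enough to absorb every $\Ord(S\alpha(\tau))/g(\tau)$ that can arise, and that the manipulations with $\lcm$, $//$ and $\coBD$ compose correctly through the two nested lcm's in the definition of $\G(\psiv,\alphabv)$; no new structural input on simple groups is needed.
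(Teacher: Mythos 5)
Your argument is correct and follows essentially the same route as the paper: construct a fixed divisors set absorbing the auxiliary $\Inndiag$ element orders (you use $\Div$ of an lcm of exponents; the paper closes a union of $\Ord$-sets under $\lcm$), then push the containment $\Ord(S_i\alpha(\tau))\subseteq g(\tau)\cdot K$ through Lemma \ref{qTildeLem}(3) via Lemmas \ref{tfracLem}, \ref{ntLem1}, \ref{ntLem2} and \ref{ntLem3}, and finally count the $\coBD$-multipliers. Two tiny points worth tidying: in Lemma \ref{gAlphaLem2} the $f$-parameter of the auxiliary group can be $tf/g(\tau)$ rather than $f/g(\tau)$, so you should allow it up to $\hat h/2$ (not $\hat h/6$); and $E^\ast$ must absorb $\Exp(\Inndiag(\cdot))$ of the auxiliary groups, not just the exponents of the simple groups themselves --- neither issue affects finiteness of the list, and as a bonus your bookkeeping, by keeping the exact multiplier $g(\tau)//(e/\ell)$ per admissible triple rather than coarsening to the whole $\coBD_{\hat h}$-set at the start, saves one iteration of $\Psi$ in the final constant compared with the paper's $\Psi(\Psi(\lfloor\hat h\rfloor))\cdot|N|$.
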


\begin{proof}
Let $H\in\Hcal_{\hat{m},\hat{d},\hat{p}}$, say with $\Soc(H)=S_1^{n_1}\times\cdots\times S_r^{n_r}$ where $S_1,\ldots,S_r$ are pairwise nonisomorphic nonabelian finite simple groups and $n_1,\ldots,n_r\in\IN^+$. Denote by $N=N(\hat{m},\hat{d},\hat{p},\hat{h})\subseteq\IN^+$ the closure under the binary operation $\lcm$ of the union of the sets $\Ord(G)$, where $G$ ranges over the following (finitely many) finite groups:
\begin{itemize}
\item the automorphism groups of the sporadic nonabelian finite simple groups,
\item the groups $\Aut(\Alt(m))$ where $5\leqslant m\leqslant\hat{m}$, and
\item the inner-diagonal automorphism groups of the finite simple groups of Lie type $\leftidx{^t}X_d(p^{ft})$ where $d\leqslant\hat{d}$, $p\leqslant\hat{p}$ and $f\leqslant\hat{h}$.
\end{itemize}
Note that $N$ is a divisors set. Consider any fixed $\hat{h}$-small socle coset $C=\Soc(H)\alphabv\psiv$. Finally, fix also a $(\psiv,\alphabv)$-admissible triple $(i,\ell,\tau)$ as defined in Notation \ref{heavyNot}(2). The proof idea is to exhibit a superset for $\Ord(C)$ which only depends on $\cl(C)$ (see Notation \ref{ctClNot}(1,3)), and for this, we will use Lemma \ref{qTildeLem}(3), which gives us an explicit description of $\Ord(C)$ in general, and we will work \enquote{bottom-up}, exhibiting suitable supersets for sets of gradually increasing complexity which occur in the construction of $\Ord(C)$.

We start by claiming that
\begin{equation}\label{cobdEq}
\Ord(S_i\alpha(\tau))\subseteq\coBD_{\hat{h}}(f(S_i))\cdot N,
\end{equation}
where $f(S_i)$ is as in Notation \ref{gNot} and $\coBD_{\hat{h}}(f(S_i))$ is the set of divisors $g$ of $f(S_i)$ such that $f(S_i)/g\leqslant\hat{h}$, as defined in Notation \ref{coBDNot}. Let us argue why Formula (\ref{cobdEq}) holds. On the one hand, if $S_i$ is sporadic or alternating, then by definition (see Notation \ref{gNot}(1)), $f(S_i)=1$, and thus
\[
\coBD_{\hat{h}}(f(S_i))=\coBD_{\hat{h}}(1)=\{1\},
\]
while also
\[
\Ord(S_i\alpha(\tau))\subseteq\Ord(\Aut(S_i))\subseteq N=\{1\}\cdot N=\coBD_{\hat{h}}(f(S_i))\cdot N
\]
by the definition of $N$. On the other hand, if $S_i$ is neither alternating nor sporadic, then $S_i=\leftidx{^t}X_d(p^{ft})$ is of Lie type with $d\leqslant\hat{d}$ and $p\leqslant\hat{p}$. By Lemma \ref{gAlphaLem2}, the order of each element of $S_i\alpha(\tau)$ is of the form $g(\tau)\cdot o$, where $o$ is an element order in a group of the form $\Inndiag(\leftidx{^{t'}}X_d(p^{(uf/g(\tau))t'}))$ for some $t'\in\{1,2,3\}$ and some $u\in\{1,t\}$. Now by assumption,
\[
\hat{h}\geqslant\frac{f(S_i)}{g(\tau)}
,
\]
and thus both $o\in N$ by definition of $N$ and $g(\tau)\in\coBD_{\hat{h}}(f(S_i))$, which concludes the proof of Formula (\ref{cobdEq}).

Formula (\ref{cobdEq}) provides us with a superset for $\Ord(S_i\alpha(\tau))$. Consider next the set $\Ord((S_i\alpha(\tau))^{\ord(\psiv)/\ell})$, of all orders of $(\ord(\psiv)/\ell)$-th powers of elements of the coset $S_i\alpha(\tau)$ (recall that we are carrying out our arguments for a fixed $(\psiv,\alphabv)$-admissible triple $(i,\ell,\tau)$). Using Lemma \ref{tfracLem}(2) and Formula (\ref{cobdEq}), we have
\begin{align*}
\Ord((S_i\alpha(\tau))^{\ord(\psiv)/\ell}) &=\Ord((S_i\alpha(\tau))//(\ord(\psiv/\ell)) \\
&\subseteq(\coBD_{\hat{h}}(f(S_i))\cdot N)//(\ord(\psiv/\ell)).
\end{align*}
Fix an $o\in\coBD_{\hat{h}}(f(S_i))\cdot N$, and write $o=nf'$ with $n\in N$ and $f'\in\coBD_{\hat{h}}(f(S_i))$. In view of Lemma \ref{ntLem2} (and using that $N$ is a divisors set), we have
\[
o//(\ord(\psiv)/\ell)=(nf')//(\ord(\psiv/\ell))\in(f'//(\ord(\psiv)/\ell))N,
\]
and by an application of Lemma \ref{ntLem3}(1),
\[
f'//(\ord(\psiv)/\ell)\in\coBD_{\hat{h}}(f(S_i)//(\ord(\psiv)/\ell)).
\]
Thus we just proved that
\begin{equation}\label{cobdEq2}
\Ord((S_i\alpha(\tau))^{\ord(\psiv)/\ell})\subseteq\coBD_{\hat{h}}(f(S_i)//(\ord(\psiv)/\ell))\cdot N.
\end{equation}
Recall, again, that we are working with a fixed $(\alphabv,\psiv)$-admissible triple $(i,\ell,\tau)$, and also recall the notation $M_{i,\ell,\tau}(\psiv,\alphabv)$ from Notation \ref{heavyNot}(4(e)), which is by definition just the set of all positive integers that can be written as a least common multiple over tuples of elements of $\Ord((S_i\alpha(\tau))^{\ord(\psiv)/\ell})$ of length $\Gamma_{i,\ell,\tau}(\psiv,\alphabv)$. By Formula (\ref{cobdEq2}) and Lemma \ref{ntLem1}, each element of $M_{i,\ell,\tau}(\psiv,\alphabv)$ can be written as the product of
\begin{itemize}
\item an element of $N$, with
\item a least common multiple over some tuple of elements from the set
\[
\coBD_{\hat{h}}(f(S_i)//(\ord(\psiv/\ell))),
\]
\end{itemize}
and by Lemma \ref{ntLem3}(2), the said least common multiple always lies in the set
\[
\coBD_{\Psi(\lfloor\hat{h}\rfloor)}(f(S_i)//(\ord(\psiv)/\ell)),
\]
where $\Psi(k):=\lcm(1,\ldots,k)$ (so that $\log{\Psi(k)}$ is the second Chebyshev function). So we have the following:
\begin{equation}\label{cobdEq3}
M_{i,\ell,\tau}(\psiv,\alphabv)\subseteq\coBD_{\Psi(\lfloor\hat{h}\rfloor)}(f(S_i)//(\ord(\psiv)/\ell))\cdot N.
\end{equation}
At last, we are ready to exhibit a suitable overset for the set $\Ord(C)$ of element orders in our socle coset $C=\Soc(H)\alphabv\psiv$. By Lemma \ref{qTildeLem}(3), each element of $\Ord(C)$ can be written as the product of
\begin{itemize}
\item the number $\ord(\psiv)$, with
\item a least common multiple over a family of numbers indexed by the $(\psiv,\alphabv)$-admissible triples $(i,\ell,\tau)$, and whose entry corresponding to $(i,\ell,\tau)$ is some choice of element from $M_{i,\ell,\tau}(\psiv,\alphabv)$.
\end{itemize}
Using this information as well as Formula (\ref{cobdEq3}) and Lemmas \ref{ntLem1} and \ref{ntLem3}(2), one can conclude (analogously to how Formula (\ref{cobdEq3}) was derived from Formula (\ref{cobdEq2})) that
\begin{equation}\label{cobdEq4}
\Ord(C)\subseteq\ord(\psiv)\cdot\coBD_{\Psi(\Psi(\lfloor\hat{h}\rfloor))}(\lcm\{f(S_i)//(\ord(\psiv)/\ell)\mid (i,\ell,\tau)\in\Adm(\psiv,\alphabv)\})\cdot N.
\end{equation}
Note that the superset in Formula (\ref{cobdEq4}) depends on $\hat{m}$, $\hat{d}$, $\hat{p}$, $\hat{h}$ and $\cl(C)$ (see Notation \ref{ctClNot}(1,3)), but not on the exact choice of $C$. It follows that
\[
D(\hat{m},\hat{d},\hat{p},\hat{h}):=\Psi(\Psi(\lfloor\hat{h}\rfloor))\cdot|N(\hat{m},\hat{d},\hat{p},\hat{h})|
\]
is a suitable choice for the constant in Lemma \ref{constantLem}.
\end{proof}

Recall the notation $\Gamma(C)$, where $C$ is a socle coset in the finite semisimple group $H$, from the paragraph after Notation \ref{gammaNot}, which just denotes the total number of cycles involved in the permutation tuple $\psiv$ in any coset representative $\alphabv\psiv$ for $C$ in $H$. Apart from information on element orders in $\hat{h}$-small socle cosets as furnished by Lemma \ref{constantLem}, we will also need one more tool to show that a given socle coset has large $\q_H$-value, namely the following:

\begin{lemmma}\label{qGammaLem}
Let $\hat{m},\hat{d},\hat{p}\geqslant1$. There is a constant $D'=D'(\hat{m},\hat{d},\hat{p})>0$ such that for all $H\in\Hcal_{\hat{m},\hat{d},\hat{p}}$ and all socle cosets $C$ of $H$, one has $\q_H(C)\geqslant\frac{1}{D'(\hat{m},\hat{d},\hat{p})}\Gamma(C)$.
\end{lemmma}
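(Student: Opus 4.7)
Given a coset representative $\alphabv\psiv$ for $C$, the plan is to apply Lemma \ref{qTildeLem} to extract a lower bound on $\omega_H(C)$ and an upper bound on $\omicron(C)$, and then to compare the two to produce the required ratio bound.

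For the lower bound on $\omega_H(C)$, the plan is to combine Lemma \ref{qTildeLem}(2) with Lemma \ref{omegaTildeLem}. Since $\omega(\tau)\geqslant 2$ for every $S_i$-type $\tau$, each factor $\Omega_{i,\ell,\tau}(\psiv,\alphabv)=\binom{\Gamma_{i,\ell,\tau}(\psiv,\alphabv)+\omega(\tau)-1}{\omega(\tau)-1}$ is at least $\Gamma_{i,\ell,\tau}(\psiv,\alphabv)+1$. Invoking the elementary inequality $\prod_j(1+x_j)\geqslant 1+\sum_j x_j$ for nonnegative $x_j$, this yields
\[
\omega_H(C)\geqslant\prod_{(i,\ell,\tau)\in\Adm(\psiv,\alphabv)}(\Gamma_{i,\ell,\tau}(\psiv,\alphabv)+1)\geqslant 1+\Gamma(C),
\]
and this lower bound is linear in $\Gamma(C)$ with no further dependence on $H$.

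For the upper bound on $\omicron(C)$, Lemma \ref{qTildeLem}(3,4) gives $\omicron(C)=|\G(\psiv,\alphabv)|$, which is at most the cardinality of the divisor sublattice of $\IN^+$ generated by $\bigcup_{(i,\ell,\tau)\in\Adm}\Ord((S_i\alpha(\tau))^{\ord(\psiv)/\ell})$. The aim is to bound this quantity uniformly in $H\in\Hcal_{\hat{m},\hat{d},\hat{p}}$. For composition factors $S_i$ from the finite subfamilies of $\Scal_{\hat{m},\hat{d},\hat{p}}$ (sporadic, and $\Alt(m)$ with $m\leqslant\hat{m}$) the bound is immediate from the finiteness of the subfamily. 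For Lie-type $S_i=\leftidx{^t}X_d(p^{ft})$ with $d\leqslant\hat{d}$ and $p\leqslant\hat{p}$ but arbitrary $f$, Lemma \ref{gAlphaLem2} factors orders in $S_i\alpha(\tau)$ as $g(\tau)$ times orders in an auxiliary Lie-type group of the form $\Inndiag(\leftidx{^{t'}}X_d(p^{(f/g(\tau))t'}))$, and Lemma \ref{ntLem1} shows this translates into a uniform rescaling of the corresponding divisor lattice by $g(\tau)$; this is a variant of the bookkeeping argument already used in the proof of Lemma \ref{constantLem}.

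The main obstacle is the regime where $f/g(\tau)$ is unbounded (equivalently, the coset is $\hat{h}$-large in the sense of Notation \ref{hNot}): there the auxiliary Lie-type group still varies with $f$, so the trivial bound on its divisor lattice is no longer uniform. The key to overcoming this is that, by Lemma \ref{qTildeLem2}(2,b), the ratio $\log\omega(\tau)/\log(\omicron(\tau)+1)$ tends to infinity in precisely this regime, so the binomial factor $\binom{\Gamma_{i,\ell,\tau}+\omega(\tau)-1}{\omega(\tau)-1}$ appearing in $\omega_H(C)$ absorbs the excess growth of the divisor lattice contributed to $\omicron(C)$. Balancing the $\hat{h}$-small and $\hat{h}$-large regimes against each other yields a single uniform constant $D''=D''(\hat{m},\hat{d},\hat{p})$ with $\omega_H(C)/\omicron(C)\geqslant(1+\Gamma(C))/D''\geqslant\Gamma(C)/D''$, and setting $D':=D''$ completes the proof.
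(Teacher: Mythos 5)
Your proposal gets the overall shape right and identifies the correct ingredients — Lemma \ref{qTildeLem} for the $\Omega$-lower bound on $\omega_H(C)$, Lemma \ref{omegaTildeLem} to force $\Omega_{i,\ell,\tau}\geqslant\Gamma_{i,\ell,\tau}+1$, the finiteness of the non-Lie members of $\Scal_{\hat{m},\hat{d},\hat{p}}$, Lemma \ref{gAlphaLem2} and a Lemma-\ref{constantLem}-style bookkeeping for bounded $f/g(\tau)$, and Lemma \ref{qTildeLem2}(2,b) for the unbounded regime. This mirrors the paper.

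However, the argument as written has a genuine gap at the crucial point, and it is a structural one. Having first committed to the linear lower bound $\omega_H(C)\geqslant\prod_{(i,\ell,\tau)}(\Gamma_{i,\ell,\tau}+1)\geqslant 1+\Gamma(C)$, you then attempt to use the same binomial factors $\Omega_{i,\ell,\tau}$ a second time to ``absorb'' the unbounded contribution of the $\hat{h}$-large triples to $\omicron(C)$. You cannot use the factors twice: once you have cashed in $\Omega_{i,\ell,\tau}\geqslant\Gamma_{i,\ell,\tau}+1$, there is nothing left to absorb the growth of $\Omicron_{i,\ell,\tau}$, which for an $\hat{h}$-large triple is of order $\Gamma_{i,\ell,\tau}^{\omicron(\tau)-1}$. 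The paper avoids this by never taking the global lower bound $1+\Gamma(C)$ on $\omega_H(C)$: it partitions $\Adm(\psiv,\alphabv)$ into $\Adm_{-}$ (the $\hat{h}$-small triples) and $\Adm_{+}$ (the $\hat{h}$-large ones), factors $\omicron(C)\leqslant |M_-|\cdot\prod_{\Adm_+}\Omicron_{i,\ell,\tau}$ with $|M_-|\leqslant D(\hat{m},\hat{d},\hat{p},\hat{h})$ from Lemma \ref{constantLem}, and then estimates, \emph{separately for each} $\hat{h}$-large triple, the quotient $\Omega_{i,\ell,\tau}/\Omicron_{i,\ell,\tau}\geqslant\Gamma_{i,\ell,\tau}+1$ (Formula \ref{starEq}). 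That quotient bound is not a formal consequence of Lemma \ref{qTildeLem2}(2,b) saying ``$\log\omega(\tau)/\log(\omicron(\tau)+1)\to\infty$''; it requires choosing $\hat{h}$ so that $\omega(\tau)\geqslant\max\{\omicron(\tau)^4,4\}$ on $\Adm_+$ and then a two-case combinatorial argument (depending on whether $\Gamma_{i,\ell,\tau}\leqslant\omega(\tau)-1$ or $\Gamma_{i,\ell,\tau}\geqslant\omega(\tau)$), which is roughly half a page of estimates in the paper. Without the explicit $M_\pm$ decomposition and without proving Formula \ref{starEq}, the ``balancing'' step is only a heuristic, and the claimed uniform constant $D''$ is not established. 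To repair the argument, drop the opening step that reduces to $\omega_H(C)\geqslant 1+\Gamma(C)$, perform the $\Adm_\pm$ split first, and then supply the missing combinatorial estimate for $\Adm_+$.
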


\begin{proof}
Again, let $H\in\Hcal_{\hat{m},\hat{d},\hat{p}}$, say with $\Soc(H)=S_1^{n_1}\times\cdots\times S_r^{n_r}$ where $S_1,\ldots,S_r$ are pairwise nonisomorphic nonabelian finite simple groups and $n_1,\ldots,n_r\in\IN^+$. Write $C=\Soc(H)\alphabv\psiv$. Assume that for a given $\hat{h}\geqslant1$, we partition the set $\Adm(\psiv,\alphabv)$ of $(\psiv,\alphabv)$-admissible triples $(i,\ell,\tau)$ (see Notation \ref{heavyNot}(2,3)) into two subsets (recall Notation \ref{gNot}):
\begin{itemize}
\item $\Adm_-(\psiv,\alphabv):=\{(i,\ell,\tau)\in\Adm(\psiv,\alphabv)\mid\frac{f(S_i)}{g(\tau)}\leqslant\hat{h}\}$ and
\item $\Adm_+(\psiv,\alphabv):=\{(i,\ell,\tau)\in\Adm(\psiv,\alphabv)\mid\frac{f(S_i)}{g(\tau)}>\hat{h}\}$.
\end{itemize}
For $\epsilon\in\{+,-\}$, set (see Notations \ref{lambdaNot} and \ref{heavyNot}(4(d)))
\[
M_{\epsilon}:=\Lambda((M_{i,\ell,\tau}(\psiv,\alphabv))_{(i,\ell,\tau)\in\Adm_{\epsilon}(\psiv,\alphabv)}).
\]
Let $D(\hat{m},\hat{d},\hat{p},\hat{h})$ be as in Lemma \ref{constantLem}. We first show that
\begin{equation}\label{mMinusEq}
|M_-|\leqslant D(\hat{m},\hat{d},\hat{p},\hat{h}),
\end{equation}
as follows: By omitting all coordinates belonging to an $(i,\ell,\tau)$-cycle of $(\psiv,\alphabv)$ where $(i,\ell,\tau)\in\Adm_+(\psiv,\alphabv)$, we get a (size-wise) smaller socle coset $\tilde{C}=\Soc(\tilde{H})\tilde{\alphabv}\tilde{\psiv}$ in a smaller finite semisimple group $\tilde{H}\in\Hcal_{\hat{m},\hat{d},\hat{p}}$, and $\tilde{C}$ is $\hat{h}$-small. We assume that the isomorphism types of nonabelian finite simple factors in $\Soc(\tilde{H})$ are labelled by the same indices $i\in\{1,\ldots,r\}$ as in $\Soc(H)$ above (in particular, the set of all such indices is not necessarily an initial segment of $\IN^+$). This notational convention has the advantage that we can write
\[
\Adm(\tilde{\psiv},\tilde{\alphabv})=\Adm_-(\psiv,\alphabv).
\]
Note that for each fixed $(i,\ell,\tau)\in\Adm(\psiv,\alphabv)$, we are either omitting or keeping all $(i,\ell,\tau)$-cycles of $(\psiv,\alphabv)$ in the above construction of $\tilde{C}$, and so for all $(i,\ell,\tau)\in\Adm_-(\psiv,\alphabv)$,
\[
\Gamma_{i,\ell,\tau}(\psiv,\alphabv)=\Gamma_{i,\ell,\tau}(\tilde{\psiv},\tilde{\alphabv}).
\]
Now by definition, $\F_{i,\ell,\tau}(\tilde{\psiv},\tilde{\alphabv})$ is a constant tuple of length $\Gamma_{i,\ell,\tau}(\tilde{\psiv},\tilde{\alphabv})$ whose entries are equal to $\Ord((S_i\alpha(\tau))^{\ord(\tilde{\psiv})/\ell})$. On the other hand, using Lemma \ref{tfracLem} (see also Notation \ref{tfracNot}), $\F_{i,\ell,\tau}(\psiv,\alphabv)$ is a constant tuple of length $\Gamma_{i,\ell,\tau}(\psiv,\alphabv)$ whose entries are equal to
\begin{align*}
\Ord((S_i\alpha(\tau))^{\ord(\psiv)/\ell}) \\
&=\Ord(((S_i\alpha(\tau))^{\ord(\tilde{\psiv})/\ell})^{\ord(\psiv)/\ord(\tilde{\psiv})}) \\
&=\Ord((S_i\alpha(\tau))^{\ord(\tilde{\psiv})/\ell})//(\ord(\psiv)/\ord(\tilde{\psiv})).
\end{align*}
By the definitions of $M_{i,\ell,\tau}(\psiv,\alphabv)$ and $M_{i,\ell,\tau}(\tilde{\psiv},\tilde{\alphabv})$ as well as Lemma \ref{ntLem4}, it now follows that
\[
M_{i,\ell,\tau}(\psiv,\alphabv)=M_{i,\ell,\tau}(\tilde{\psiv},\tilde{\alphabv})//(\ord(\psiv)/\ord(\tilde{\psiv})).
\]
Hence, taking the least common multiple over all $(i,\ell,\tau)\in\Adm_-(\psiv,\alphabv)$ and applying Lemma \ref{ntLem4} again,
\[
M_-=\G(\tilde{\psiv},\tilde{\alphabv})//(\ord(\vec(\psi))/\ord(\tilde{\psiv})),
\]
whence, by Lemmas \ref{qTildeLem}(3) and \ref{constantLem}, applied to the $\hat{h}$-small socle coset $\tilde{C}$ in $\tilde{H}$,
\[
|M_-|\leqslant|\G(\tilde{\psiv},\tilde{\alphabv})|=\omicron(\tilde{C})\leqslant D(\hat{m},\hat{d},\hat{p},\hat{h}),
\]
as asserted above.

Now that we have shown Formula (\ref{mMinusEq}), we note that by Lemma \ref{qTildeLem}(3), applied to $C$ in $H$,
\[
\omicron(C)=|\G(\psiv,\alphabv)|=\lambda((M_+,M_-))\leqslant|M_+|\cdot|M_-|\leqslant D(\hat{m},\hat{d},\hat{p},\hat{h})\cdot\prod_{(i,\ell,\tau)\in\Adm_+(\psiv,\alphabv)}{\Omicron_{i,\ell,\tau}(\psiv,\alphabv)},
\]
and therefore (see Notation \ref{heavyNot}(4(b,c))), using Lemma \ref{qTildeLem}(2),
\[
\q_H(C)\geqslant D(\hat{m},\hat{d},\hat{p},\hat{h})^{-1}\cdot\prod_{(i,\ell,\tau)\in\Adm_-(\psiv,\alphabv)}{\Omega_{i,\ell,\tau}(\psiv,\alphabv)}\cdot\prod_{(i,\ell,\tau)\in\Adm_+(\psiv,\alphabv)}{\frac{\Omega_{i,\ell,\tau}(\psiv,\alphabv)}{\Omicron_{i,\ell,\tau}(\psiv,\alphabv)}}.
\]
Now note that for each $(i,\ell,\tau)\in\Adm(\psiv,\alphabv)$ and using Lemma \ref{omegaTildeLem} as well as Formula (\ref{omegailtEq}), we have that
\[
\Omega_{i,\ell,\tau}(\psiv,\alphabv)={\Gamma_{i,\ell,\tau}(\psiv,\alphabv)+\omega(\tau)-1 \choose \omega(\tau)-1}\geqslant{\Gamma_{i,\ell,\tau}(\psiv,\alphabv)+2-1 \choose 2-1}=\Gamma_{i,\ell,\tau}(\psiv,\alphabv)+1.
\]
Our goal will be to show that if $\hat{h}$ is chosen large enough (relative to $\hat{m}$, $\hat{d}$ and $\hat{p}$), then
\begin{equation}\label{starEq}
\frac{\Omega_{i,\ell,\tau}(\psiv,\alphabv)}{\Omicron_{i,\ell,\tau}(\psiv,\alphabv)}\geqslant\Gamma_{i,\ell,\tau}(\psiv,\alphabv)+1,
\end{equation}
for all $(i,\ell,\tau)\in\Adm_+(\psiv,\alphabv)$, so that then
\begin{align*}
\q_H(C)&\geqslant D(\hat{m},\hat{d},\hat{p},\hat{h})^{-1}\cdot\prod_{(i,\ell,\tau)\in\Adm_-(\psiv,\alphabv)}{\Omega_{i,\ell,\tau}(\psiv,\alphabv)}\cdot\prod_{(i,\ell,\tau)\in\Adm_+(\psiv,\alphabv)}{\frac{\Omega_{i,\ell,\tau}(\psiv,\alphabv)}{\Omicron_{i,\ell,\tau}(\psiv,\alphabv)}} \\
&\geqslant D(\hat{m},\hat{d},\hat{p},\hat{h})^{-1}\cdot\prod_{(i,\ell,\tau)\in\Adm(\psiv,\alphabv)}{(\Gamma_{i,\ell,\tau}(\psiv,\overline{\vec{\alpha}})+1)} \\
&\geqslant D(\hat{m},\hat{d},\hat{p},\hat{h})^{-1}\cdot\sum_{(i,\ell,\tau)\in\Adm(\psiv,\alphabv)}{(\Gamma_{i,\ell,\tau}(\psiv,\overline{\vec{\alpha}})+1)} \\
&\geqslant D(\hat{m},\hat{d},\hat{p},\hat{h})^{-1}\cdot\Gamma(\psiv)=D(\hat{m},\hat{d},\hat{p},\hat{h})^{-1}\cdot\Gamma(C),
\end{align*}
as asserted. It remains to prove our claim that Formula (\ref{starEq}) can be made true for sufficiently large $\hat{h}$. Assume that $\hat{h}$ has been chosen so large that for all $S\in\Scal_{\hat{m},\hat{d},\hat{p}}$ and all $S$-types $\tau$ with $\frac{f(S)}{g(\tau)}>\hat{h}$, one has $\omega(\tau)\geqslant\max\{\omicron(\tau)^4,4\}$ (which is possible by Lemma \ref{qTildeLem2}). Then let $(i,\ell,\tau)\in\Adm_+(\psiv,\alphabv)$. We make a case distinction.
\begin{enumerate}
\item Case: $\Gamma_{i,\ell,\tau}(\psiv,\alphabv)\leqslant\omega(\tau)-1$. Then
\begin{align*}
&\frac{\Omega_{i,\ell,\tau}(\psiv,\alphabv)}{\Omicron_{i,\ell,\tau}(\psiv,\alphabv)}\geqslant\frac{{{\Gamma_{i,\ell,\tau}(\psiv,\alphabv)+\omega(\tau)-1} \choose {\omega(\tau)-1}}}{{{\Gamma_{i,\ell,\tau}(\psiv,\alphabv)+\omicron(\tau)-1} \choose {\omicron(\tau)-1}}}=\frac{\frac{(\Gamma_{i,\ell,\tau}(\psiv,\alphabv)+\omega(\tau)-1)!}{(\omega(\tau)-1)!\Gamma_{i,\ell,\tau}(\psiv,\alphabv)!}}{\frac{(\Gamma_{i,\ell,\tau}(\psiv,\alphabv)+\omicron(\tau)-1)!}{(\omicron(\tau)-1)!\Gamma_{i,\ell,\tau}(\psiv,\alphabv)!}}= \\
&\frac{(\Gamma_{i,\ell,\tau}(\psiv,\alphabv)+\omega(\tau)-1)!\cdot(\omicron(\tau)-1)!}{(\Gamma_{i,\ell,\tau}(\psiv,\alphabv)+\omicron(\tau)-1)!\cdot(\omega(\tau)-1)!}=\frac{\omega(\tau)}{\omicron(\tau)}\cdot\frac{\omega(\tau)+1}{\omicron(\tau)+1}\cdot\cdots\cdot\frac{\omega(\tau)+\Gamma_{i,\ell,\tau}(\psiv,\alphabv)-1}{\omicron(\tau)+\Gamma_{i,\ell,\tau}(\psiv,\alphabv)-1}.
\end{align*}
If $\Gamma_{i,\ell,\tau}(\psiv,\alphabv)\leqslant3$, then that last product of fractions is bounded from below by
\[
(\frac{\omega(\tau)+2}{\omicron(\tau)+2})^{\Gamma_{i,\ell,\tau}(\psiv,\alphabv)}\geqslant 2^{\Gamma_{i,\ell,\tau}(\psiv,\alphabv)}\geqslant\Gamma_{i,\ell,\tau}(\psiv,\alphabv)+1,
\]
where the first inequality follows from the fact that $\omega(\tau)\geqslant2\omicron(\tau)+2$, which can be deduced from our assumption $\omega(\tau)\geqslant\max\{\omicron(\tau)^4,4\}$. And if $4\leqslant\Gamma_{i,\ell,\tau}(\psiv,\alphabv)\leqslant\omega(\tau)-1$, then the product of fractions is bounded from below by
\[
(\frac{2\omega(\tau)-2}{\omicron(\tau)+\omega(\tau)-2})^{\Gamma_{i,\ell,\tau}(\psiv,\alphabv)}\geqslant 1.5^{\Gamma_{i,\ell,\tau}(\psiv,\alphabv)}\geqslant\Gamma_{i,\ell,\tau}(\psiv,\alphabv)+1,
\]
where the first inequality follows from $\omega(\tau)\geqslant3\omicron(\tau)-2$, which is another consequence of our assumption $\omega(\tau)\geqslant\max\{\omicron(\tau)^4,4\}$. This concludes the proof of Formula (\ref{starEq}) in case $\Gamma_{i,\ell,\tau}(\psiv,\alphabv)\leqslant\omega(\tau)-1$.
\item Case: $\Gamma_{i,\ell,\tau}(\psiv,\alphabv)\geqslant\omega(\tau)$. In this case, we use the trivial (by definition) upper bound $\Omicron_{i,\ell,\tau}(\psiv,\alphabv)\leqslant2^{\omicron(\tau)}$. If $\omicron(\tau)\leqslant2$, then this yields
\begin{align*}
&\frac{\Omega_{i,\ell,\tau}(\psiv,\alphabv)}{\Omicron_{i,\ell,\tau}(\psiv,\alphabv)}\geqslant\frac{{\Gamma_{i,\ell,\tau}(\psiv,\alphabv)+\omega(\tau)-1 \choose \omega(\tau)-1}}{4}\geqslant\frac{{\Gamma_{i,\ell,\tau}(\psiv,\alphabv)+2 \choose 2}}{4}= \\
&\frac{1}{8}(\Gamma_{i,\ell,\tau}(\psiv,\alphabv)+2)(\Gamma_{i,\ell,\tau}(\psiv,\alphabv)+1)\geqslant\Gamma_{i,\ell,\tau}(\psiv,\alphabv)+1,
\end{align*}
where the second inequality uses that $\omega(\tau)\geqslant4>3$. If $\omicron(\tau)\geqslant3$, then we have the following:
\begin{align*}
&\frac{\Omega_{i,\ell,\tau}(\psiv,\alphabv)}{\Omicron_{i,\ell,\tau}(\psiv,\alphabv)}\geqslant\frac{{\Gamma_{i,\ell,\tau}(\psiv,\alphabv)+\omega(\tau)-1 \choose \omega(\tau)-1}}{2^{\omicron(\tau)}}\geqslant\frac{{\Gamma_{i,\ell,\tau}(\psiv,\alphabv)+\omicron(\tau) \choose \omicron(\tau)}}{2^{\omicron(\tau)}}\geqslant\frac{(\frac{\Gamma_{i,\ell,\tau}(\psiv,\alphabv)+\omicron(\tau)}{\omicron(\tau)})^{\omicron(\tau)}}{2^{\omicron(\tau)}}= \\
&(\frac{\Gamma_{i,\ell,\tau}(\psiv,\alphabv)+\omicron(\tau)}{2\omicron(\tau)})^{\omicron(\tau)}\geqslant(\frac{\Gamma_{i,\ell,\tau}(\psiv,\alphabv)}{\omicron(\tau)^2})^{\omicron(\tau)}\geqslant\Gamma_{i,\ell,\tau}(\psiv,\alphabv)^{\omicron(\tau)/2}\geqslant\Gamma_{i,\ell,\tau}(\psiv,\alphabv)+1.
\end{align*}
Here, the last inequality in the first line is by the binomial coefficient bound ${n \choose k} \geqslant (\frac{n}{k})^k$, see e.g.~\cite[Formula (2), p.~2]{Das}. Moreover, the second inequality in the second line follows from the observation that $\Gamma_{i,\ell,\tau}(\psiv,\alphabv)\geqslant\omega(\tau)\geqslant\omicron(\tau)^4$, and thus $\omicron(\tau)^2\leqslant\Gamma_{i,\ell,\tau}(\psiv,\alphabv)^{1/2}$. Finally, the last inequality in the second line uses that $\omicron(\tau)\geqslant 3$ and $\Gamma_{i,\ell,\tau}(\psiv,\alphabv)\geqslant\omega(\tau)\geqslant4$. This concludes the proof of Formula (\ref{starEq}) in case $\Gamma_{i,\ell,\tau}(\psiv,\alphabv)\geqslant\omega(\tau)$.\qedhere
\end{enumerate}
\end{proof}

Lemmas \ref{constantLem} and \ref{qGammaLem} allow us to prove a certain technical result, Lemma \ref{aLem} below, which provides lower bounds on $\q_H$-values of certain unions of cosets of $\Soc(H)$. This will be used in the proof of Lemma \ref{coupLem} below. Before we can formulate Lemma \ref{aLem}, we need some more notation and concepts.

\begin{nottation}\label{ctNot}
For a permutation $\sigma$ on a finite set, the \emph{cycle type of $\sigma$}, denoted by $\ct(\sigma)$, is defined as the multiset of cycle lengths of $\sigma$ (including $1$). Moreover, we introduce the following notation:

\begin{enumerate}
\item Let $\delta$ be a multiset of positive integers.
\begin{enumerate}
\item We denote by $\Gamma(\delta)$ the (multiset) cardinality of $\delta$. Equivalently, $\Gamma(\delta)$ is the number of cycles of any permutation on a finite set with cycle type $\delta$.
\item We denote by $\ord(\delta)$ the least common multiple of the elements of $\delta$. Equivalently, $\ord(\delta)$ is the order of any permutation on a finite set with cycle type $\delta$.
\item For $e\in\IN^+$, we denote by $\delta^e$ the multiset which for each occurrence of an element $\ell\in\delta$ contains $\gcd(\ell,e)$ occurrences of $\ell//e=\frac{\ell}{\gcd(\ell,e)}$ (see Notation \ref{tfracNot}(1)) but nothing else. Equivalently, $\delta^e$ is the cycle type of the $e$-th power of any permutation on a finite set with cycle type $\delta$.
\end{enumerate}
\item Let $\vec{\delta}=(\delta_1,\ldots,\delta_r)$ be a tuple of multisets of positive integers.
\begin{enumerate}
\item We set $\Gamma(\vec{\delta}):=\sum_{i=1}^r{\Gamma(\delta_i)}$.
\item We set $\ord(\vec{\delta}):=\lcm\{\ord(\delta_i)\mid i=1,\ldots,r\}$.
\item We set $\vec{\delta}^e:=(\delta_1^e,\ldots,\delta_r^e)$.
\end{enumerate}
\item Let $H$ be a finite semisimple group, say with $\Soc(H)=S_1^{n_1}\times\cdots\times S_r^{n_r}$ where $S_1,\ldots,S_r$ are pairwise nonisomorphic nonabelian finite simple groups and $n_1,\ldots,n_r\in\IN^+$. Moreover, let $C=\Soc(H)\alphabv\psiv$ be a socle coset in $H$, where $\psiv=(\psi_1,\ldots,\psi_r)$. Then we set $\ct(C):=(\ct(\psi_1),\ldots,\ct(\psi_r))$, called the \emph{cycle type of $C$}, which is independent of the choice of coset representative of $C$.
\end{enumerate}
\end{nottation}

To avoid confusion among readers, let us briefly recall the different usages of the notation $\Gamma(x)$ in this paper, to which Notation \ref{ctNot} has added two:
\begin{itemize}
\item When $\psi$ is a permutation on a finite set, then $\Gamma(\psi)$ denotes the number of distinct cycles of $\psi$ including fixed points, see Notation \ref{gammaNot}. This is the most basic use of this notation, from which the others are derived.
\item When $\psiv=(\psi_1,\ldots,\psi_r)$ is a tuple of permutations on finite sets, then $\Gamma(\psiv):=\sum_{i=1}^r{\Gamma(\vec{\psi})}$, as explained in the paragraph after Notation \ref{gammaNot}.
\item When $C$ is a coset of the socle $\Soc(H)\cong S_1^{n_1}\times\cdots\times S_r^{n_r}$ of a finite semisimple group $H$, then $C=\Soc(H)\alphabv\vec{\psi}$ for some tuple $\alphabv=(\vec{\alpha_1},\ldots,\vec{\alpha_r})$ with $\vec{\alpha_i}\in\Aut(S_i)^{n_i}$ for $i=1,\ldots,r$ and for a \emph{unique} permutation tuple $\psiv=(\psi_1,\ldots,\psi_r)\in\prod_{i=1}^r{\Sym(n_i)}$, so that it makes sense to set $\Gamma(C):=\Gamma(\psiv)$ in the sense of the paragraph after Notation \ref{gammaNot}, see also that same paragraph.
\item When $\delta$ is a multiset of positive integers, then $\Gamma(\delta)$ is just $\Gamma(\psi)$ (in the sense of Notation \ref{gammaNot}) for any permutation $\psi$ of cycle type $\delta$, see Notation \ref{ctNot}(1,a).
\item When $\vec{\delta}=(\delta_1,\ldots,\delta_r)$ is a tuple of multisets of positive integers, then $\Gamma(\vec{\delta}):=\sum_{i=1}^r{\Gamma(\delta_i)}$ (in the sense of Notation \ref{ctNot}(1,a)), see Notation \ref{ctNot}(2,a).
\end{itemize}

\begin{deffinition}\label{aDef}
Let $H$ be a finite semisimple group, $A>2$ a constant.
\begin{enumerate}
\item Denote by $\CT(H)$ the set of cycle types of socle cosets of $H$.
\item Say that $\vec{\delta}\in\CT(H)$ is \emph{$A$-good} if and only if $\Gamma(\vec{\delta})>A$, and otherwise, say that $\vec{\delta}$ is \emph{$A$-bad}.
\item We denote the set of $A$-good $\vec{\delta}\in\CT(H)$ by $\CT^{(A)}_{\good}(H)$, and the set of $A$-bad $\vec{\delta}\in\CT(H)$ by $\CT^{(A)}_{\bad}(H)$.
\item We distinguish further between two kinds of $\vec{\delta}\in\CT^{(A)}_{\bad}(H)$:
\begin{enumerate}
\item $\vec{\delta}$ is called \emph{$A$-bad of the first kind} if and only if $\ord(\vec{\delta})$ is divisible by some prime strictly larger than $A$, and we denote the set of such $\vec{\delta}$ by $\CT^{(A)}_{\bad,1}(H)$.
\item $\vec{\delta}$ is called \emph{$A$-bad of the second kind} if and only if all prime divisors of $\ord(\vec{\delta})$ are at most $A$, and we denote the set of such $\vec{\delta}$ by $\CT^{(A)}_{\bad,2}(H)$.
\end{enumerate}
\item We denote by $\beta^{(A)}_H$ the function $\CT^{(A)}_{\bad,1}(H)\rightarrow\CT^{(A)}_{\good}(H)$ mapping $\vec{\delta}\mapsto\vec{\delta}^{\max\{p\in\IP\,\mid\, p\text{ divides }\ord(\vec{\delta})\}}$.
\end{enumerate}
\end{deffinition}

Concerning the function $\beta^{(A)}_H$ from Definition \ref{aDef}(5), note the following two observations:
\begin{enumerate}
\item The set $\CT(H)$ is closed under taking powers in the sense of Notation \ref{ctNot}(2(c)). This is because for each $e\in\IN^+$,
\[
\ct((\Soc(H)\alphabv\psiv)^e)=\ct(\Soc(H)\alphabv\psiv)^e.
\]
\item If $\vec{\delta}=(\delta_1,\ldots,\delta_r)\in\CT(H)$ is $A$-bad of the first kind and $p_0:=\max\{p\in\IP\,\mid\, p\text{ divides }\ord(\vec{\delta})\}$, then $p_0>A$ by definition of \enquote{$A$-bad of the first kind}. Moreover, there is an $i\in\{1,\ldots,r\}$ and an $\ell\in\delta_i$ with $p_0\mid\ell$, and so by Notation \ref{ctNot}(1(c)), $\delta_i^{p_0}$ contains at least $p_0$ occurrences of the number $\ell/p_0$, whence
\begin{equation}\label{p0Eq}
\Gamma(\vec{\delta}^{p_0})\geqslant\Gamma(\delta_i^{p_0})\geqslant p_0>A.
\end{equation}
This shows that $\vec{\delta}^{p_0}=\beta^{(A)}_H(\vec{\delta})$ is not only an element of $\CT(H)$ (as follows from the first observation), but it is also $A$-good. Hence $\beta^{(A)}_H$ indeed maps into $\CT^{(A)}_{\good}(H)$, as asserted in Definition \ref{aDef}(5).
\end{enumerate}

\begin{nottation}\label{vwNot}
Let $H$ be a finite semisimple group, let $\hat{h}\geqslant1$, and let $\vec{\delta}\in\CT(H)$.
\begin{enumerate}
\item We denote by $V_{\vec{\delta}}(H)$ the union of all socle cosets in $H$ of cycle type $\vec{\delta}$.
\item We denote by $W^{(\hat{h})}_{\vec{\delta}}(H)$ the union of all $\hat{h}$-small (see Notation \ref{hNot}) socle cosets in $H$ of cycle type $\vec{\delta}$.
\end{enumerate}
\end{nottation}

We are now ready for formulating and proving Lemma \ref{aLem}:

\begin{lemmma}\label{aLem}
Let $\hat{m},\hat{d},\hat{p},\hat{h}\geqslant1$ be constants. There is a function $g_{\hat{m},\hat{d},\hat{p},\hat{h}}:\left[1,\infty\right)\rightarrow\left[1,\infty\right)$ with $g_{\hat{m},\hat{d},\hat{p},\hat{h}}(x)\to\infty$ as $x\to\infty$ such that
\[
\q_H\left(V_{\vec{\delta}}(H)\cup\bigcup\{W^{(\vec{h})}_{\vec{\epsilon}}(H)\mid\vec{\epsilon}\in(\beta^{(A)}_H)^{-1}[\{\vec{\delta}\}]\}\right)\geqslant g_{\hat{m},\hat{d},\hat{p},\hat{h}}(A)
\]
for every constant $A>2$, for all $H\in\Hcal_{\hat{m},\hat{d},\hat{p}}$ and every $\vec{\delta}\in\CT^{(A)}_{\good}(H)$.
\end{lemmma}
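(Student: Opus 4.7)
My plan is to invoke Lemma \ref{mixedPartitionLem} on the partition $M_{\good}:=V_{\vec{\delta}}(H)$ and $M_{\bad}:=\bigcup\{W^{(\hat{h})}_{\vec{\epsilon}}(H)\mid\vec{\epsilon}\in(\beta^{(A)}_H)^{-1}[\{\vec{\delta}\}]\}$; both sets are $\Aut(H)$-invariant because cycle type and the property of being $\hat{h}$-small are preserved by $\Aut(H)$. If $M_{\bad}=\varnothing$ the conclusion is immediate from Lemmas \ref{qGammaLem} and \ref{partitionLem}, so assume both parts are nonempty. The task then reduces to producing a lower bound on $\q_H(V_{\vec{\delta}}(H))$ and an upper bound on $\omicron(M_{\bad})$ whose ratio tends to infinity with $A$.

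The lower bound is routine: every socle coset $C\subseteq V_{\vec{\delta}}(H)$ has $\Gamma(C)=\Gamma(\vec{\delta})>A$, so Lemma \ref{qGammaLem} yields $\q_H(C)\geqslant\Gamma(\vec{\delta})/D'(\hat{m},\hat{d},\hat{p})$; partitioning $V_{\vec{\delta}}(H)$ into its $\Aut(H)$-orbits of socle cosets and applying Lemma \ref{partitionLem} then gives $\q_H(V_{\vec{\delta}}(H))\geqslant\Gamma(\vec{\delta})/D'$.

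The key structural observation for bounding $\omicron(M_{\bad})$ is that for each $\vec{\epsilon}\in(\beta^{(A)}_H)^{-1}[\{\vec{\delta}\}]$ the identity $\vec{\epsilon}^{p_0}=\vec{\delta}$ holds with $p_0=p_0(\vec{\epsilon})>A$ equal to the largest prime dividing $\ord(\vec{\epsilon})$. From $\Gamma(\vec{\epsilon})\leqslant A$ combined with the cycle-counting identity $\Gamma(\vec{\delta})=\Gamma(\vec{\epsilon})+(p_0-1)k$ (where $k\geqslant 1$ counts the cycles of $\vec{\epsilon}$ whose length is divisible by $p_0$), one deduces $p_0\leqslant\Gamma(\vec{\delta})$; moreover, each such split cycle of $\vec{\epsilon}$ produces $p_0$ equal-length cycles in $\vec{\delta}$, so in fact $p_0\leqslant m_{\max}(\vec{\delta})$, where $m_{\max}(\vec{\delta})$ denotes the largest multiplicity with which any length occurs in a component of $\vec{\delta}$. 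Partition $M_{\bad}$ by the associated prime $p_0$; since Lemma \ref{qTildeLem}(3) and the fact that $p_0\mid\ord(\vec{\epsilon})$ force $p_0$ to divide every element order in a coset of type $\vec{\epsilon}$, the $p_0$-th power map sends the $p_0$-piece into $V_{\vec{\delta}}(H)$ and divides element orders by exactly $p_0$, giving $\Ord(M_{\bad,p_0})\subseteq p_0\cdot\Ord(V_{\vec{\delta}}(H))$, and hence $\omicron(M_{\bad})\leqslant m_{\max}(\vec{\delta})\cdot\omicron(V_{\vec{\delta}}(H))$.

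I would then split into two regimes. When $m_{\max}(\vec{\delta})\leqslant\Gamma(\vec{\delta})^{1/2}$, substituting $\omega_H(V_{\vec{\delta}}(H))=\omicron(V_{\vec{\delta}}(H))\cdot\q_H(V_{\vec{\delta}}(H))$ and $\omicron(M_{\good}\cup M_{\bad})\leqslant\omicron(V_{\vec{\delta}}(H))(1+m_{\max}(\vec{\delta}))$ into the obvious bound $\q_H(M_{\good}\cup M_{\bad})\geqslant\omega_H(V_{\vec{\delta}}(H))/\omicron(M_{\good}\cup M_{\bad})$ produces $\q_H(M_{\good}\cup M_{\bad})\geqslant\sqrt{\Gamma(\vec{\delta})}/(2D')>\sqrt{A}/(2D')$, which tends to infinity with $A$. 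The opposite regime $m_{\max}(\vec{\delta})>\Gamma(\vec{\delta})^{1/2}$ is the main obstacle: here I plan to sharpen the lower bound on $\omega_H(V_{\vec{\delta}}(H))$ by exploiting the factor $\Omega_{i_0,\ell_0,\tau_0}(\vec{\psi},\alphabv)$ from Lemma \ref{qTildeLem}(2) arising at the pair $(i_0,\ell_0)$ of maximal multiplicity, where by pigeonhole over the universally bounded set of $S_{i_0}$-types some $\tau_0$ satisfies $\Gamma_{i_0,\ell_0,\tau_0}(\vec{\psi},\alphabv)$ of size comparable to $m_{\max}(\vec{\delta})/|\Out(S_{i_0})|$; this should yield a compensating factor of order $m_{\max}(\vec{\delta})$ that cancels the $m_{\max}(\vec{\delta})$ appearing in $\omicron(M_{\bad})$ and again recovers $\sqrt{\Gamma(\vec{\delta})}$-style growth. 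Interlacing this multiplicative improvement with the additive coset-counting above is the delicate part of the argument.
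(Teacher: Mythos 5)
Your overall framework is sound, and your idea to bound $\omicron(M_{\bad})$ via the $p_0$-th power map (rather than via Lemma~\ref{constantLem}, which is what the paper uses) is a genuinely different and rather elegant route: the chain $g\mapsto g^{p_0}$ lands in $V_{\vec{\delta}}(H)$, and since $p_0$ divides $\ord(\psiv)$ for every coset of cycle type $\vec{\epsilon}$ in the fibre, $\ord(g)=p_0\cdot\ord(g^{p_0})$, which indeed gives $\Ord(M_{\bad})\subseteq\bigcup_{p_0}p_0\cdot\Ord(V_{\vec{\delta}}(H))$. Combined with the quotient form $\q_H(M)\geqslant\omega_H(V_{\vec{\delta}})/\omicron(M)$, the factor $\omicron(V_{\vec{\delta}})$ cancels, which is the clever part of your argument.

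However, there is a genuine gap at the point you yourself flag as ``delicate'': your case $m_{\max}(\vec{\delta})>\Gamma(\vec{\delta})^{1/2}$ is not resolved, and the proposed fix via a sharper $\Omega_{i_0,\ell_0,\tau_0}$-factor in Lemma~\ref{qTildeLem}(2) is not going to work as stated --- blowing up $\omega_H(V_{\vec{\delta}})$ by pigeonhole over $S_{i_0}$-types does nothing to control $\omicron(V_{\vec{\delta}})$, and the ratio $\Omega/\Omicron$ from Lemma~\ref{qTildeLem}(5) is only bounded below by $\tilde{\q}(S_i)\geqslant 1$ regardless of how large $\Gamma_{i,\ell,\tau}$ is, so there is no gain of order $m_{\max}(\vec{\delta})$ to be extracted this way. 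The missing idea is the injectivity of the map $\vec{\epsilon}\mapsto p_0(\vec{\epsilon})$ on the fibre $(\beta^{(A)}_H)^{-1}[\{\vec{\delta}\}]$: all those $p_0$'s are \emph{distinct} primes, each strictly larger than $A$ and each at most $\Gamma(\vec{\delta})$, so $\Gamma(\vec{\delta})\geqslant p_{k(A)+\varphi(\vec{\delta})}$ where $\varphi(\vec{\delta})$ is the fibre size and $p_n$ is the $n$-th prime. By the Prime Number Theorem this forces $\Gamma(\vec{\delta})\geqslant c'(k(A)+\varphi(\vec{\delta}))\log(k(A)+\varphi(\vec{\delta}))$, and then the direct bound
\[
\q_H(M)\geqslant\frac{\q_H(V_{\vec{\delta}})}{1+\varphi(\vec{\delta})}\geqslant\frac{\Gamma(\vec{\delta})/D'}{1+\varphi(\vec{\delta})}\geqslant\frac{c'}{D'}\cdot\frac{k(A)+\varphi(\vec{\delta})}{1+\varphi(\vec{\delta})}\cdot\log(k(A))\geqslant\frac{c'}{D'}\log k(A)
\]
tends to infinity with $A$, eliminating any need for a case split on $m_{\max}(\vec{\delta})$. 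Note also that you should count the number of primes $p_0$ as the fibre size $\varphi(\vec{\delta})$ rather than by the crude estimate $m_{\max}(\vec{\delta})$; it is this sharper count, together with the injectivity observation, that closes the argument.
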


\begin{proof}
For $\vec{\delta}\in\CT^{(A)}_{\good}(H)$, set
\[
\varphi(\vec{\delta})=\varphi^{(A)}_H(\vec{\delta}):=|(\beta^{(A)}_H)^{-1}[\{\vec{\delta}\}]|.
\]
We make the following two observations:
\begin{enumerate}
\item As in Formula (\ref{p0Eq}) above, for each $\vec{\delta}\in\CT^{(A)}_{\good}(H)$ and each $\vec{\epsilon}\in(\beta^{(A)}_H)^{-1}[\{\vec{\delta}\}]$, we have $\Gamma(\vec{\delta})\geqslant \max\{p\in\IP\,\mid\, p\text{ divides }\ord(\vec{\delta})\}$.
\item For fixed $\vec{\delta}$, the function that assigns to each element $\vec{\epsilon}$ of the $\beta^{(A)}_H$-fibre of $\vec{\delta}$ the largest prime divisor of $\ord(\vec{\epsilon})$ is injective. This is because of the following: For any prime $p>A$, if there is any $A$-bad cycle type $\vec{\epsilon}$ whose $p$-th power is $\vec{\delta}$, then it is the one which for each $\ell\in\{1,\ldots,n\}$ has exactly
\[
\gamma_{\ell}-p\cdot\lfloor\frac{\gamma_{\ell}}{p}\rfloor+\lfloor\frac{\gamma_{\ell/p}}{p}\rfloor
\]
cycles of length $\ell$, where $\gamma_x$ denotes the number of $x$-cycles of $\vec{\delta}$ if $x$ is a positive integer, and $\gamma_x=0$ otherwise. Indeed, all other cycle types that are $p$-th roots of $\vec{\delta}$ have at least $p$ (and thus more than $A$) cycles of some given length $\ell\in\{1,\ldots,n\}$.
\end{enumerate}

Combining these two observations, we conclude that $\Gamma(\vec{\delta})\geqslant p$ for at least $\varphi(\vec{\delta})$ many primes $p>A$, and so, denoting by $p_k$ for $k\in\IN^+$ the $k$-th prime,
\[
\Gamma(\vec{\delta})\geqslant p_{k(A)+\varphi(\vec{\delta})},
\]
where $k(A)\in\IN^+$ is such that $p_{k(A)}$ is the largest prime that is at most $A$; note that $k(A)\to\infty$ as $A\to\infty$. By the Prime Number Theorem, $p_x\sim x\log{x}$ as $x\to\infty$, and so there is an absolute constant $c'>0$ such that
\[
p_{k(A)+\varphi(\vec{\delta})}\geqslant c'(k(A)+\varphi(\vec{\delta}))\log{(k(A)+\varphi(\vec{\delta}))}.
\]
Therefore and by Lemma \ref{qGammaLem},
\begin{equation}\label{goodDeltaEq}
\q_H(V_{\vec{\delta}})\geqslant D'(\hat{m},\hat{d},\hat{p})^{-1}\cdot c'(k(A)+\varphi(\vec{\delta}))\log{(k(A)+\varphi(\vec{\delta}))}.
\end{equation}
On the other hand, by Lemma \ref{constantLem},
\begin{equation}\label{badDeltaEq}
\omicron(\bigcup\{W^{(\hat{h})}_{\vec{\epsilon}}(H)\mid\vec{\epsilon}\in(\beta^{(A)}_H)^{-1}[\{\vec{\epsilon}\}]\})\leqslant\varphi(\vec{\delta})\cdot D(\hat{m},\hat{d},\hat{p},\hat{h}).
\end{equation}
We now claim (and will show) that
\begin{equation}\label{gDefEq}
g_{\hat{m},\hat{d},\hat{p},\hat{h}}:=\frac{c'}{(D(\hat{m},\hat{d},\hat{p},\hat{h})+1)\cdot D'(\hat{m},\hat{d},\hat{p})}\cdot\log{k(A)}
\end{equation}
is a suitable choice for the function in the statement of Lemma \ref{aLem}. To verify this, assume first that the fibre $(\beta^{(A)}_H)^{-1}[\{\vec{\delta}\}]$ is empty, i.e., that $\varphi(\vec{\delta})=0$. Then by Formula (\ref{goodDeltaEq})
\[
\q_H\left(V_{\vec{\delta}}(H)\cup\bigcup\{W^{(\vec{h})}_{\vec{\epsilon}}(H)\mid\vec{\epsilon}\in(\beta^{(A)}_H)^{-1}[\{\vec{\delta}\}]\}\right)=\q_H(V_{\vec{\delta}})\geqslant \frac{c'}{D'(\hat{m},\hat{d},\hat{p})}\cdot k(A)\log{k(A)},
\]
which is indeed bounded from below by $g_{\hat{m},\hat{d},\hat{p},\hat{h}}(A)$ as in Formula (\ref{gDefEq}). Now assume that the fibre $(\beta^{(A)}_H)^{-1}[\{\vec{\delta}\}]$ is nonempty. Using Formulas (\ref{goodDeltaEq}) and (\ref{badDeltaEq}), an application of Lemma \ref{mixedPartitionLem} with
\begin{itemize}
\item $G:=H$,
\item $M:=V_{\vec{\delta}}\cup\bigcup\{W^{(\hat{h})}_{\vec{\epsilon}}(H)\mid\vec{\epsilon}\in(\beta^{(A)}_H)^{-1}[\{\vec{\epsilon}\}]\}$,
\item $M_{\good}:=V_{\vec{\delta}}$ and
\item $M_{\bad}:=\bigcup\{W^{(\hat{h})}_{\vec{\epsilon}}(H)\mid\vec{\epsilon}\in(\beta^{(A)}_H)^{-1}[\{\vec{\epsilon}\}]\}$ (which is disjoint from $M_{\good}$ because $M_{\bad}$ is by definition a union of socle cosets with cycle type in $(\beta^{(A)}_H)^{-1}[\{\vec{\epsilon}\}]\subseteq\CT^{(A)}_{\bad}(H)$, whereas $M_{\good}$ is a union of socle cosets with cycle type in $\CT^{(A)}_{\good}(H)$, and by definition, $\CT^{(A)}_{\good}(H)\cap\CT^{(A)}_{\bad}(H)=\varnothing$)
\end{itemize}
yields that
\begin{align*}
&\q_H(V_{\vec{\delta}}\cup\bigcup\{W^{(\hat{h})}_{\vec{\epsilon}}(H)\mid\vec{\epsilon}\in(\beta^{(A)}_H)^{-1}[\{\vec{\epsilon}\}]\})\geqslant \\
&\frac{1}{1+\varphi(\vec{\delta})D(\hat{m},\hat{d},\hat{p},\hat{h})}\cdot\frac{1}{D'(\hat{m},\hat{d},\hat{p})}\cdot c'(k(A)+\varphi(\vec{\delta}))\log{(k(A)+\varphi(\vec{\delta}))}\geqslant \\
&\frac{c'}{(D(\hat{m},\hat{d},\hat{p},\hat{h})+1)\cdot D'(\hat{m},\hat{d},\hat{p})}\cdot\frac{k(A)+\varphi(\vec{\delta})}{\varphi(\vec{\delta})}\cdot\log{(k(A)+\varphi(\vec{\delta}))}\geqslant \\
&\frac{c'}{(D(\hat{m},\hat{d},\hat{p},\hat{h})+1)\cdot D'(\hat{m},\hat{d},\hat{p})}\cdot\log{k(A)}=g_{\hat{m},\hat{d},\hat{p},\hat{h}}(A),
\end{align*}
as required.
\end{proof}

\subsection{More restrictions on finite semisimple groups with bounded \texorpdfstring{$\q$}{q}-value}\label{subsec4P8}

This subsection provides the last few remaining jigsaw pieces for completing the proof of Theorem \ref{mainTheo1}(2) (or, rather, of the finiteness of the classes $\Hcal^{(c)}$ from Notation \ref{classNot}(1), see Remark \ref{classRem}). Recall Lemma \ref{hcalLem}, which states that for each constant $c>0$, the class $\Hcal^{(c)}$, of all finite semisimple groups $H$ with $\q(H)\leqslant c$, is contained in the class $\Hcal_{\hat{m},\hat{d},\hat{p}}$ of finite semisimple groups (with restrictions on the simple factors in the socle, see Notation \ref{classNot}(2,3) for details), where $\hat{m}=\hat{m}(c)$, $\hat{d}=\hat{d}(c)$ and $\hat{p}=\hat{p}(c)$. So this already provides some restrictions on finite semisimple groups with bounded $\q$-value, and using Lemmas \ref{constantLem} and \ref{qGammaLem}, we will be able to add even more restrictions to this list, see Lemma \ref{hcalLem2} below.

\begin{nottation}\label{classNot2}
Let $\hat{m},\hat{d},\hat{p},\hat{r}\geqslant1$, and let $f:\left[1,\infty\right)\rightarrow\left[1,\infty\right)$. We denote by $\Hcal_{\hat{m},\hat{d},\hat{p},\hat{r},f}$ the class of finite semisimple groups $H$ such that
\begin{enumerate}
\item $H\in\Hcal_{\hat{m},\hat{d},\hat{p}}$,
\item the number of nonisomorphic nonabelian simple factors in $\Soc(H)$ is at most $\hat{r}$, and
\item the composition length of $\Soc(H)$ is at least $f(|H|)$.
\end{enumerate}
\end{nottation}

\begin{lemmma}\label{hcalLem2}
For each $c\geqslant1$ there are constants $\hat{m}=\hat{m}(c)$, $\hat{d}=\hat{d}(c)$, $\hat{p}=\hat{p}(c)$ and $\hat{r}=\hat{r}(c)$, all in $\left[1,\infty\right)$, as well as a monotonically increasing function $f_c:\left[1,\infty\right)\rightarrow\left[1,\infty\right)$ with $f_c(x)\to\infty$ as $x\to\infty$ such that $\Hcal^{(c)}\subseteq\Hcal_{\hat{m},\hat{d},\hat{p},\hat{r},f_c}$.
\end{lemmma}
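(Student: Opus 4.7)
The plan is to invoke Lemma \ref{hcalLem} to obtain $\hat{m}(c),\hat{d}(c),\hat{p}(c)$ with $\Hcal^{(c)}\subseteq\Hcal_{\hat{m},\hat{d},\hat{p}}$, and then to establish conditions (2) and (3) of Notation \ref{classNot2} separately.

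For condition (2), the key tool is Lemma \ref{qGammaLem}. Fix $H\in\Hcal^{(c)}$. Since $\q(H)\leqslant c$, Lemma \ref{partitionLem} applied to the partition $\P_H$ of $H$ into $\Aut(H)$-invariant unions of socle cosets supplies a socle coset $C=\Soc(H)\alphabv\psiv$ with $\q_H(C)\leqslant c$. Writing $\psiv=(\psi_1,\ldots,\psi_r)$, each $\psi_i\in\Sym(n_i)$ has at least one cycle, so $\Gamma(C)=\sum_{i=1}^r\Gamma(\psi_i)\geqslant r$; Lemma \ref{qGammaLem} then gives $r\leqslant\Gamma(C)\leqslant c\cdot D'(\hat{m},\hat{d},\hat{p})$, and we take $\hat{r}(c):=\lceil c\cdot D'(\hat{m}(c),\hat{d}(c),\hat{p}(c))\rceil$.

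For condition (3), the plan is to show that for each $L\geqslant 1$, the order $|H|$ is bounded whenever $H\in\Hcal^{(c)}$ has composition length $\sum n_i\leqslant L$; the monotonically increasing $f_c$ with $f_c(x)\to\infty$ is then obtained by inversion. Two estimates drive the argument. Firstly, Lemma \ref{qTildeLem}(5) applied to the socle coset $\Soc(H)$ (with trivial $\alphabv$ and $\psiv$), together with Lemma \ref{qsnLem}(1,2), yields $\q_H(\Soc(H))\geqslant\q(\Soc(H))\geqslant\q(S_i)$ for each $i$. Secondly, writing $\pi\colon H\to H/\Soc(H)$ for the canonical projection, for any coset $\Soc(H)x$ of $\Soc(H)$ in $H$ and any $sx\in\Soc(H)x$ one has $\ord(sx)=\ord(\pi(x))\cdot\ord((sx)^{\ord(\pi(x))})$ with $(sx)^{\ord(\pi(x))}\in\Soc(H)$, whence $|\Ord(\Soc(H)x)|\leqslant\omicron(\Soc(H))$ and thus $\omicron(H)\leqslant|H/\Soc(H)|\cdot\omicron(\Soc(H))$.

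Combining these via $\q(H)\geqslant\omega_H(\Soc(H))/\omicron(H)=\q_H(\Soc(H))\cdot\omicron(\Soc(H))/\omicron(H)$ yields $\q(H)\geqslant\q(S_i)/|H/\Soc(H)|$ for each $i$. Using $H/\Soc(H)\hookrightarrow\Out(\Soc(H))$, the composition length bound, and Kohl's bound $|\Out(S)|\leqslant\log_2|S|$ from \cite{Koh03a}, we obtain $|H/\Soc(H)|\leqslant(\log_2\max_j|S_j|)^L\cdot L!$. Picking $i$ with $|S_i|=\max_j|S_j|$, Theorem \ref{mainTheo2}(5) gives $\q(S_i)+3\geqslant\exp(\log^{1/c^*}|S_i|)$, where $c^*:=\log\log|\M|/\log\log(413/73)\approx 8.77$; since $\exp(\log^{1/c^*}|S_i|)$ grows faster than any polynomial in $\log|S_i|$, the ratio $\q(S_i)/((\log_2|S_i|)^L\cdot L!)$ tends to $\infty$ as $|S_i|\to\infty$, so $\q(H)\leqslant c$ forces $\max_j|S_j|\leqslant M(c,L)$ for an explicit $M$. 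Plugging into $|H|\leqslant(M\log_2 M)^L\cdot L!=:K(c,L)$ and taking $f_c(x):=\min\{L\in\IN^+\mid K(c,L)\geqslant x\}$ yields a monotonically increasing $f_c$ with $f_c(x)\to\infty$ so that $\sum n_i\geqslant f_c(|H|)$ holds throughout $\Hcal^{(c)}$, completing the proof. The main subtlety is the second estimate above, bounding $\omicron(H)$ in terms of $|H/\Soc(H)|\cdot\omicron(\Soc(H))$.
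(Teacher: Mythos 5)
Your proof is correct, and for the key point---producing $f_c$---it takes a genuinely different and more economical route than the paper. Conditions (1) and (2) of Notation \ref{classNot2} are established exactly as in the paper: Lemma \ref{hcalLem} for the first, and Lemma \ref{partitionLem} (to produce a socle coset $C$ with $\q_H(C)\leqslant c$) together with Lemma \ref{qGammaLem} for the second, giving $\hat{r}(c)=\lceil cD'\rceil$. For condition (3), the paper instead obtains $F$ with $\q(\Soc(H))\geqslant F(|\Soc(H)|)$ from Lemma \ref{qsnLem}(3), bounds $\omicron$ on the union of the $\hat{h}$-small socle cosets by $D(c)\cdot 2^{N(H)\hat{r}(c)}$ using the cycle-type count of Lemma \ref{constantLem}, and combines these via Lemma \ref{mixedPartitionLem} after discarding the $\hat{h}$-large cosets by Lemma \ref{hHatLargeLem}. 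You sidestep all of that machinery: the elementary observation that, for $y$ in the coset $\Soc(H)x$, one has $\ord(y)=\ord(\pi(y))\cdot\ord\bigl(y^{\ord(\pi(y))}\bigr)$ with $y^{\ord(\pi(y))}\in\Soc(H)$, yielding $\Ord(\Soc(H)x)\subseteq\ord(\pi(x))\cdot\Ord(\Soc(H))$ and hence the cruder but sufficient bound $\omicron(H)\leqslant|H:\Soc(H)|\cdot\omicron(\Soc(H))$, gives $\q(H)\geqslant\q(\Soc(H))/|H:\Soc(H)|\geqslant\q(S_i)/|H:\Soc(H)|$. Kohl's bound and Theorem \ref{mainTheo2}(5) applied to the $S_i$ of largest order then bound $\max_j|S_j|$, and therefore $|H|$, in terms of $c$ and the composition length $L$, and inversion produces $f_c$. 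The trade-off is that your bound on $\omicron(H)$ grows factorially in $L$ rather than exponentially like the paper's $D(c)\cdot 2^{N(H)\hat{r}(c)}$; for this lemma only the qualitative $f_c\to\infty$ conclusion matters, so either works, but the sharper cycle-type apparatus is reused more essentially in Lemma \ref{coupLem}, which is presumably why the paper develops it already here. One small bookkeeping point: take $K(c,L)$ non-decreasing in $L$ (replace by a running maximum if necessary) so that $f_c(x):=\min\{L\in\IN^+:K(c,L)\geqslant x\}$ is well-defined, monotone, and tends to infinity.
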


\begin{proof}
Let $H\in\Hcal^{(c)}$, i.e., $H$ is a finite semisimple group with $\q(H)\leqslant c$. By Lemma \ref{hcalLem}, we can fix constants $\hat{m},\hat{d},\hat{p}\geqslant1$, all depending on $c$, such that $H\in\Hcal_{\hat{m},\hat{d},\hat{p}}$. Write $\Soc(H)=S_1^{n_1}\times\cdots\times S_r^{n_r}$, where $S_1,\ldots,S_r$ are pairwise nonisomorphic nonabelian finite simple groups and $n_1,\ldots,n_r\in\IN^+$. By Lemma \ref{qGammaLem}, for all socle cosets $C$ in $H$, we have that
\[
\q_H(C)\geqslant D'(c)^{-1}\Gamma(C)\geqslant D'(c)^{-1}r,
\]
so letting $\hat{r}(c):=cD'(c)$, we have that $\Soc(H)$ has at most $\hat{r}(c)$ nonisomorphic nonabelian simple factors.

It remains to prove the existence of $f_c$. Note that by Lemma \ref{qsnLem}(3), there is a monotonically increasing function $F:\left[1,\infty\right)\rightarrow\left[1,\infty\right)$ with $F(x)\to\infty$ as $x\to\infty$ such that
\begin{equation}\label{fSocEq}
\q(\Soc(H))\geqslant F(|\Soc(H)|).
\end{equation}

Moreover, let $\hat{h}=\hat{h}(c)$ be so large that $\q_H(C)>c$ for every $\hat{h}$-large socle coset $C$ in $H$. Finally, denote by $N(H)$ the composition length of $\Soc(H)$.

By definition, for each socle coset $C=\Soc(H)\alphabv\psiv$ of $H$, where $\psiv=(\psi_1,\ldots,\psi_r)$, we have $\cl(C)=\cl(\psiv)=(\cl(\psi_1),\ldots,\cl(\psi_r))$, and, for $i=1,\ldots,r$,
\[
\cl(\psi_i)\subseteq\{1,\ldots,n_i\}\subseteq\{1,\ldots,N(H)\},
\]
where the last inclusion uses that $N(H)=\sum_{j=1}^r{n_j}\geqslant n_i$. So $\cl(C)$ is always an $r$-tuple of subsets of $\{1,\ldots,N(H)\}$, and so the number of distinct $\cl$-values of socle cosets in $H$ is at most
\begin{equation}\label{subsetTuplesEq}
2^{N(H)r}\leqslant 2^{N(H)\hat{r}(c)}.
\end{equation}
By Formula (\ref{subsetTuplesEq}) and Lemma \ref{constantLem}, if we denote by $U$ the union of all $\hat{h}$-small socle cosets in $H$, then
\begin{equation}\label{omicronUEq}
\omicron(U)\leqslant D(c)\cdot 2^{N(H)\hat{r}(c)}.
\end{equation}
Set $M:=\Soc(H)\cup U$, $M_{\good}:=\Soc(H)$ and $M_{\bad}:=M\setminus\Soc(H)$. By Formula (\ref{omicronUEq}),
\begin{equation}\label{omicronBadEq}
\omicron(M_{\bad})\leqslant D(c)\cdot 2^{N(H)\hat{r}(c)}.
\end{equation}
In view of Formulas (\ref{fSocEq}) and (\ref{omicronBadEq}), an application of Lemma \ref{mixedPartitionLem} yields that
\begin{equation}\label{qhmEq}
\q_H(M)\geqslant\frac{F(|\Soc(H)|)}{1+D(c)\cdot 2^{N(H)\hat{r}(c)}}.
\end{equation}
Set $M':=H\setminus M$. Then $M'$ is an $\Aut(H)$-invariant union of $\hat{h}$-large socle cosets, and so by the choice of $\hat{h}$ from above and Lemma \ref{partitionLem}, we have
\[
\q_H(M')>c.
\]
But we are assuming that $\q(H)\leqslant c$, so we must have $\q_H(M)\leqslant c$ (otherwise, an application of Lemma \ref{partitionLem} with $\P:=\{M,M'\}$ yields that $\q_H(H)=\q(H)>c$). Together with Formula (\ref{qhmEq}), this yields that
\[
c\geqslant\frac{F(|\Soc(H)|)}{1+D(c)\cdot 2^{N(H)\hat{r}(c)}},
\]
or equivalently
\[
N(H)\geqslant\frac{\log{\frac{F(|\Soc(H)|)-c}{cD(c)}}}{\hat{r}(c)\log{2}}.
\]
Hence, denoting by $h(x)$ the smallest order of the socle of a finite semisimple group $H$ with $|H|\geqslant x$ (note that the function $h$ is also monotonically increasing), we find that
\[
f_c(x):=\frac{\log{\frac{F(h(x))-c}{cD(c)}}}{\hat{r}(c)\log{2}}
\]
defines a suitable choice for $f_c$ in the statement of Lemma \ref{hcalLem2}.
\end{proof}

Recall that by Remark \ref{classRem}, our goal is to show that for each $c\geqslant1$, the class $\Hcal^{(c)}$, of finite semisimple groups $H$ with $\q(H)\leqslant c$, is finite. Now Lemma \ref{hcalLem2} tells us that $\Hcal^{(c)}$ can also be written as the intersection of itself with one of the classes $\Hcal_{\hat{m},\hat{d},\hat{p},\hat{r},f}$ from Notation \ref{classNot2}. And the following lemma says that each such intersection is finite:

\begin{lemmma}\label{coupLem}
For all constants $c,\hat{m},\hat{d},\hat{p},\hat{r}\geqslant1$ and all functions $f:\left[1,\infty\right)\rightarrow\left[1,\infty\right)$ with $f(x)\to\infty$ as $x\to\infty$, the intersection $\Hcal^{(c)}\cap\Hcal_{\hat{m},\hat{d},\hat{p},\hat{r},f}$ is finite.
\end{lemmma}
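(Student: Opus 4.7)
The plan is to argue by contradiction: suppose $\Hcal^{(c)}\cap\Hcal_{\hat{m},\hat{d},\hat{p},\hat{r},f}$ contains groups of arbitrarily large order. For such an $H$, write $\Soc(H)=S_1^{n_1}\times\cdots\times S_r^{n_r}$ with the $S_i$ pairwise nonisomorphic. The class assumptions give $r\leqslant\hat{r}$, each $S_i\in\Scal_{\hat{m},\hat{d},\hat{p}}$, and the composition length $N:=\sum_i n_i$ of $\Soc(H)$ satisfies $N\geqslant f(|H|)$. Since $f(x)\to\infty$ as $x\to\infty$ we have $N\to\infty$, and since $r\leqslant\hat{r}$ we also obtain $\max_i n_i\to\infty$. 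The aim is to show that this forces $\q(H)>c$ for $|H|$ large enough, contradicting $H\in\Hcal^{(c)}$.

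I would fix $\hat{h}:=\hat{h}(c)$ via Lemma \ref{hHatLargeLem}, so every $\hat{h}$-large socle coset $C$ of any $H\in\Hcal_{\hat{m},\hat{d},\hat{p}}$ satisfies $\q_H(C)>c$. For a parameter $A>2$ to be chosen below, I would then partition the socle cosets of $H$ by cycle type and $\hat{h}$-smallness. Define
\[
V_{\good}(H):=\bigcup_{\vec{\delta}\in\CT^{(A)}_{\good}(H)}\left(V_{\vec{\delta}}(H)\cup\bigcup\{W^{(\hat{h})}_{\vec{\epsilon}}(H)\mid\vec{\epsilon}\in(\beta^{(A)}_H)^{-1}[\{\vec{\delta}\}]\}\right),
\]
let $R_1$ be the union of all $\hat{h}$-large socle cosets whose cycle type lies in $\CT^{(A)}_{\bad,1}(H)\cup\CT^{(A)}_{\bad,2}(H)$, and let $R_2$ be the union of the $\hat{h}$-small socle cosets whose cycle type lies in $\CT^{(A)}_{\bad,2}(H)$. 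These three sets are $\Aut(H)$-invariant, pairwise disjoint and cover $H$. Lemma \ref{aLem} together with Lemma \ref{partitionLem} gives $\q_H(V_{\good}(H))\geqslant g_{\hat{m},\hat{d},\hat{p},\hat{h}}(A)$; the choice of $\hat{h}$ together with Lemma \ref{partitionLem} gives $\q_H(R_1)>c$; and Lemma \ref{constantLem}, applied per $\cl$-value, gives $\omicron(R_2)\leqslant D(\hat{m},\hat{d},\hat{p},\hat{h})\cdot\#\{\cl(C):C\subseteq R_2\}$.

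To finish, I would apply Lemma \ref{mixedPartitionLem} to $H=(V_{\good}(H)\cup R_1)\sqcup R_2$, yielding
\[
\q(H)\geqslant\frac{\q_H(V_{\good}(H)\cup R_1)}{1+\omicron(R_2)}\geqslant\frac{\min\{g_{\hat{m},\hat{d},\hat{p},\hat{h}}(A),\ \q_H(R_1)\}}{1+\omicron(R_2)},
\]
and then choose $A$ (depending on $N$, hence on $|H|$) so that the right-hand side strictly exceeds $c$. Since $g_{\hat{m},\hat{d},\hat{p},\hat{h}}(A)\to\infty$ as $A\to\infty$ by Lemma \ref{aLem}, the first factor in the numerator can be made as large as desired.

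The main obstacle is the final balancing. A naive estimate shows that the number of $\cl$-values of cycle types in $\CT^{(A)}_{\bad,2}(H)$ is at most $\bigl(\Psi_A(N)+1\bigr)^{A\hat{r}}$, where $\Psi_A(N)$ counts $A$-smooth integers up to $N$, so $\omicron(R_2)$ is controlled only by a quantity that grows with $N$ whenever $A$ is fixed, while $g_{\hat{m},\hat{d},\hat{p},\hat{h}}(A)$ grows only like a constant times $\log\pi(A)$. The delicate point is thus to tune $A=A(N)$ so that the ratio above exceeds $c$; I expect this to require either replacing the bound on $\q_H(R_1)$ from Lemma \ref{hHatLargeLem} by a stronger one whose value tends to infinity with a secondary parameter $\hat{h}'>\hat{h}$ (allowing $R_1$ to absorb the loss from $R_2$), or a structural refinement showing that in the class $\Hcal_{\hat{m},\hat{d},\hat{p},\hat{r},f}$ the element orders arising in $\hat{h}$-small bad-of-second-kind cosets are much more rigidly constrained than in the crude $\cl$-value count --- for instance, by exploiting that in the proof of Lemma \ref{constantLem} such orders always lie in the finite set $\ord(\vec{\psi})\cdot N(\hat{m},\hat{d},\hat{p},\hat{h})$ so that only the number of distinct values of $\ord(\vec{\psi})$ matters, and this can be bounded more tightly. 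Carrying out this tuning is the technical heart of the argument.
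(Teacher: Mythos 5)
Your partition of $H$ into $V_{\good}(H)$, $R_1$, $R_2$ is sound (and in one respect cleaner than the paper's own bookkeeping: the paper's block decomposition $\{B_{\vec{\delta}}\}$ does not explicitly cover $\hat{h}$-large cosets with $A$-bad cycle type, which you handle via $R_1$). You have also correctly diagnosed the balancing problem: with your arrangement, the numerator is pinned at $\min\{g_{\hat{m},\hat{d},\hat{p},\hat{h}}(A),\q_H(R_1)\}$, which is bounded once $A$ is fixed, while $\omicron(R_2)$ grows with $N$, so no choice of $A=A(N)$ closes the gap. What you are missing is the specific mechanism the paper uses to escape this, and neither of your two speculative fixes matches it.

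The paper's key move is to keep $A$ \emph{fixed} (depending only on $c,\hat{m},\hat{d},\hat{p}$) and to absorb $\bigcup_{\vec{\epsilon}\in\CT^{(A)}_{\bad,2}(H)}W^{(\hat{h})}_{\vec{\epsilon}}(H)$ not into the whole good part but into the \emph{single} block $B_{\vec{\delta_0}}$ attached to the trivial cycle type $\vec{\delta_0}=\ct(\Soc(H))$. The reason this is the right block is that $\Gamma(\vec{\delta_0})=\sum_i n_i\geqslant n(H)$, so Lemma~\ref{qGammaLem} gives $\q_H(V_{\vec{\delta_0}}(H))\geqslant n(H)/D'$, a lower bound that \emph{grows linearly} in $n(H)$ rather than being stuck at the bounded quantity $g_{\hat{m},\hat{d},\hat{p},\hat{h}}(A)$. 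Against this linear growth, the denominator contribution $1+\omicron(R_2)$, bounded via Lemma~\ref{constantLem} by $\tilde{D}$ times the number of $A$-bad-of-second-kind cycle types, is only polylogarithmic in $n(H)$ (the paper uses the estimate $(\hat{r}A(1+\log_2 n(H)))^A$, and there is a similar polylog contribution $1+A(1+\log_2 n(H))\tilde{D}$ from the $\beta_H^{(A)}$-fibre of $\vec{\delta_0}$, which one bounds by noting that each preimage corresponds to a distinct prime $p>A$ dividing one of $n(H),\dotsc,n(H)-A+1$). Thus $\q_H(B'_{\vec{\delta_0}})\gtrsim n(H)/\text{polylog}(n(H))>c$ for $n(H)>N_0$, while the remaining blocks $B_{\vec{\delta}}$ ($\vec{\delta}\neq\vec{\delta_0}$ $A$-good) and the $\hat{h}$-large $A$-bad cosets have $\q_H>c$ by Lemmas~\ref{aLem} and~\ref{hHatLargeLem}; Lemma~\ref{partitionLem} then gives $\q(H)>c$, contradicting $H\in\Hcal^{(c)}$ and hence bounding $n(H)$, and then the composition-length hypothesis $\sum_i n_i\geqslant f(|H|)$ together with $r\leqslant\hat{r}$ bounds $|H|$. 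Your proposal, by contrast, never lets the numerator grow with $N$, so it cannot beat $\omicron(R_2)$; the missing idea is precisely the use of the trivial-cycle-type block and Lemma~\ref{qGammaLem} to convert composition length into a linearly growing $\q_H$-value.
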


\begin{proof}
We begin by declaring some parameters:
\begin{itemize}
\item Let $\hat{h}=\hat{h}(c)$ be so large that for all finite semisimple groups $H$ and all $\hat{h}$-large (see Notation \ref{hNot}) socle cosets $C$ in $H$, we have $\q_H(C)>c$ (this is possible by Lemma \ref{hHatLargeLem}).
\item Let $D'=D'(\hat{m},\hat{d},\hat{p})$ be as in Lemma \ref{qGammaLem}.
\item Set $\tilde{D}:=D(\hat{m},\hat{d},\hat{p},\hat{h}(c))$, where the quaternary function $D$ is as in Lemma \ref{constantLem}.
\item Let $A=A(\hat{m},\hat{d},\hat{p},c)$ be so large that $\min\{\frac{A}{D'(\hat{m},\hat{d},\hat{p})},g_{\hat{m},\hat{d},\hat{p},\hat{h}(c)}(A)\}>c$, where $g_{\hat{m},\hat{d},\hat{p},\hat{h}(c)}$ is as in Lemma \ref{aLem}.
\item Let $N_0=N_0(\hat{m},\hat{d},\hat{p},\hat{r},c)\geqslant A(\hat{m},\hat{d},\hat{p},c)$ be so large that for all $N\in\IN^+$ with $N>N_0$, we have
\[
\frac{N}{D'(1+A(1+\log_2{N})\tilde{D})(1+\tilde{D}(\hat{r}A(1+\log_2{N}))^A)}>c.
\]
\end{itemize}
We claim that if $H\in\Hcal^{(c)}\cap\Hcal_{\hat{m},\hat{d},\hat{p},\hat{r},f}$ has socle $\Soc(H)=S_1^{n_1}\times\cdots\times S_r^{n_r}$, where $S_1,\ldots,S_r$ are pairwise nonisomorphic nonabelian finite simple groups and $n_1,\ldots,n_r\in\IN^+$, then $n(H):=\max\{n_i\mid i=1,\ldots,r\}$ is at most $N_0$.

Indeed, assume one could have an $H\in\Hcal^{(c)}\cap\Hcal_{\hat{m},\hat{d},\hat{p},\hat{r},f}$ with $n(H)>N_0$. We will show that $\q(H)>c$, and thus a contradiction. By choice of $\hat{h}$, all $\hat{h}$-large socle cosets $C$ in $H$ satisfy $\q_H(C)>c$, and so do all socle cosets $C$ whose cycle type $\ct(C)$ is $A$-good, by choice of $A$ and Lemma \ref{qGammaLem}. Therefore, it is only the $\hat{h}$-small socle cosets with an $A$-bad cycle type that we need to worry about -- the idea is to carefully join unions of such \enquote{bad} socle cosets with unions of previously mentioned \enquote{good} socle cosets such that each \enquote{mixed} union still has $\q_H$-value strictly larger than $c$, using Lemmas \ref{partitionLem} and \ref{mixedPartitionLem} for this.

Assume that in a first step, we take care of the $\hat{h}$-small socle cosets with a cycle type that is $A$-bad of the first kind by joining each socle coset union $W^{(\hat{h})}_{\vec{\epsilon}}(H)$, where $\vec{\epsilon}\in\CT^{(A)}_{\bad,1}(H)$, with the union $V_{\beta^{(A)}_H(\vec{\epsilon})}(H)$ of socle cosets with the $A$-good cycle type $\beta^{(A)}_H(\vec{\epsilon})$. This results in a partition of
\[
H\setminus\bigcup_{\vec{\epsilon}\in\CT^{(A)}_{\bad,2}(H)}{W^{(\hat{h})}_{\vec{\epsilon}}(H)}
\]
into blocks of the form
\[
B_{\vec{\delta}}:=V_{\vec{\delta}}(H)\cup\bigcup\{W^{(\vec{h})}_{\vec{\epsilon}}(H)\mid\vec{\epsilon}\in(\beta^{(A)}_H)^{-1}[\{\vec{\delta}\}]\},
\]
where $\vec{\delta}$ ranges over the $A$-good cycle types of socle cosets of $H$. Lemma \ref{aLem} and the choice of $A$ guarantee us that
\begin{equation}\label{bEq1}
\q_H(B_{\vec{\delta}})>c
\end{equation}
for all $\vec{\delta}\in\CT^{(A)}_{\good}(H)$.

It remains to deal with the part
\[
\bigcup_{\vec{\epsilon}\in\CT^{(A)}_{\bad,2}(H)}{W^{(\hat{h})}_{\vec{\epsilon}}(H)},
\]
consisting of all $\hat{h}$-small socle cosets whose cycle type is $A$-bad of the second kind. Denote by $\vec{\delta_0}:=\ct(\Soc(H))$ the trivial cycle type, which is $A$-good because
\[
\Gamma(\vec{\delta_0})=\sum_{i=1}^r{n_i}\geqslant n(H)>N_0\geqslant A.
\]
One of the blocks of the partition of $H\setminus\bigcup_{\vec{\epsilon}\in\CT^{(A)}_{\bad,2}(H)}{W^{(\hat{h})}_{\vec{\epsilon}}(H)}$ mentioned just above is
\[
B_{\vec{\delta_0}}:=V_{\vec{\delta_0}}(H)\cup\bigcup\{W^{(\hat{h})}_{\vec{\epsilon}}(H)\mid\epsilon\in(\beta^{(A)}_H)^{-1}[\{\vec{\delta_0}\}]\},
\]
and we claim that the union
\[
B'_{\vec{\delta_0}}:=B_{\vec{\delta_0}}\cup\bigcup_{\vec{\epsilon}\in\CT^{(A)}_{\bad,2}(H)}{W^{(\hat{h})}_{\vec{\epsilon}}(H)}
\]
still has $\q_H$-value strictly larger than $c$.

Following the proof of Lemma \ref{aLem} with $\vec{\delta}:=\vec{\delta_0}$ and using again that $\Gamma(\vec{\delta_0})=\sum_{i=1}^r{n_i}\geqslant \max\{n_1,\ldots,n_r\}=n(H)$, we find that
\[
\q_H(B_{\vec{\delta_0}})\geqslant\frac{n(H)}{D'(1+\varphi^{(A)}_H(\vec{\delta_0})\tilde{D})}.
\]
Now $(\beta^{(A)}_H)^{-1}[\{\vec{\delta_0}\}]$ consists only of cycle types $\vec{\epsilon}$ such that $A<\ord(\vec{\epsilon})=p$ is a prime and $p$ divides one of the numbers $n(H),n(H)-1,\ldots,n(H)-A+1$ (otherwise, each of the corresponding cycle types $\vec{\epsilon}$ has too many cycles to be $A$-bad). Moreover, each such prime $p$ corresponds to at most one $A$-bad cycle type $\vec{\epsilon}$ of the first kind. It follows that
\[
\varphi^{(A)}_H(\vec{\delta_0})=|(\beta^{(A)}_H)^{-1}[\{\vec{\delta_0}\}]|\leqslant A(1+\log_2{n(H)}),
\]
and so
\begin{equation}\label{goodEq2}
\q_H(B_{\vec{\delta_0}})\geqslant\frac{n(H)}{D'(1+A(1+\log_2{n(H)})\tilde{D})}.
\end{equation}
How many element orders are there in
\[
\bigcup_{\vec{\epsilon}\in\CT^{(A)}_{\bad,2}(H)}{W^{(\hat{h})}_{\vec{\epsilon}}(H)},
\]
the union of all the $\hat{h}$-small socle cosets with an $A$-bad cycle type of the second kind? The number of such cycle types is at most $(\hat{r}A(1+\log_2{n(H)}))^A$ (think of length $A$ sequences of pairs of choices of an index $i\in\{1,\ldots,r\}\subseteq\{1,\ldots,\lfloor\hat{r}\rfloor\}$ and of a cycle length in $\{1,\ldots,n_i\}\subseteq\{1,\ldots,n(H)\}$ which is a power of a prime $p\leqslant A$). Hence, by Lemma \ref{constantLem},
\begin{equation}\label{badEq2}
\omicron(\bigcup_{\vec{\epsilon}\in\CT^{(A)}_{\bad,2}(H)}{W^{(\hat{h})}_{\vec{\epsilon}}(H)})\leqslant\tilde{D}(\hat{r}A(1+\log_2{n(H)}))^A.
\end{equation}
Applying Lemma \ref{mixedPartitionLem} with
\begin{itemize}
\item $M:=B'_{\vec{\delta_0}}=B_{\vec{\delta_0}}\cup\bigcup_{\vec{\epsilon}\in\CT^{(A)}_{\bad,2}(H)}{W^{(\hat{h})}_{\vec{\epsilon}}(H)}$,
\item $M_{\good}:=B_{\vec{\delta_0}}$ and
\item $M_{\bad}:=\bigcup_{\vec{\epsilon}\in\CT^{(A)}_{\bad,2}(H)}{W^{(\hat{h})}_{\vec{\epsilon}}(H)}$,
\end{itemize}
and using Formulas (\ref{goodEq2}) and (\ref{badEq2}), we conclude that
\begin{equation}\label{bEq2}
\q_H\left(B'_{\vec{\delta_0}}\right)\geqslant\frac{n(H)}{D'(1+A(1+\log_2{n(H)})\tilde{D})(1+\tilde{D}(\hat{r}A(1+\log_2{n(H)}))^A)}>c,
\end{equation}
where the second inequality is by the assumption $n(H)>N_0$ and the choice of $N_0$.

We are now ready to show that $\q(H)>c$. Consider the partition $\P$ of $H$ whose members are the following:
\begin{itemize}
\item the set $B'_{\vec{\delta_0}}=B_{\vec{\delta_0}}\cup\bigcup_{\vec{\epsilon}\in\CT^{(A)}_{\bad,2}(H)}{W^{(\hat{h})}_{\vec{\epsilon}}(H)}$, and
\item the sets $B_{\vec{\delta}}$ where $\vec{\delta}\in\CT^{(A)}_{\good}(H)\setminus\{\delta_0\}$.
\end{itemize}
By Formulas (\ref{bEq1}) and (\ref{bEq2}), each partition member has $\q_H$-value strictly larger than $c$, and so $\q_H(H)=\q(H)>c$ by an application of Lemma \ref{partitionLem}. This is the desired contradiction confirming that $n(H)\leqslant N_0$.

Now that we know that $n(H)\leqslant N_0$, and in view of our assumption that $H\in\Hcal_{\hat{m},\hat{d},\hat{p},\hat{r},f}$, it follows that the composition length $\sum_{i=1}^r{n_i}$ of $\Soc(H)$ is at most $\hat{r}N_0$, and so $\hat{r}N_0\geqslant f(|H|)$. But $f(x)\to\infty$ as $x\to\infty$, so there are indeed only finitely many possibilities for $H$, as required.
\end{proof}

\subsection{Completing the proof of Theorem \ref{mainTheo1}(2)}\label{subsec4P9}

Let us now give a proof of Theorem \ref{mainTheo1}(2) using the results developed in the previous subsections. By Remark \ref{classRem}, it suffices to show that for each constant $c\geqslant1$, the class $\Hcal^{(c)}$, defined in Notation \ref{classNot}(1), is finite. By Lemma \ref{hcalLem2}, we find that there are
\begin{itemize}
\item constants $\hat{m}=\hat{m}(c)$, $\hat{d}=\hat{d}(c)$, $\hat{p}=\hat{p}(c)$, $\hat{r}=\hat{r}(c)$, all in $\left[1,\infty\right)$, as well as
\item a monotonically increasing function $f_c:\left[1,\infty\right)\rightarrow\left[1,\infty\right)$ with $f_c(x)\to\infty$ as $x\to\infty$
\end{itemize}
such that $\Hcal^{(c)}$ is contained in the class $\Hcal_{\hat{m},\hat{d},\hat{p},\hat{r},f_c}$, as defined in Notation \ref{classNot2}. In other words, we have
\begin{equation}\label{ultimateEq}
\Hcal^{(c)} \cap \Hcal_{\hat{m},\hat{d},\hat{p},\hat{r},f_c} = \Hcal^{(c)}.
\end{equation}
But an application of Lemma \ref{coupLem} yields that the intersection $\Hcal^{(c)}\cap\Hcal_{\hat{m},\hat{d},\hat{p},\hat{r},f_c}$, i.e., the class $\Hcal^{(c)}$ by Formula (\ref{ultimateEq}), is finite, which concludes the proof.

\section*{Acknowledgements}

The authors would like to thank Stefan Kohl for checking and providing some feedback on their GAP source code. Moreover, they would like to thank the anonymous referee for their careful reading of this long manuscript and their helpful comments.

\end{document}